\documentclass[10pt,reqno,hidelinks]{amsart}
\usepackage{amssymb}
\usepackage{amsmath, amssymb, verbatim, url, mathrsfs} 
\usepackage{stmaryrd}
\usepackage{tabularx}
\usepackage{mathtools}
\usepackage{verbatim}
\usepackage{amsthm}
\usepackage{framed}
\usepackage{cite}
\usepackage{wasysym}
\usepackage{upgreek}
\usepackage{color}
\usepackage[dvipsnames]{xcolor}
\usepackage{array}
\usepackage{tensor}
\usepackage{accents} 
\usepackage[T1]{fontenc}
\usepackage{dsfont}
\usepackage[colorlinks,linkcolor=blue,citecolor=blue]{hyperref}
\usepackage{enumerate}
\usepackage{enumitem}
\usepackage[normalem]{ulem}
\usepackage{longtable}
\usepackage{mathtools}

\numberwithin{equation}{section}

\theoremstyle{definition} 

\newtheorem{proposition}{Proposition}[section]
\newtheorem{lemma}[proposition]{Lemma}
\newtheorem{corollary}{Corollary}[section]
\newtheorem{theorem}{Theorem}[section]

\newtheorem{remark}{Remark}[section]

\newtheorem{definition}[proposition]{Definition}

\newtheorem*{theorem*}{Theorem}
\newtheorem*{mquestion*}{Main Question}
\newtheorem*{claim*}{Claim}

\newtheorem*{intuition*}{Intuition}

\newcommand{\ca}{\mathsf{a}}
\newcommand{\ba}{\bar{\mathsf{a}}}
\newcommand{\cb}{\mathsf{b}}
\newcommand{\cg}{\mathsf{g}}

\newcommand{\cc}{\mathsf{c}}
\newcommand{\mft}{\mathfrak{t}}
\newcommand{\mfg}{\mathfrak{g}}
\newcommand{\cm}{\mathsf{m}}
\newcommand{\bc}{\bar{\mathsf{c}}}

\newcommand{\ufo}{\underline{f_0}}

\newcommand{\ck}{\mathsf{k}}
\newcommand{\mf}{\beta}

\newcommand{\vertiiii}[1]{{\left\vert\kern-0.25ex\left\vert\kern-0.25ex\left\vert\kern-0.25ex\left\vert #1 \right\vert\kern-0.25ex\right\vert\kern-0.25ex\right\vert\kern-0.25ex\right\vert}}
\newcommand{\vertiii}[1]{{\left\vert\kern-0.25ex\left\vert\kern-0.25ex\left\vert #1 \right\vert\kern-0.25ex\right\vert\kern-0.25ex\right\vert}}

\newcommand{\mfu}{\mathfrak{u}}

\newcommand{\mfv}{\mathfrak{v}}

\newcommand{\uf}{\underline{f}}

\newcommand{\htau}{\hat{\tau}}
\newcommand{\ttau}{\tilde{\tau}}

\newcommand{\txi}{\tilde{\zeta}}

\newcommand{\Rbb}{\mathbb{R}}

\newcommand{\del}[1]{{\partial_{#1}}}

\newcommand{\AND}{{\quad\text{and}\quad}}

\newcommand{\supp}{\mathop{\mathrm{supp}}}

\newcommand{\tc}{\mathtt{c}}

\newcommand{\be}{\begin{equation}}
	\newcommand{\ee}{\end{equation}}

\allowdisplaybreaks

\setlength{\hoffset}{-20mm}
\setlength{\voffset}{-17mm}

\setlength{\textwidth}{17cm}
\setlength{\textheight}{23.5cm}%

\setlength{\marginparwidth}{25mm}%

\setlength\ULdepth{3pt}

\begin{document}

	\title[Emergence of nonlinear Jeans  instabilities]{The emergence of nonlinear Jeans-type instabilities for quasilinear wave equations. II: Generalizations}

	\author{Chao Liu  \and Yiqing Shi}
	
	\address[Chao Liu]{Center for Mathematical Sciences and School of Mathematics and Statistics, Huazhong University of Science and Technology, Wuhan 430074, Hubei Province, China}
	\email{chao.liu.math@foxmail.com}

	\address[Yiqing Shi]{School of Mathematics and Statistics, Huazhong University of Science and Technology, Wuhan 430074, Hubei Province, China.}
\email{yiqngshi@hust.edu.cn}

\begin{abstract} 
This work extends the previous work by the first author  [arXiv:2409.02516] and [Math. Ann. 393 (2025), 317-363], analyzing the long-term behavior of solutions to a broader class of quasilinear wave equations with parameter $1<\ca\leq30$ and $\frac{1}{3}\leq\cb\leq\frac{2}{3}$:
\begin{equation*} 
	\partial^2_t  \varrho-  \biggl( \frac{ \mathsf{m}^2 (\partial_{t}\varrho  )^2}{(1+\varrho )^2}  + 4(\mathsf{k}-\mathsf{m}^2)(1+\varrho  )\biggr)  \Delta \varrho  =  F(t,\varrho,\partial_{\mu} \varrho)     
\end{equation*}   
where $F$ is given by
\begin{equation*}
	F(t,\varrho,\partial_{\mu} \varrho):=
	\cb  \varrho  (1+  \varrho  )  -(\ca-1) \partial_{t}\varrho  
	+ \frac{4}{3} \frac{(\partial_{t}\varrho )^2}{1+\varrho } +   \biggl(\mathsf{m}^2 \frac{ (\partial_{t}\varrho )^2}{(1+\varrho )^2}  + 4(\mathsf{k}-\mathsf{m}^2) (1+\varrho  )  \biggr)  q^i \partial_{i}\varrho  -  \mathtt{K}^{ij}   \partial_{i}\varrho\partial_{j}\varrho    .  
\end{equation*}
The results demonstrate that for this extensive family of quasilinear wave equations satisfying $1<\ca\leq30$ and $\frac{1}{3}\leq\cb\leq\frac{2}{3}$, self-increasing blowup solutions also exist, and self-increasing singularities emerge at certain future endpoints of null geodesics  provided the inhomogeneous perturbations of data are sufficiently small.

 \vspace{2mm}

{{\bf Keywords:} quasilinear wave equations, blowup, Jeans instability, self-increase blowup, Fuchsian system}

\vspace{2mm}

{{\bf Mathematics Subject Classification:} Primary 35L05; Secondary 35A01, 35L02, 35L10 
}
\end{abstract}

	\maketitle
	
	\setcounter{tocdepth}{2}
	
	\pagenumbering{roman} \pagenumbering{arabic}

\section{Introduction}

Let $1<\ca\leq30$, $\frac{1}{3}\leq\cb\leq\frac{2}{3}$, $\ck>0$, $\cm^2 \leq \ck$, $\beta \in (0,+\infty)$, $\beta_0 \in (0,+\infty)$,  $t_0 \in (0,+\infty)$ and $\mathbf{t}_0=\ln t_0$ be given constants, and let $q^i$ be a given vector field. Suppose $\psi \in C^1_0(\Rbb^n)$ and $\psi_0 \in C^1_0(\Rbb^n)$ are given positive-valued functions with 
$\supp \psi = \supp \psi_0 =B_1(0)$  (where $B_1(0)$ denotes the unit open ball centered at the origin). Let $\mathtt{K}_{ij}$ be analytic functions in all their variables. We \textit{aim} to identify self-increasing blowup solutions for the following type of non-covariant quasilinear wave equation, where the condition $1<\ca\leq30$, $\frac{1}{3}\leq\cb\leq\frac{2}{3}$ extends the case $\ca=\frac{4}{3}$, $\cb=\frac{2}{3}$ previously studied by Liu \cite{Liu2024}.
\begin{enumerate}[label={(Eq.\arabic*)}]
	\item\label{Eq1}  $
	\begin{gathered} 
		 \partial^2_t \varrho-  \cg \delta^{ij} \del{i}\del{j} \varrho  = 	\underbrace{\cb  \varrho  (1+  \varrho  ) }_{ \text{(i) self-increas.}} \underbrace{-(\ca-1) \del{t}\varrho  }_{ \text{(ii) damping}} 
		+ \underbrace{\frac{4}{3} \frac{(\del{t}\varrho )^2}{1+\varrho } }_{\text{(iii) Riccati}} +  \underbrace{ \cg q^i \del{i}\varrho }_{\text{(iv) convec.}} -  \mathtt{K}^{ij}(t,\varrho,\del{\mu}\varrho)    \del{i}\varrho\del{j}\varrho     , \quad  \text{in}\; [\mathbf{t}_0,\mathbf{t}^\star) \times \Rbb^n,    \label{e:eq3}       \\
		  \varrho|_{t=\mathbf{t}_0}= \beta+ \psi(x^k)   \AND 	\del{t} \varrho|_{t=\mathbf{t}_0}=  \beta_0 +  \psi_0(x^k)   ,  \quad   \text{in}\; \{\mathbf{t}_0 \} \times \Rbb^n ,  
	\end{gathered}
	$\\
	where\footnote{In this article, we use the index convention given in \S\ref{s:AIN}, i.e., $\mu=0,\cdots,n$ and $i=1,\cdots,n$, $x_0=t$, and the Einstein summation convention. } 
	\begin{equation*}
	 \cg= \cg(   \varrho,\del{t}\varrho )  =  \cm^2 \frac{ (\del{t}\varrho )^2}{(1+\varrho )^2}  + 4(\ck-\cm^2) (1+\varrho  )    .
	\end{equation*}  \end{enumerate}

To study \ref{Eq1}, this article first applies a simple exponential-time coordinate transformation (where $e^t$ is treated as the new time) to convert it into \ref{Eq2}, and then analyzes \ref{Eq2}. 	A detailed proof demonstrating that \ref{Eq1} and \ref{Eq2} are mutually convertible through a simple time transformation can be found in \cite[Appendix A]{Liu2024} . Moreover, the conclusions derived from \ref{Eq2} in this study are equally applicable to \ref{Eq1}.
	\begin{enumerate}[label={(Eq.\arabic*)}] 
	\setcounter{enumi}{1}
	\item\label{Eq2}$ 
	\begin{gathered}
		\partial^2_t \varrho-  	\cg \delta^{ij} \del{i}\del{j} \varrho = 
		\frac{\cb}{t^2}  \varrho  (1+  \varrho )-\frac{\ca}{ t} \del{t}\varrho   + \frac{4}{3} \frac{(\del{t}\varrho )^2}{1+\varrho } + \cg q^i \del{i}\varrho   - \frac{1}{ t^2} \mathtt{K}^{ij}(t,\varrho,\del{\mu}\varrho ) \del{i}\varrho\del{j}\varrho      , \quad    \text{in}\; [t_0,t^\star) \times \Rbb^n,    
		 \\
	  \varrho|_{t=t_0}=   \beta + \psi(x^k)  \AND 	\del{t} \varrho|_{t=t_0}=     \beta_0 + \psi_0(x^k)   ,  \quad  \text{in}\; \{t_0 \} \times \Rbb^n ,  
	\end{gathered}$ \\
	where  
	\begin{equation*}
		\cg =\cg(t, \varrho,\del{t}\varrho )  :=     \cm^2\frac{ (\del{t}\varrho )^2}{(1+\varrho )^2}  + 4(\ck-\cm^2) \frac{1+\varrho  }{t^2} . 
	\end{equation*} 
\end{enumerate}
For convenience, we denote $\mathtt{g}^{\alpha\beta} =-\delta^\alpha_0\delta^\beta_0+\cg\delta^{ij}\delta_i^\alpha\delta_j^\beta$ and view $\mathtt{g}_{\alpha\beta}$ as a Lorentzian metric.

  In the first author’s work \cite{Liu2024}, equations \ref{Eq1} and \ref{Eq2} adopted parameters $\ca=\frac{4}{3}$, $\cb=\frac{2}{3}$, and $\cc=\frac{4}{3}$, derived from simplified models of the nonlinear Jeans instability in the Euler--Poisson system \cite[\S II.9]{Peebles2020} and Einstein--Euler system \cite{Noh2004, Hwang2013a, Hwang2005, Hwang2007, Hwang2006, Hwang1999, Noh2005}. Here, we \textit{aim} to extend the parameter range to
  \begin{equation}\label{A:2}
  	1 < \ca \leq 30, \quad \frac{1}{3} \leq \cb \leq \frac{2}{3}, \quad \cc = \frac{4}{3},
  \end{equation}
  and study the long-time dynamics of solutions under these conditions. That is, this work \textit{extends} the results of \cite{Liu2024} to a more general setting, encompassing a broader range of parameters. Consequently, the derivation inevitably involves more complex computations and more delicate and challenging issues in \textit{parameter selection}. These constitute the main problems that this paper aims to address.
  
  As shown in \cite{Liu2023}, neglecting rotation, shear, and tidal forces links the case $\ca=\frac{4}{3}$, $\cb=\frac{2}{3}$, $\cc=\frac{4}{3}$ to the Euler–Poisson equations; hence, after appropriate transformations, \ref{Eq1} and \ref{Eq2} serve as simplified models of that system. We prove that when the inhomogeneous perturbation of the initial data is sufficiently small (i.e., long-wavelength), the solution can grow unboundedly, producing self-growth singularities along certain null geodesics. The growth rate of these blow-up solutions is estimated in Theorems \ref{t:mainthm1} and \ref{t:mainthm2}.
  
  Linearized Jeans theory predicts a slow growth rate $\sim t^{2/3}$ \cite{Bonnor1957, Zeldovich1971, ViatcehslavMukhanov2013, Liu2022}, inadequate for describing high-density dynamics and inconsistent with observed cosmic structure formation. This motivates the nonlinear analysis pursued here and in related studies \cite{Liu2022, Liu2022b, Liu2023a, Liu2023, Liu2024}, which reveal that $\varrho$ grows substantially faster than predicted by linear theory.
  
  Equation~\ref{Eq1} contains competing nonlinear terms: the self-growth, damping, Riccati, and convection terms (see \cite{Liu2024} for details). Retaining only the Riccati term with $\cm=0$ reduces the system to one akin to shock-formation models \cite{Speck2016, Holzegel2016, Speck2016a}. However, the interplay of self-growth, damping, and convection terms can still drive unbounded amplification of $\varrho$ along null geodesics, forming self-growth singularities when initial inhomogeneities are small. Once the Riccati term dominates and $\varrho$ becomes large, the system transitions to the regime of large $\varrho$ (cf. Remark~1.9 in \cite{Speck2016}).

\subsection{Methodology, Assumptions and Main Theorems}\label{s:mainthm}
\subsubsection{Basic Assumptions and Main Theorems}
Before stating the main theorems, we review a class of ordinary differential equations (ODEs) studied in \cite{Liu2022b}, namely \eqref{e:feq0b}--\eqref{e:feq1b}, as the reference equations
\begin{gather}
	\partial^2_\mft f(\mft) + \frac{\ca}{\mft} \del{\mft} f(\mft) - \frac{\cb}{\mft^2} f(\mft)(1 + f(\mft)) - \frac{\cc (\del{\mft} f(\mft))^2}{1 + f(\mft)} = 0, \label{e:feq0b} \\
	f(t_0) = \beta > 0, \quad \del{t} f(t_0) = \beta_0 > 0. \label{e:feq1b}
\end{gather}
The solution to this ODE serves as the reference solution (i.e., the solution of equation \ref{Eq2} is expected to asymptotically approach this reference solution). The following quantities, which depend only on the initial value $t_0$ and the constants $\beta, \beta_0$, are used to describe the self-growth behavior of both the reference solution and the solution to equation \ref{Eq2}.
\begin{align}
	\mathtt{A} &:= \frac{t_0^{-\frac{\ba - \triangle}{2}}}{\triangle} \biggl( \frac{t_0 \mf_0}{(1 + \mf)^2} - \frac{\ba + \triangle}{2} \frac{\mf}{1 + \mf} \biggr), \quad
	\mathtt{B} := \frac{t_0^{-\frac{\ba + \triangle}{2}}}{\triangle} \Bigl( \frac{\ba - \triangle}{2} \frac{\mf}{1 + \mf} - \frac{t_0 \mf_0}{(1 + \mf)^2} \Bigr) < 0, \label{e:ttA} \\
	\mathtt{C} &:= \frac{2}{2 + \ba + \triangle} \left( \max \left\{ \frac{\ba + \triangle}{2 \cb}, 1 \right\} \right)^{-1} \Bigl( \ln(1 + \mf) + \frac{\ba + \triangle}{2\cb} \frac{t_0 \mf_0}{1 + \mf} \Bigr) t_0^{-\frac{\ba + \triangle}{2}} > 0, \label{e:ttC} \\
	\mathtt{D} &:= \frac{\ba + \triangle}{2 + \ba + \triangle} \Bigl( \ln(1 + \mf) - \frac{1}{\cb} \frac{t_0 \mf_0}{1 + \mf} \Bigr) t_0. \label{e:ttD}
\end{align}
Here, $\triangle := \sqrt{(1 - \ca)^2 + 4\cb} > -\ba$ and $\ba = 1 - \ca < 0$. Furthermore, within the parameter range \eqref{A:2} for $\ca, \cb, \cc$ in equations \ref{Eq1} and \ref{Eq2}, and to simplify subsequent calculations, we introduce the following assumptions \ref{A:1}--\ref{A:4} regarding the initial values $\beta, \beta_0$ and the parameters $\ck$ and $q^i$
\begin{enumerate}[label=(A\arabic*)]
	\item \label{A:1} The initial values satisfy $\beta_0^2 \geq 6\cb\beta (1 + \beta)^{2} t_0^{-2}$.
	\item \label{A:3} The convection direction is assumed constant and can be expressed as $q^i = |q| \delta^i_1$, with $|q| = \frac{606}{2 - (2 - 3\cb)\cm^2} \in [303, \frac{606}{2 - m^2}]$.
	\item \label{A:4} $\ck = 1$.
\end{enumerate}

\begin{figure}[h]
	\centering
	\includegraphics[width=9cm]{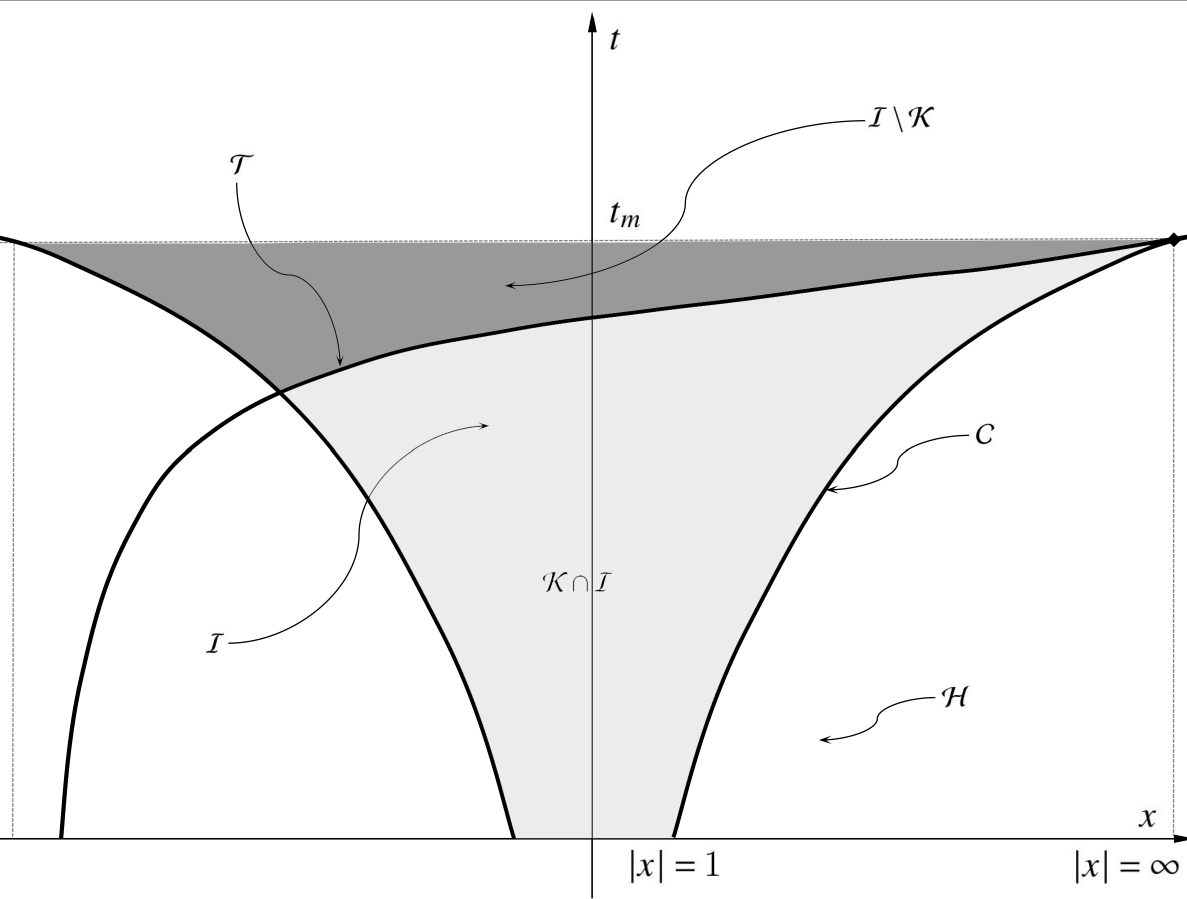}
	\caption{Regions for the solution in the main theorems.}
	\label{f:fig0}
\end{figure}

We now describe the regions appearing in the main theorems. As shown in Fig.~\ref{f:fig0}, the region $\mathcal{I}$ corresponds to the inhomogeneous solution, while $\mathcal{H}$ corresponds to the homogeneous solution. The domain of influence of the inhomogeneous initial data is bounded by the characteristic cone $\mathcal{C}$. These regions are defined by
\begin{align}
	\mathcal{I} &:= \left\{ (t, x) \in [t_0, t_m) \times \mathbb{R}^n \;\middle|\; |x| < 1 + \int_{t_0}^{t} \sqrt{ \cg(y, f(y), f_0(y)) } \, dy \right\}, \label{e:cai} \\
	\mathcal{H} &:= \left\{ (t, x) \in [t_0, t_m) \times \mathbb{R}^n \;\middle|\; |x| \geq 1 + \int_{t_0}^{t} \sqrt{ \cg(y, f(y), f_0(y)) } \, dy \right\}, \label{e:cah} \\
	\mathcal{C} &:= \left\{ (t, x) \in [t_0, t_m) \times \mathbb{R}^n \;\middle|\; |x| = 1 + \int_{t_0}^{t} \sqrt{ \cg(y, f(y), f_0(y)) } \, dy \right\}. \label{e:char1}
\end{align}

The main results are illustrated in Fig.~\ref{f:fig0}. A hypersurface $\mathcal{T}$, determined by the inhomogeneous initial data, divides the inhomogeneous region $\mathcal{I}$ into two parts: the lower light-shaded region $\mathcal{K}$ and the upper dark-shaded region $\mathcal{I}\setminus\mathcal{K}$. Theorems \ref{t:mainthm1} and \ref{t:mainthm2} rigorously define $\mathcal{K}$ and establish that the solution is fully determined in $\mathcal{K}$ and in the homogeneous region $\mathcal{H}$ (i.e., $\mathcal{K}\cup\mathcal{H}$, the light-shaded plus white domains), while it remains undetermined in the dark-shaded region $\mathcal{I}\setminus\mathcal{K}$, hereafter the unknown region. The solution is inhomogeneous within $\mathcal{K}\cap\mathcal{I}$ but homogeneous in $\mathcal{H}$. In $\mathcal{K}\cup\mathcal{H}$, the solution exhibits self-growth behavior, developing a self-growth singularity at $p_m=(t_m,+\infty,0,\dots,0)$, whereas its spatial derivatives stay small, thus $\mathcal{K}\cup\mathcal{H}$ is referred to as the long-wavelength domain.

\begin{remark}\label{s:excel}
	The white region $\mathcal{H}$ represents the homogeneous self-growing solution; the light-shaded region $\mathcal{K}\cap\mathcal{I}$ corresponds to the inhomogeneous self-growing solution; and the dark-shaded region $\mathcal{I}\setminus\mathcal{K}$ is the unknown region for future investigation.
\end{remark}

The main theorems are stated below. For detailed proofs, see \S\ref{s:pfmthm}.

\begin{theorem}\label{t:mainthm1}
	For \ref{Eq1}, assume $k \in \mathbb{Z}_{\geq \frac{n}{2} + 3}$, the parameters $\ca, \cb, \cc$ satisfy condition \eqref{A:2}, the constants $\mathtt{A}, \mathtt{B}, \mathtt{C}, \mathtt{D}$ defined in \eqref{e:ttA}--\eqref{e:ttD} depend on the initial values $\beta$ and $\beta_0$, and Assumptions \ref{A:1}--\ref{A:4} hold. Given functions $(\psi, \psi_0) \in C^1_0(\mathbb{R}^n)$ with $\supp(\psi, \psi_0) = B_1(0)$, and let $f(\mft)$ be the solution to \eqref{e:feq0b}--\eqref{e:feq1b} (provided by Theorem~\ref{t:mainthm0}). Then there exist sufficiently small constants $\sigma_0 > 0$ and $\delta_0 > 0$ such that if the initial data satisfy
	\begin{equation*}
		\| \psi \|_{H^k(B_1(0))} + \| \partial_i \psi \|_{H^{k}(B_1(0))} + \| \psi_0 \|_{H^k(B_1(0))} \leq e^{-\frac{303}{\delta_0}} \sigma_0^2,
	\end{equation*}
	then there exists a hypersurface $t = \mathcal{T}(x, \delta_0)$ satisfying
	\begin{align*}
		\Gamma_{\delta_0} &:= \{(t, x) \in [\mathbf{t}_0, t_m) \times \mathbb{R}^n \mid t = \mathcal{T}(x, \delta_0)\} \subset \mathcal{I}, \\
		\lim_{a \to +\infty} \mathcal{T}(a \delta^i_1, \delta_0) &= t_m \quad \text{and} \quad \lim_{\delta_0 \to 0+} \mathcal{T}(x, \delta_0) = t_m,
	\end{align*}
	so that \ref{Eq1} admits a solution $\varrho \in C^2(\mathcal{K} \cup \mathcal{H})$, where $\mathcal{K} := \{(t, x) \in [\mathbf{t}_0, t_m) \times \mathbb{R}^n \mid t < \mathcal{T}(x, \delta_0)\}$, and $\varrho$ satisfies the following
	\begin{enumerate}[label=(\arabic*)]
		\item As $x^1 \to +\infty$, define
		\begin{equation*}
			\mathbf{1}_{-}(x^1) := 1 - C \sigma_0^2 e^{-\frac{100}{\delta_0}} e^{-\frac{3x^1}{4}} \searrow 1 \quad \text{and} \quad \mathbf{1}_{+}(x^1) := 1 + C \sigma_0^2 e^{-\frac{100}{\delta_0}} e^{-\frac{3x^1}{4}} \searrow 1.
		\end{equation*}
		Then for all $(t, x) \in \mathcal{K} \cap \mathcal{I}$,
		\begin{gather*}
			\mathbf{1}_{-}(x^1) f_0\bigl( e^{\mathbf{t}_0} + \mathbf{1}_{-}(x^1) (e^{t} - e^{\mathbf{t}_0}) \bigr) \leq \varrho_0(t, x) \leq \mathbf{1}_{+}(x^1) f_0\bigl( e^{\mathbf{t}_0} + \mathbf{1}_{+}(x^1) (e^{t} - e^{\mathbf{t}_0}) \bigr), \\
			-C \sigma_0^2 e^{-\frac{100}{\delta_0}} e^{-\frac{3x^1}{4}} \bigl(1 + f\bigl( e^{\mathbf{t}_0} + \mathbf{1}_{-}(x^1) (e^{t} - e^{\mathbf{t}_0}) \bigr) \bigr) \leq \varrho_i(t, x) \\
			\quad \text{and} \quad \leq C \sigma_0^2 e^{-\frac{100}{\delta_0}} e^{-\frac{3x^1}{4}} \bigl(1 + f\bigl( e^{\mathbf{t}_0} + \mathbf{1}_{+}(x^1) (e^{t} - e^{\mathbf{t}_0}) \bigr) \bigr), \\
			\mathbf{1}_{-}(x^1) f\bigl( e^{\mathbf{t}_0} + \mathbf{1}_{-}(x^1) (e^{t} - e^{\mathbf{t}_0}) \bigr) \leq \varrho(t, x) \leq \mathbf{1}_{+}(x^1) f\bigl( e^{\mathbf{t}_0} + \mathbf{1}_{+}(x^1) (e^{t} - e^{\mathbf{t}_0}) \bigr).
		\end{gather*}
		Moreover, $\varrho_0$ and $\varrho$ develop a self-growth singularity at the point $p_m := (t_m, +\infty, 0, \cdots, 0)$, i.e.,
		\begin{gather*}
			\lim_{\mathcal{K} \ni (t, x) \to p_m} \varrho = \lim_{\mathcal{K} \ni (t, x) \to p_m} f = +\infty, \\
			\lim_{\mathcal{K} \ni (t, x) \to p_m} \varrho_0 = \lim_{\mathcal{K} \ni (t, x) \to p_m} f_0 = +\infty \quad \text{and} \quad \lim_{\mathcal{K} \ni (t, x) \to p_m} \varrho_i = 0.
		\end{gather*}
		
		\item On $\mathcal{H}$, $\varrho \equiv f$, where $\mathcal{H}$ is defined in \eqref{e:cah}.
		
		\item For $(t, x) \in \mathcal{K} \cap \mathcal{I}$, the growth rate of $\varrho$ can be estimated by exponential functions as follows.
		\begin{align*}
			\varrho(t, x) &\geq \mathbf{1}_{-}(x^1) f\bigl( e^{\mathbf{t}_0} + \mathbf{1}_{-}(x^1) (e^{t} - e^{\mathbf{t}_0}) \bigr) > \mathbf{1}_{-}(x^1) \bigl( e^{\mathtt{C} (e^{\mathbf{t}_0} + \mathbf{1}_{-}(x^1) (e^{t} - e^{\mathbf{t}_0}))^{\frac{\ba + \triangle}{2}}} - 1 \bigr) \\
			\intertext{and}
			\varrho(t, x) &\leq \mathbf{1}_{+}(x^1) f\bigl( e^{\mathbf{t}_0} + \mathbf{1}_{+}(x^1) (e^{t} - e^{\mathbf{t}_0}) \bigr) \\
			&< \frac{3}{2} \left( \frac{1}{1 + \mathtt{A} (e^{\mathbf{t}_0} + \mathbf{1}_{+}(x^1) (e^{t} - e^{\mathbf{t}_0}))^{\frac{\ba - \triangle}{2}} + \mathtt{B} (e^{\mathbf{t}_0} + \mathbf{1}_{+}(x^1) (e^{t} - e^{\mathbf{t}_0}))^{\frac{\ba + \triangle}{2}}} - 1 \right).
		\end{align*}
		
		\item If the initial data satisfy $\breve{\beta} := \frac{e^{\mathbf{t}_0} \mf_0}{3(\ca - 1)(1 + \mf)} - 1 > 0$, then for all $(t, x^k) \in \mathcal{K} \cap \mathcal{I}$, the lower bound for $\varrho$ becomes
		\begin{align*}
			\varrho(t, x) &\geq \mathbf{1}_{-}(x^1) f\bigl( e^{\mathbf{t}_0} + \mathbf{1}_{-}(x^1) (e^{t} - e^{\mathbf{t}_0}) \bigr) \\
			&> \mathbf{1}_{-}(x^1) \left( \frac{1 + \mf}{\left( \frac{\beta_0 e^{\mathbf{t}_0 \ca}}{3(\ca - 1)(1 + \beta)} \bigl( e^{\mathbf{t}_0} + \mathbf{1}_{-}(x^1) (e^{t} - e^{\mathbf{t}_0}) \bigr)^{1 - \ca} - \breve{\beta} \right)^3} - 1 \right),
		\end{align*}
		and $\varrho$ blows up in finite time.
	\end{enumerate}
\end{theorem}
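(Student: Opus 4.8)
The plan is to reduce everything to the transformed equation \ref{Eq2} via the exponential time substitution $\tau=e^{t}$ — the equivalence of \ref{Eq1} and \ref{Eq2} under this change being established in \cite[Appendix A]{Liu2024} — so that the rescaled time arguments $e^{\mathbf{t}_0}+\mathbf{1}_\pm(x^1)(e^{t}-e^{\mathbf{t}_0})$ in the statement are precisely the images, pulled back to $t$, of linearly modulated $\tau$-times. With $f$ the reference ODE solution supplied by Theorem~\ref{t:mainthm0}, the core of the proof is a comparison argument: I would trap $\varrho$ (in $\tau$-variables) between the two modulated profiles $\varrho_{\pm}:=\mathbf{1}_\pm(x^1)\,f\bigl(e^{\mathbf{t}_0}+\mathbf{1}_\pm(x^1)(e^{t}-e^{\mathbf{t}_0})\bigr)$, which solve the spatially homogeneous version of \ref{Eq2} up to a remainder of relative size $O\bigl(\sigma_0^2 e^{-100/\delta_0}e^{-3x^1/4}\bigr)$ against $1+f$. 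First I would record from Theorem~\ref{t:mainthm0} the structural properties of $f$ that the comparison needs: positivity and monotone growth under Assumptions~\ref{A:1}--\ref{A:4}, the explicit sub/super-solution estimates encoded in the constants \eqref{e:ttA}--\eqref{e:ttD} (drawn from \cite{Liu2022b}), and finite-time blowup at $\mft=e^{t_m}$.

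Next I would run a continuity/bootstrap argument in the inhomogeneous region $\mathcal{I}$. Writing $\varrho$ as a modulated reference profile times $(1+\text{perturbation})$ — which linearizes the Riccati term, the $\mathtt{K}^{ij}\del{i}\varrho\del{j}\varrho$ term and the $\del{t}\varrho$-dependence of $\cg$ most cleanly — the perturbation satisfies a quasilinear wave equation whose principal part converges to $\mathtt{g}_{\alpha\beta}$ and whose forcing is sourced by $(\psi,\psi_0)$. I would propagate a bootstrap on $\|\text{perturbation}\|_{H^{k}}$ together with a pointwise bound $\lesssim\sigma_0^2 e^{-100/\delta_0}e^{-3x^1/4}(1+f)$ for the perturbation and its first derivatives; Sobolev embedding, valid since $k\ge\tfrac n2+3$, upgrades this to the $C^2$ control in the statement. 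Two geometric inputs drive the estimate: (a) finite speed of propagation for $\mathtt{g}_{\alpha\beta}$ (strictly hyperbolic since $\cg>0$), which confines the influence of $(\psi,\psi_0)$ to the region bounded by $\mathcal{C}$, hence $\varrho\equiv f$ on $\mathcal{H}$, which is part (2); and (b) integration of the perturbation equation along the characteristics of $\mathtt{g}_{\alpha\beta}$, which — because $q^{i}=|q|\delta^{i}_1$ with the magnitude prescribed in Assumption~\ref{A:3} — extracts the decay $e^{-3x^1/4}$ in the $x^1$-direction; this is exactly where $|q|=606/(2-(2-3\cb)\cm^2)$, the exponent $\tfrac34$, and the weights $e^{-303/\delta_0}$, $e^{-100/\delta_0}$ are pinned down. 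The hypersurface $t=\mathcal{T}(x,\delta_0)$ is the maximal slice up to which the bootstrap closes; then $\Gamma_{\delta_0}\subset\mathcal{I}$, $\mathcal{T}(a\delta^i_1,\delta_0)\to t_m$ as $a\to+\infty$, and $\mathcal{T}(x,\delta_0)\to t_m$ as $\delta_0\to0^{+}$ follow from $\mathbf{1}_\pm(x^1)\to1$ (in $x^1$ and in $\delta_0$) together with the fact that at $\mathbf{1}_\pm\equiv1$ the comparison degenerates to the homogeneous solution, which survives precisely up to $t_m$. Feeding the $L^\infty$ control back into $\varrho_{\pm}$ then yields the sandwich inequalities for $\varrho$, $\varrho_0=\del{t}\varrho$ and $\varrho_i=\del{i}\varrho$ in part (1).

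For the blowup and the quantitative rates I would push the sandwich to the boundary of $\mathcal{K}$. Letting $\mathcal{K}\ni(t,x)\to p_m=(t_m,+\infty,0,\dots,0)$ sends the modulated time to $e^{t_m}$ and $\mathbf{1}_\pm(x^1)\to1$, so both barriers converge to $f$, which blows up there; hence $\varrho,\varrho_0\to+\infty$, while $|\varrho_i|\le C\sigma_0^2 e^{-100/\delta_0}e^{-3x^1/4}(1+f(\cdot))\to0$ because the exponential weight beats the growth of $1+f$ along the approach to $p_m$. The exponential lower bound $\varrho>\mathbf{1}_-(x^1)\bigl(e^{\mathtt{C}(\cdot)^{(\ba+\triangle)/2}}-1\bigr)$ and the upper bound through $\mathtt{A},\mathtt{B}$ in part (3) come from substituting the explicit sub/super-solution estimates for $f$ at the modulated time — the constants \eqref{e:ttA}--\eqref{e:ttD} are constructed for exactly this — and part (4) is the same, using the sharpened ODE bound available when $\breve{\beta}:=\frac{e^{\mathbf{t}_0}\mf_0}{3(\ca-1)(1+\mf)}-1>0$, which additionally promotes the self-growth singularity at $p_m$ to genuine finite-time blowup.

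The main obstacle I anticipate is closing the comparison and the weighted energy estimates \emph{uniformly} over the full range $1<\ca\le30$, $\tfrac13\le\cb\le\tfrac23$. In \cite{Liu2024} only the single pair $\ca=\tfrac43$, $\cb=\tfrac23$ was handled, and there the error terms produced by differentiating the modulated profile and by the non-covariant nonlinearities happened to carry favorable signs and sizes; for general $(\ca,\cb)$ these sign and size conditions must be rederived, and the competition between the damping rate $\ca/t$, the self-growth rate $\cb/t^2$, the Riccati exponent $\cc=\tfrac43$ and the convection strength $|q|$ must be balanced — which is precisely what dictates Assumptions~\ref{A:1}--\ref{A:4} and the concrete choices of $\sigma_0,\delta_0$, of the weight $\tfrac34$, and of the constants $100$, $303$, $606$. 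The crux is verifying that with $\triangle=\sqrt{(1-\ca)^2+4\cb}$ and $\ba=1-\ca$ the Fuchsian exponents $\tfrac{\ba\pm\triangle}{2}$ fall into the windows required for the weighted energy estimates (and for the monotonicity of $f$) to close; this parameter bookkeeping, flagged in the introduction as the paper's main contribution, is the delicate step.
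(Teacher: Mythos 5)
Your reduction of Theorem~\ref{t:mainthm1} to \ref{Eq2} via the exponential time change, and your reading-off of parts (1)--(4) from a weighted closeness estimate $|\varrho-f|\lesssim$ (weight)$\cdot(1+f)$ plus Theorem~\ref{t:mainthm0}, match the paper's Steps 3--4. But the core of your argument --- ``trap $\varrho$ between the two modulated profiles $\varrho_{\pm}$'' closed by a bootstrap in physical time on $[t_0,t_m)$ --- has a genuine gap. There is no comparison/maximum principle for quasilinear wave equations, so the sandwich inequalities cannot be the \emph{mechanism}; they are \emph{conclusions} extracted from $L^\infty$-smallness of rescaled deviation variables. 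More seriously, you give no mechanism for closing energy estimates up to $t_m$, where $f$, $\varrho$, $\cg$ and the Riccati coefficient all blow up (and where the blowup times of $f$ and $\varrho$ need not even coincide). The paper's entire apparatus exists to supply that mechanism: two separate time compactifications \eqref{e:ctm2}--\eqref{e:ctm2b} synchronizing the blowup times at $\tau=0$; the variables \eqref{e:v1}--\eqref{e:v5}; the zoom-in coordinates and rescalings $\mu,\eta$; a cutoff; spatial compactification to $\mathbb{T}^n$; and then the Fuchsian GIVP theorem (Theorem~\ref{t:fuch}), whose applicability hinges on the positive-definiteness chain \eqref{e:pstv} for the symmetrized singular matrix $\acute{\mathfrak{A}}_\phi$ in \eqref{e:Aphi2}. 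That positivity computation, not ``integration along characteristics,'' is where the parameter window $1<\ca\le 30$, $\tfrac13\le\cb\le\tfrac23$ and the constants $200,303,606$ actually bite.

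Two concrete misattributions follow from this. First, Assumption~\ref{A:3} with $|q|=606/(2-(2-3\cb)\cm^2)$ is not what produces the $e^{-3x^1/4}$ decay; it is chosen so that $\acute{\mathfrak{D}}_{12}^{1}=(-2+(2-3\cb)\cm^2)|q|+606=0$, killing the off-diagonal $X_1X_2$ coupling in \eqref{e:eBe2} so that $\acute{\mathfrak{A}}_\phi$ can be bounded below. The $e^{-100/\delta_0}e^{-3x^1/4}$ weight is inserted by hand through $\mu(\txi^1)=\sigma_0 e^{-303/\delta_0}e^{-\frac{303}{2}\txi^1}$ and recovered on the lens-shaped region via Lemma~\ref{t:dcy}, which trades part of $e^{-\frac{303}{2}\zeta^1}$ against $(-\tau)^{-300/A}$ along $\widehat{\Gamma}_{\delta_0}$. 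Second, $\mathcal{T}(x,\delta_0)$ is not ``the maximal slice up to which the bootstrap closes'': the Fuchsian solution exists globally up to $\tau=0$ on the torus, and $\widehat{\Gamma}_{\delta_0}$ is an explicitly constructed weakly spacelike hypersurface (Lemma~\ref{t:extori2}, verified through the minors $\mathrm{D}_\ell$ of $(\nu_\Lambda)_0\mathfrak{A}^0+(\nu_\Lambda)_i\frac{1}{A\ttau}\mathfrak{A}^i$) delimiting the lens-shaped region on which the cutoff-modified equation \eqref{e:mainsys6} provably coincides with the original \eqref{e:mainsys8}. Without an argument replacing the Fuchsian positivity structure --- or an equivalent weighted energy functional that remains coercive as $t\to t_m$ --- your bootstrap does not close, and the theorem is not proved.
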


\begin{theorem}\label{t:mainthm2}
	For \ref{Eq2}, assume $k \in \mathbb{Z}_{\geq \frac{n}{2} + 3}$, the parameters $\ca, \cb, \cc$ satisfy condition \eqref{A:2}, the constants $\mathtt{A}, \mathtt{B}, \mathtt{C}, \mathtt{D}$ defined in \eqref{e:ttA}--\eqref{e:ttD} depend on the initial values $\beta$ and $\beta_0$, and Assumptions \ref{A:1}--\ref{A:4} hold. Given functions $(\psi, \psi_0) \in C^1_0(\mathbb{R}^n)$ with $\supp(\psi, \psi_0) = B_1(0)$, and let $f(\mft)$ be the solution to \eqref{e:feq0b}--\eqref{e:feq1b} (provided by Theorem~\ref{t:mainthm0}). Then there exist sufficiently small constants $\sigma_0 > 0$ and $\delta_0 > 0$ such that if the initial data satisfy
	\begin{equation*}
		\| \psi \|_{H^k(B_1(0))} + \| \partial_i \psi \|_{H^{k}(B_1(0))} + \| \psi_0 \|_{H^k(B_1(0))} \leq e^{-\frac{303}{\delta_0}} \sigma_0^2,
	\end{equation*}
	then there exists a hypersurface $t = \mathcal{T}(x, \delta_0)$ satisfying
	\begin{align*}
		\Gamma_{\delta_0} &:= \{(t, x) \in [t_0, t_m) \times \mathbb{R}^n \mid t = \mathcal{T}(x, \delta_0)\} \subset \mathcal{I}, \\
		\lim_{a \to +\infty} \mathcal{T}(a \delta^i_1, \delta_0) &= t_m \quad \text{and} \quad \lim_{\delta_0 \to 0+} \mathcal{T}(x, \delta_0) = t_m,
	\end{align*}
	so that \ref{Eq2} admits a solution $\varrho \in C^2(\mathcal{K} \cup \mathcal{H})$, where $\mathcal{K} := \{(t, x) \in [t_0, t_m) \times \mathbb{R}^n \mid t < \mathcal{T}(x, \delta_0)\}$, and $\varrho$ satisfies the following
	\begin{enumerate}[label=(\arabic*)]
		\item\label{b1} As $x^1 \to +\infty$, define
		\begin{equation*}
			\mathbf{1}_{-}(x^1) := 1 - C \sigma_0^2 e^{-\frac{100}{\delta_0}} e^{-\frac{3x^1}{4}} \searrow 1 \quad \text{and} \quad \mathbf{1}_{+}(x^1) := 1 + C \sigma_0^2 e^{-\frac{100}{\delta_0}} e^{-\frac{3x^1}{4}} \searrow 1.
		\end{equation*}
		Then for all $(t, x) \in \mathcal{K} \cap \mathcal{I}$,
		\begin{gather*}
			\mathbf{1}_{-}(x^1) f_0\bigl( t_0 + \mathbf{1}_{-}(x^1) (t - t_0) \bigr) \leq \varrho_0(t, x) \leq \mathbf{1}_{+}(x^1) f_0\bigl( t_0 + \mathbf{1}_{+}(x^1) (t - t_0) \bigr), \\
			-C \sigma_0^2 e^{-\frac{100}{\delta_0}} e^{-\frac{3x^1}{4}} \bigl(1 + f\bigl( t_0 + \mathbf{1}_{-}(x^1) (t - t_0) \bigr) \bigr) \leq \varrho_i(t, x) \\
			\quad \text{and} \quad \leq C \sigma_0^2 e^{-\frac{100}{\delta_0}} e^{-\frac{3x^1}{4}} \bigl(1 + f\bigl( t_0 + \mathbf{1}_{+}(x^1) (t - t_0) \bigr) \bigr), \\
			\mathbf{1}_{-}(x^1) f\bigl( t_0 + \mathbf{1}_{-}(x^1) (t - t_0) \bigr) \leq \varrho(t, x) \leq \mathbf{1}_{+}(x^1) f\bigl( t_0 + \mathbf{1}_{+}(x^1) (t - t_0) \bigr).
		\end{gather*}
		Moreover, $\varrho_0$ and $\varrho$ develop a self-growth singularity at the point $p_m := (t_m, +\infty, 0, \cdots, 0)$, i.e.,
		\begin{gather*}
			\lim_{\mathcal{K} \ni (t, x) \to p_m} \varrho = \lim_{\mathcal{K} \ni (t, x) \to p_m} f = +\infty, \\
			\lim_{\mathcal{K} \ni (t, x) \to p_m} \varrho_0 = \lim_{\mathcal{K} \ni (t, x) \to p_m} f_0 = +\infty \quad \text{and} \quad \lim_{\mathcal{K} \ni (t, x) \to p_m} \varrho_i = 0.
		\end{gather*}
		
		\item\label{b2} On $\mathcal{H}$, $\varrho \equiv f$, where $\mathcal{H}$ is defined in \eqref{e:cah}.
		
		\item\label{b3} For $(t, x) \in \mathcal{K} \cap \mathcal{I}$, the growth rate of $\varrho$ can be estimated by exponential functions as follows
		\begin{align*}
			\varrho(t, x) &\geq \mathbf{1}_{-}(x^1) f\bigl( t_0 + \mathbf{1}_{-}(x^1) (t - t_0) \bigr) > \mathbf{1}_{-}(x^1) \bigl( e^{\mathtt{C} (t_0 + \mathbf{1}_{-}(x^1) (t - t_0))^{\frac{\ba + \triangle}{2}}} - 1 \bigr) \\
			\intertext{and}
			\varrho(t, x) &\leq \mathbf{1}_{+}(x^1) f\bigl( t_0 + \mathbf{1}_{+}(x^1) (t - t_0) \bigr) \\
			&< \frac{3}{2} \left( \frac{1}{1 + \mathtt{A} (t_0 + \mathbf{1}_{+}(x^1) (t - t_0))^{\frac{\ba - \triangle}{2}} + \mathtt{B} (t_0 + \mathbf{1}_{+}(x^1) (t - t_0))^{\frac{\ba + \triangle}{2}}} - 1 \right).
		\end{align*}
		
		\item\label{b4} If the initial data satisfy $\breve{\beta} := \frac{t_0 \mf_0}{3(\ca - 1)(1 + \mf)} - 1 > 0$, then for all $(t, x^k) \in \mathcal{K} \cap \mathcal{I}$, the lower bound for $\varrho$ becomes
		\begin{align*}
			\varrho(t, x) &\geq \mathbf{1}_{-}(x^1) f\bigl( t_0 + \mathbf{1}_{-}(x^1) (t - t_0) \bigr) \\
			&> \mathbf{1}_{-}(x^1) \left( \frac{1 + \mf}{\left( \frac{\beta_0 t_0^{\ca}}{3(\ca - 1)(1 + \beta)} \bigl( t_0 + \mathbf{1}_{-}(x^1) (t - t_0) \bigr)^{1 - \ca} - \breve{\beta} \right)^3} - 1 \right),
		\end{align*}
		and $\varrho$ blows up in finite time.
	\end{enumerate}
\end{theorem}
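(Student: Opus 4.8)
The plan is to prove Theorem~\ref{t:mainthm2} by combining finite speed of propagation for the quasilinear wave operator $\mathtt{g}^{\alpha\beta}\partial_\alpha\partial_\beta$ with a Fuchsian renormalization of \ref{Eq2} around the ODE profile $f$, so that the inhomogeneous solution gets trapped between two rescaled copies of $f$. First I would dispose of the homogeneous region: for spatially homogeneous data the spatial derivatives in \ref{Eq2} vanish and, because $\cc=\frac43$, the equation reduces \emph{exactly} to the reference ODE \eqref{e:feq0b}--\eqref{e:feq1b}, whose solution $f$ and whose growth bounds are supplied by Theorem~\ref{t:mainthm0} and which blows up as $t\to t_m^-$. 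Since $\ck>0$ and $\cm^2\le\ck$ make $\cg>0$, the metric $\mathtt{g}_{\alpha\beta}$ is genuinely Lorentzian, so the domain-of-dependence property gives $\varrho\equiv f$ on the complement $\mathcal{H}$ of the characteristic cone $\mathcal{C}$ issuing from $\partial B_1(0)\times\{t_0\}$ (note that $\mathcal{C}$ is defined using precisely the homogeneous speed $\sqrt{\cg(y,f(y),f_0(y))}$). This proves part~\ref{b2} and reduces everything to the interior $\mathcal{I}$ of that cone.

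Inside $\mathcal{I}$ I would renormalize. The idea is to pass to a characteristic coordinate in $x^1$ that absorbs the transport carried by the large coefficient $\cg|q|$ of the convection term, together with a time coordinate in which the blow-up surface of $f$ is sent to infinity, and to measure $\varrho$ against the slowly varying rescaling $\lambda(x^1)f\!\left(t_0+\lambda(x^1)(t-t_0)\right)$ of the reference solution, with $\lambda\to 1$ at spatial infinity. Collecting the resulting (bounded) renormalized amplitude and its rescaled first derivatives into a single unknown $W$, equation \ref{Eq2} should take the symmetric hyperbolic Fuchsian form $B^0\partial_\tau W+B^i\partial_i W=\tfrac{1}{\tau}\mathfrak{B}W+F(\tau,x,W)$, in which the damping term~(ii) (with $\ca>1$) furnishes the dissipativity of the singular matrix $\mathfrak{B}$, while the self-growth, Riccati, convection and $\mathtt{K}^{ij}$ terms are absorbed into a regular remainder $F$ that is quadratically small. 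The admissible spectrum of $\mathfrak{B}$ is governed by the exponents $\tfrac{\ba\pm\triangle}{2}$ with $\triangle=\sqrt{(1-\ca)^2+4\cb}$, and, together with the positivity of the accompanying energy corrections, it is exactly what pins down the parameter window \eqref{A:2}; the normalizations $\ck=1$ and $|q|=\tfrac{606}{2-(2-3\cb)\cm^2}$ in \ref{A:3}--\ref{A:4}, and the data constraint $\beta_0^2\ge 6\cb\beta(1+\beta)^2t_0^{-2}$ in \ref{A:1}, are then chosen so that these estimates close uniformly over $1<\ca\le30$, $\tfrac13\le\cb\le\tfrac23$.

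Feeding this system into the Fuchsian global existence and decay theorem (in the form used in \cite{Liu2022b, Liu2023}), with the smallness $\|\psi\|_{H^k(B_1(0))}+\|\partial_i\psi\|_{H^k(B_1(0))}+\|\psi_0\|_{H^k(B_1(0))}\le e^{-303/\delta_0}\sigma_0^2$, yields a solution $W$ on the region corresponding to $\mathcal{K}\cap\mathcal{I}$ together with the weighted bound $|W|\lesssim \sigma_0^2 e^{-100/\delta_0}e^{-3x^1/4}$, the weight reflecting the characteristic distance traversed in $x^1$ before the damping acts; $\mathcal{T}$ is precisely the graph up to which this bootstrap closes, so $\Gamma_{\delta_0}\subset\mathcal{I}$, while $\mathcal{T}(a\delta^i_1,\delta_0)\to t_m$ as $a\to+\infty$ and $\mathcal{T}(x,\delta_0)\to t_m$ as $\delta_0\to0^+$ because along the $x^1$-axis, and in the vanishing-inhomogeneity limit, the two barriers collapse onto $f$. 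Translating the bound on $W$ back through the change of variables produces the squeeze $\mathbf{1}_-(x^1)f\!\left(t_0+\mathbf{1}_-(x^1)(t-t_0)\right)\le\varrho\le\mathbf{1}_+(x^1)f\!\left(t_0+\mathbf{1}_+(x^1)(t-t_0)\right)$ and the companion estimates for $\varrho_0$ and $\varrho_i$ in part~\ref{b1}; sending $(t,x)\to p_m$ inside $\mathcal{K}$ forces $\mathbf{1}_\pm(x^1)\to1$ and $t_0+\mathbf{1}_\pm(x^1)(t-t_0)\to t_m$, hence $\varrho,\varrho_0\to f(t_m)=+\infty$ and $\varrho_i\to0$. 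Finally, parts~\ref{b3} and~\ref{b4} follow by inserting into this squeeze the bounds for $f$ from Theorem~\ref{t:mainthm0} --- the lower bound $f(s)>e^{\mathtt{C}s^{(\ba+\triangle)/2}}-1$ at $s=t_0+\mathbf{1}_-(x^1)(t-t_0)$, the upper bound $f(s)<\tfrac{3}{2}\bigl(\bigl(1+\mathtt{A}s^{(\ba-\triangle)/2}+\mathtt{B}s^{(\ba+\triangle)/2}\bigr)^{-1}-1\bigr)$ at $s=t_0+\mathbf{1}_+(x^1)(t-t_0)$, and, under $\breve{\beta}>0$, the refined Riccati-free lower bound --- after which the finite-time blow-up in~\ref{b4} is inherited from that refined barrier.

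The step I expect to be the main obstacle is the renormalization-plus-estimates over the \emph{full} parameter rectangle \eqref{A:2}, rather than the single point $\ca=\tfrac43$, $\cb=\tfrac23$ treated in \cite{Liu2024}: one must verify that the symmetrized singular system is genuinely Fuchsian with the asserted decay, i.e.\ that the eigenvalue and positivity conditions on $\mathfrak{B}$, the compatibility of the constants $\mathtt{A},\mathtt{B},\mathtt{C},\mathtt{D}$ with the chosen $|q|$ and $\ck=1$, and the data constraint \ref{A:1} all hold \emph{simultaneously} as $(\ca,\cb)$ ranges over $1<\ca\le30$, $\tfrac13\le\cb\le\tfrac23$ --- this is exactly the delicate parameter-selection problem the introduction flags. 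A secondary difficulty is excluding shock-type (derivative) blow-up driven by the Riccati and $\mathtt{K}^{ij}$ terms inside $\mathcal{K}\cup\mathcal{H}$; this is where one crucially uses that the Fuchsian estimates keep the spatial derivatives $\varrho_i$ of size $e^{-3x^1/4}$, so that the self-growth and damping terms stay dominant.
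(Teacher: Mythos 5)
Your overall architecture is right: dispose of $\mathcal{H}$ by finite speed of propagation plus the fact that homogeneous data reduce \ref{Eq2} to the ODE \eqref{e:feq0b}--\eqref{e:feq1b}; recast the equation inside $\mathcal{I}$ as a Fuchsian singular symmetric hyperbolic system around the reference profile; invoke the Fuchsian GIVP theorem with the exponentially weighted smallness of the data; translate back and insert the ODE barriers of Theorem~\ref{t:mainthm0}. This is the skeleton of the paper's proof. But two pieces of machinery that the paper treats as essential are missing, and without some substitute for them the argument as you describe it does not close.

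First, you have only one time compactification --- ``a time coordinate in which the blow-up surface of $f$ is sent to infinity''. The paper's central difficulty, stated explicitly in \S\ref{s:2}, is that \ref{Eq2} has no synchronous source term, so the blow-up time of the true solution $\varrho(t,x)$ depends on $x$ and is \emph{not} the blow-up time $t_m$ of $f$. The resolution is a pair of compactifications, $\tau=\mathfrak{g}(\mft)$ for $f$ and $\tau=g(t,x)$ for $\varrho$ (equations \eqref{e:ctm2}--\eqref{e:ctm2b}), both sending their respective blow-up loci to $\tau=0$, together with the extra Fuchsian unknown $z=(\mathsf h/\mathsf h_\uparrow)^{1/\ell_0}-1$ from \eqref{e:v4} that tracks the mismatch between the two inverse transformations. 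Your renormalized amplitude $W$ would not stay bounded up to the singular time without this synchronization; the profile comparison $\varrho \approx \lambda(x^1)f(t_0+\lambda(x^1)(t-t_0))$ is a \emph{conclusion} of that construction (Lemma~\ref{t:bgest}), not something you can posit up front as the ansatz and then bootstrap.

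Second, your ``bootstrap that closes on $\mathcal{K}$'' conflates two distinct steps. The exponential weights $\mu(\txi^1)$ and $\eta(\txi^1)$ in \eqref{e:eta1} blow up as $\txi^1\to-\infty$, and the Fuchsian GIVP theorem (Theorem~\ref{t:fuch}) is stated on a compact manifold. The paper therefore (a) inserts a cutoff $\phi(\txi^1)$ killing the bad region, (b) compactifies space to $\mathbb{T}^n$ via $\hat\zeta^i=\arctan(\gamma\txi^i)$, (c) solves the \emph{modified} equation \eqref{e:mainsys6} globally by Proposition~\ref{t:verfuc}, and then (d) recovers a solution of the \emph{original} equation on the lens-shaped region $\widehat{\Lambda}_{\delta_0}$ by Lax's uniqueness theorem for weakly spacelike hypersurfaces (Lemma~\ref{t:extori2}); the surface $t=\mathcal{T}(x,\delta_0)$ is precisely the boundary hypersurface $\widehat\Gamma_{\delta_0}$ of that lens, whose weak spacelikeness has to be verified from the principal symbol --- this is where the explicit constants in \eqref{e:para} and the minor determinant computations for $\mathrm{D}_\ell$ come in. ``The graph up to which the bootstrap closes'' does not capture this: the region $\mathcal K$ is not determined by where an a priori estimate fails, but by where the cutoff is provably inert.

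Minor points of framing: the zoom-in coordinate \eqref{e:coord5} is introduced to make the diagonal of $\mathbf A^i$ non-degenerate so that $\widehat{\mathfrak A}^i_\phi$ can have the right signature, not to absorb the convection; the special value of $|q|$ in \ref{A:3} is chosen separately so that the off-diagonal entries $\mathfrak D^j_{12}$, $\acute{\mathfrak D}^j_{12}$ take manageable constant values in the positivity verification of Condition~\ref{c:5}. You are right that the genuinely new labor over \cite{Liu2024} is verifying that the positivity/eigenvalue conditions hold uniformly over the rectangle \eqref{A:2}; the paper does this by fixing $\ell_0=2$, splitting $\ca\le 16/9$ versus $\ca>16/9$, and producing explicit numerical bounds such as \eqref{e:XAX1}--\eqref{e:XA0X}.
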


\subsubsection{Methods and Ideas}\label{s:oview}

The analysis in this work is based on the framework developed in \cite{Liu2024}; 
a detailed exposition of the method can be found in \cite[\S1.4]{Liu2024}. 
Only a brief outline of the main ideas is provided here. 
Compared with \cite{Liu2024}, the \textit{main difficulty} of this paper lies in the more complex computations and the more delicate and challenging parameter choices, which require further refinement of the previous proof and more precise parameter selection.

The approach combines the reference ODE system \eqref{e:feq0b}--\eqref{e:feq1b} introduced in \cite{Liu2022b} 
with the Fuchsian formulation originally developed by Oliynyk~\cite{Oliynyk2016a}, 
which has since been extensively applied and generalized in the analysis of nonlinear PDEs 
(see, e.g., \cite{Liu2018,Liu2018a,Beyer2020b,Fajman2021,LeFloch2021,Ames2022,Beyer2023b,Oliynyk2024}).  

The essential idea is to employ the ODE solution $f $  as a \emph{reference profile} 
and to establish that the PDE solution $ \varrho $ asymptotically approaches $f $ . 
The reference solution $f $ is self-growing and exhibits blow-up at a time $t_m$, 
which may be finite or infinite depending on the initial data. 
Accordingly, if $ \varrho $ can be shown to remain sufficiently close to $ f $ 
on the domain $ [t_0,t_m)\times\mathbb{R}^n $, 
a corresponding self-growth estimate for $ \varrho $ follows.

To achieve this, the original PDE is transformed into a \emph{Fuchsian-type singular symmetric hyperbolic system} of the form
\begin{equation}\label{e:Fucmodel}
	B^{\mu}\partial_{\mu}u = \frac{1}{\tau}\mathbf{B}\mathbf{P}u + H,
\end{equation}
where the coefficient matrices satisfy the structural and positivity assumptions specified in 
Conditions~\ref{c:2}--\ref{c:7}. 
Once the system is cast in this standard Fuchsian form, 
the \emph{Fuchsian global initial value problem} (GIVP) ensures global existence of solutions 
up to the singular time $\tau=0$, corresponding to $t=+\infty$.

Simply speaking, the transformation into Fuchsian form proceeds through three main steps.
\begin{enumerate}[label=(\arabic*)]
	\item \textbf{Time compactification:} the physical time $t\in[t_0,t_m)$ is mapped to a compact interval $\tau\in[-1,0)$ via a transformation depending on $f$;
	\item \textbf{Fuchsian variable construction:} variables are chosen as appropriately scaled deviations between $\varrho$ and $f$;
	\item \textbf{System normalization:} singular and regular components are separated, and the principal matrix is adjusted to satisfy the required positivity conditions.
\end{enumerate}

However, since equation~\ref{Eq2} lacks a synchronous source term, 
the blow-up times of $f$ and $\varrho$ generally do not coincide, 
introducing substantial analytical difficulties. 
To resolve this, the time compactification is applied separately to both functions  $f$ and $\varrho$, 
thereby aligning their asymptotic regimes. 
Additional transformations  (including the introduction of zoom-in coordinates, 
zoom-in variable rescaling, cutoff procedures, and spatial compactification) are then performed 
as in \cite[\S1.4]{Liu2024} (see it for more details), 
ultimately converting the system into a standard Fuchsian form on the compact manifold \(\mathbb{T}^n\), 
to which the Fuchsian GIVP theory applies.

In essence, the analysis relies on the Fuchsian GIVP framework, 
the fundamental properties of the reference ODE system \eqref{e:feq0b}--\eqref{e:feq1b}, 
the time compactification transformation, 
and the auxiliary quantities \(\chi_\uparrow\), \(1/(f\mfg)\), and \(1/(\mfg f^{1/2})\), 
together with the identities summarized in \S\ref{s:iden1}.  

As a result, it is shown that for sufficiently small long-wavelength perturbations, 
the solution \(\varrho(t,x^k)\) remains close to the reference profile \(f(\mft)\) 
and inherits its self-growth behavior up to the singular time.

\subsection{Summaries and outlines} 
This article employs three successive coordinate transformations to reformulate the wave equation into standard Fuchsian form.
\begin{enumerate}[label=(\arabic*)]
	\item $(t,x)\!\to\!(\tau,\zeta)$ compactifies time so that the blow-up time corresponds to $\tau=0$, enabling the Fuchsian formulation;
	\item $(\tau,\zeta)\!\to\!(\ttau,\txi)$ with $\mathfrak{U}=e^{\theta\txi^1}U$ renders the matrix $\mathbf{B}$ in \eqref{e:Fucmodel} positive definite;
	\item $(\ttau,\txi)\!\to\!(\htau,\hat{\zeta})$ compactifies space onto the torus $\mathbb{T}^n$, as required by the Fuchsian GIVP theory.
\end{enumerate}

\S\ref{s:2} introduces the time-compactification transformation and related identities essential for converting equation~\ref{Eq2} into Fuchsian form.  
\S\ref{s:3} derives a singular symmetric hyperbolic system from \ref{Eq2} using suitable variables and analyzes the domain of influence of inhomogeneous data.  
\S\ref{s:4} revises this system through zoom-in coordinates, variable rescalings, and cutoff modifications, extends it to the compact manifold $\mathbb{T}^n$, and verifies that the resulting formulation satisfies the standard Fuchsian structure.  
Finally, \S\ref{s:pfmthm} establishes Theorems~\ref{t:mainthm1} and \ref{t:mainthm2}, with supporting lemmas proved in Appendix~\ref{s:pfmthm1}.

\subsection{Notations}\label{s:AIN}
Unless stated otherwise, the following conventions are used throughout this article.

\subsubsection{Indices and coordinates}\label{iandc}
Latin indices ($i,j,k$) denote spatial components ($1\le i\le n$), and Greek indices ($\alpha,\beta,\gamma$) denote spacetime components ($0\le \alpha\le n$). The Einstein summation convention is adopted.  
Spatial coordinates on $\Rbb^n$ or $\mathbb{T}^n$ are denoted by $x^i$ (or $\zeta^i$), and time by $t=x^0$ (or $\tau=\zeta^0$). For $\zeta=(\zeta^1,\dots,\zeta^n)$, we set $|\zeta|^2=\delta_{ij}\zeta^i\zeta^j$.

Four coordinate systems are introduced: $(t,x)$, $(\tau,\zeta)$, $(\ttau,\txi)$, and $(\htau,\hat{\zeta})$.  
An object $Q$ in these coordinates is denoted respectively by $\underline{Q}$, $\widetilde{Q}$, and $\widehat{Q}$; for instance,
\begin{equation*}
\underline{\varrho}(\tau,\zeta)=\varrho(t(\tau,\zeta),x(\tau,\zeta)), \quad 
\widetilde{\mathbf{A}^0}(\ttau,\txi)=\mathbf{A}^0(\tau(\ttau,\txi),\zeta(\ttau,\txi)), \quad
\widehat{\mathfrak{U}}(\htau,\hat{\zeta})=\mathfrak{U}(\ttau(\htau,\hat{\zeta}),\txi(\htau,\hat{\zeta})).
\end{equation*}

\subsubsection{Remainder terms}\label{s:rmdrs}
A function $f(x,y)$ is said to vanish to $n$-th order in $y$ if $|f(x,y)|\le C|y|^n$ as $y\to0$.  
To represent analytic remainders whose explicit form is irrelevant, we use script symbols such as $\mathscr{S}(\tau,U;V)$, $\mathscr{Z}(\tau,U;V)$, and $\mathscr{H}(\tau,U;V)$, all analytic in $(\tau,U,V)$ and vanishing to first order in $V$.

\subsubsection{Derivatives}\label{s:der}
Partial derivatives with respect to $x^\mu=(t,x^i)$ are denoted $\partial_\mu=\partial/\partial x^\mu$.  
We write $Du=(\partial_j u)$ for the spatial gradient and $\partial u=(\partial_\mu u)$ for the spacetime gradient.  
Greek letters may also denote multi-indices $\alpha=(\alpha_1,\dots,\alpha_n)\in\mathbb{Z}_{\ge0}^n$, with
$D^\alpha=\partial_1^{\alpha_1}\cdots\partial_n^{\alpha_n}$; the context clarifies the meaning.

\subsubsection{Function spaces and matrix inequalities}\label{s:funsp}
For a finite-dimensional vector space $V$, $H^s(\Rbb^n,V)$ denotes the space of $V$-valued functions with $s$ derivatives in $L^2(\Rbb^n)$; when $V=\Rbb^N$, we simply write $H^s(\Rbb^n)$.  
The $L^2$ inner product and $H^s$ norm are defined as
\begin{equation*}
\langle u,v\rangle=\int_{\Rbb^n}(u(x),v(x))\,d^n x, \qquad
\|u\|_{H^s}^2=\sum_{|\alpha|\le s}\langle D^\alpha u, D^\alpha u\rangle,
\end{equation*}
where $(\xi,\zeta)=\xi^T\zeta$ is the Euclidean product on $\Rbb^N$.  
For symmetric matrices $A,B\in\mathbb{M}_{N\times N}$, we write
\begin{equation*}
	A\le B \quad \Leftrightarrow \quad (\zeta,A\zeta)\le (\zeta,B\zeta),\ \forall\,\zeta\in\Rbb^N.
\end{equation*}


\section{Compactified time transformations and analysis of the reference equation}
\label{s:2} 
In \cite{Liu2022b}, the presence of a synchronous source term ensures that $\varrho$ and $f$ blow up simultaneously at $t=t_m$, so a single compactification $\tau=\mathfrak{g}(t)$ suffices to map $t=t_m$ to $\tau=0$.  
In the present case, the lack of this synchronization requires two independent time transformations to compactify the coordinates $t$ for $\varrho(t,x)$ and $\mft$ for $f(\mft)$, respectively.

Since our analysis focuses on the long-wavelength region $\mathcal{K}\cup\mathcal{H}$, where $\varrho$ closely approximates $f$, we adopt the similar compactification form as in \cite{Liu2022b}, with modified parameters:
\begin{align}
	\tau &= \mathfrak{g}(\mft)= -\Bigl(1+\cb B\!\int_{t_0}^{\mft} s^{\ca-2} f(s)(1+f(s))^{1-\cc}\,ds\Bigr)^{-\frac{A}{\cb}} \in [-1,0)  , \label{e:ctm2}\\
	\tau &= g(t,x)= -\Bigl(1+\cb B\!\int_{t_0}^{t} s^{\ca-2} \varrho(s,x)(1+\varrho(s,x))^{1-\cc}\,ds\Bigr)^{-\frac{A}{\cb}}. \label{e:ctm2b}
\end{align}
When $\varrho=f$, the two coincide; in general, each maps the blow-up time to $\tau=0$.  
Because $g$ depends on $\varrho$, the coordinate change is governed by an evolution equation,
\begin{equation*}
	\partial_t g(t,x)=\frac{AB\,\varrho(t,x)\bigl(-g(t,x)\bigr)^{\frac{\mathsf{b}}{A}+1}}{t^{2-\mathsf{a}}(1+\varrho(t,x))^{\mathsf{c}-1}}, 
	\qquad g(t_0,x)=-1,
\end{equation*}
which is solved jointly with the main equation~\ref{Eq2}.  
For convenience, we also consider its inverse $\mathsf{h}=\mathsf{h}(\tau,\zeta)$, whose derivatives $\mathsf{h}_\zeta=\partial_\zeta\mathsf{h}$ satisfy a corresponding evolution equation, used in converting \ref{Eq2} into a singular hyperbolic system in $(\tau,\zeta)$ coordinates (see \S\ref{s:2}).

After transformation, both $\varrho$ and $f$ share the blow-up time $\tau=0$, making it natural to compare them in $\tau$-coordinates.  
Hence, new variables such as 
$u(\tau,\zeta):=(\underline{\varrho}(\tau,\zeta) - \uf(\tau))/\uf(\tau)$
are introduced to measure their relative deviation (see \S\ref{s:sngexp} for details).

In summary, two independent time compactifications, $\tau=\mathfrak{g}(\mft)$ for $f$ and $\tau=g(t,x)$ for $\varrho$, are essential to synchronize their blow-up times and to cast equation~\ref{Eq2} into the Fuchsian framework. 
In what follows, Lemmas~\ref{t:gb2} and~\ref{t:gb1} are analogous to \cite[Lemmas~2.1–2.2]{Liu2024}. Their statements are presented without proof, as the arguments are essentially identical to those in \cite[Lemmas~2.1–2.2]{Liu2024}.

\subsection{Time Compactification for the Reference Solution}\label{s:2.2}
We introduce the  time compactification transformation $	\tau = \mathfrak{g}(\mft)$ for the reference equation \eqref{e:feq0b}--\eqref{e:feq1b}. 

\begin{lemma}\label{t:gb2}
	Let $f\in C^2([t_0,t_1))$, $t_1>t_0$, be the solution to the reference ODE \eqref{e:feq0b}--\eqref{e:feq1b}, let $\mathfrak{g}(\mft)$ be defined as in \eqref{e:ctm2}, and let $f_0(\mft):=\partial_{\mft} f(\mft)$. Then the following statements hold:
	\begin{enumerate}[label=(\arabic*)]
		\item\label{l:2.1} $\tau=\mathfrak{g}(\mft)$ can be expressed as
		\begin{equation*}
			\tau=\mathfrak{g}(\mft)=-\exp\Bigl(-A\int^{\mft}_{t_0} \frac{f(s)(f(s)+1)}{s^2 f_0(s)} ds \Bigr)<0.
		\end{equation*}
		\item\label{l:2.2} The function $\mathfrak{g}(\mft)$ satisfies
		\begin{align*}
			\partial_{\mft}\mathfrak{g}(\mft) = -A \mathfrak{g}(\mft)   \frac{f(\mft)(f(\mft)+1)}{\mft^2f_0(\mft)} = \frac{A B f(\mft) (-\mathfrak{g}(\mft) )^{1+\frac{\cb}{A}}  }{  \mft^{2-\ca}  (1+f(\mft))^{\cc-1}} \AND
			\mathfrak{g}(t_0) =   -1 .
		\end{align*}
		\item\label{l:2.3} The inverse transformation of \eqref{e:ctm2} is
		\begin{equation*}
			\mft=\mathsf{h}_\uparrow(\tau) ,
		\end{equation*}
		where $\mathsf{h}_\uparrow(\tau)$ satisfies the ODE
		\begin{align*}
			\partial_{\tau}\mathsf{h}_\uparrow (\tau)  =  \frac{\mathsf{h}_\uparrow^{2-\ca}(\tau) (1+\underline{f}(\tau) )^{\cc-1}}{A B  \underline{f}(\tau)  (-\tau)^{\frac{\cb}{A}+1}} \AND
			\mathsf{h}_\uparrow(-1) = t_0  .
		\end{align*}
		\item\label{l:2.4} Substituting the compactified time coordinate $\tau$ into the reference ODE \eqref{e:feq0b}--\eqref{e:feq1b} yields
		\begin{equation*}\label{e:dtf0eq1} 	\partial_{\tau}\underline{f_0} + \ca  \frac{\mathsf{h}_\uparrow^{1-\ca} (1+ \uf)^{\cc-1}}{A B  \uf (-\tau)^{\frac{\cb}{A}+1}} \underline{f_0}  -\cb  \frac{\mathsf{h}_\uparrow^{-\ca} (1+ \uf )^{\cc} }{A B  (-\tau)^{\frac{\cb}{A}+1}} -\cc \frac{\mathsf{h}_\uparrow^{2-\ca}  \underline{f_0}^2 (1+ \uf )^{\cc-2}}{A B  \uf (-\tau)^{\frac{\cb}{A}+1}}= 0,
		\end{equation*}
		where $\uf(\tau):=f\circ\mathsf{h}_\uparrow(\tau)$ and $\ufo(\tau):=f_0\circ\mathsf{h}_\uparrow(\tau)$.
		\item\label{l:2.5} $\partial_{\tau}\uf $ and $\underline{f_0}$ satisfy
		\begin{equation*}
			\partial_{\tau} \uf  = \frac{\mathsf{h}_\uparrow^{2-\ca} (1+ \uf )^{\cc-1}}{A B  \uf   (-\tau)^{\frac{\cb}{A}+1}}	\underline{f_0}  .
		\end{equation*}
		\item\label{l:2.6} In the compactified time coordinate $\tau$, $\underline{f_0}(\tau)$ is given by
		\begin{equation*}
			\underline{f_0}(\tau) =B^{-1} (\mathsf{h}_{\uparrow}(\tau))^{-\ca} (-\tau)^{-\frac{\cb}{A}} (1+ \uf(\tau))^{\cc } >0  .
		\end{equation*}
	\end{enumerate}
\end{lemma}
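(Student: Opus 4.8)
The plan is to reproduce the argument of \cite[Lemmas~2.1--2.2]{Liu2024}, whose engine is a single first integral of the reference ODE \eqref{e:feq0b}; only the admissible parameter range changes. I would begin by recording the sign facts $f>0$, $f_0>0$ on $[t_0,t_1)$ (hence $1+f>0$), together with $\mft>0$ and $\tau\in[-1,0)$, all supplied by Theorem~\ref{t:mainthm0}; these guarantee that every fractional power occurring below is well defined and positive, and since $f\in C^2$ the integrals in \eqref{e:ctm2} are finite on compact subintervals. The crux is the observation that multiplying \eqref{e:feq0b} by the integrating factor $\mft^{\ca}(1+f)^{-\cc}$ produces an exact derivative,
\[
	\frac{d}{d\mft}\Bigl(\mft^{\ca}(1+f(\mft))^{-\cc}f_0(\mft)\Bigr)=\cb\,\mft^{\ca-2}f(\mft)(1+f(\mft))^{1-\cc}.
\]
Integrating from $t_0$ and using that $B=(1+\beta)^{\cc}t_0^{-\ca}\beta_0^{-1}$ (so that the integration constant $t_0^{\ca}(1+\beta)^{-\cc}\beta_0$ equals $B^{-1}$), the outcome rearranges to $\mft^{\ca}(1+f)^{-\cc}f_0=B^{-1}\bigl(1+\cb B\int_{t_0}^{\mft}s^{\ca-2}f(1+f)^{1-\cc}\,ds\bigr)$, whose bracket equals $(-\mathfrak{g})^{-\cb/A}$ by the definition \eqref{e:ctm2}. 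This gives the master identity $f_0(\mft)=B^{-1}\mft^{-\ca}(1+f(\mft))^{\cc}(-\mathfrak{g}(\mft))^{-\cb/A}$, which, evaluated at $\mft=\mathsf{h}_\uparrow(\tau)$, is precisely statement~(6), with positivity immediate.

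The remaining items then follow by routine manipulation of this identity. For (2): differentiating \eqref{e:ctm2} directly gives $\partial_{\mft}\mathfrak{g}=AB\,\mft^{\ca-2}f(1+f)^{1-\cc}(-\mathfrak{g})^{1+\cb/A}$, which is the second claimed expression, and substituting the master identity to trade $(-\mathfrak{g})^{\cb/A}$ for a multiple of $1/f_0$ converts it into the first expression $-A\mathfrak{g}\,f(f+1)/(\mft^2 f_0)$; the value $\mathfrak{g}(t_0)=-1$ is read off from \eqref{e:ctm2}. For (1): both candidate formulas equal $-1$ at $t_0$, so it suffices to match logarithmic derivatives, and the master identity gives $\tfrac{d}{d\mft}\ln(-\mathfrak{g})=-A\,f(f+1)/(\mft^2 f_0)$, whence integration produces the exponential form (positivity of $A$ is also what forces $-\mathfrak{g}\le 1$). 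For (3): by (2) we have $\partial_{\mft}\mathfrak{g}>0$, so $\mathfrak{g}\colon[t_0,t_1)\to[-1,0)$ is a strictly increasing $C^1$ bijection; setting $\mathsf{h}_\uparrow:=\mathfrak{g}^{-1}$ gives $\mathsf{h}_\uparrow(-1)=t_0$, and the inverse-function rule $\partial_\tau\mathsf{h}_\uparrow=1/\bigl(\partial_{\mft}\mathfrak{g}\circ\mathsf{h}_\uparrow\bigr)$ combined with the second form in (2) yields the stated ODE. Statement (5) is the chain rule $\partial_\tau\uf=f_0(\mathsf{h}_\uparrow)\,\partial_\tau\mathsf{h}_\uparrow$ together with (3), and (4) is obtained by rewriting \eqref{e:feq0b} with $\partial_{\mft}=(\partial_\tau\mathsf{h}_\uparrow)^{-1}\partial_\tau$ (so that $\partial_{\mft}f_0$ becomes $\partial_\tau\ufo/\partial_\tau\mathsf{h}_\uparrow$), multiplying through by $\partial_\tau\mathsf{h}_\uparrow$, and inserting the expression for $\partial_\tau\mathsf{h}_\uparrow$ from (3).

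I do not anticipate a genuine obstacle: the statement is the verbatim analogue of \cite[Lemmas~2.1--2.2]{Liu2024} with $(\ca,\cb,\cc)$ in the wider range \eqref{A:2} instead of $(\tfrac43,\tfrac23,\tfrac43)$, and the proof is a rerun of those computations. The only thing demanding care is the bookkeeping of the exponents of $\mft$, $1+f$ and $-\tau$ and of the placement of the constants $A$ and $B$ --- in particular noticing that $A$ drops out of the first integral entirely and re-enters only through the defining relation $(-\mathfrak{g})^{-\cb/A}=1+\cb B\int_{t_0}^{\mft}s^{\ca-2}f(1+f)^{1-\cc}\,ds$, and that the normalization of $B$ is exactly what makes statement~(6) take its stated clean form.
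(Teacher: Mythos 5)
Your proof is correct and follows essentially the same route as the paper, which defers to \cite[Lemmas~2.1--2.2]{Liu2024}: the crux is the exact derivative $\partial_{\mft}\bigl(\mft^{\ca}(1+f)^{-\cc}f_0\bigr)=\cb\,\mft^{\ca-2}f(1+f)^{1-\cc}$ extracted from \eqref{e:feq0b}, which together with the normalization $B^{-1}=t_0^{\ca}(1+\mf)^{-\cc}\mf_0$ and the definition \eqref{e:ctm2} yields item (6); items (1)--(5) then follow by direct differentiation, inversion of the strictly increasing map $\mathfrak{g}$, and the chain rule, exactly as you carry out.
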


\subsection{Time Compactification for the Perturbed Solution}\label{s:2.1}
To study $\varrho(t,x)$, we introduce the following compactified time coordinate system $(\tau, \zeta^i)$, which is a prerequisite for obtaining the Fuchsian equation.
\begin{equation}\label{e:coord2}
	\tau = g(t,x^i) \quad \text{and} \quad \zeta^i = x^i,
\end{equation}
where the function $g(t, x^i)$ satisfies the following equations,
\begin{align}
	\partial_{t}g(t,x^i) &= 	\frac{A B   \varrho(t,x^i) \left(-g(t,x^i)\right)^{\frac{\mathsf{b}}{ A}+1}}{t^{2-\mathsf{a}}   (  \varrho(t,x^i)+1)^{\mathsf{c}-1}} ,  \label{e:tmeq1}   \\
	g(t_0,x^i) &= -1 ,  \label{e:tmeq2}
\end{align}
with constants $0 < A < \frac{2\cb}{3-2\cc}$ and $B := (1+\mf)^{\cc} / ( t_0^{\ca} \mf_0) > 0$. Note that the coordinates $(\tau,\zeta)$ and the solution $\varrho$ are determined simultaneously by solving equations \ref{Eq2} and \eqref{e:tmeq1}--\eqref{e:tmeq2} together.

\begin{lemma}\label{t:gb1}
	If the coordinates $(\tau,\zeta^i)$ are defined by \eqref{e:coord2}, the function $g$ satisfies \eqref{e:tmeq1}--\eqref{e:tmeq2}, and $0<\varrho \in C^2([t_0,t_\star]\times \mathbb{R})$ is a solution to the main equation \ref{Eq2}, then the following statements hold:
	\begin{enumerate}[label=(\arabic*)]
		\item\label{l:1.1} $\tau = g(t,x^i)$ can be expressed as
		\begin{equation*}\label{e:tmdef}
			\tau = g(t,x^i) = - \left(1+ \cb B  \int^t_{t_0} s^{\ca-2} \varrho(s,x^i)(1+\varrho(s,x^i))^{1-\cc}  ds \right)^{-\frac{A}{\cb}} \in [-1,0).
		\end{equation*}
		\item\label{l:1.2} The inverse transformation of \eqref{e:coord2} is
		\begin{equation}\label{e:coordi2}
			t = \mathsf{h}(\tau,\zeta^i) \quad \text{and} \quad x^i = \zeta^i,
		\end{equation}
		where $\mathsf{h}(\tau,\zeta^i)$ satisfies the ODE,
		\begin{align*}
			\partial_{\tau} \mathsf{h} (\tau, \zeta^i) = \frac{\mathsf{h}^{2-\ca} (\tau, \zeta^i)  ( 1 + \underline{\varrho}(\tau,\zeta^i))^{\cc-1}}{A B  \underline{\varrho}(\tau,\zeta^i) \left(-\tau\right)^{\frac{\cb}{A}+1}} \AND
			\mathsf{h}(-1,\zeta^i) = t_0.
		\end{align*}
		\item\label{l:1.3} Let $\mathsf{h}_i := \partial_{\zeta^i}\mathsf{h}$. Then $\mathsf{h}_i$ satisfies
		\begin{align*}
			\partial_{\tau} \mathsf{h}_{i}	
			=&  \frac{(2-\ca)(-\tau )^{-\frac{\cb}{A}-1} \mathsf{h}_i \left(\underline{\varrho } +1\right)^{\cc-1}}{A B  \mathsf{h}^{\ca-1} \underline{\varrho } }       -\frac{(-\tau )^{-\frac{\cb}{A}-1} \mathsf{h}^{2-\ca} \partial_{\zeta^i} \underline{\varrho }   \left(\underline{\varrho } +1\right)^{\cc-1}}{A B  \underline{\varrho }^2} \nonumber\\
			&+\frac{(\cc-1)(-\tau )^{-\frac{\cb}{A}-1} \mathsf{h}^{2-\ca} \partial_{\zeta^i} \underline{\varrho }  }{A B  \underline{\varrho }  \left(\underline{\varrho } +1\right)^{2-\cc}}.
		\end{align*}
		\item\label{l:1.4} For any function $F(t,x^i)$, the following relations hold:
		\begin{align*}
			\underline{\partial_{t}F} = \frac{A B  \underline{\varrho}  \left(-\tau\right)^{\frac{\cb}{ A}+1}}{\mathsf{h}^{2-\ca}   ( \underline{\varrho} +1)^{\cc-1}} \partial_{\tau} \underline{F} \AND
			\underline{\partial_{x^i}F}  = -  \frac{A B  \underline{\varrho}  \left(-\tau\right)^{\frac{\cb}{A}+1} \mathsf{h}_i  } {\mathsf{h}^{2-\ca}   ( \underline{\varrho} +1)^{\cc-1}} \partial_{\tau} \underline{F} + \partial_{\zeta^i} \underline{F}.
		\end{align*}
	\end{enumerate}
\end{lemma}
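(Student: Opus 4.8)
The plan is to establish the four assertions in sequence, the key structural observation being that for each fixed spatial point $x^i$, equation~\eqref{e:tmeq1} is a separable scalar ODE in the time variable $t$. For \ref{l:1.1} I would rewrite \eqref{e:tmeq1} as $(-g)^{-\frac{\cb}{A}-1}\partial_t g = AB\,t^{\ca-2}\varrho(t,x^i)(1+\varrho(t,x^i))^{1-\cc}$, recognize the left-hand side as $\partial_t\bigl(\tfrac{A}{\cb}(-g)^{-\frac{\cb}{A}}\bigr)$, and integrate in $t$ from $t_0$ to $t$ using the initial condition $g(t_0,x^i)=-1$ from \eqref{e:tmeq2}. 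This gives $(-g)^{-\frac{\cb}{A}} = 1+\cb B\int_{t_0}^t s^{\ca-2}\varrho(s,x^i)(1+\varrho(s,x^i))^{1-\cc}\,ds$, and raising to the power $-\tfrac{A}{\cb}$ and negating produces the stated formula. Since $\varrho>0$ and $B>0$ the integrand is positive, so the bracketed expression is $\geq 1$ and nondecreasing in $t$; hence $g(t,x^i)\in[-1,0)$, taking the value $-1$ exactly at $t=t_0$.

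For \ref{l:1.2} the same positivity shows $\partial_t g>0$ on $[t_0,t_\star]\times\mathbb{R}$, so for each $x^i$ the map $t\mapsto g(t,x^i)$ is a strictly increasing $C^2$ bijection onto its image; the inverse function theorem then yields a $C^2$ inverse $t=\mathsf{h}(\tau,\zeta^i)$ with $\zeta^i=x^i$ and $\mathsf{h}(-1,\zeta^i)=t_0$. Differentiating the identity $g(\mathsf{h}(\tau,\zeta),\zeta)=\tau$ in $\tau$ gives $\partial_\tau\mathsf{h}=1/(\partial_t g)\big|_{t=\mathsf{h}}$, and substituting \eqref{e:tmeq1} with the replacements $g\mapsto\tau$, $t\mapsto\mathsf{h}$, $\varrho\mapsto\underline{\varrho}$ gives the claimed ODE for $\mathsf{h}$.

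For \ref{l:1.3} I would differentiate the $\mathsf{h}$-equation of \ref{l:1.2} in $\zeta^i$; since the prefactor $AB(-\tau)^{-\frac{\cb}{A}-1}$ is independent of $\zeta^i$, only $\partial_{\zeta^i}\bigl(\mathsf{h}^{2-\ca}(1+\underline{\varrho})^{\cc-1}\underline{\varrho}^{-1}\bigr)$ has to be expanded, and the product and chain rules — using $\partial_{\zeta^i}\mathsf{h}^{2-\ca}=(2-\ca)\mathsf{h}^{1-\ca}\mathsf{h}_i$, $\partial_{\zeta^i}(1+\underline{\varrho})^{\cc-1}=(\cc-1)(1+\underline{\varrho})^{\cc-2}\partial_{\zeta^i}\underline{\varrho}$ and $\partial_{\zeta^i}\underline{\varrho}^{-1}=-\underline{\varrho}^{-2}\partial_{\zeta^i}\underline{\varrho}$ — reproduce exactly the three displayed terms. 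For \ref{l:1.4}, writing $\underline{F}(\tau,\zeta)=F(\mathsf{h}(\tau,\zeta),\zeta)$, the chain rule gives $\partial_\tau\underline{F}=(\underline{\partial_t F})\,\partial_\tau\mathsf{h}$ and $\partial_{\zeta^i}\underline{F}=(\underline{\partial_t F})\,\mathsf{h}_i+\underline{\partial_{x^i}F}$; I would solve the first relation for $\underline{\partial_t F}$ using the formula for $\partial_\tau\mathsf{h}$ from \ref{l:1.2} and substitute into the second, which gives both identities.

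All of these are elementary calculations; the one point that will require genuine care is the well-posedness of the coordinate change itself. Unlike Lemma~\ref{t:gb2}, where $\mathfrak{g}$ depends on $\mft$ alone, here $g$ is built from the unknown $\varrho(t,x)$, so $\mathsf{h}$ genuinely depends on $\zeta^i$ and one must verify — using only the a priori positivity and $C^2$-regularity of $\varrho$ on $[t_0,t_\star]\times\mathbb{R}$ — that $\partial_t g>0$ uniformly in $x^i$, that $g$ maps into $[-1,0)$, and that the resulting $\mathsf{h}$ is $C^2$ while $\mathsf{h}_i$ and $\partial_{\zeta^i}\underline{\varrho}$ appearing in \ref{l:1.3}--\ref{l:1.4} are $C^1$ functions of $(\tau,\zeta^i)$. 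Once this bookkeeping is settled, the remaining arguments are direct differentiations, identical in structure to those of \cite[Lemmas~2.1--2.2]{Liu2024}.
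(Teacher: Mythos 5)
Your proposal is correct and follows the same direct route the paper takes (it defers the proof to \cite[Lemma~2.2]{Liu2024}, where the argument is exactly this: recognize \eqref{e:tmeq1} as separable after the substitution $(-g)^{-\cb/A}$, integrate from $t_0$ using \eqref{e:tmeq2}, invert via the monotonicity of $t\mapsto g(t,x^i)$ granted by $\partial_t g>0$, and obtain \ref{l:1.3}--\ref{l:1.4} by direct differentiation of the $\mathsf{h}$-ODE and the chain rule). The computations check out term by term, including the three-term expansion in \ref{l:1.3} and the sign in the second identity of \ref{l:1.4}.
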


\section{Singular Symmetric Hyperbolic System}\label{s:3}

Following the approach in \cite{Liu2022b}, we rewrite equation~\ref{Eq2} in the compactified coordinates $(\tau,\zeta)$ using a suitable set of Fuchsian variables, obtaining a first-order singular symmetric hyperbolic system with a singularity at $\tau=0$ (see Lemma~\ref{t:mainsys1}).  
Unlike \cite{Liu2022b}, the parameters $\ca$, $\cb$, $\cc$ are not fixed, and an undetermined parameter $\ell_0$ appears in \eqref{e:v4}, which will later be specified in \eqref{e:l0def}. Hence, the calculations in this section need to be redone. While the calculations in this section are more involved, they introduce no new ideas. We thus present only the key results without proofs; for details, see \cite{Liu2024}. The derivation relies on key identities from \S\ref{s:2} and Appendix~\ref{t:refsol}.

\subsection{Derivation of the Singular Form}\label{s:sngexp}
We introduce the following  variables. 
\begin{align}
	u(\tau,\zeta^k) = 	\frac{\underline{\varrho}(\tau,\zeta^k)- \uf(\tau) }{\uf(\tau )}  \quad \Leftrightarrow \quad & 	\underline{\varrho}(\tau,\zeta^k)= \uf(\tau) + \uf(\tau )u(\tau,\zeta^k),  \label{e:v1}  \\
	u_0(\tau,\zeta^k)   =  	\frac{ \underline{\varrho_0}(\tau,\zeta^k)  - \underline{f_0}(\tau) }{ \underline{f_0}(\tau) }   \quad \Leftrightarrow \quad & 	\underline{\varrho_0}(\tau,\zeta^k)  = \underline{f_0}(\tau)  +\underline{f_0}(\tau)   u_0(\tau,\zeta^k),  \label{e:v2}  \\
	u_i(\tau,\zeta^k)  =  \frac{ \underline{\varrho_i} (\tau,\zeta^k) }{1+\underline{f}(\tau)}   \quad \Leftrightarrow \quad & 	\underline{\varrho_i} (\tau,\zeta^k) = (1+\underline{f}(\tau)) u_i(\tau,\zeta^k),  \label{e:v3}   \\
	z(\tau,\zeta^k) =  \biggl(\frac{\mathsf{h}(\tau,\zeta^k)}{\mathsf{h}_\uparrow(\tau)}\biggr)^{\frac{1}{\ell_0}}-1 \quad \Leftrightarrow \quad & 	\mathsf{h}(\tau,\zeta^k)  =  \bigl(1+ z (\tau,\zeta^k) \bigr)^{\ell_0} \mathsf{h}_\uparrow(\tau),  \label{e:v4}
\end{align}
and
\begin{align}\label{e:v5} 
	\mathcal{B}_{j} (\tau,\zeta^k)   =  \frac{(\mathsf{h}_{\uparrow}(\tau))^{\ca-2}(1+\uf(\tau))^{2-\cc}\mathsf{h}_j(\tau,\zeta^k)}{  B^{-1}  (-\tau)^{-\frac{\cb}{A}}  }       \quad \Leftrightarrow \quad & 	\mathsf{h}_j(\tau,\zeta^k)= \frac{  B^{-1}  (-\tau)^{-\frac{\cb}{A}}  \mathcal{B}_j (\tau,\zeta^k)}  {(\mathsf{h}_{\uparrow}(\tau))^{\ca-2}(1+\uf(\tau))^{2-\cc}},
\end{align}
where $\varrho_\mu:=\partial_{x^\mu}\varrho$.

The following lemma collects several identities for these variables, which will be used throughout.  
Since their proofs are analogous to those in \cite[\S3]{Liu2024}, we state them without proof.

\begin{lemma}\label{t:id0}
	If $\ell_1,\ell_2,\ell_3,\ell_4$ are constants, then the following identities hold:
	\begin{gather}
		\frac{    \mathsf{h}^{\ell_2} (1+\uf+\uf u)^{\ell_3}}{A B  (\uf+\uf u)^{\ell_1} (-\tau)^{\frac{\ell_4}{3A}+1}}
		=  \frac{   \mathsf{h}_\uparrow^{\ell_2}   (1+\uf)^{\ell_3}   }{A B \uf^{\ell_1} (-\tau)^{\frac{\ell_4}{3A}+1}}  \frac{ (   1+ z  )^{\ell_0\ell_2} (1+\frac{\uf}{1+\uf} u)^{\ell_3}  }{ (1+ u)^{\ell_1}  },  \label{e:id1} \\
		\frac{  B^{-1}  (-\tau)^{-\frac{\cb}{A}}  }  {(\mathsf{h}_{\uparrow}(\tau))^{\ca-2}(1+\uf(\tau))^{2-\cc}}=\frac{\mathsf{h}_\uparrow^2(\tau) \underline{f_0}(\tau)}{(1+\uf(\tau))^2}	 = \frac{\underline{\chi_\uparrow}(\tau) }{B  } \frac{ \uf (\tau) }{\ufo (\tau )}.  \label{e:id3}
	\end{gather}
	Moreover, from \eqref{e:id3} it follows that
	\begin{equation}\label{e:id2}
		\mathsf{h}_j(\tau,\zeta^k)= 	\frac{\mathsf{h}_\uparrow^2(\tau) \underline{f_0}(\tau)}{(1+\uf(\tau))^2}	 \mathcal{B}_j (\tau,\zeta^k) = \frac{\underline{\chi_\uparrow}(\tau) }{B  } \frac{ \uf (\tau) }{\ufo (\tau )}\mathcal{B}_j (\tau,\zeta^k).
	\end{equation}
\end{lemma}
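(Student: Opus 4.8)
The plan is to obtain all three displays by pure algebraic substitution of the defining relations \eqref{e:v1}--\eqref{e:v5}, the closed form of $\ufo$ supplied by Lemma~\ref{t:gb2}, and the definition of $\chi_\uparrow$ recorded in Appendix~\ref{t:refsol}. No estimates or analytic input are needed; once the correct factorizations are identified, each identity is a one-line computation, exactly as in \cite[\S3]{Liu2024} after inserting the present (non-fixed) values of $\ca,\cb,\cc$ and the auxiliary parameter $\ell_0$.

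First I would prove \eqref{e:id1}. From \eqref{e:v1} the two scalar combinations on the left of \eqref{e:id1} factor as
\begin{equation*}
	\uf+\uf u=\uf(1+u),\qquad 1+\uf+\uf u=(1+\uf)\Bigl(1+\frac{\uf}{1+\uf}\,u\Bigr),
\end{equation*}
and from \eqref{e:v4} we have $\mathsf{h}=(1+z)^{\ell_0}\mathsf{h}_\uparrow$, hence $\mathsf{h}^{\ell_2}=(1+z)^{\ell_0\ell_2}\mathsf{h}_\uparrow^{\ell_2}$. Raising the two scalar identities to the powers $\ell_1$ and $\ell_3$, inserting them into the left-hand side of \eqref{e:id1}, and regrouping the $\mathsf{h}_\uparrow,\uf,(1+\uf)$ factors into the first fraction and the $z,u$ factors into the second yields the asserted product. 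Note that the prefactor $AB$ and the power $(-\tau)^{\frac{\ell_4}{3A}+1}$ are literally identical on the two sides, so $\ell_4$ is inert and the identity holds for arbitrary constants $\ell_1,\ell_2,\ell_3,\ell_4$.

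Next I would treat \eqref{e:id3}. For the first equality, substitute the closed form $\ufo=B^{-1}\mathsf{h}_\uparrow^{-\ca}(-\tau)^{-\cb/A}(1+\uf)^{\cc}$ from Lemma~\ref{t:gb2} into $\mathsf{h}_\uparrow^{2}\ufo/(1+\uf)^{2}$ and collect powers of $\mathsf{h}_\uparrow$ and $(1+\uf)$; this produces $B^{-1}(-\tau)^{-\cb/A}\mathsf{h}_\uparrow^{2-\ca}(1+\uf)^{\cc-2}$, which is exactly the left member of \eqref{e:id3} rewritten as a single fraction. For the middle-to-right equality I would invoke the definition of $\underline{\chi_\uparrow}$ from Appendix~\ref{t:refsol}: by that definition, dividing $\underline{\chi_\uparrow}$ by $B$ and multiplying by $\uf/\ufo$ reproduces $\mathsf{h}_\uparrow^{2}\ufo/(1+\uf)^{2}$, so this step reduces to checking that normalization against the closed form of $\ufo$. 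Finally \eqref{e:id2} is immediate: the combination $B^{-1}(-\tau)^{-\cb/A}/\bigl((\mathsf{h}_\uparrow)^{\ca-2}(1+\uf)^{2-\cc}\bigr)$ is precisely the factor multiplying $\mathcal{B}_j$ in the definition \eqref{e:v5} of $\mathsf{h}_j$, so replacing it by the two equal expressions furnished by \eqref{e:id3} gives the two claimed forms of $\mathsf{h}_j$.

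The only point needing care---and hence the \emph{main obstacle}, modest as it is---is bookkeeping of conventions: keeping straight which inverse time map, $\mathsf{h}$ or $\mathsf{h}_\uparrow$, and which underline is attached to each factor, and importing the definition of $\chi_\uparrow$ from Appendix~\ref{t:refsol} in exactly the normalization for which the middle and right members of \eqref{e:id3} coincide identically rather than only up to a constant. Once these are pinned down, each of \eqref{e:id1}, \eqref{e:id3}, and \eqref{e:id2} follows by a direct computation mirroring \cite[\S3]{Liu2024}.
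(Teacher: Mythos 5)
Your proposal is correct and is exactly the direct algebraic verification the paper has in mind (the paper omits the proof, deferring to \cite[\S3]{Liu2024}): \eqref{e:id1} follows from the factorizations $\uf+\uf u=\uf(1+u)$, $1+\uf+\uf u=(1+\uf)(1+\tfrac{\uf}{1+\uf}u)$ and $\mathsf{h}=(1+z)^{\ell_0}\mathsf{h}_\uparrow$; \eqref{e:id3} follows from Lemma~\ref{t:gb2}\ref{l:2.6} together with the definition \eqref{e:Gdef2} of $\underline{\chi_\uparrow}$; and \eqref{e:id2} is then immediate from \eqref{e:v5}. No gaps.
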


We define frequently used quantities,
\begin{gather}
	\mathscr{R}^j := \underline{\cg} \delta^{ij} \mathsf{h}_{i} \frac{(1+\underline{f} ) }{\underline{f_0} } \overset{\eqref{e:id2}}{=} \underline{\cg} \delta^{ij} \frac{\uf(1+\underline{f} ) }{\underline{f_0}^2 } \frac{\underline{\chi_\uparrow} }{B } \mathcal{B}_i , \quad 
	H^{ij} := \underline{\cg} \delta^{ij} \frac{\mathsf{h}_\uparrow^{2} }{\uf} \frac{ (1 + z )^{\ell_0(2-\ca)} (1+\frac{\uf}{1+\uf} u )^{\cc-1} }{ (1+ u) } , \label{e:Rdef}  
	\\
	\mathscr{S}=\mathscr{S}(\tau):= \frac{ (\ck-\cm^2) }{\tau} \frac{B}{\underline{\chi_\uparrow}} \biggl( \frac{4}{ \uf } - \frac{\underline{\mathfrak{G}}}{ B} \biggr) , \quad 
	S=S(\tau):= \ck +\frac{(\ck-\cm^2)(12-2\cb-8\cc)}{2\cb+(3-2\cc)\frac{\underline{\mathfrak{G}}}{ B}} + \tau \mathscr{S} , \label{e:S1} \\  
	\mathscr{L}=\mathscr{L}(\tau;u_0,u,z) := \cm^2 \left( \frac{ (1 + u_0 )^2}{ (1+ \frac{\uf}{1+\uf } u )^2} -1\right) + 4(\ck-\cm^2) \frac{B}{\underline{\chi_\uparrow}} \biggl(1+\frac{1}{\uf}\biggr) \left(\frac{(1+\frac{\uf}{1+\uf} u ) }{(1+ z )^{2\ell_0}}-1\right) . \label{e:L1}   
\end{gather} 

\begin{lemma}\label{t:coef1} 
	Substituting the variables \eqref{e:v1}--\eqref{e:v5} into \eqref{e:Rdef}--\eqref{e:L1} yields,
	\begin{align*}
		\mathscr{R}^j &= \mathscr{R}^j(\tau,u_0,u,z;\mathcal{B}_k ):= R \mathcal{B}_i \delta^{ij} = (S+\mathscr{L}) \frac{\uf}{1+\uf} \frac{\underline{\chi_\uparrow}}{B} \mathcal{B}_i \delta^{ij}, \\
		H^{ij} &= H^{ij}(\tau,u_0,u,z) := S \delta^{ij} \frac{\underline{\chi_\uparrow}}{B} + \mathscr{H} \delta^{ij},  
	\end{align*}
	where
	\begin{align}		
		R &:= R(\tau,u_0,u,z)= (S+\mathscr{L} ) \frac{\uf}{1+\uf} \frac{\underline{\chi_\uparrow}}{B}, \label{e:R1}\\
		\mathscr{H} &:= \mathscr{H}(\tau; u_0,u, z)= \mathscr{L} \frac{\underline{\chi_\uparrow}}{B} + (S+\mathscr{L} ) \frac{\underline{\chi_\uparrow}}{B} \left( \frac{ (1 + z )^{\ell_0(2-\ca)} (1+\frac{\uf}{1+\uf} u)^{\cc-1} }{ (1+ u) }-1\right). 
	\end{align}
	Furthermore, the following inequalities hold,
	\begin{align*}
		\mathscr{S}(\tau) & < -\frac{(\ck-\cm^2)}{\tau}, \\
		\text{and} \quad \cm^2+\frac{(\ck-\cm^2)(12-2\cb-8\cc)}{2\cb+(3-2\cc)\frac{\underline{\mathfrak{G}}}{ B}} & < S(\tau) \leq \ck\bigl(1+ \frac{1}{\beta}\bigr)+\frac{(\ck-\cm^2)(12-2\cb-8\cc)}{2\cb+(3-2\cc)\frac{\underline{\mathfrak{G}}}{ B}}.
	\end{align*}
\end{lemma}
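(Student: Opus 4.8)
The statement has two parts: the closed-form expressions for $\mathscr{R}^j$ and $H^{ij}$, which are pure algebra driven by the identities of Lemma~\ref{t:id0} and the reference-solution facts of \S\ref{s:2} and Appendix~\ref{t:refsol}; and the two chains of inequalities for $\mathscr{S}(\tau)$ and $S(\tau)$. I would treat them in that order.

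\emph{Closed forms.} The plan is to reduce everything to the single scalar identity $\underline{\cg}\,(1+\uf)^2/\underline{f_0}^2=S+\mathscr{L}$. To obtain it, substitute \eqref{e:v1}--\eqref{e:v4} into $\underline{\cg}=\cm^2\underline{\varrho_0}^2/(1+\underline{\varrho})^2+4(\ck-\cm^2)(1+\underline{\varrho})/\mathsf{h}^2$ using $1+\underline{\varrho}=(1+\uf)(1+\tfrac{\uf}{1+\uf}u)$, $\underline{\varrho_0}=\underline{f_0}(1+u_0)$, $\mathsf{h}=(1+z)^{\ell_0}\mathsf{h}_\uparrow$, and then multiply by $(1+\uf)^2/\underline{f_0}^2$. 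The $\cm^2$ term becomes $\cm^2(1+u_0)^2/(1+\tfrac{\uf}{1+\uf}u)^2$ at once; for the $4(\ck-\cm^2)$ term I would use Lemma~\ref{t:gb2}\,\ref{l:2.6} together with \eqref{e:id3} (equivalently the definition of $\chi_\uparrow$ in Appendix~\ref{t:refsol}) to rewrite $(1+\uf)^3/(\mathsf{h}_\uparrow^2\underline{f_0}^2)$ as $\tfrac{B}{\underline{\chi_\uparrow}}(1+\tfrac1{\uf})$. Comparing with \eqref{e:L1}, the displayed identity is then equivalent to $S=\cm^2+4(\ck-\cm^2)\tfrac{B}{\underline{\chi_\uparrow}}(1+\tfrac1{\uf})$, and matching this with the two formulas in \eqref{e:S1} requires only the reference-solution relation linking $\underline{\chi_\uparrow}$, $\underline{\mathfrak{G}}$ and $B$ (with $\cc=\tfrac43$ held fixed this relation is affine in $\underline{\mathfrak{G}}$, which is precisely what makes the decomposition close). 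Granting the identity, $\mathscr{R}^j$ drops out of \eqref{e:Rdef} and \eqref{e:id2} by factoring $\tfrac{\uf}{1+\uf}\tfrac{\underline{\chi_\uparrow}}{B}\mathcal{B}_i\delta^{ij}$ out; for $H^{ij}$ one rewrites $\underline{\cg}\,\mathsf{h}_\uparrow^2/\uf=(S+\mathscr{L})\tfrac{\underline{\chi_\uparrow}}{B}$, splits the leftover factor in \eqref{e:Rdef} as $1+(\,\cdot\,-1)$, and gathers the ``$-1$'' part together with $\mathscr{L}\tfrac{\underline{\chi_\uparrow}}{B}$ into $\mathscr{H}$, which by inspection is analytic and vanishes to first order in $(u_0,u,z)$, i.e.\ is a legitimate remainder in the sense of \S\ref{s:rmdrs}.

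\emph{Inequalities.} For $\mathscr{S}$, multiplying the definition in \eqref{e:S1} by $\tau$ gives $\tau\mathscr{S}=(\ck-\cm^2)\bigl(\tfrac{4B}{\uf\underline{\chi_\uparrow}}-\tfrac{\underline{\mathfrak{G}}}{\underline{\chi_\uparrow}}\bigr)$, so (for $\cm^2<\ck$, and recalling $\tau<0$) the claim $\mathscr{S}<-\tfrac{\ck-\cm^2}{\tau}$ is equivalent to $\underline{\mathfrak{G}}<\underline{\chi_\uparrow}+\tfrac{4B}{\uf}$, which is immediate from the affine relation $\underline{\chi_\uparrow}=\underline{\mathfrak{G}}+(\text{positive multiple of }B)$ and $\uf,B>0$. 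For $S$, I would use the closed form $S=\cm^2+4(\ck-\cm^2)\tfrac{B}{\underline{\chi_\uparrow}}(1+\tfrac1{\uf})$, the monotonicity of the reference solution (so $\uf\ge f(t_0)=\beta$, hence $1<1+\tfrac1{\uf}\le1+\tfrac1\beta$), and the control of $\underline{\mathfrak{G}}/B$ provided by Appendix~\ref{t:refsol} (Assumption~\ref{A:1} already forces $\underline{\mathfrak{G}}\ge0$): the lower bound reduces to a sign condition that holds because $\cb>0$, and the upper bound follows by inserting $\uf\ge\beta$ and the lower bound on $\underline{\mathfrak{G}}/B$ and invoking the parameter constraints \eqref{A:2} and \ref{A:4}.

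\emph{Main obstacle.} The algebra itself is mechanical; the real work, as flagged in the introduction, is the parameter bookkeeping. With $\ca,\cb$ ranging over the intervals in \eqref{A:2} and $\ell_0$ still undetermined, every identity has to be carried along in a form that displays its dependence on those parameters, and the upper bound on $S(\tau)$ — the only estimate here that is genuinely tight, the constant $\ck(1+\tfrac1\beta)$ being attained in a limit — must be checked to hold uniformly over the admissible ranges. Pinning down the exact reference-solution relation among $\underline{\chi_\uparrow}$, $\underline{\mathfrak{G}}$ and $B$, and the sharp lower bound on $\underline{\mathfrak{G}}/B$, is the linchpin of the whole computation.
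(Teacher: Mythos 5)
Your route to the closed forms is the right one and lands cleanly: the single scalar identity $\underline{\cg}\,(1+\uf)^2/\underline{f_0}^2 = S+\mathscr{L}$ is exactly what makes both displays collapse, and your verification of it (substitute the variable changes, use \eqref{e:keyid3} to rewrite $(1+\uf)^3/(\mathsf{h}_\uparrow^2\underline{f_0}^2)$ as $\tfrac{B}{\underline{\chi_\uparrow}}(1+\tfrac1\uf)$, then use the affine relation $\tfrac{\underline{\chi_\uparrow}}{B}=\tfrac{2\cb}{3-2\cc}+\tfrac{\underline{\mathfrak{G}}}{B}$) is correct and in fact works uniformly in $\cc$, not only at $\cc=\tfrac43$. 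The $\mathscr{S}$ bound and the lower bound on $S$ reduce, as you say, to sign conditions ($\tfrac4\uf>-\tfrac{2\cb}{3-2\cc}$ and $\tfrac{4(3-2\cc)}{\uf}>-2\cb$) and hold trivially.

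The gap is in the upper bound on $S$, precisely the place you wave at with "insert $\uf\ge\beta$ and the lower bound on $\underline{\mathfrak{G}}/B$ and invoke the parameter constraints." Subtracting the common term, the claimed upper bound is equivalent to $\tau\mathscr{S}\le\tfrac{\ck}{\beta}$, i.e.\ $(\ck-\cm^2)\tfrac{B}{\underline{\chi_\uparrow}}\bigl(\tfrac4\uf-\tfrac{\underline{\mathfrak{G}}}{B}\bigr)\le\tfrac{\ck}{\beta}$. Dropping $\underline{\mathfrak{G}}\ge0$ and using $\tfrac{B}{\underline{\chi_\uparrow}}\le\tfrac{3-2\cc}{2\cb}=\tfrac1{6\cb}$ together with $\uf\ge\beta$ gives only $\tau\mathscr{S}\le\tfrac{2(\ck-\cm^2)}{3\cb\beta}$, and this is $\le\tfrac{\ck}{\beta}$ iff $\cb\ge\tfrac{2(\ck-\cm^2)}{3\ck}$ — a constraint which is \emph{not} implied by \eqref{A:2} and \ref{A:4}. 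Concretely, take $\ck=1$, $\cm^2=0$, $\cb=\tfrac13$, $\cc=\tfrac43$ and data making $\underline{\mathfrak{G}}(t_0)$ arbitrarily small (i.e.\ near-saturating \ref{A:1}); then at $\tau=-1$ one computes $S(-1)\to 2+\tfrac2\beta$ while the claimed upper bound tends to $2+\tfrac1\beta$, so the inequality is violated. Moreover, the estimate you used is not loose: at $\tau=-1$ with near-saturated \ref{A:1}, both the bound on $\tfrac{B}{\underline{\chi_\uparrow}}$ and the bound $\uf\ge\beta$ are simultaneously attained, so no rearrangement of the same ingredients will rescue the argument. Either the stated constant $\ck(1+\tfrac1\beta)$ is not what was intended (Lemma~\ref{t:Thpst} later uses the weaker bound $\ck(3+\tfrac1\beta)$), or some additional relation between $\cb$, $\cm^2$, and the initial data is being used implicitly — but your proof as written asserts the bound follows without either, which is the one step here that genuinely does not go through.
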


We also frequently use the identity later (from Proposition \ref{t:limG}):
 \begin{align}\label{e:chig}
 	\frac{\underline{\chi_\uparrow}}{B} = \frac{2\cb}{3-2\cc}+\frac{\underline{\mathfrak{G}}}{B} . 
 \end{align}

 Let $\mathscr{R}_k := \delta_{jk} \mathscr{R}^j$, $H^i_k := \delta_{jk} H^{ij}$. Then, using Lemmas \ref{t:id0} and \ref{t:coef1}, analogous but more involved calculations as in \cite[\S 3]{Liu2024} (omitting details) yield the following singular symmetric hyperbolic system. Note that \eqref{e:mainsys1} does not satisfy Condition \ref{c:5} of Appendix \ref{s:fuch}, so Theorem \ref{t:fuch} cannot be directly applied; modifications are discussed in the next section.

\begin{lemma}\label{t:mainsys1}
	Equation \ref{Eq2} can be reformulated as the following first-order singular symmetric hyperbolic system,
	\begin{align}\label{e:mainsys1}
		\mathbf{A}^0 \partial_{\tau} U + \frac{1}{A\tau} \mathbf{A}^i \partial_{\zeta^i} U = \frac{1}{A\tau} \mathbf{A} U + \mathbf{F},
	\end{align}
	where $U := (u_0, u_j, u, \mathcal{B}_l, z)^T$, and
	\begin{align*}
		\mathbf{A}^0  = \begin{pmatrix}
			1 & \mathscr{R}^j & 0 & 0 & 0 \\
			\mathscr{R}_k & (S + \mathscr{L}) \delta^j_k & 0 & 0 & 0 \\
			0 & 0 & 2 & 0 & 0 \\
			0 & 0 & 0 & \delta^l_s & 0 \\
			0 & 0 & 0 & 0 & 1
		\end{pmatrix},  \quad
		\mathbf{A}^i  = \begin{pmatrix}
			0 & H^{ij} & 0 & 0 & 0 \\
			H^i_k & 0 & 0 & 0 & 0 \\
			0 & 0 & 0 & 0 & 0 \\
			0 & 0 & 0 & 0 & 0 \\
			0 & 0 & 0 & 0 & \mathtt{d}^i
		\end{pmatrix}, \quad (\mathtt{d}^i \text{ are arbitrary constants})
	\end{align*}
	\begin{align*}
		\mathbf{A} &= \begin{pmatrix}
			\mathbf{D}_{11} + \mathscr{Z}_{11} & \mathbf{D}^j_{12} + \mathscr{Z}^j_{12} & \mathbf{D}_{13} + \mathscr{Z}_{13} & 0 & \mathbf{D}_{15} + \mathscr{Z}_{15} \\
			0 & (\mathbf{D}_{22} + \mathscr{Z}_{22}) \delta^j_k & 0 & (\mathbf{D}_{24} + \mathscr{Z}_{24}) \delta^l_k & 0 \\
			\mathbf{D}_{31} + \mathscr{Z}_{31} & 0 & \mathbf{D}_{33} + \mathscr{Z}_{33} & 0 & \mathbf{D}_{35} + \mathscr{Z}_{35} \\
			0 & (\mathbf{D}_{42} + \mathscr{Z}_{42}) \delta^j_{s} & 0 & (\mathbf{D}_{44} + \mathscr{Z}_{44}) \delta_{s}^l & 0 \\
			0 & 0 & 0 & 0 & 0
		\end{pmatrix}.
	\end{align*}
	and $\mathbf{F} = (\mathfrak{F}_{u_0}, \mathfrak{F}_{u_j}, \mathfrak{F}_{u}, \mathfrak{F}_{\mathcal{B}_i}, \mathfrak{F}_{z})^T$. 
	Here, the constant matrices $\mathbf{D}_{mn}$ are given by
	\begin{align*}
		\mathbf{D}_{11} &= \frac{3\cb - 4\cb\cc}{3 - 2\cc}, &
		\mathbf{D}^j_{12} &= \left(-4\ck + 4\cm^2 - \frac{2\cb}{3-2\cc}\cm^2\right) q^j, &
		\mathbf{D}_{13} &= \frac{3\cb\cc}{3-2\cc}, \\
		\mathbf{D}_{15} &= \frac{\ell_0 \cb (3\ca - 4\cc)}{3-2\cc}, &
		\mathbf{D}_{22} &= \frac{2\cb}{3-2\cc} \ck, &
		\mathbf{D}_{24} &= \frac{2\cb\ck}{3-2\cc} \left(\cb + \frac{2\ca\cb}{3-2\cc}\right), \\
		\mathbf{D}_{31} &= -\frac{4\cb}{3-2\cc}, &
		\mathbf{D}_{33} &= \frac{4\cb(3-\cc)}{3-2\cc}, &
		\mathbf{D}_{35} &= -\frac{4\cb\ell_0(2-\ca)}{3-2\cc}, \\
		\mathbf{D}_{42} &= 2 - \cc, &
		\mathbf{D}_{44} &= \cb . 
	\end{align*}
and
	\begin{align*}
		\mathfrak{F}_{u_0} = & \mathfrak{F}_{u_0}(\tau, u_0,u, z) := \frac{\underline{\mathfrak{G}}}{\tau} \mathscr{S}_{11}( \tau; u_0,u, z ) + \frac{1}{\tau \uf^{\frac{1}{2}}} \mathscr{S}_{12}(\tau,u_0; u, z ),  \\
		\mathfrak{F}_{u_k} =&\mathfrak{F}_{u_k}(\tau, u_0,u, z,  u_k,\mathcal{B}_k) := \frac{\underline{\mathfrak{G}}}{\tau} \mathscr{S}_{k;21}(\tau; u_k,\mathcal{B}_k) + \frac{1}{\tau \uf^{\frac{1}{2}}} \mathscr{S}_{k;22}(\tau,u_0,u,z; u_k,\mathcal{B}_k); \\
		\mathfrak{F}_{\mathcal{B}_i} =&\mathfrak{F}_{\mathcal{B}_i}(\tau, u_0,u, z, u_i,\mathcal{B}_k) := \frac{1}{\tau \uf^{\frac{1}{2}}} \mathscr{S}_{i;42}(\tau, u_0,u, z; u_i,\mathcal{B}_k); \\
		\mathfrak{F}_{u} =&\mathfrak{F}_{u}(\tau, u_0,u, z) := \frac{\underline{\mathfrak{G}}}{\tau} \mathscr{S}_{31}(\tau;u_0,u,z) + \frac{1}{\tau \uf^{\frac{1}{2}}} \mathscr{S}_{32}(\tau; u_0, u, z); \\
		\mathfrak{F}_z =&\mathfrak{F}_{z}(\tau; u, \mathcal{B}_i, z) := \frac{1}{\tau \uf^{\frac{1}{2}}} \mathscr{S}_{52}(\tau; u, \mathcal{B}_i, z) \notag \\
		=& - \frac{1}{\ell_0} \frac{1}{A \tau \uf^{\frac{1}{2}}} \biggl( \frac{\underline{\chi_\uparrow}}{B} \biggr)^{\frac{1}{2}} \Biggl( \frac{(1+z)^{\ell_0(1-\ca)+1} (1+\frac{\uf}{1+\uf} u)^{\cc-1}}{(1+ u)} - 1 - z - \frac{\uf}{(1+\uf)} \frac{\mathtt{d}^i \mathcal{B}_i}{(1+z)^{\ell_0-1}} \Biggr).
	\end{align*}  
	with $\mathscr{Z}^j_{12}(\tau; u_0,u,z; 0,0) = 0$, $\mathscr{Z}_{1\ell}(\tau; 0,0,0) = 0$ for $\ell=1,3,5$, and the notation explained in \S \ref{s:rmdrs}.
\end{lemma}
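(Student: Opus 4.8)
The plan is to derive the singular symmetric hyperbolic system \eqref{e:mainsys1} directly by transcribing equation~\ref{Eq2} into the compactified coordinates $(\tau,\zeta)$ and substituting the Fuchsian variables \eqref{e:v1}--\eqref{e:v5}, following the template of \cite[\S3]{Liu2024} but keeping $\ca,\cb,\cc$ and $\ell_0$ as free parameters. First I would use Lemma~\ref{t:gb1}\ref{l:1.4} to convert the $\partial_t$ and $\partial_{x^i}$ operators in \ref{Eq2} into $\partial_\tau$ and $\partial_{\zeta^i}$, producing the characteristic factor $AB\underline\varrho(-\tau)^{\cb/A+1}\mathsf{h}^{\ca-2}(1+\underline\varrho)^{1-\cc}$ in front of each time derivative; the second-order wave term, written as $\partial_t(\partial_t\varrho)-\underline\cg\delta^{ij}\partial_i(\partial_j\varrho)$, then furnishes the first-order evolution equations for $\varrho_0$ and for the spatial gradient components $\varrho_i$. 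Substituting \eqref{e:v1}--\eqref{e:v3} replaces $\underline\varrho,\underline{\varrho_0},\underline{\varrho_i}$ by $\uf(1+u)$, $\ufo(1+u_0)$, $(1+\uf)u_i$, and the reference ODE identities in Lemma~\ref{t:gb2} (items \ref{l:2.4}--\ref{l:2.6}) are used to cancel the leading-order terms coming purely from $f$, leaving a system in $u_0,u_i,u$. Adjoining the evolution equations for $z$ (from \eqref{e:v4} via Lemma~\ref{t:gb1}\ref{l:1.2} and Lemma~\ref{t:gb2}\ref{l:2.3}) and for $\mathcal{B}_j$ (from \eqref{e:v5} via Lemma~\ref{t:gb1}\ref{l:1.3}), and choosing $\mathtt{d}^i$ as the free constants multiplying $\partial_{\zeta^i}z$, closes the system.

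Next I would extract the singular $\frac{1}{A\tau}$ structure. Each of the evolution equations carries an overall factor of the form $\frac{1}{A B\uf^{\ell_1}(-\tau)^{\cb/(3A)+1}}\mathsf{h}_\uparrow^{\ell_2}(1+\uf)^{\ell_3}\times(\text{bounded})$; identity \eqref{e:id1} of Lemma~\ref{t:id0} rewrites these in terms of the reference quantities times corrections in $(1+z),(1+u),(1+\tfrac{\uf}{1+\uf}u)$. Using \eqref{e:chig} to replace $\underline{\chi_\uparrow}/B$ by $\frac{2\cb}{3-2\cc}+\underline{\mathfrak{G}}/B$, and the definitions \eqref{e:Rdef}--\eqref{e:L1} together with Lemma~\ref{t:coef1} for $\mathscr{R}^j$, $H^{ij}$, $S$, $\mathscr{L}$, one sees that the parts that survive at $\tau\to 0$ (where $\underline{\mathfrak{G}}\to 0$, $\uf^{-1/2}\to 0$ along the reference blowup, and $z,u,u_0,u_i\to$ their limits) collapse to the constant matrix $\mathbf{A}$ with entries $\mathbf{D}_{mn}$, while everything else is absorbed into $\mathscr{Z}_{mn}$ (vanishing at $U=0$) or into $\mathbf{F}$. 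Computing the $\mathbf{D}_{mn}$ is then bookkeeping: each is the value of the corresponding coefficient combination at $u=u_0=u_i=z=0$ and $\tau=0$, e.g.\ $\mathbf{D}_{33}$ comes from the linearization of the $u$-equation's source, $\mathbf{D}^j_{12}$ from the convection term $\underline\cg q^i\partial_i\varrho$ evaluated with $\underline\cg\to(2\cb/(3-2\cc))\cm^2+4(\ck-\cm^2)$ hidden in the limit, and the $\ell_0$-dependence in $\mathbf{D}_{15},\mathbf{D}_{35}$ tracks the power $\ell_0(2-\ca)$ appearing in $H^{ij}$. The source terms $\mathfrak{F}_\bullet$ are read off as the residual pieces carrying either $\underline{\mathfrak{G}}/\tau$ or $1/(\tau\uf^{1/2})$; in particular $\mathfrak{F}_z$ is obtained by solving \eqref{e:v4}'s evolution for $\partial_\tau z$, substituting Lemma~\ref{t:gb2}\ref{l:2.3} and \ref{l:1.2}, and recognizing the displayed closed form.

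Finally I would verify symmetry and the stated remainder properties. The matrices $\mathbf{A}^0$ and $\mathbf{A}^i$ are manifestly symmetric given $\mathscr{R}_k=\delta_{jk}\mathscr{R}^j$ and $H^i_k=\delta_{jk}H^{ij}$, and $\mathbf{A}^0$ is positive definite near $U=0,\tau=0$ because $S+\mathscr{L}>0$ there by the bounds in Lemma~\ref{t:coef1} and the smallness of $\mathscr{L}$. The conditions $\mathscr{Z}^j_{12}(\tau;u_0,u,z;0,0)=0$ and $\mathscr{Z}_{1\ell}(\tau;0,0,0)=0$ follow because the $\mathscr{Z}_{mn}$ collect precisely the differences between the exact coefficient functions and their $U=0$ values, hence vanish to first order in the indicated variables, and the $\mathscr{S}$-type remainders in $\mathbf{F}$ vanish to first order in their last arguments by construction of \eqref{e:Rdef}--\eqref{e:L1}.

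\textbf{Main obstacle.} The hard part is the parameter bookkeeping: with $\ca,\cb,\cc,\ell_0$ free, one must track which terms carry which negative powers of $(-\tau)$ and $\uf$ to confirm that the genuinely singular $\frac{1}{A\tau}$ part is exactly the constant matrix $\mathbf{A}$ and not something $\tau$-dependent, and that no term secretly blows up faster than $1/\tau$. This requires the identity \eqref{e:chig} and the precise asymptotics $\underline{\mathfrak{G}}\to 0$, $\underline{\chi_\uparrow}/B\to 2\cb/(3-2\cc)$ from Appendix~\ref{t:refsol}, together with care that the $\uf^{-1/2}$-weighted terms are placed in $\mathbf{F}$ rather than in $\mathbf{A}$ — a placement that is only legitimate because $\uf\to+\infty$ makes $\uf^{-1/2}\to 0$ as $\tau\to 0$; this is also why \eqref{e:mainsys1} fails Condition~\ref{c:5} and must be repaired in \S\ref{s:4}. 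Since the calculations introduce no new ideas beyond \cite{Liu2024}, the statement is recorded without a full proof.
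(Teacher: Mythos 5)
Your outline matches the paper's (omitted) derivation, which simply redoes the computations of \cite[\S3]{Liu2024} with $\ca,\cb,\cc,\ell_0$ left free: the same variable substitutions \eqref{e:v1}--\eqref{e:v5}, the identities of Lemmas \ref{t:id0}--\ref{t:coef1} together with \eqref{e:chig}, and the routing of the $\underline{\mathfrak{G}}/\tau$- and $(\tau\uf^{1/2})^{-1}$-weighted remainders into $\mathbf{F}$. One small correction to a side remark only: the reason \eqref{e:mainsys1} fails Condition~\ref{c:5} is not the placement of the $\uf^{-1/2}$ terms but that $\tfrac{1}{A}\mathbf{A}$ is not positive definite (e.g.\ $\mathbf{D}_{11}=-7\cb<0$ for $\cc=\tfrac{4}{3}$), which is precisely what the zoom-in rescaling by $\mu,\eta$ in \S\ref{s:4} repairs.
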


\subsection{Domain of Influence of inhomogeneous Initial Data}\label{s:DoI}
This section gives the representation of the inhomogeneous domain in various coordinate systems. Following \S $3.2$ of \cite{Liu2024}, we redo the calculations due to different parameter choices. We first present the $(\tau,\zeta)$ form, which can be directly converted to $(\ttau,\txi)$ and $(\htau,\hat{\zeta})$. For simplicity, we set $\ck=1$ and $\mathtt{d}^i=0$.

\begin{lemma}\label{t:char1}
	If equation \eqref{e:mainsys1} admits a classical solution $U$ in some spacetime region, with initial data satisfying $\supp U(\zeta)|_{\tau=-1}=B_1(0)$, then the domain of influence of $B_1(0)$ is contained within (or on the boundary of) the characteristic cone $\underline{\mathcal{C}}$, where the hypersurface $\underline{\mathcal{C}}$ is given by
	\begin{equation*}
		|\zeta| = 1 + \frac{1}{A} \int_{-1}^{\tau} \frac{1}{-s} \sqrt{S} \frac{\underline{\chi_\uparrow}}{B} ds = 1 - \frac{2b}{A} \ln(-\tau) + \frac{1}{A} \int_{-1}^{\tau} \frac{\Xi(s)}{-s} ds,
	\end{equation*}
	where
	\begin{equation}\label{e:bdef!}
		b := \sqrt{9\cb^2 m^2 - 6\cb m^2 + 6\cb}
	\end{equation}
	and
	\begin{equation*}
		\Xi := 2\sqrt{\left(1+\frac{\underline{\mathfrak{G}}}{6\cb B}\right)\left(b^2 + \frac{3\cb \cm^2}{2} \frac{\underline{\mathfrak{G}}}{B} + \frac{6\cb(1-\cm^2)}{\uf}\right)} - 2b \geq 0,
	\end{equation*}
	with $\lim_{\tau \to 0} \Xi(\tau) = 0$. Moreover, for $\tau \in [-1,0)$, the term $\frac{1}{A} \int_{-1}^{\tau} \frac{\Xi(s)}{-s} ds$ is non-negative and bounded. Denote $\theta := \max_{\tau \in [-1,0]} \frac{1}{A} \int_{-1}^{\tau} \frac{\Xi(s)}{-s} ds$.
\end{lemma}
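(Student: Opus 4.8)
The plan is to combine the standard finite-propagation-speed argument for symmetric hyperbolic systems with the explicit structure of the principal part of \eqref{e:mainsys1}. First I would localize attention to the top-left block of the system, since the equations for $u$, $z$, and (after setting $\mathtt{d}^i=0$) $\mathcal{B}_l$ carry no spatial derivatives, so the characteristic speeds come entirely from the $2\times 2$ block coupling $u_0$ and $u_j$ through $\mathbf{A}^0$ and $\mathbf{A}^i$. Concretely, for a unit covector $\xi_i$ the characteristic polynomial of $\tfrac{1}{A\tau}(\mathbf{A}^0)^{-1}(\mathbf{A}^i\xi_i)$ reduces, using $H^{ij}=S\delta^{ij}\underline{\chi_\uparrow}/B+\mathscr{H}\delta^{ij}$ and the relation $H^{ij}=(S+\mathscr{L})\mathscr{R}^j\cdot(\text{scalar})$ hidden in Lemma~\ref{t:coef1}, to give propagation speed (in the $\tau$ variable, per unit $|\zeta|$) equal to $\frac{1}{A(-\tau)}\sqrt{S}\,\frac{\underline{\chi_\uparrow}}{B}$ along the homogeneous profile $U\equiv 0$. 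Integrating this speed from $\tau=-1$ gives the stated cone
\[
	|\zeta| = 1 + \frac{1}{A}\int_{-1}^{\tau}\frac{1}{-s}\sqrt{S}\,\frac{\underline{\chi_\uparrow}}{B}\,ds,
\]
and the energy/domain-of-dependence estimate for symmetric hyperbolic systems (integrating $\partial_\tau(\mathbf{A}^0U,U)$ over backward cones with exactly this slope, controlling the zeroth-order and $\frac{1}{A\tau}$-singular terms by Gr\"onwall on the fixed interval $\tau\in[-1,0)$, which is harmless since we stay away from $\tau=0$ or include the $\tau^{-1}$ factor in the Gr\"onwall constant) shows $\supp U(\tau,\cdot)\subset\{|\zeta|\le 1+\frac{1}{A}\int_{-1}^\tau(-s)^{-1}\sqrt S\,\underline{\chi_\uparrow}/B\,ds\}$.

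The second half is the rewriting of that integral into the displayed form with the explicit logarithm. Here I would use the identity \eqref{e:chig}, $\underline{\chi_\uparrow}/B=\frac{2\cb}{3-2\cc}+\underline{\mathfrak{G}}/B$, together with the formula for $S$ from \eqref{e:S1} and the bounds on $\mathscr{S}$, $S$ from Lemma~\ref{t:coef1}, and substitute $\cc=\tfrac43$ (so $3-2\cc=\tfrac13$, $\frac{2\cb}{3-2\cc}=6\cb$). A direct algebraic manipulation of $\sqrt S\cdot(6\cb+\underline{\mathfrak{G}}/B)$ should produce a ``constant'' piece whose square is $9\cb^2m^2-6\cb m^2+6\cb=b^2$, giving the leading $-\frac{2b}{A}\ln(-\tau)$ after integrating $\int_{-1}^\tau\frac{2b}{-s}ds=-2b\ln(-\tau)$, and a remainder $\Xi(s)=\sqrt S\,\underline{\chi_\uparrow}/B-2b\ge 0$ which I would rewrite in the factored form
\[
	\Xi=2\sqrt{\Bigl(1+\tfrac{\underline{\mathfrak{G}}}{6\cb B}\Bigr)\Bigl(b^2+\tfrac{3\cb\cm^2}{2}\tfrac{\underline{\mathfrak{G}}}{B}+\tfrac{6\cb(1-\cm^2)}{\uf}\Bigr)}-2b
\]
by completing the product under the square root. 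Non-negativity of $\Xi$ then follows because the product of the two bracketed factors is at least $b^2$ (each factor exceeds its value at $\underline{\mathfrak{G}}/B=0$, $1/\uf=0$, which multiply to $b^2$); one needs the sign conditions $m^2\le\ck=1$ and $\underline{\mathfrak{G}}\ge 0$ from the properties of the reference solution in Appendix~\ref{t:refsol}.

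For the final claims — that $\frac{1}{A}\int_{-1}^\tau\frac{\Xi(s)}{-s}ds$ is non-negative and bounded on $[-1,0)$, so that $\theta$ is well-defined and finite — non-negativity is immediate from $\Xi\ge 0$ and $-s>0$. Boundedness is the one genuinely analytic point: near $\tau=0$ the factor $1/(-s)$ is non-integrable, so I need $\Xi(s)\to 0$ fast enough. I would extract the rate from $\lim_{\tau\to 0}\Xi(\tau)=0$ by expanding the square root: $\Xi\sim b\bigl(\tfrac{\underline{\mathfrak{G}}}{6\cb B}+\tfrac{3\cm^2}{2b^2}\tfrac{\underline{\mathfrak{G}}}{B}+\tfrac{6\cb(1-\cm^2)}{b^2\uf}\bigr)$ to leading order, so it suffices that $\underline{\mathfrak{G}}(\tau)/B$ and $1/\uf(\tau)$ vanish at least like $(-\tau)^{\epsilon}$ for some $\epsilon>0$ as $\tau\to 0$; these decay rates should follow from Proposition~\ref{t:limG} and the blow-up rate of $\uf$ established in Appendix~\ref{t:refsol} (the self-growth gives $\uf\to\infty$ and $\underline{\mathfrak{G}}\to 0$ polynomially in $(-\tau)$). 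I expect this decay-rate bookkeeping — pinning down exactly how fast $\Xi$ vanishes and checking it beats the logarithmic divergence of $\int (-s)^{-1}ds$ — to be the main obstacle; the characteristic-speed computation and the algebraic identity for $b$ are routine once the block structure and $\cc=\tfrac43$ are used.
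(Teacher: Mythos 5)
Your proposal follows essentially the same route as the paper: the cone is obtained from the bicharacteristic ODE $\frac{d|\zeta|}{d\tau}=-\frac{1}{A\tau}\sqrt{S}\,\underline{\chi_\uparrow}/B$ (the paper simply cites the domain-of-influence argument of \cite[Lemma 3.9]{Liu2024} with this modified speed), and the algebraic identity $\sqrt{S}\,\underline{\chi_\uparrow}/B=2b+\Xi$ together with $\Xi\ge0$ is exactly Lemma~\ref{t:Thpst}(3) via $\underline{\chi_\uparrow}/B=6\cb+\underline{\mathfrak{G}}/B$ and $\underline{\mathfrak{G}}\ge0$. The decay bookkeeping you flag as the main obstacle is already supplied by Lemma~\ref{t:Gest2} ($|\underline{\mathfrak{G}}|\lesssim(-\tau)^{1/2}$) and Proposition~\ref{t:fginv0} ($1/\uf\lesssim-\mfg$), which make $\Xi(s)/(-s)$ integrable up to $s=0$.
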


\begin{proof}
The proof follows from a modification of \cite[Lemma 3.9]{Liu2024}. The only essential change is to replace equation (3.43) in \cite[Lemma 3.9]{Liu2024} by
\begin{equation*}
	\frac{d\zeta^i}{d\tau} = -\frac{1}{A\tau} \sqrt{S} \frac{\chi_\uparrow}{B} \frac{\zeta^i}{|\zeta|} = -\frac{1}{A\tau} \big(2b + \Xi(\tau)\big) \frac{\zeta^i}{|\zeta|} \quad \overset{\text{multiply } \zeta_i/|\zeta|}{\Longrightarrow} \quad \frac{d|\zeta|}{d\tau} = -\frac{1}{A\tau} \sqrt{S} \frac{\chi_\uparrow}{B} = -\frac{1}{A\tau} \big(2b + \Xi(\tau)\big).
\end{equation*}
This completes the proof.
\end{proof}

The proofs of the following corollaries are analogous to \cite[Corollary 3.1 and 3.2]{Liu2024}, so we omit the details.
\begin{corollary}
	The characteristic cone $\underline{\mathcal{C}}$ can be expressed in the $(t,x^i)$ coordinates as $\mathcal{C}$,
	\begin{equation*}
		\mathcal{C} := \left\{ (t,x) \in [t_0,t_m) \times \mathbb{R}^n \;\middle|\; |x| = 1 + \int_{t_0}^{t} \sqrt{ \cg(y,f(y),f_0(y)) } \, dy \right\},
	\end{equation*}
	which is equation \eqref{e:char1}.
\end{corollary}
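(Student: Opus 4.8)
The plan is to obtain the stated formula for $\mathcal{C}$ directly from the expression for $\underline{\mathcal{C}}$ in Lemma~\ref{t:char1} by pulling back along the inverse time transformation $t=\mathsf{h}(\tau,\zeta)$, $x^i=\zeta^i$ of \eqref{e:coordi2}. Since the hypersurface $\underline{\mathcal{C}}$ is given by $|\zeta|=1+\frac{1}{A}\int_{-1}^{\tau}\frac{1}{-s}\sqrt{S}\,\frac{\underline{\chi_\uparrow}}{B}\,ds$, and since on this characteristic cone $\varrho$ agrees with the reference solution $f$ (the cone bounds the domain of influence of the inhomogeneous data, outside of which $\underline{\varrho}=\uf$ by Lemma~\ref{t:mainsys1} together with finite propagation speed), the function $\mathsf{h}$ restricted to the cone coincides with $\mathsf{h}_\uparrow$. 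Hence, along $\underline{\mathcal C}$ one has $x^i=\zeta^i$ so $|x|=|\zeta|$, and $t=\mathsf h_\uparrow(\tau)$, and the task reduces to showing that the $\tau$-integral transforms into $\int_{t_0}^{t}\sqrt{\cg(y,f(y),f_0(y))}\,dy$.

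The key steps, in order, are as follows. First, I would invoke Lemma~\ref{t:gb2}\ref{l:2.3} to change variables in the integral from $s$ (a $\tau$-variable) to $y=\mathsf h_\uparrow(s)$, using $\partial_\tau\mathsf h_\uparrow(\tau)=\frac{\mathsf h_\uparrow^{2-\ca}(\tau)(1+\uf)^{\cc-1}}{AB\,\uf\,(-\tau)^{\frac{\cb}{A}+1}}$, so that $ds=AB\,\uf\,(-s)^{\frac{\cb}{A}+1}\mathsf h_\uparrow^{\ca-2}(1+\uf)^{1-\cc}\,dy$. Second, I would substitute the identity \eqref{e:id3}, $\frac{B^{-1}(-\tau)^{-\frac{\cb}{A}}}{\mathsf h_\uparrow^{\ca-2}(1+\uf)^{2-\cc}}=\frac{\mathsf h_\uparrow^2\,\underline{f_0}}{(1+\uf)^2}$, to rewrite the remaining $\tau$-dependent factors; combined with Lemma~\ref{t:coef1} ($H^{ij}=S\delta^{ij}\frac{\underline{\chi_\uparrow}}{B}+\mathscr H\delta^{ij}$, and on the cone the principal symbol reduces to $\sqrt{S}\,\frac{\underline{\chi_\uparrow}}{B}$ up to the rescaling used in $\mathscr R^j$, $H^{ij}$), the integrand $\frac{1}{-s}\sqrt{S}\frac{\underline{\chi_\uparrow}}{B}\cdot\frac1A$ should collapse, after reinstating the original variables via \eqref{e:v1}--\eqref{e:v5} with $u=u_0=z=0$ (since $\varrho=f$ on the cone), to exactly $\sqrt{\underline{\cg}(\tau)}$ pulled back, i.e.\ to $\sqrt{\cg(y,f(y),f_0(y))}$. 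Third, I would check the endpoint: at $\tau=-1$ we have $\mathsf h_\uparrow(-1)=t_0$ and $|\zeta|=1$, which matches the claimed initial radius $|x|=1$ at $t=t_0$, and the upper limit $\tau\to 0$ corresponds to $t\to t_m$.

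The main obstacle I anticipate is the bookkeeping in step two: verifying that all the compactification factors — the powers of $(-\tau)$, the powers of $\mathsf h_\uparrow$, the $(1+\uf)$ factors, and the $\underline{\chi_\uparrow}/B$ normalization appearing in $H^{ij}$ and in the definition \eqref{e:v5} of $\mathcal B_j$ — conspire to leave precisely $\sqrt{\cg}$ with no residual weight, and that the definition of $\cg=\cg(t,\varrho,\del t\varrho)$ from \ref{Eq2} evaluated at $\varrho=f$, $\del t\varrho=f_0$ is reproduced exactly. One must be careful that $\cg$ in \ref{Eq2} is $\cm^2\frac{(\del t\varrho)^2}{(1+\varrho)^2}+4(\ck-\cm^2)\frac{1+\varrho}{t^2}$, so $\sqrt{\underline{\cg}}$ is genuinely the pull-back of that expression, and that the quantity $\sqrt{S}\frac{\underline{\chi_\uparrow}}{B}$ in Lemma~\ref{t:char1}, together with the factor $\frac{1}{-A s}$ and the Jacobian $\partial_\tau\mathsf h_\uparrow$, matches $\sqrt{\underline{\cg}}\cdot\partial_\tau\mathsf h_\uparrow$. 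Because Lemma~\ref{t:char1} and the surrounding identities (especially \eqref{e:id3} and \eqref{e:chig}) were designed precisely so that this cancellation holds, once the substitution is set up correctly the computation is routine; the only real subtlety is tracking signs of $\tau\in[-1,0)$ and of $\ba+\triangle$ so that every radical and every power is well-defined. This is why we relegate the details to being "analogous to \cite[Corollary~3.1]{Liu2024}" and present only the change-of-variables skeleton above.
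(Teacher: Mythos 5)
Your proposal is correct and follows the same route the paper indicates: pull back the expression for $\underline{\mathcal{C}}$ from Lemma~\ref{t:char1} along $y=\mathsf h_\uparrow(s)$, use $\partial_\tau\mathsf h_\uparrow$ from Lemma~\ref{t:gb2} together with the identities \eqref{e:Gdef2}, \eqref{e:keyid3}, and the relation $S\,\frac{\underline{\chi_\uparrow}}{B}=\cm^2\frac{\underline{\chi_\uparrow}}{B}+4(1-\cm^2)\frac{1+\uf}{\uf}$ (with $\ck=1$) to reduce the integrand to $\sqrt{S}\,\frac{\underline{f_0}}{1+\uf}=\sqrt{\underline{\cg}}$, and invoke $\varrho\equiv f$ on the cone so that $\mathsf h=\mathsf h_\uparrow$ there — precisely the argument the paper attributes to \cite[Corollary 3.1]{Liu2024}. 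The paper also records an alternative, more elementary derivation directly from the wave operator (the null function $-|x|+|x_0|+\int_{t_0}^t\sqrt{\cg}\,dy$), which your appeal to Lemma~\ref{t:coef1} and $H^{ij}$ is an unnecessary detour around: Lemma~\ref{t:char1} already packages the characteristic speed as $\sqrt{S}\,\underline{\chi_\uparrow}/B$, so those intermediate objects need not be revisited.
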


Alternatively, the expression for $\mathcal{C}$ can be derived directly from the wave operator $\partial^2_t - \cg \delta^{ij} \partial_i \partial_j$, where
\[
\cg = \cg(t, \varrho, \partial_t \varrho) := \cm^2 \frac{ (\partial_t \varrho)^2}{(1+\varrho)^2} + 4(\ck-\cm^2) \frac{1+\varrho}{t^2}.
\]
This is because when $|x_0| \geq 1$, the function $-|x| + |x_0| + \int^t_{t_0} \sqrt{\cg} \, dy$ is a null function, and $x^1 = x^1_0 + \int^t_{t_0} \sqrt{\cg} \, dy$ represents a null geodesic.

\begin{corollary}\label{t:homdom}
	If $\mathcal{H}$ is defined as in \eqref{e:cah}, i.e.,
	\begin{equation*}
		\mathcal{H} := \left\{ (t,x) \in [t_0,t_m) \times \mathbb{R}^n \;\middle|\; |x| \geq 1 + \int_{t_0}^{t} \sqrt{ \cg(y,f(y),f_0(y)) } \, dy \right\},
	\end{equation*}
	then on $\mathcal{H}$, we have $\varrho = f$.
\end{corollary}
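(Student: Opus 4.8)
The plan is to realise the reference solution $f$ as a spatially homogeneous solution of \ref{Eq2} and then invoke finite speed of propagation. Because $\cc=\frac43$, a function depending only on time solves \ref{Eq2} if and only if it solves the reference ODE \eqref{e:feq0b}: substituting $\varrho=f(t)$ annihilates the terms $\cg\,\delta^{ij}\del{i}\del{j}\varrho$, $\cg\,q^i\del{i}\varrho$ and $\mathtt{K}^{ij}\del{i}\varrho\del{j}\varrho$, leaving exactly \eqref{e:feq0b}. Hence $f$, viewed as a function on $[t_0,t_m)\times\Rbb^n$, is a $C^2$ solution of \ref{Eq2} with data $(\beta,\beta_0)$; and since $\supp\psi=\supp\psi_0=B_1(0)$, the Cauchy data of $\varrho$ and of $f$ coincide on $\Rbb^n\setminus B_1(0)$. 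It therefore suffices to show that $\varrho=f$ in the exterior of the characteristic cone $\mathcal{C}$; by continuity the equality then extends to $\mathcal{C}$, hence to all of $\mathcal{H}$.

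I would carry this out through the Fuchsian reformulation of Lemma \ref{t:mainsys1}, for which $\varrho=f$ is equivalent to $U\equiv0$ (indeed $u=u_0=u_i=0$, while $\mathsf{h}\equiv\mathsf{h}_\uparrow$ forces $z=0$ and $\mathcal{B}_j=0$). At $\tau=-1$, Lemmas \ref{t:gb2} and \ref{t:gb1} give $\mathsf{h}=\mathsf{h}_\uparrow=t_0$, so $z|_{\tau=-1}=0$ and $\mathcal{B}_j|_{\tau=-1}=0$, whereas $u|_{\tau=-1}=\psi/\beta$, $u_0|_{\tau=-1}=\psi_0/\beta_0$ and $u_i|_{\tau=-1}=\del{i}\psi/(1+\beta)$; thus $\supp U|_{\tau=-1}=B_1(0)$. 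Since $U\equiv0$ is the unique solution of \eqref{e:mainsys1} whose data vanish on $\Rbb^n\setminus B_1(0)$, finite speed of propagation for the symmetric hyperbolic system \eqref{e:mainsys1}, together with Lemma \ref{t:char1} (which confines the domain of influence of $B_1(0)$ to $\underline{\mathcal{C}}$), gives $U\equiv0$, i.e.\ $\varrho=f$, in the exterior of $\underline{\mathcal{C}}$. Translating by the preceding corollary, $\underline{\mathcal{C}}$ becomes $\mathcal{C}$, so $\varrho=f$ outside $\mathcal{C}$, and therefore on $\mathcal{H}$. As an alternative one can argue directly with the operator $\del{t}^2-\cg\,\delta^{ij}\del{i}\del{j}$: wherever $\varrho=f$ one has $\cg(t,\varrho,\del{t}\varrho)=\cg(t,f,f_0)$, so the null geodesics issuing from $\partial B_1(0)$ at $t=t_0$ are $x^1=x^1_0+\int_{t_0}^t\sqrt{\cg(y,f,f_0)}\,dy$ with $|x_0|=1$, sweeping out $\mathcal{C}$, and a standard energy estimate for $\varrho-f$ in the exterior region---whose flux through the characteristic hypersurface $\mathcal{C}$ has a favourable sign---forces $\varrho\equiv f$ there.

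The one delicate point is the familiar mild circularity of quasilinear domain-of-dependence arguments: the cone $\mathcal{C}$ is defined through $f$, so one must already know that $\varrho=f$ near $\mathcal{C}$ before $\mathcal{C}$ is genuinely null for $\varrho$. This is handled by the usual continuity/bootstrap device---assume $\varrho=f$ on the closure of the exterior region, deduce that $\cg$ evaluated on the solution equals $\cg(t,f,f_0)$ there so that $\mathcal{C}$ is characteristic, and then close the energy estimate---and it has already been carried out at the level of \eqref{e:mainsys1} in Lemma \ref{t:char1} for the present parameter range \eqref{A:2}. Consequently the remaining content of the corollary is only the coordinate bookkeeping and the uniqueness statement, exactly as in \cite[Corollary 3.2]{Liu2024}.
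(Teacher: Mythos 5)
Your argument is correct and follows essentially the same route the paper intends: verify that the spatially homogeneous profile $f$ solves \ref{Eq2} with the homogeneous part of the data (using $\cc=\tfrac{4}{3}$), observe that in the Fuchsian variables $\varrho\equiv f$ corresponds to $U\equiv 0$ with $U|_{\tau=-1}$ supported in $B_1(0)$, and then invoke the domain-of-influence bound of Lemma~\ref{t:char1} together with uniqueness/finite speed of propagation for the symmetric hyperbolic system \eqref{e:mainsys1} to conclude $U\equiv 0$ outside $\underline{\mathcal{C}}$, i.e., $\varrho=f$ on $\mathcal{H}$. The paper simply defers the details to \cite[Corollaries~3.1--3.2]{Liu2024}, which is precisely the argument you have reconstructed, including the bootstrap resolution of the quasilinear circularity already packaged into Lemma~\ref{t:char1}.
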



\section{Modification and Extension to Fuchsian Form}\label{s:4} 
Since the singular equation \eqref{e:mainsys1} is not in Fuchsian form, we apply a sequence of transformations to partially convert it. Following the approach in \cite[\S1.4.2]{Liu2024}, this process yields a standard Fuchsian form while preserving the equivalence of solutions within the specified region. Unlike \cite{Liu2024}, the relaxed constraints on parameters $\ca$, $\cb$, and $\cc$ make their selection more involved. Moreover, \S \ref{s:reorg} introduces a new hypersurface $\widehat{\Gamma}_{\delta_0}$ to define the updated lens-shaped region.

\subsection{The Zoom-in Coordinate}\label{s:riv1} 
The first step in transforming equation \eqref{e:mainsys1} into a Fuchsian equation is to introduce a zoom-in coordinate system $(\ttau,\txi)$ defined by
\begin{equation}\label{e:coord5}
	\ttau = \ttau(\tau,\zeta) = \tau
	\quad \text{and} \quad
	\txi^i = \txi^i(\tau,\zeta) = \frac{\mathtt{p} \tc^i}{A} \ln(-\tau) + \zeta^i,
\end{equation}
where $\tc^i$ and $\mathtt{p}$ are given constants, and $\txi^i \in (-\infty, \infty)$. The purpose of this transformation is to make the main diagonal elements of $\mathbf{A}^i$ non-zero.

The inverse transformation of \eqref{e:coord5} is
\begin{equation}\label{e:coordi5}
	\tau = \tau(\ttau,\txi) = \ttau
	\quad \text{and} \quad
	\zeta^i = \zeta^i(\ttau,\txi) = \txi^i - \frac{\mathtt{p} \tc^i}{A} \ln(-\ttau).
\end{equation}

\begin{figure}[htbp]
	\begin{minipage}[t]{0.5\linewidth}
		\centering
		\includegraphics[width=1\textwidth]{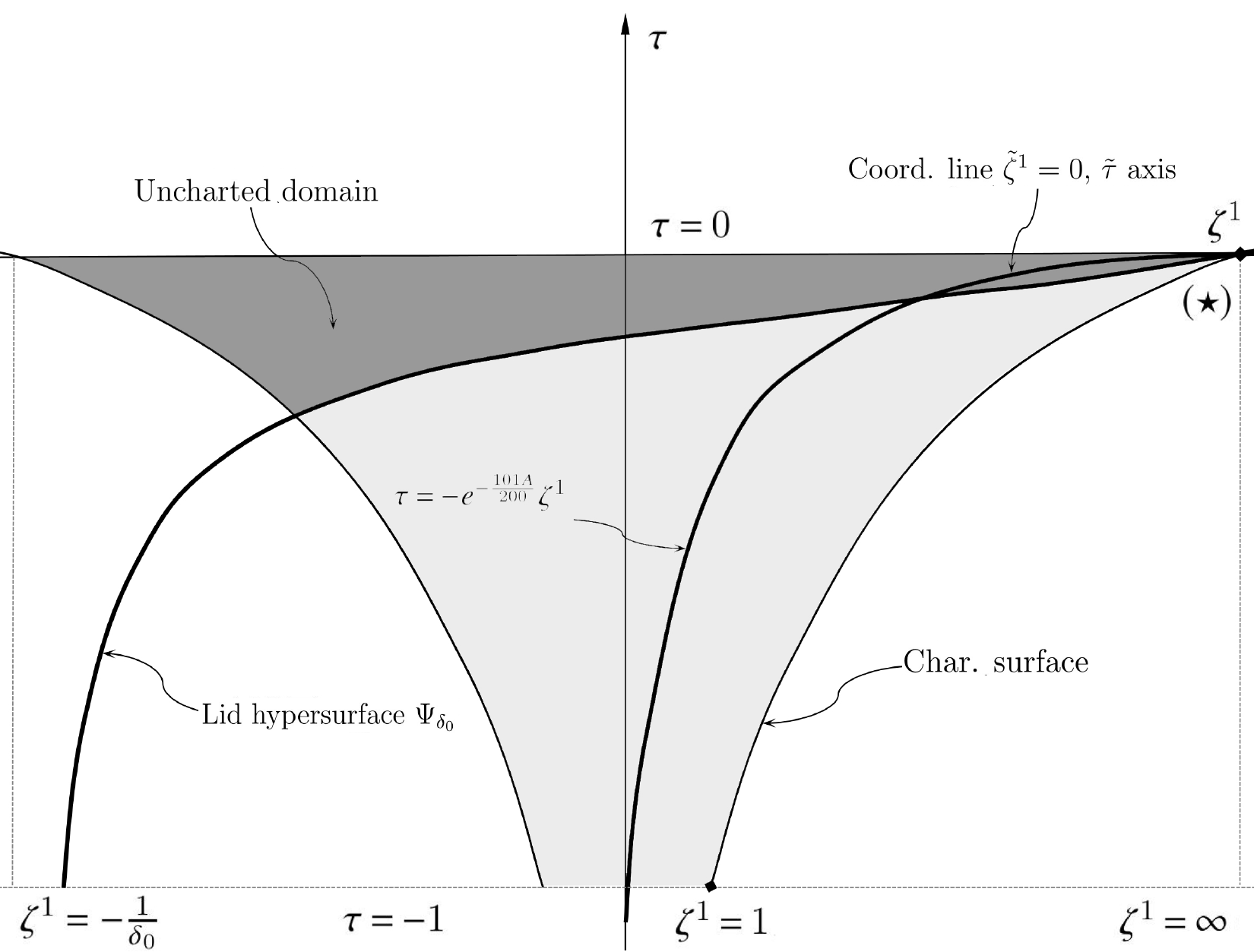}
		\caption{The compactified coordinate system $(\tau,\zeta)$ after time compactification.}
		\label{f:fig3a}
	\end{minipage}%
	\begin{minipage}[t]{0.5\linewidth}
		\centering
		\includegraphics[width=1\textwidth]{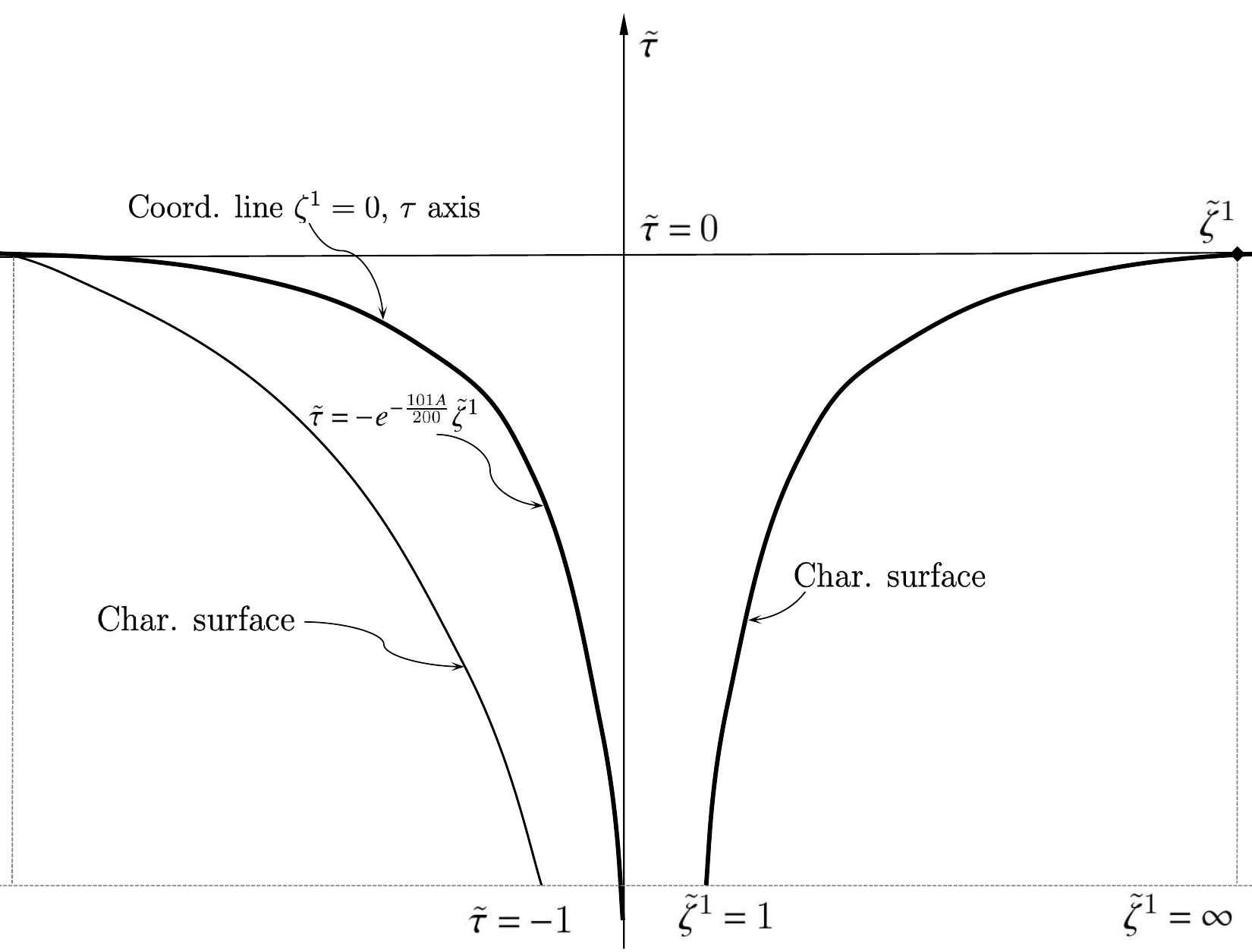}
		\caption{The zoom-in coordinate system $(\ttau,\txi)$.}
		\label{f:fig3b}
	\end{minipage}
\end{figure}

From \eqref{e:coord5} and \eqref{e:coordi5}, the Jacobian matrix of the coordinate transformation is
\begin{equation}\label{e:Jb2}
	\begin{pmatrix}
		\widetilde{\frac{\partial \ttau}{\partial \tau}} & \widetilde{\frac{\partial \ttau}{\partial \zeta^j}} \\
		\widetilde{\frac{\partial \txi^i}{\partial \tau}} & \widetilde{\frac{\partial \txi^i}{\partial \zeta^j}}
	\end{pmatrix}
	= \begin{pmatrix}
		1 & 0 \\
		\frac{\mathtt{p} \tc^i}{A \ttau} & \delta^i_j
	\end{pmatrix},
	\quad \text{and} \quad
	\det \begin{pmatrix}
		\widetilde{\frac{\partial \ttau}{\partial \tau}} & \widetilde{\frac{\partial \ttau}{\partial \zeta^j}} \\
		\widetilde{\frac{\partial \txi^i}{\partial \tau}} & \widetilde{\frac{\partial \txi^i}{\partial \zeta^j}}
	\end{pmatrix} = 1 > 0.
\end{equation}
For any function $z(\tau,\zeta)$, let $\tilde{z}(\ttau,\txi) := z(\tau(\ttau,\txi), \zeta(\ttau,\txi))$. Using the Jacobian matrix \eqref{e:Jb2}, we obtain
\begin{align}
	\widetilde{\partial_{\tau} z} &= \widetilde{\frac{\partial \ttau}{\partial \tau}} \partial_{\ttau} \tilde{z} + \widetilde{\frac{\partial \txi^i}{\partial \tau}} \partial_{\txi^i} \tilde{z} = \partial_{\ttau} \tilde{z} + \frac{\mathtt{p} \tc^i}{A \ttau} \partial_{\txi^i} \tilde{z}, \label{e:transf1} \\
	\widetilde{\partial_{\zeta^i} z} &= \widetilde{\frac{\partial \ttau}{\partial \zeta^i}} \partial_{\ttau} \tilde{z} + \widetilde{\frac{\partial \txi^j}{\partial \zeta^i}} \partial_{\txi^j} \tilde{z} = \partial_{\txi^i} \tilde{z}. \label{e:transf2}
\end{align}

\begin{lemma}\label{t:mainsys2}
	In the zoom-in coordinate system \eqref{e:coord5}, the singular equation \eqref{e:mainsys1} becomes
	\begin{equation}\label{e:mainsys2a}
		\widetilde{\mathbf{A}^0} \partial_{\ttau} \tilde{U} + \frac{1}{A \ttau} \tilde{\mathbf{A}}^{i} \partial_{\txi^i} \tilde{U} = \frac{1}{A\ttau} \widetilde{\mathbf{A}} \tilde{U} + \tilde{\mathbf{F}},
	\end{equation}
	where
	\begin{align*}
		\tilde{\mathbf{A}}^{i} &:= \mathtt{p} \tc^i \widetilde{\mathbf{A}^0} + \widetilde{\mathbf{A}^i} \\
		&= \begin{pmatrix}
			\mathtt{p} \tc^i & \mathtt{p} \tc^i \tilde{\mathscr{R}}^j + \tilde{H}^{ij} & 0 & 0 & 0 \\
			\mathtt{p} \tc^i \tilde{\mathscr{R}}_k + \tilde{H}^i_k & \mathtt{p} \tc^i (\ck + \mathscr{L} + \tau\mathscr{S}) \delta^j_k & 0 & 0 & 0 \\
			0 & 0 & 2 \mathtt{p} \tc^i & 0 & 0 \\
			0 & 0 & 0 & \mathtt{p} \tc^i \delta^l_s & 0 \\
			0 & 0 & 0 & 0 & \mathtt{p} \tc^i + \mathtt{d}^i
		\end{pmatrix}.
	\end{align*}
\end{lemma}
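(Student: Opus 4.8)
The plan is to push the singular system \eqref{e:mainsys1} forward under the zoom-in diffeomorphism \eqref{e:coord5} and read off the transformed principal part. Because the Jacobian in \eqref{e:Jb2} is unimodular ($\det = 1 > 0$), the map $(\tau,\zeta)\mapsto(\ttau,\txi)$ is a genuine diffeomorphism on the region of interest, so a classical solution $U$ of \eqref{e:mainsys1} is carried to a classical solution $\tilde U(\ttau,\txi) = U(\tau(\ttau,\txi),\zeta(\ttau,\txi))$ of the pulled-back system. First I would insert the derivative identities \eqref{e:transf1}--\eqref{e:transf2} into \eqref{e:mainsys1}: the spatial derivative transforms with no correction, $\widetilde{\partial_{\zeta^i}U} = \partial_{\txi^i}\tilde U$, while the time derivative acquires a single extra term, $\widetilde{\partial_{\tau}U} = \partial_{\ttau}\tilde U + \tfrac{\mathtt{p}\tc^i}{A\ttau}\partial_{\txi^i}\tilde U$. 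Every coefficient object in \eqref{e:mainsys1} --- the matrices $\mathbf{A}^0,\mathbf{A}^i,\mathbf{A}$, the source $\mathbf{F}$, and the scalar weight $\tfrac{1}{A\tau}$ --- depends only on $\tau$ and on the components of $U$, so under the pullback it simply acquires a tilde (with $\tau$ replaced by $\ttau$) and generates no new terms.

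Collecting the two occurrences of $\partial_{\txi^i}\tilde U$ --- one coming from $\widetilde{\mathbf{A}^0}$ via the correction $\tfrac{\mathtt{p}\tc^i}{A\ttau}$, the other from $\tfrac{1}{A\ttau}\widetilde{\mathbf{A}^i}$ --- gives
\begin{equation*}
	\widetilde{\mathbf{A}^0}\,\partial_{\ttau}\tilde U + \frac{1}{A\ttau}\bigl(\mathtt{p}\tc^i\,\widetilde{\mathbf{A}^0} + \widetilde{\mathbf{A}^i}\bigr)\partial_{\txi^i}\tilde U = \frac{1}{A\ttau}\widetilde{\mathbf{A}}\,\tilde U + \tilde{\mathbf{F}},
\end{equation*}
which is \eqref{e:mainsys2a} once we define $\tilde{\mathbf{A}}^i := \mathtt{p}\tc^i\widetilde{\mathbf{A}^0} + \widetilde{\mathbf{A}^i}$. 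It then remains only to expand this combination block by block using the explicit forms of $\mathbf{A}^0$ and $\mathbf{A}^i$ from Lemma~\ref{t:mainsys1}; the addition is entrywise. The $(1,1)$ block of $\mathbf{A}^i$ vanishes, leaving $\mathtt{p}\tc^i$; the off-diagonal $(1,2)$ and $(2,1)$ blocks become $\mathtt{p}\tc^i\tilde{\mathscr{R}}^j + \tilde{H}^{ij}$ and $\mathtt{p}\tc^i\tilde{\mathscr{R}}_k + \tilde{H}^i_k$; the $u$-row gives $2\mathtt{p}\tc^i$; the $\mathcal{B}_l$-row gives $\mathtt{p}\tc^i\delta^l_s$; the $z$-row combines the diagonal $1$ of $\mathbf{A}^0$ with the entry $\mathtt{d}^i$ of $\mathbf{A}^i$ to give $\mathtt{p}\tc^i + \mathtt{d}^i$; and for the $(2,2)$ block the entry of $\mathbf{A}^0$ is $(S+\mathscr{L})\delta^j_k$, so the block of $\tilde{\mathbf{A}}^i$ is $\mathtt{p}\tc^i(S+\mathscr{L})\delta^j_k$, which is the displayed entry once $S$ is written out via \eqref{e:S1}.

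I do not anticipate a genuine obstacle: this is a bookkeeping step, not a conceptual one, and the authors themselves note that the computations of this section introduce no new ideas. The two places demanding attention (and the likeliest source of slips) are: (i) confirming that the spatial derivative contributes nothing beyond $\partial_{\txi^i}$, so that the whole $\mathtt{p}\tc^i\widetilde{\mathbf{A}^0}$ correction originates solely from $\widetilde{\partial_{\tau}U}$ and merges cleanly with the $\tfrac{1}{A\ttau}$ weight already attached to the $\mathbf{A}^i$-term; and (ii) carrying the full coefficient $S$ of \eqref{e:S1} through the $(2,2)$ block. Finally, since $\tilde{\mathbf{A}}^i$, $\widetilde{\mathbf{A}}$, $\widetilde{\mathbf{A}^0}$ and $\tilde{\mathbf{F}}$ are the pullbacks of objects already shown to be analytic, with the vanishing orders recorded in \S\ref{s:rmdrs}, in Lemma~\ref{t:mainsys1}, those structural properties are inherited verbatim --- which is exactly what the Fuchsian reduction in the rest of \S\ref{s:4} will use.
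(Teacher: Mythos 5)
Your proposal is correct and is exactly the paper's own (one-line) argument: the pullback under the unimodular map \eqref{e:coord5}, together with \eqref{e:transf1}--\eqref{e:transf2}, converts $\widetilde{\partial_\tau U}$ into $\partial_{\ttau}\tilde U + \tfrac{\mathtt{p}\tc^i}{A\ttau}\partial_{\txi^i}\tilde U$, and collecting the spatial-derivative terms gives $\tilde{\mathbf{A}}^i = \mathtt{p}\tc^i\widetilde{\mathbf{A}^0} + \widetilde{\mathbf{A}^i}$. One small caveat: your entrywise expansion correctly yields $\mathtt{p}\tc^i(S+\mathscr{L})\delta^j_k$ for the $(2,2)$ block (which Lemma~\ref{t:sigsys} confirms), whereas the lemma's displayed $\mathtt{p}\tc^i(\ck+\mathscr{L}+\tau\mathscr{S})\delta^j_k$ drops the middle fraction from \eqref{e:S1}, which vanishes only when $\cb=\tfrac{2}{3}$ and not over the full range $\cb\in[\tfrac13,\tfrac23]$ considered here — so your "once $S$ is written out" remark is not quite literal, but your computation is the correct one.
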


\begin{proof}
	This result follows directly from \eqref{e:mainsys1} and the transformation rules \eqref{e:transf1}--\eqref{e:transf2}.
\end{proof}

\subsection{The Zoom-in Variables}\label{s:riv2}
The second modification introduces a set of new zoom-in variables based on \eqref{e:v1}--\eqref{e:v5}.
\begin{align}
	\mfu_0(\ttau,\txi) = \frac{1}{\mu(\txi)} \widetilde{u_0}(\ttau,\txi)
	\quad &\Leftrightarrow \quad \widetilde{u_0}(\ttau,\txi) = \mu(\txi) \mfu_0(\ttau,\txi), \label{e:fv1} \\
	\mfu_i(\ttau,\txi) = \frac{1}{\mu(\txi)} \widetilde{u_i}(\ttau,\txi)
	\quad &\Leftrightarrow \quad \widetilde{u_i}(\ttau,\txi) = \mu(\txi) \mfu_i(\ttau,\txi), \label{e:fv2} \\
	\mfu(\ttau,\txi) = \tilde{u}(\ttau,\txi)
	\quad &\Leftrightarrow \quad \tilde{u}(\ttau,\txi) = \mfu(\ttau,\txi), \label{e:fv3} \\
	\mfv(\ttau,\txi) = \frac{1}{\mu(\txi)} \tilde{u}(\ttau,\txi)
	\quad &\Leftrightarrow \quad \tilde{u}(\ttau,\txi) = \mu(\txi) \mfv(\ttau,\txi), \label{e:fv7} \\
	\mathfrak{B}_j(\ttau,\txi^I) = \frac{1}{\mu(\txi)} \tilde{\mathcal{B}}_j(\ttau,\txi)
	\quad &\Leftrightarrow \quad \tilde{\mathcal{B}}_j(\ttau,\txi) = \mu(\txi) \mathfrak{B}_j(\ttau,\txi), \label{e:fv4} \\
	\mathfrak{z}(\ttau,\txi) = \frac{1}{\eta(\txi)} \tilde{z}(\ttau,\txi)
	\quad &\Leftrightarrow \quad \tilde{z}(\ttau,\txi) = \eta(\txi) \mathfrak{z}(\ttau,\txi), \label{e:fv6}
\end{align}
where
\begin{equation}\label{e:eta1}
	\mu(\txi) := \mu_\star e^{\omega \kappa_i \txi^i}, \quad
	\eta(\txi) := \eta_\star e^{\theta \kappa_i \txi^i}, \quad
	\Rightarrow \quad
	\frac{\partial_{\txi^i} \mu}{\mu} = \omega \kappa_i < 0, \quad
	\frac{\partial_{\txi^i} \eta}{\eta} = \theta \kappa_i,
\end{equation}
with $\mu_\star$, $\eta_\star$, $\omega > 0$, $\kappa_i < 0$ being given constants satisfying the condition
\begin{equation}\label{e:ck}
	\tc^i \kappa_i = 1.
\end{equation}

\begin{lemma}\label{t:sigsys}
	After applying the zoom-in variables \eqref{e:fv1}--\eqref{e:fv6}, equation \eqref{e:mainsys2a} can be rewritten as
	\begin{align}\label{e:mainsys3}
		\mathfrak{A}^0 \partial_{\ttau} \mathfrak{U} + \frac{1}{A\ttau} \mathfrak{A}^{i} \partial_{\txi^i} \mathfrak{U} = \frac{1}{A\ttau} \mathfrak{A} \mathfrak{U} + \mathfrak{F},
	\end{align}
	where $\mathfrak{U} := (\mfu_0, \mfu_j, \mfu, \mathfrak{B}_l, \mathfrak{z}, \mfv)^T$, and $\mathfrak{A}^0$, $\mathfrak{A}^{i}$, $\mathfrak{A}$, and $\mathfrak{F}$ are given by
	\begin{align*}
		\mathfrak{A}^0 &:= 
		\begin{pmatrix}
			1 & \mu \tilde{R} \mathfrak{B}_i \delta^{ij} & 0 & 0 & 0 & 0 \\
			\mu \tilde{R} \mathfrak{B}_k & (\tilde{S} + \widetilde{\mathscr{L}}) \delta^j_k & 0 & 0 & 0 & 0 \\
			0 & 0 & 2 & 0 & 0 & 0 \\
			0 & 0 & 0 & \delta^l_s & 0 & 0 \\
			0 & 0 & 0 & 0 & 1 & 0 \\
			0 & 0 & 0 & 0 & 0 & 2
		\end{pmatrix}, \\
		\mathfrak{A}^{i} &:= 
		\begin{pmatrix}
			\mathtt{p} \tc^i & \tilde{S} \frac{\widetilde{\underline{\chi_\uparrow}}}{B} \delta^{ij} + \widetilde{\mathscr{Z}}^{ij} & 0 & 0 & 0 & 0 \\
			(\tilde{S} \frac{\widetilde{\underline{\chi_\uparrow}}}{B} \delta^{ij} + \widetilde{\mathscr{Z}}^{ij}) \delta_{jk} & \mathtt{p} \tc^i (\tilde{S} + \widetilde{\mathscr{L}}) \delta^j_k & 0 & 0 & 0 & 0 \\
			0 & 0 & 2 \mathtt{p} \tc^i & 0 & 0 & 0 \\
			0 & 0 & 0 & \mathtt{p} \tc^i \delta^l_s & 0 & 0 \\
			0 & 0 & 0 & 0 & \mathtt{p} \tc^i + \mathtt{d}^i & 0 \\
			0 & 0 & 0 & 0 & 0 & 2 \mathtt{p}\tc^i
		\end{pmatrix},
	\end{align*}
	\begin{align*}
		\mathfrak{A} &:= 
		\begin{pmatrix}
			\mathfrak{D}_{11} + \widetilde{\mathscr{Z}_{11}} & \mathfrak{D}_{12}^j + \widetilde{\mathscr{Z}^j_{-}} & 0 & 0 & \frac{\eta}{\mu} (\mathfrak{D}_{15} + \widetilde{\mathscr{Z}_{15}}) & \mathfrak{D}_{16} + \widetilde{\mathscr{Z}_{13}} \\
			\mathfrak{D}_{21,k} + \widetilde{\mathscr{Z}_k^+} & (\mathfrak{D}_{22} + \widetilde{\mathscr{Z}}_\star)\delta^j_k & 0 & (\mathfrak{D}_{24} + \widetilde{\mathscr{Z}_{24}}) \delta^l_k & 0 & 0 \\
			\mu(\mathfrak{D}_{31} + \widetilde{\mathscr{Z}_{31}}) & 0 & \mathfrak{D}_{33} + \widetilde{\mathscr{Z}_{33}} & 0 & \eta(\mathfrak{D}_{35} + \widetilde{\mathscr{Z}_{35}}) & 0 \\
			0 & (\mathfrak{D}_{42} + \widetilde{\mathscr{Z}_{42}})\delta^j_{s} & 0 & (\mathfrak{D}_{44} + \widetilde{\mathscr{Z}_{44}}) \delta^l_s & 0 & 0 \\
			0 & 0 & 0 & 0 & \mathfrak{D}_{55} & 0 \\
			\mathfrak{D}_{31} + \widetilde{\mathscr{Z}_{31}} & 0 & 0 & 0 & \frac{\eta}{\mu} (\mathfrak{D}_{35} + \widetilde{\mathscr{Z}_{35}}) & \mathfrak{D}_{66} + \widetilde{\mathscr{Z}_{33}}
		\end{pmatrix},
	\end{align*}
	\begin{equation*}
		\mathfrak{F} := 
		\begin{pmatrix}
			- \frac{\omega}{A} \widetilde{\mathscr{S}}^j \mfu_j + \mu^{-1} \widetilde{\mathfrak{F}_{u_0}} \\
			- \frac{\omega \mathtt{p}}{A} \widetilde{\mathscr{S}} \mfu_k - \frac{\omega}{A} \widetilde{\mathscr{S}}^j \delta_{jk} \mfu_0 + \mu^{-1} \widetilde{\mathfrak{F}_{u_k}} \\
			\widetilde{\mathfrak{F}_{u}} \\
			\mu^{-1} \widetilde{\mathfrak{F}_{\mathcal{B}_i}} \\
			\eta^{-1} \widetilde{\mathfrak{F}_{z}} \\
			\mu^{-1} \widetilde{\mathfrak{F}_u}
		\end{pmatrix},
	\end{equation*}
	with
	\begin{gather}
		\widetilde{\mathscr{Z}}^{ij} := \widetilde{\mathscr{Z}}^{ij}(\ttau; \mu \mfu_0, \mu \mfv, \eta \mathfrak{z}, \mu \mathfrak{B}_i) = \frac{\mu \mathtt{p} \tc^i (\tilde{S} + \widetilde{\mathscr{L}}) \tilde{\uf}}{1 + \tilde{\uf}} \frac{\widetilde{\underline{\chi_\uparrow}}}{B} \mathfrak{B}_k \delta^{kj} + \widetilde{\mathscr{H}} (\ttau; \mu \mfu_0, \mu \mfv, \eta \mathfrak{z}) \delta^{ij}, \label{e:ZIj} \\
		\widetilde{\mathscr{Z}}^j := \kappa_i \widetilde{\mathscr{Z}}^{ij}, \quad
		\widetilde{\mathscr{S}}^j := \ck \kappa_i \delta^{ij} \frac{\tilde{\underline{\mathfrak{G}}}}{B} + \kappa_i \delta^{ij} \frac{\tilde{\underline{\chi}}_\uparrow}{B} \ttau \widetilde{\mathscr{S}}, \label{e:Zj} \\
		\widetilde{\mathscr{Z}_k^+} = - \omega \widetilde{\mathscr{Z}}^j \delta_{jk}, \quad
		\widetilde{\mathscr{Z}^j_{-}} = \widetilde{\mathscr{Z}^j_{12}} - \omega \widetilde{\mathscr{Z}}^j, \quad
		\widetilde{\mathscr{Z}}_\star = - \omega \mathtt{p} \widetilde{\mathscr{L}} + \widetilde{\mathscr{Z}_{22}}, \label{e:Z+-}
	\end{gather}
	and
	\begin{align*}
		\mathfrak{D}_{11} &= -\omega \mathtt{p} + \frac{3\cb - 4\cb\cc}{3 - 2\cc}, \\
		\mathfrak{D}_{12}^j &= \left(-4\ck + 4\cm^2 - \frac{2\cb}{3-2\cc}\cm^2\right) q^j - 4 \omega \ck \kappa_i \delta^{ij}, \\
		\mathfrak{D}_{15} &= \frac{\ell_0 \cb (3\ca - 4\cc)}{3 - 2\cc}, \quad
		\mathfrak{D}_{16} = \frac{3\cb\cc}{3 - 2\cc}, \\
		\mathfrak{D}_{21,k} &= -4 \omega \ck \kappa_i \delta^{i}_k, \quad
		\mathfrak{D}_{22} = \frac{2\cb}{3-2\cc} \ck - \ck \omega \mathtt{p}, \\
		\mathfrak{D}_{24} &= \frac{2\cb\ck}{3-2\cc}\left(\cb + \frac{2\ca\cb}{3-2\cc}\right), \quad
		\mathfrak{D}_{31} = -\frac{4\cb}{3-2\cc}, \\
		\mathfrak{D}_{33} &= \frac{4\cb(3-\cc)}{3-2\cc}, \quad
		\mathfrak{D}_{35} = -\frac{4\cb\ell_0(2-\ca)}{3-2\cc}, \\
		\mathfrak{D}_{42} &= 2 - \cc, \quad
		\mathfrak{D}_{44} = \cb - \omega \mathtt{p}, \\
		\mathfrak{D}_{55} &= -\theta (\mathtt{p} + \kappa_i \mathtt{d}^i), \quad
		\mathfrak{D}_{66} = \frac{4\cb(3-\cc)}{3-2\cc} - 2 \omega \mathtt{p},
	\end{align*}
	and the components of $\mathfrak{F}$ are given by
	\begin{align*}
		\mu^{-1} \widetilde{\mathfrak{F}_{u_0}} &= \frac{\tilde{\underline{\mathfrak{G}}}}{\ttau} \widehat{\mathscr{S}_{11}}(\ttau,\mu,\eta,\eta \mu^{-1}; \mfu_0, \mfv, \mathfrak{z}) + \frac{1}{\ttau \tilde{\uf}^{\frac{1}{2}}} \widehat{\mathscr{S}_{12}}(\ttau,\mu,\eta,\eta \mu^{-1}; \mfu_0, \mfv, \mathfrak{z}); \\
		\mu^{-1} \widetilde{\mathfrak{F}_{u_k}} &= \frac{\tilde{\underline{\mathfrak{G}}}}{\ttau} \widehat{\mathscr{S}_{k;21}}(\ttau,\mu,\eta; \mfu_i, \mathfrak{B}_k) + \frac{1}{\ttau \tilde{\uf}^{\frac{1}{2}}} \widehat{\mathscr{S}_{k;22}}(\ttau,\mu,\eta,\mu \mfu_0,\mfu,\eta \mathfrak{z}; \mfu_i, \mathfrak{B}_k); \\
		\mu^{-1} \widetilde{\mathfrak{F}_{\mathcal{B}_i}} &= \frac{1}{\ttau \tilde{\uf}^{\frac{1}{2}}} \widehat{\mathscr{S}_{l;42}}(\ttau,\mu,\eta,\mu \mfu_0,\mfu,\eta \mathfrak{z}; \mfu_i, \mathfrak{B}_k); \\
		\eta^{-1} \widetilde{\mathfrak{F}_{z}} &= \frac{1}{\ttau \tilde{\uf}^{\frac{1}{2}}} \widehat{\mathscr{S}_{52}}(\ttau,\mu\eta^{-1}; \mfv, \mathfrak{B}_i, \mathfrak{z}); \\
		\mu^{-1} \widetilde{\mathfrak{F}_u} &= \frac{\underline{\widetilde{\mathfrak{G}}}}{\ttau} \widehat{\mathscr{S}_{31}}(\ttau,\mu,\eta,\eta \mu^{-1}; \mfu_0, \mfv, \mathfrak{z}) + \frac{1}{\ttau \tilde{\uf}^{\frac{1}{2}}} \widehat{\mathscr{S}_{32}}(\ttau,\mu,\eta,\eta \mu^{-1}; \mfu_0, \mfv, \mathfrak{z}).
	\end{align*}
\end{lemma}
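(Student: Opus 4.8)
The plan is to obtain \eqref{e:mainsys3} by a direct substitution of the zoom-in variables \eqref{e:fv1}--\eqref{e:fv6} into the system \eqref{e:mainsys2a} from Lemma~\ref{t:mainsys2}, keeping careful track of how the multipliers $\mu(\txi)$ and $\eta(\txi)$ interact with the $\txi^i$-derivatives. Concretely, for each component $\mfu_\bullet$ that is rescaled by $\mu$ (namely $\mfu_0,\mfu_j,\mathfrak{B}_l,\mfv$) I write $\widetilde{u_\bullet}=\mu\,\mfu_\bullet$, so $\partial_{\ttau}\widetilde{u_\bullet}=\mu\,\partial_{\ttau}\mfu_\bullet$ and $\partial_{\txi^i}\widetilde{u_\bullet}=\mu\,\partial_{\txi^i}\mfu_\bullet+(\partial_{\txi^i}\mu)\mfu_\bullet=\mu\bigl(\partial_{\txi^i}\mfu_\bullet+\omega\kappa_i\mfu_\bullet\bigr)$ using \eqref{e:eta1}; likewise $\partial_{\txi^i}\tilde z=\eta\bigl(\partial_{\txi^i}\mathfrak{z}+\theta\kappa_i\mathfrak{z}\bigr)$. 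The component $\mfu=\tilde u$ is unchanged, while $\mfv=\mu^{-1}\tilde u$ is an auxiliary duplicate introduced so that the block structure closes. Substituting these into \eqref{e:mainsys2a}, every occurrence of a $\txi^i$-derivative of a rescaled variable produces, besides the genuine derivative term, an \emph{algebraic} term proportional to $\frac{1}{A\ttau}\omega\kappa_i$ (or $\theta\kappa_i$) times the variable itself, contracted with the relevant row of $\tilde{\mathbf A}^i$. After dividing through by the scalar factor $\mu$ (respectively $\eta$) row by row, these algebraic contributions are precisely what gets absorbed into the new source matrix $\mathfrak A$ on the right-hand side.

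The key bookkeeping steps, in order, are: (1) multiply the $\mfu_0$-row, the $\mfu_k$-rows and the $\mathfrak{B}_l$-rows of \eqref{e:mainsys2a} by $\mu^{-1}$ and the $\mathfrak{z}$-row by $\eta^{-1}$, leaving the $\mfu$-row as is; (2) in each row collect the extra algebraic pieces coming from $\partial_{\txi^i}\mu$ and $\partial_{\txi^i}\eta$ hitting the entries of $\tilde{\mathbf A}^i$ listed in Lemma~\ref{t:mainsys2}, using the normalization $\tc^i\kappa_i=1$ from \eqref{e:ck} to reduce the contractions $\mathtt p\tc^i\kappa_i=\mathtt p$; this is where the $-\omega\mathtt p$ shifts in $\mathfrak D_{11},\mathfrak D_{22},\mathfrak D_{44},\mathfrak D_{66}$ and the $-\theta(\mathtt p+\kappa_i\mathtt d^i)$ in $\mathfrak D_{55}$ originate, while the off-diagonal contractions $\kappa_i H^{ij}$ and $\kappa_i\mathscr{R}^j$ produce the terms $\widetilde{\mathscr{S}}^j$, $\widetilde{\mathscr{Z}}^j$ defined in \eqref{e:Zj}--\eqref{e:Z+-} and the combinations $\widetilde{\mathscr{Z}_k^+}$, $\widetilde{\mathscr{Z}^j_-}$, $\widetilde{\mathscr{Z}}_\star$; (3) handle the mixing between $\mfu$ (the $\tilde u$-row, kept unscaled) and $\mfv$ (the $\mu^{-1}\tilde u$-duplicate): since the original $u$-row of \eqref{e:mainsys1} couples to $u_0,u,z$ with constant coefficients $\mathbf D_{31},\mathbf D_{33},\mathbf D_{35}$, rescaling forces the factors $\mu$ in front of $\mathfrak D_{31}$ and $\eta$ in front of $\mathfrak D_{35}$ in the $\mfu$-row, and the reciprocal factors $\eta/\mu$ in the $\mfu_0$- and $\mfv$-rows, exactly as displayed; (4) rename the rescaled source components $\mu^{-1}\widetilde{\mathfrak F_{u_0}}$, $\mu^{-1}\widetilde{\mathfrak F_{u_k}}$, $\mu^{-1}\widetilde{\mathfrak F_{\mathcal B_i}}$, $\eta^{-1}\widetilde{\mathfrak F_z}$, absorbing the explicit powers of $\mu,\eta,\eta\mu^{-1}$ into the remainder symbols $\widehat{\mathscr S}_{\bullet}$ (still analytic and vanishing to first order in their final slots), which gives the stated form of $\mathfrak F$; (5) verify $\mathfrak A^0$: the rescaling is diagonal so $\mathfrak A^0$ is just $\widetilde{\mathbf A^0}$ with the extra trivial $\mfv$-row $2$ appended, and $\tilde S+\widetilde{\mathscr L}=\ck+\widetilde{\mathscr L}+\tau\widetilde{\mathscr S}$ is used to identify the $(2,2)$-block.

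I expect the main obstacle to be step (2)/(3) combined: correctly tracking which entries of $\tilde{\mathbf A}^i$ are ``diagonal'' (the $\mathtt p\tc^i$, $2\mathtt p\tc^i$, $\mathtt p\tc^i+\mathtt d^i$ pieces) versus ``off-diagonal'' (the $H^{ij}$ and $\mathscr R^j$ pieces), because only the off-diagonal ones generate genuinely new coupling terms after contraction with $\kappa_i$, whereas the diagonal ones only shift existing diagonal entries — and getting the signs of $\omega\kappa_i<0$ right throughout is delicate. A secondary subtlety is the introduction of the duplicate variable $\mfv$: one must check that the $\mfv$-equation is consistent with the $\mfu$-equation under $\mfv=\mu^{-1}\mfu$, i.e.\ that no contradiction arises, which holds because the $\mfv$-row of $\mathfrak A^0$, $\mathfrak A^i$ is obtained from the $\mfu$-row by the same $\mu^{-1}$-rescaling that produced the other rescaled rows, and the $\ttau$-derivative term picks up no $\mu$-derivative since $\mu$ is $\ttau$-independent. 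Since none of this introduces new ideas beyond the analogous computation in \cite[\S3--\S4]{Liu2024}, and the structural assertions (symmetry of $\mathfrak A^0,\mathfrak A^i$, analyticity and vanishing order of the remainders) follow as there, I will state the lemma and relegate the remaining routine algebra to a reference to \cite{Liu2024}.
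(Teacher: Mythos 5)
Your proposal is correct and follows essentially the same route as the paper, which simply defers this computation to \cite[Lemma 4.2]{Liu2024}: a direct substitution of the rescaled variables, with the $\partial_{\txi^i}\mu$ and $\partial_{\txi^i}\eta$ contributions contracted against $\tilde{\mathbf A}^i$ via $\tc^i\kappa_i=1$ and sorted into $\mathfrak A$ (singular, vanishing or constant pieces) versus $\mathfrak F$ (regular pieces), together with the $\mu^{-1}$-rescaled duplicate row for $\mfv$. Your bookkeeping of where the $-\omega\mathtt p$, $-\theta(\mathtt p+\kappa_i\mathtt d^i)$ shifts and the $\widetilde{\mathscr S}^j$, $\widetilde{\mathscr Z}^j_\pm$, $\widetilde{\mathscr Z}_\star$ combinations originate is consistent with the stated coefficients.
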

\begin{proof}
The proof follows the same arguments as in Lemma~4.2 of \cite{Liu2024} and is therefore omitted.
\end{proof}

\subsection{Parameter Determination and Simplification of the Computation}\label{s:revFuc}

In equation \eqref{e:mainsys3}, the functions $\mu$ and $\eta$ become singular as $\txi^1 \to -\infty$. To eliminate this singularity, we introduce a cutoff function $\phi(\txi^1)$ that removes the problematic region, an essential step toward achieving the Fuchsian form. Before this modification, suitable parameters must be chosen so that the matrix $\mathbf{B}$ in \eqref{e:Fucmodel} is positive definite and the normalized matrix $\mathbf{B}^1$ has eigenvalues with the desired signs.
We assume $\ca > 1$, $\cb > 0$, and $1 < \cc < \tfrac{3}{2}$ as required by Appendix \ref{s:ODE0}, and leave $\ell_0$ in \eqref{e:v4} undetermined. The parameters $\ck$, $\kappa_i$, $\tc$, $\omega$, $\theta$, $\mathtt{p}$, and $\mathtt{d}^i$ are then selected to satisfy the positivity and eigenvalue conditions, considering their interdependence.
Specifically, for $1 < \ca \le 30$, $\tfrac{1}{3} \le \cb \le \tfrac{2}{3}$, and $\cc = \tfrac{4}{3}$, we take
\begin{equation}\label{e:para}
	\ck = 1, \quad
	\kappa_i = -101 \delta^1_i, \quad
	\tc^i = -\tfrac{1}{101} \delta_1^i, \quad
	\omega = \theta = \tfrac{3}{2}, \quad
	\mathtt{p} = -200, \quad
	\mathtt{d}^i = 0.
\end{equation}
For $\sigma_0, \delta_0 \in (0,1)$, set $\mu_\star = \sigma_0 e^{-303/\delta_0}$ and $\eta_\star = \sigma_0^2 e^{-303/\delta_0}$. Then, from \eqref{e:eta1},
\begin{align}\label{e:eta2}
	\mu(\txi^1) &:= \sigma_0 e^{-\frac{303}{\delta_0}} e^{-\frac{303}{2} \txi^1}, \AND 
	\eta(\txi^1) := \sigma_0^2 e^{-\frac{303}{\delta_0}} e^{-\frac{303}{2} \txi^1}, \nonumber \\
	\Rightarrow \quad \frac{\partial_{\txi^i} \mu}{\mu} &= -\frac{303}{2} \delta^1_i < 0, \quad
	\frac{\partial_{\txi^i} \eta}{\eta} = -\frac{303}{2} \delta^1_i < 0.
\end{align}     
If $\txi^1 > -\tfrac{2}{\delta_0}$, then
\begin{equation}\label{e:mueta2}
	\mu(\txi^1) \le \sigma_0, \quad \eta(\txi^1) \le \sigma_0^2.
\end{equation}
Moreover, we compute $\frac{\eta}{\mu}$, which will be used in subsequent derivations,
\begin{equation*}\label{e:eonm}
	\frac{\eta}{\mu} = \frac{\sigma_0^2 e^{-\frac{303}{\delta_0}} e^{-\frac{303}{2} \txi^1}}{\sigma_0 e^{-\frac{303}{\delta_0}} e^{-\frac{303}{2} \txi^1}} = \sigma_0.
\end{equation*}

We now introduce a smooth cutoff function $\phi(\txi^1)$ satisfying the following conditions,
\begin{equation}\label{e:phi1}
	\phi \in C^\infty\bigl(\mathbb{R}; [0,1]\bigr), \quad \phi|_{[-\delta_0^{-1}, +\infty)} = 1, \quad \text{and} \quad \supp \phi \subset [-2\delta_0^{-1}, +\infty) \subset \mathbb{R}.
\end{equation}
The equation modified by this cutoff function $\phi(\txi^1)$ becomes
\begin{equation}\label{e:mainsys5}
	\mathfrak{A}^0_\phi \partial_{\ttau} \mathfrak{U} + \frac{1}{A\ttau} \mathfrak{A}^{i}_\phi \partial_{\txi^i} \mathfrak{U} = \frac{1}{A\ttau} \mathfrak{A}_\phi \mathfrak{U} + \mathfrak{F}_\phi,
\end{equation}
where
\begin{align}\label{e:A0}
	\mathfrak{A}^0_\phi := &
	\begin{pmatrix}
		1 & \phi\mu \tilde{R} \mathfrak{B}_i \delta^{ij} & 0 & 0 & 0 & 0 \\
		\phi\mu \tilde{R} \mathfrak{B}_k & (\tilde{S} + \widetilde{\mathscr{L}}_\phi) \delta^j_k & 0 & 0 & 0 & 0 \\
		0 & 0 & 2 & 0 & 0 & 0 \\
		0 & 0 & 0 & \delta^l_s & 0 & 0 \\
		0 & 0 & 0 & 0 & 1 & 0 \\
		0 & 0 & 0 & 0 & 0 & 2
	\end{pmatrix},
\end{align}
\begin{align}\label{e:Ai}
	\mathfrak{A}^{i}_\phi :=
	\begin{pmatrix}
		\frac{200}{101} \delta^i_1 & \tilde{S} \frac{\widetilde{\underline{\chi_\uparrow}}}{B} \delta^{ij} + \widetilde{\mathscr{Z}}_\phi^{ij} & 0 & 0 & 0 & 0 \\
		(\tilde{S} \frac{\widetilde{\underline{\chi_\uparrow}}}{B} \delta^{ij} + \widetilde{\mathscr{Z}}_\phi^{ij}) \delta_{jk} & \frac{200}{101} \delta^i_1 (S + \widetilde{\mathscr{L}}_\phi) \delta^j_k & 0 & 0 & 0 & 0 \\
		0 & 0 & \frac{400}{101} \delta^i_1 & 0 & 0 & 0 \\
		0 & 0 & 0 & \frac{200}{101} \delta^i_1 \delta^l_s & 0 & 0 \\
		0 & 0 & 0 & 0 & \frac{200}{101} \delta^i_1 & 0 \\
		0 & 0 & 0 & 0 & 0 & \frac{400}{101} \delta^i_1
	\end{pmatrix},
\end{align}
\begin{align}\label{e:Aph}
	\mathfrak{A}_\phi := 
	{\footnotesize
		\begin{pmatrix}
			\mathfrak{D}_{11} + \widetilde{\mathscr{Z}_{11,\phi}} & \mathfrak{D}_{12}^j + \widetilde{\mathscr{Z}^j_{-,\phi}} & 0 & 0 & \frac{\eta}{\mu} (\mathfrak{D}_{15} + \widetilde{\mathscr{Z}_{15,\phi}}) & \mathfrak{D}_{16} + \widetilde{\mathscr{Z}_{13,\phi}} \\
			\mathfrak{D}_{21,k} + \widetilde{\mathscr{Z}_{k,\phi}^+} & (\mathfrak{D}_{22} + \widetilde{\mathscr{Z}}_{\star,\phi})\delta^j_k & 0 & (\mathfrak{D}_{24} + \widetilde{\mathscr{Z}_{24,\phi}}) \delta^l_k & 0 & 0 \\
			\phi\mu(\mathfrak{D}_{31} + \widetilde{\mathscr{Z}_{31,\phi}}) & 0 & \mathfrak{D}_{33} + \widetilde{\mathscr{Z}_{33,\phi}} & 0 & \phi\eta(\mathfrak{D}_{35} + \widetilde{\mathscr{Z}_{35,\phi}}) & 0 \\
			0 & (\mathfrak{D}_{42} + \widetilde{\mathscr{Z}_{42,\phi}})\delta^j_{s} & 0 & (\mathfrak{D}_{44} + \widetilde{\mathscr{Z}_{44,\phi}}) \delta^l_s & 0 & 0 \\
			0 & 0 & 0 & 0 & \mathfrak{D}_{55} & 0 \\
			\mathfrak{D}_{31} + \widetilde{\mathscr{Z}_{31,\phi}} & 0 & 0 & 0 & \frac{\eta}{\mu} (\mathfrak{D}_{35} + \widetilde{\mathscr{Z}_{35,\phi}}) & \mathfrak{D}_{66} + \widetilde{\mathscr{Z}_{33,\phi}}
	\end{pmatrix}},
\end{align}
\begin{equation}\label{e:Fphi}
	\mathfrak{F}_\phi := 
	\begin{pmatrix}
		- \frac{3}{2A} \widetilde{\mathscr{S}}^j\mfu_j + \mu^{-1} \widetilde{\mathfrak{F}_{u_0,\phi}} \\
		\frac{300}{A} \widetilde{\mathscr{S}}(\ttau) \mfu_k - \frac{3}{2A} \widetilde{\mathscr{S}}^j\delta_{jk} \mfu_0 + \mu^{-1} \widetilde{\mathfrak{F}_{u_k,\phi}} \\
		\widetilde{\mathfrak{F}_{u,\phi}} \\
		\mu^{-1} \widetilde{\mathfrak{F}_{\mathcal{B}_i,\phi}} \\
		\eta^{-1} \widetilde{\mathfrak{F}_{z,\phi}} \\
		\mu^{-1} \widetilde{\mathfrak{F}_{u,\phi}}
	\end{pmatrix},
\end{equation}
with
\begin{align}\label{D}
	\mathfrak{D}_{11} = &300-7\cb   ,\quad	\mathfrak{D}_{12}^j=(-4+(4-6\cb)\cm^2)q^j + 606\delta^{1j}  
	, \quad \mathfrak{D}_{15}=\ell_0\cb(9\ca-16)  ,\notag \\ 
	\mathfrak{D}_{16}= & 12\cb,\quad
	\mathfrak{D}_{21,k}=  606 \delta^{1}_k  ,\quad	\mathfrak{D}_{22}= 300 + 6\cb    ,\quad \mathfrak{D}_{24}=6\cb(\cb+6\ca\cb)  ,\quad \mathfrak{D}_{31}=- 12 \cb   ,\notag\\
	\mathfrak{D}_{33}=&  20\cb     ,\quad \mathfrak{D}_{35}=-12\cb\ell_0(2-\ca) ,\quad  
	\mathfrak{D}_{42}=\frac{2}{3}   ,\quad	\mathfrak{D}_{44}=\cb +300     ,\quad \mathfrak{D}_{55}=300     
	,  \notag\\
	\mathfrak{D}_{66}= & 20\cb +  600.
\end{align}

It should be noted that $\widetilde{\mathfrak{F}_{u,\phi}}$ denotes $\widetilde{\mathfrak{F}_{u}}$ with $\mu$, $\eta$, and $\eta\mu^{-1}$ replaced by $\phi\mu$, $\phi\eta$, and $\phi\eta\mu^{-1}$, respectively. The same notation applies to $\widetilde{\mathfrak{F}_{\mathcal{B}_i,\phi}}$, $\widetilde{\mathfrak{F}_{z,\phi}}$, $\widetilde{\mathfrak{F}_{u_k,\phi}}$, $\widetilde{\mathfrak{F}_{u_0,\phi}}$, $\widetilde{\mathscr{Z}_\phi}$, and $\widetilde{\mathscr{L}}_\phi$.

\subsection{Fuchsian System on   $\mathbb{T}^n$}\label{s:revFuc1}
 As noted earlier, equation \eqref{e:mainsys5} is nearly in Fuchsian form, except that its domain is noncompact. To resolve this, we compactify the domain by introducing new coordinates $(\hat{\tau}, \hat{\zeta})$, which map $\mathbb{R}^n$ to the torus $\mathbb{T}^n_{[-\pi/2, \pi/2]}$. The torus boundary is defined by $\hat{\zeta}^i = \pm \frac{\pi}{2}$, and $\mathbb{T}^n_{[-\pi/2, \pi/2]}$  an $n$-dimensional torus determined by the endpoints of each interval $[-\frac{\pi}{2}, \frac{\pi}{2}]$.
 
 The transformed equation \eqref{e:mainsys5} can thus be formulated on this torus. As $\hat{\zeta}^1 \to \pm \frac{\pi}{2}$, both $\hat{\phi}\hat{\mu}$ and $\hat{\phi}\hat{\eta}$ vanish, yielding a Fuchsian form defined on a closed manifold. The detailed verification of this form will be presented in \S \ref{s:verfuc}.  
 
 The coordinate transformation is given by  
 \begin{equation}\label{e:coord6b}
 	\hat{\tau} = \ttau \in [-1,0), \qquad
 	\hat{\zeta}^i = \arctan(\gamma \txi^i) \in \left(-\frac{\pi}{2}, \frac{\pi}{2}\right),
 \end{equation}
 where $\gamma > 0$ is a fixed constant (see \S\ref{s:verfuc}, equation~\eqref{e:bt1}).  
 The inverse map is  
 \begin{equation}\label{e:coordi6}
 	\ttau = \hat{\tau}, \qquad
 	\txi^i = \frac{1}{\gamma} \tan \hat{\zeta}^i.
 \end{equation}

The Jacobian matrix of this transformation is
\begin{equation*}\label{e:Jb3}
	\begin{pmatrix}
		\widehat{\frac{\partial \hat{\tau}}{\partial \ttau}} & \widehat{\frac{\partial \hat{\tau}}{\partial \txi^j}} \\
		\widehat{\frac{\partial \hat{\zeta}^i}{\partial \ttau}} & \widehat{\frac{\partial \hat{\zeta}^i}{\partial \txi^j}}
	\end{pmatrix}
	= \begin{pmatrix}
		1 & 0 \\
		0 & \gamma \cos^2(\hat{\zeta}^i) \delta^i_j
	\end{pmatrix},
	\quad \text{and} \quad
	\det \begin{pmatrix}
		\widehat{\frac{\partial \hat{\tau}}{\partial \ttau}} & \widehat{\frac{\partial \hat{\tau}}{\partial \txi^j}} \\
		\widehat{\frac{\partial \hat{\zeta}^i}{\partial \ttau}} & \widehat{\frac{\partial \hat{\zeta}^i}{\partial \txi^j}}
	\end{pmatrix}
	= \gamma^n \prod_{i=1}^n \cos^2 \hat{\zeta}^i > 0.
\end{equation*}

For a function $\tilde{z}(\ttau, \txi)$, let $\hat{z}(\hat{\tau}, \hat{\zeta}) := \tilde{z}(\ttau(\hat{\tau}, \hat{\zeta}), \txi(\hat{\tau}, \hat{\zeta}))$. Using the Jacobian matrix above, we obtain
\begin{equation}\label{e:transf3}
	\widehat{\partial_{\ttau} \tilde{z}} = \widehat{\frac{\partial \hat{\tau}}{\partial \ttau}} \partial_{\hat{\tau}} \hat{z} + \widehat{\frac{\partial \hat{\zeta}^i}{\partial \ttau}} \partial_{\hat{\zeta}^i} \hat{z} = \partial_{\hat{\tau}} \hat{z},
	\quad \text{and} \quad
	\widehat{\partial_{\txi^i} \tilde{z}} = \widehat{\frac{\partial \hat{\tau}}{\partial \txi^i}} \partial_{\hat{\tau}} \hat{z} + \widehat{\frac{\partial \hat{\zeta}^j}{\partial \txi^i}} \partial_{\hat{\zeta}^j} \hat{z} = \gamma \cos^2(\hat{\zeta}^i) \partial_{\hat{\zeta}^i} \hat{z}.
\end{equation}

As $\hat{\zeta}^1$ approaches $\pi/2$ or $-\pi/2$, the quantities $\cos^2(\hat{\zeta}^i)$, $\hat{\phi}\hat{\mu}$, $\hat{\phi}\hat{\eta}$, and all their derivatives tend to zero. Therefore, this coordinate transformation is well-suited for compactifying the domain of equation \eqref{e:mainsys5} appropriately to $[-1,0) \times \mathbb{T}^n_{[-\frac{\pi}{2}, \frac{\pi}{2}]}$.

We now present the Fuchsian equation on $\mathbb{T}^n$.
\begin{equation}\label{e:mainsys6}
	\widehat{\mathfrak{A}}^0_\phi \partial_{\hat{\tau}} \widehat{\mathfrak{U}} + \frac{1}{A\hat{\tau}} \gamma \cos^2 \hat{\zeta}^i \widehat{\mathfrak{A}}^{i}_\phi \partial_{\hat{\zeta}^i} \widehat{\mathfrak{U}} = \frac{1}{A\hat{\tau}} \widehat{\mathfrak{A}}_\phi \widehat{\mathfrak{U}} + \widehat{\mathfrak{F}}_\phi,
\end{equation}
where
\begin{align}\label{e:A0b}
	\widehat{\mathfrak{A}}^0_\phi := &
	\begin{pmatrix}
		1 & \hat{\phi} \hat{\mu} \hat{R} \hat{\mathfrak{B}}_i \delta^{ij} & 0 & 0 & 0 & 0 \\
		\hat{\phi} \hat{\mu} \hat{R} \hat{\mathfrak{B}}_k & (\hat{S} + \widehat{\mathscr{L}}_\phi) \delta^j_k & 0 & 0 & 0 & 0 \\
		0 & 0 & 2 & 0 & 0 & 0 \\
		0 & 0 & 0 & \delta^l_s & 0 & 0 \\
		0 & 0 & 0 & 0 & 1 & 0 \\
		0 & 0 & 0 & 0 & 0 & 2
	\end{pmatrix},
\end{align}
\begin{align}\label{e:Aib}
	\widehat{\mathfrak{A}}^{i}_\phi :=
	\begin{pmatrix}
		\frac{200}{101} \delta^i_1 & \hat{S} \frac{\widehat{\underline{\chi_\uparrow}}}{B} \delta^{ij} + \widehat{\mathscr{Z}}_\phi^{ij} & 0 & 0 & 0 & 0 \\
		(\hat{S} \frac{\widehat{\underline{\chi_\uparrow}}}{B} \delta^{ij} + \widehat{\mathscr{Z}}_\phi^{ij}) \delta_{jk} & \frac{200}{101} \delta^i_1 (\hat{S} + \widehat{\mathscr{L}}_\phi) \delta^j_k & 0 & 0 & 0 & 0 \\
		0 & 0 & \frac{400}{101} \delta^i_1 & 0 & 0 & 0 \\
		0 & 0 & 0 & \frac{200}{101} \delta^i_1 \delta^l_s & 0 & 0 \\
		0 & 0 & 0 & 0 & \frac{200}{101} \delta^i_1 & 0 \\
		0 & 0 & 0 & 0 & 0 & \frac{400}{101} \delta^i_1
	\end{pmatrix},
\end{align}
\begin{align}\label{e:Aphb}
	\widehat{\mathfrak{A}}_\phi :=
	{\footnotesize
		\begin{pmatrix}
			\mathfrak{D}_{11} + \widehat{\mathscr{Z}_{11,\phi}} & \mathfrak{D}_{12}^{j} + \widehat{\mathscr{Z}^j_{-,\phi}} & 0 & 0 & \frac{\hat{\eta}}{\hat{\mu}} (\mathfrak{D}_{15} + \widehat{\mathscr{Z}_{15,\phi}}) & \mathfrak{D}_{16} + \widehat{\mathscr{Z}_{13,\phi}} \\
			\mathfrak{D}_{21,k} + \widehat{\mathscr{Z}_{k,\phi}^+} & (\mathfrak{D}_{22} + \widehat{\mathscr{Z}}_{\star,\phi})\delta^j_k & 0 & (\mathfrak{D}_{24} + \widehat{\mathscr{Z}_{24,\phi}}) \delta^l_k & 0 & 0 \\
			\hat{\phi}\hat{\mu}(\mathfrak{D}_{31} + \widehat{\mathscr{Z}_{31,\phi}}) & 0 & \mathfrak{D}_{33} + \widehat{\mathscr{Z}_{33,\phi}} & 0 & \hat{\phi}\hat{\eta}(\mathfrak{D}_{35} + \widehat{\mathscr{Z}_{35,\phi}}) & 0 \\
			0 & (\mathfrak{D}_{42} + \widehat{\mathscr{Z}_{42,\phi}})\delta^j_{s} & 0 & (\mathfrak{D}_{44} + \widehat{\mathscr{Z}_{44,\phi}}) \delta^l_s & 0 & 0 \\
			0 & 0 & 0 & 0 & \mathfrak{D}_{55} & 0 \\
			\mathfrak{D}_{31} + \widehat{\mathscr{Z}_{31,\phi}} & 0 & 0 & 0 & \frac{\hat{\eta}}{\hat{\mu}} (\mathfrak{D}_{35} + \widehat{\mathscr{Z}_{35,\phi}}) & \mathfrak{D}_{66} + \widehat{\mathscr{Z}_{33,\phi}}
	\end{pmatrix}},
\end{align}
\begin{equation}\label{e:Fphib}
	\widehat{\mathfrak{F}}_\phi :=
	\begin{pmatrix}
		- \frac{3}{2A} \widehat{\mathscr{S}}^j \hat{\mfu}_j + \hat{\mu}^{-1} \widehat{\mathfrak{F}_{u_0,\phi}} \\
		\frac{300}{A} \widehat{\mathscr{S}} \hat{\mfu}_k - \frac{3}{2A} \widehat{\mathscr{S}}^j \delta_{jk} \hat{\mfu}_0 + \hat{\mu}^{-1} \widehat{\mathfrak{F}_{u_k,\phi}} \\
		\widehat{\mathfrak{F}_{u,\phi}} \\
		\hat{\mu}^{-1} \widehat{\mathfrak{F}_{\mathcal{B}_i,\phi}} \\
		\hat{\eta}^{-1} \widehat{\mathfrak{F}_{z,\phi}} \\
		\hat{\mu}^{-1} \widehat{\mathfrak{F}_{u,\phi}}
	\end{pmatrix},
\end{equation}
and the constants $\mathfrak{D}_{ij}$ are given in equation \eqref{D}.

For future reference, we denote the equation without the cutoff function $\phi$ (i.e., equation \eqref{e:mainsys3}) as
\begin{equation}\label{e:mainsys8}
	\widehat{\mathfrak{A}}^0 \partial_{\hat{\tau}} \widehat{\mathfrak{U}} + \frac{1}{A\hat{\tau}} \gamma \cos^2 \hat{\zeta}^i \widehat{\mathfrak{A}}^{i} \partial_{\hat{\zeta}^i} \widehat{\mathfrak{U}} = \frac{1}{A\hat{\tau}} \widehat{\mathfrak{A}} \widehat{\mathfrak{U}} + \widehat{\mathfrak{F}}.
\end{equation}

After the above construction, we have the following result for the final singular equation \eqref{e:mainsys6} (the detailed proof is given in \S \ref{s:verfuc}).

\begin{proposition}\label{t:verfuc}
	Let $k \in \mathbb{Z}_{> \frac{n}{2} + 3}$, $\widehat{\mathfrak{U}}|_{\hat{\tau} = -1} = \widehat{\mathfrak{U}}_0$, and $\widehat{\mathfrak{U}}_0 \in H^{k}(\mathbb{T}^n_{[-\frac{\pi}{2}, \frac{\pi}{2}]})$. Then the following hold:
	\begin{enumerate}[label=(\arabic*)]
		\item\label{t:verfuc.1} Equation \eqref{e:mainsys6} is a Fuchsian equation as defined in Appendix \ref{s:fuch}.
		\item\label{t:verfuc.2} If there exists a constant $0 < \sigma < \sigma_1$ such that
	$	\|\widehat{\mathfrak{U}}_0\|_{H^k} \leq \sigma$, 
		then the initial value problem for equation \eqref{e:mainsys6} has a unique solution
		\begin{equation}\label{e:solreg}
			\widehat{\mathfrak{U}} \in C^0([-1,0), H^k(\mathbb{T}^n_{[-\frac{\pi}{2},\frac{\pi}{2}]})) \cap C^1([-1,0), H^{k-1}(\mathbb{T}^n_{[-\frac{\pi}{2},\frac{\pi}{2}]})) \cap L^\infty([-1,0), H^k(\mathbb{T}^n_{[-\frac{\pi}{2},\frac{\pi}{2}]})),
		\end{equation}
		with $\widehat{\mathfrak{U}}|_{\hat{\tau} = -1} = \widehat{\mathfrak{U}}_0$. Moreover, for all $-1 \leq \hat{\tau} < 0$, the solution $\widehat{\mathfrak{U}}$ satisfies the energy estimate
		\begin{equation*}
			\|\widehat{\mathfrak{U}}(\hat{\tau})\|_{H^k(\mathbb{T}^n_{[-\frac{\pi}{2},\frac{\pi}{2}]})}^2 - \int^{\hat{\tau}}_{-1} \frac{1}{s} \| \widehat{\mathfrak{U}}(s)\|^2_{H^k(\mathbb{T}^n_{[-\frac{\pi}{2},\frac{\pi}{2}]})} ds \leq C(\sigma_1, \sigma_1^{-1}) \|\widehat{\mathfrak{U}}_0\|^2_{H^k(\mathbb{T}^n_{[-\frac{\pi}{2},\frac{\pi}{2}]})}.
		\end{equation*}
	\end{enumerate}
\end{proposition}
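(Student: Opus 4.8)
The plan is to prove \ref{t:verfuc.1} by verifying, one at a time, the structural hypotheses of the Fuchsian framework of Appendix~\ref{s:fuch} (Conditions~\ref{c:2}--\ref{c:7}) for the system \eqref{e:mainsys6}, and then to obtain \ref{t:verfuc.2} as an immediate application of the Fuchsian GIVP existence theorem (Theorem~\ref{t:fuch}) to that system. The whole point of \S\ref{s:riv1}--\S\ref{s:revFuc1} — the zoom-in coordinates, the zoom-in variable rescalings, the cutoff $\phi$, and the spatial compactification onto $\mathbb{T}^n_{[-\pi/2,\pi/2]}$ — is that \eqref{e:mainsys6} now matches the model \eqref{e:Fucmodel}; what remains is the verification.

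First I would dispatch the routine structural conditions. Symmetry of $\widehat{\mathfrak{A}}^0_\phi$ and of $\gamma\cos^2\hat\zeta^i\,\widehat{\mathfrak{A}}^i_\phi$ is read off from the explicit block forms \eqref{e:A0b}--\eqref{e:Aib}. Positive-definiteness and boundedness of $\widehat{\mathfrak{A}}^0_\phi$ I would get from the two-sided bound on $\hat S$ in Lemma~\ref{t:coef1}, the fact that $\widehat{\mathscr{L}}_\phi$ vanishes to first order in $(\mfu_0,\mfv,\mathfrak{z})$ (inspect \eqref{e:L1}) and is therefore $O(\sigma_0)$ on the relevant range, and the bound $\hat\phi\hat\mu\le\sigma_0$ on $\supp\hat\phi$ coming from \eqref{e:mueta2}; a Schur-complement estimate on the top-left $(1+n)\times(1+n)$ block then gives $\widehat{\mathfrak{A}}^0_\phi\ge\kappa\,\mathrm{id}$ for $\sigma_0$ small. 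On the torus, the factor $\gamma\cos^2\hat\zeta^i$, together with the vanishing of $\hat\phi\hat\mu$, $\hat\phi\hat\eta$ and all their $\hat\zeta$-derivatives as $\hat\zeta^1\to\pm\tfrac{\pi}{2}$, makes the principal part and the coefficients extend smoothly to the closed manifold and the singular spatial vector field tangent to the boundary; the divergence and commutator bounds then follow because all coefficients are analytic in $(\hat\tau,\widehat{\mathfrak{U}})$ with $\hat\tau$ ranging over the compact interval $[-1,0]$. The apparently singular source $\widehat{\mathfrak{F}}_\phi$ of \eqref{e:Fphib} I would show is admissible: every $1/\hat\tau$ there is multiplied by a factor tending to $0$ as $\hat\tau\to0$ — either $\hat{\underline{\mathfrak{G}}}$ (which $\to0$ by Proposition~\ref{t:limG} and \eqref{e:chig}) or $\hat{\uf}^{-1/2}$ (which $\to0$ since the reference solution blows up) — and by a further factor ($\widehat{\mathscr{S}}_{\bullet}$) vanishing to first order in $\widehat{\mathfrak{U}}$, so $\widehat{\mathfrak{F}}_\phi$ splits into a genuinely regular piece and a $\tfrac1{\hat\tau}$-times-small, first-order piece that can be absorbed into the singular matrix.

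The hard part will be the positivity of the singular coefficient, i.e.\ the analogue of Condition~\ref{c:5} that \eqref{e:mainsys1} failed. Writing $\mathbf{B}=\widehat{\mathfrak{A}}_\phi$, $\mathbf{B}^0=\widehat{\mathfrak{A}}^0_\phi$ and $\mathbb{P}$ the appropriate projection, this asks for a sign-definiteness estimate for the symmetrized singular coefficient $(\mathbf{B}^0)^{-1/2}\big(\tfrac12(\mathbf{B}+\mathbf{B}^{T})+\cdots\big)(\mathbf{B}^0)^{-1/2}$ on the relevant invariant subspace, together with the eigenvalues of the normalized principal matrix $\mathbf{B}^1$ (in the $\hat\zeta^1$-direction) having the prescribed signs. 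After discarding the remainders $\widehat{\mathscr{Z}}_{\bullet,\phi}$ — which vanish to first order in $\widehat{\mathfrak{U}}$, hence are $O(\sigma)$ — this reduces to a spectral/inequality condition on the constant matrix built from the entries $\mathfrak{D}_{ij}$ of \eqref{D} relative to the constant limit of $\mathbf{B}^0$. This is exactly where the choices \eqref{e:para} and the still-undetermined exponent $\ell_0$ (to be fixed in \eqref{e:l0def}) must be calibrated: the large diagonal entries $\mathfrak{D}_{11}=300-7\cb$, $\mathfrak{D}_{22}=300+6\cb$, $\mathfrak{D}_{44}=\cb+300$, $\mathfrak{D}_{55}=300$, $\mathfrak{D}_{66}=20\cb+600$ are engineered to dominate the off-diagonal couplings (through $q^j=606\delta^{1j}$ and $\kappa_i=-101\delta^1_i$), which should force the required spectral gap. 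What makes this genuinely delicate — and where the present paper differs from \cite{Liu2024} — is that the verification must be carried out \emph{uniformly} over the enlarged box $1<\ca\le30$, $\tfrac13\le\cb\le\tfrac23$, $\cc=\tfrac43$: the inequalities are tight and the $\mathfrak{D}_{ij}$ are mutually dependent, so I expect this parameter bookkeeping, flagged repeatedly in \S\ref{s:oview}, to be the bulk of the work.

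Finally, once the preceding steps establish that \eqref{e:mainsys6} is a Fuchsian system in the sense of Appendix~\ref{s:fuch}, part \ref{t:verfuc.1} is proved, and part \ref{t:verfuc.2} follows by invoking the Fuchsian GIVP theorem (Theorem~\ref{t:fuch}) on the closed manifold $\mathbb{T}^n_{[-\pi/2,\pi/2]}$ with Cauchy data $\widehat{\mathfrak{U}}_0\in H^k$, $k>\tfrac n2+3$: for $\|\widehat{\mathfrak{U}}_0\|_{H^k}\le\sigma<\sigma_1$, with $\sigma_1$ the smallness threshold produced by the earlier steps, the theorem yields the unique solution in the regularity class \eqref{e:solreg} on $[-1,0)$ together with the asserted energy estimate, completing the proof.
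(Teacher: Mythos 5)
Your overall plan — verify Conditions~\ref{c:2}--\ref{c:7} for \eqref{e:mainsys6} and then invoke Theorem~\ref{t:fuch} — is exactly the paper's approach, and your identification of the constant block of $\mathfrak{D}_{ij}$'s as the crux of Condition~\ref{c:5}, together with the need to calibrate $\ell_0$ and the \eqref{e:para} parameters against it, matches the paper's symmetrization $X^T\acute{\mathfrak{A}}_\phi X$ plus Young-inequality estimate well. Two points are worth flagging, however.

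First, a genuine gap: you dismiss the divergence/commutator bounds (Condition~\ref{c:7}) as ``following because the coefficients are analytic.'' Analyticity gives you that the divergence bound \eqref{e:divB1} holds for \emph{some} constants $\theta,\beta_0,\beta_1$, but Theorem~\ref{t:fuch} additionally requires the coupling inequality \eqref{e:kpbt1}, $\kappa>\tfrac12\gamma_1\max\{\cdots\}$, relating the $\beta_\ell$ from Condition~\ref{c:7} to $\kappa,\gamma_1$ from Condition~\ref{c:5}. This is not automatic; it is enforced by taking the spatial compactification parameter $\gamma$ (from \eqref{e:coord6b}) sufficiently small, since the $\htau^{-1}$-order terms in $\mathrm{div}\,\widehat{\mathfrak{A}}_\phi$ are controlled by $\sigma_0$ and $\gamma$, and one imposes \eqref{e:bt1}: $(C_1+2k(k+1)C_0)\gamma<\cm^2\beta/(3A(4\beta+1))$. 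Without this step you cannot ``invoke Theorem~\ref{t:fuch},'' because $\gamma$ has not yet been fixed at any earlier point and the hypothesis of the theorem is simply not yet verified.

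Second, a presentational mismatch rather than an error on your part: you frame the Condition~\ref{c:5} verification as something to be carried out uniformly over the full box $1<\ca\le 30$, $\tfrac13\le\cb\le\tfrac23$. The paper in fact sets $\ell_0=2$ and then does the explicit computation only for $1\le\ca\le\tfrac{16}{9}$ (equation \eqref{e:l0def}), asserting the remaining range by analogy with ``minor computational differences.'' If you attempted a genuinely uniform argument you would be doing more than the paper does, and the sign of $\mathfrak{D}_{15}=\ell_0\cb(9\ca-16)$ flips across $\ca=\tfrac{16}{9}$, so the bookkeeping in \eqref{e:eBe2} does change; it is worth being aware that the paper itself splits the case.

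Finally, one small imprecision: Condition~\ref{c:5} in Appendix~\ref{s:fuch} is the two-sided matrix inequality $\tfrac{1}{\gamma_1}\mathds{1}\le B^0\le\tfrac{1}{\kappa}\mathbf{B}\le\gamma_2\mathds{1}$; it does not ask for sign conditions on the eigenvalues of the principal-part matrix $\mathbf{B}^1$. The sign considerations you mention belong to the weakly-spacelike verification in Lemma~\ref{t:extori2}, not to the Fuchsian hypotheses themselves.
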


\subsection{Construction of the Lens-Shaped Region}\label{s:reorg}

\begin{figure}[h]
	\centering
	\includegraphics[width=10cm]{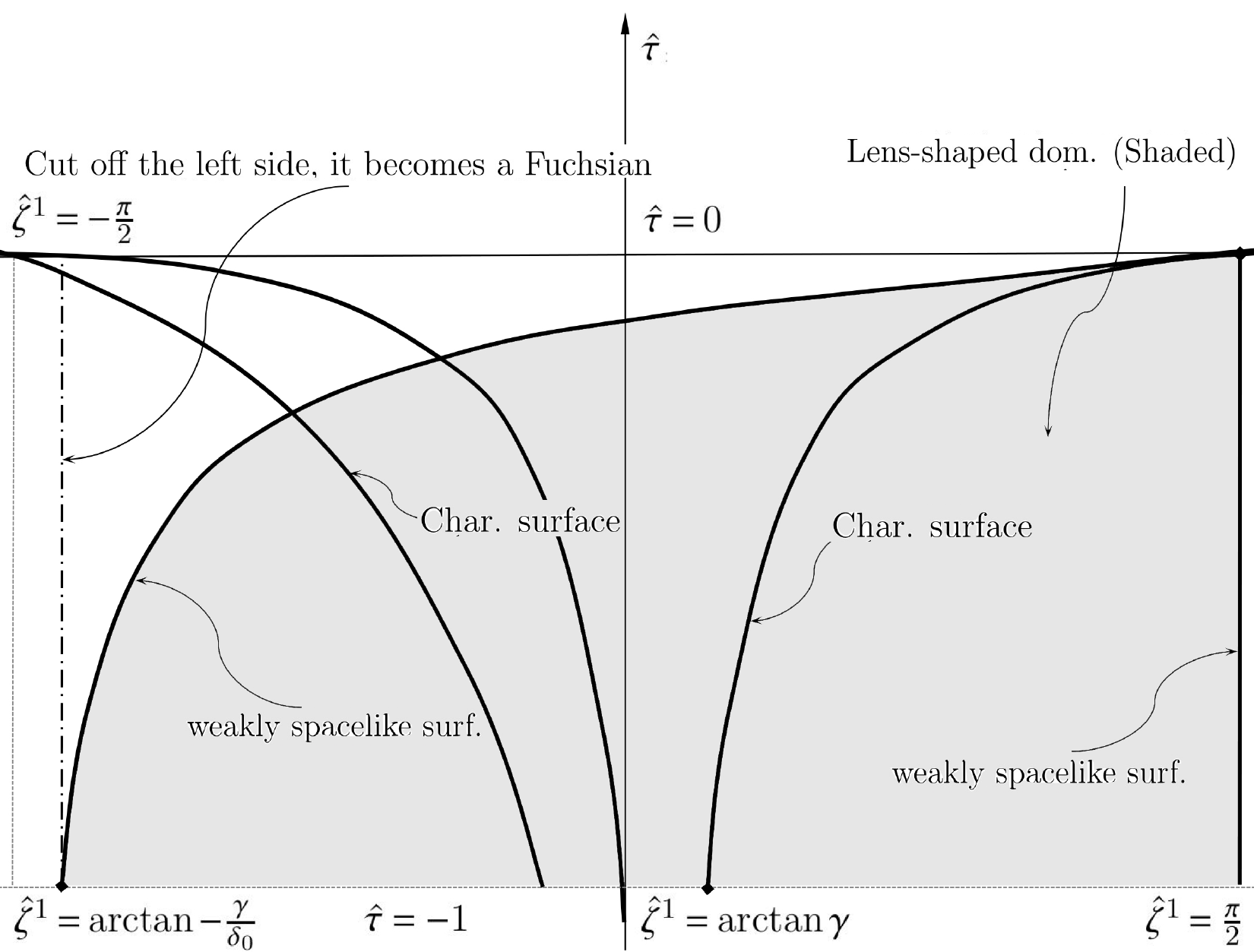}
	\caption{The region $\widehat{\mathit{\Lambda}_{\delta_0}}$ and the characteristic surface $\widehat{\mathcal{C}}$}
	\label{f:fig1}
\end{figure}

Due to the cutoff function $\phi(\txi^1)$ introduced earlier, the solution of the modified equation \eqref{e:mainsys6} no longer coincides with that of \eqref{e:mainsys8}. To restore this equivalence, we construct a lens-shaped region where both solutions agree, applying Theorem~4.5 in \cite{Lax2006}.

Specifically, we define a subregion of $[-1,0) \times \mathbb{T}^n_{[-\pi/2, \pi/2]}$ that is lens-shaped and intersects the singular set $\mathcal{I}$ at the blow-up point $p_m$ (see Theorem~\ref{t:mainthm2}). Following the method in \cite{Liu2024}, we build such a region as shown in Figure~\ref{f:fig1}. The key step is to construct a weakly spacelike hypersurface $\widehat{\Gamma}_{\delta_0}$, defined in \eqref{e:surf0}, which forms the boundary of the region. The enclosed domain is thus lens-shaped, and by Theorem~4.5 in \cite{Lax2006}, the solutions of \eqref{e:mainsys6} and \eqref{e:mainsys8} coincide within it.

Let $\Lambda_{\delta_0}$ (the shaded area in Figure~\ref{f:fig1}) denote this lens-shaped region. It is bounded by weakly spacelike hypersurfaces, with its upper boundary intersecting the characteristic cone $\mathcal{C}$ defined in \eqref{e:char1} at the future endpoint $p_m$ of a null geodesic. The hypersurface is given by
\begin{equation}\label{e:surf0}
	\widehat{\Gamma}_{\delta_0}
	:= \left\{ (\hat{\tau}, \hat{\zeta}) \in [-1,0) \times \mathbb{T}^n_{[-\pi/2, \pi/2]}
	\;\middle|\;
	\widehat{\Psi}(\hat{\tau}, \hat{\zeta})
	:= \hat{\tau} - \widehat{\mathfrak{T}}_{\delta_0}(\hat{\zeta}) = 0 \right\},
\end{equation}
where
\begin{align*}
	\widehat{\mathfrak{T}}_{\delta_0}(\hat{\zeta}) :=
	\begin{cases}
		\widehat{\mathfrak{T}}^{(1)}_{\delta_0}(\hat{\zeta}),
		& -\arctan\!\left(\tfrac{\gamma}{\delta_0}\right)
		\le \hat{\zeta}^1 \le
		\arctan\!\left( \tfrac{202b - 200 - 101\Xi_0}{202\Xi_0}
		\tfrac{\gamma}{\delta_0} \right), \\[4pt]
		\widehat{\mathfrak{T}}^{(2)}_{\delta_0}(\hat{\zeta}),
		& \hat{\zeta}^1 >
		\arctan\!\left( \tfrac{202b - 200 - 101\Xi_0}{202\Xi_0}
		\tfrac{\gamma}{\delta_0} \right),
	\end{cases}
\end{align*}
and
\begin{align*}
	\widehat{\mathfrak{T}}^{(1)}_{\delta_0}(\hat{\zeta}) &= -\exp\left(- \frac{\overline{\cc} \cdot 101A}{202b - 200} \left( 1 - \frac{1}{2\delta_0 \gamma^{-1} \tan \hat{\zeta}^1 + 4} \right) \left( \frac{1}{\gamma} \tan \hat{\zeta}^1 + \frac{1}{\delta_0} \right) \right), \\
	\widehat{\mathfrak{T}}^{(2)}_{\delta_0}(\hat{\zeta}) &= -\exp\left(-\frac{101A}{202b - 200} \left(1 - \frac{1}{2\delta_0 \gamma^{-1} \tan \hat{\zeta}^1 + 4} \right) \left( \frac{1}{\gamma} \tan \hat{\zeta}^1 + \frac{1}{2\delta_0} \right) \right),
\end{align*}
with $\delta_0 > 0$ a sufficiently small constant, $\overline{\cc} \in \left(0, \left(1 + \frac{101\Xi_0}{202b - 200}\right)^{-1} \right)$, and $\Xi_0 := \Xi(t_0) = \sup_{t \in [t_0, t_m)} \Xi(t)$ (Lemma \ref{t:Thpst} shows that $\Xi(t)$ is decreasing). Note that $\frac{202b - 200 - 101 \Xi_0}{202 \Xi_0} = \frac{101b - 100}{101 \Xi_0} - \frac{1}{2} > -\frac{1}{2}$ for $1 \leq b \leq 2$.

The lens-shaped region $\widehat{\mathit{\Lambda}}_{\delta_0}$ is defined as
\begin{equation*}\label{e:Lbddef}
	\widehat{\mathit{\Lambda}}_{\delta_0} := \left\{ (\hat{\tau}, \hat{\zeta}) \in [-1,0) \times \mathbb{T}^n_{[-\frac{\pi}{2}, \frac{\pi}{2}]} \;\middle|\; \hat{\tau} \leq \widehat{\mathfrak{T}}_{\delta_0}(\hat{\zeta}) \right\},
\end{equation*}
and its boundary is given by
\begin{equation*}\label{e:ttsurf1}
	\partial \widehat{\mathit{\Lambda}}_{\delta_0} = \widehat{\Sigma}_0 \cup \widehat{\Gamma}_{\delta_0} \cup \widehat{\Sigma}_1 \cup \left( \bigcup_{k=2}^n \widehat{\Sigma}_{\pm k} \right),
\end{equation*}
where
\begin{equation*}
	\widehat{\Sigma}_{k}:=  \left\{(\hat{\tau},\hat\zeta^i) \;\Big|\; \hat{\zeta}^k=\frac{\pi}{2}\right\}\cap \widehat{\mathit{\Lambda}}_{\delta_0} , \quad \widehat{\Sigma}_{-k}:=  \left\{(\hat{\tau},\hat\zeta^i) \;\Big|\; \hat{\zeta}^k=-\frac{\pi}{2}\right\}\cap \widehat{\mathit{\Lambda}}_{\delta_0} \quad \text{and}\quad \widehat{\Sigma}_0:=\{\hat{\tau}=-1\} \cap \widehat{\mathit{\Lambda}}_{\delta_0}  . 
\end{equation*}

It should be noted that Lemmas \ref{t:Tsurf2}--\ref{t:Tsurf5} in Appendix \ref{s:ctbdry} provide the expressions for the hypersurface $\widehat{\Gamma}_{\delta_0}$ in other coordinate systems, such as $(\ttau,\txi)$, $(\tau,\zeta)$, and $(t,x)$. Moreover, Lemma \ref{t:inI} proves that $p_m \in \overline{\mathit{\Lambda}_{\delta_0} \cap \mathcal{I}}$.

\begin{lemma}\label{t:extori2}
	Let $\widehat{\mathfrak{U}}(\hat{\tau}, \hat{\zeta})$ be a solution of the modified equation \eqref{e:mainsys6} on the spacetime $[-1,0) \times \mathbb{T}^n_{[-\frac{\pi}{2}, \frac{\pi}{2}]}$, and denote its restriction to the lens-shaped region $\widehat{\mathit{\Lambda}}_{\delta_0}$ by $\widehat{\mathfrak{U}}(\hat{\tau}, \hat{\zeta})|_{\widehat{\mathit{\Lambda}}_{\delta_0}}$. Then $\widehat{\mathfrak{U}}(\hat{\tau}, \hat{\zeta})|_{\widehat{\mathit{\Lambda}}_{\delta_0}}$ is also a solution of the original equation \eqref{e:mainsys8} in the region $\widehat{\mathit{\Lambda}}_{\delta_0}$ with parameters given in \eqref{e:para}, and this solution is uniquely determined by the initial data on $\widehat{\Sigma}_0$.
\end{lemma}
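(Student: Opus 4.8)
The plan is to exploit the single fact that the cutoff $\phi(\txi^1)$ is identically $1$ on the lens-shaped region, so that equations \eqref{e:mainsys6} and \eqref{e:mainsys8} have \emph{literally the same} coefficient matrices there, and then to invoke the finite-propagation-speed/uniqueness theorem for symmetric hyperbolic systems (Theorem~4.5 in \cite{Lax2006}) on $\widehat{\mathit{\Lambda}}_{\delta_0}$. First I would check that $\widehat{\mathit{\Lambda}}_{\delta_0}$ is contained in the set $\{\txi^1 \ge -\delta_0^{-1}\}$. Indeed, by construction $\widehat{\mathfrak{T}}_{\delta_0}(\hat\zeta)$ in \eqref{e:surf0} is defined only for $\hat\zeta^1 \ge -\arctan(\gamma\delta_0^{-1})$, so every point of $\widehat{\mathit{\Lambda}}_{\delta_0}$ satisfies $\hat\zeta^1 \ge -\arctan(\gamma\delta_0^{-1})$; via the inverse map \eqref{e:coordi6} this is equivalent to $\txi^1 = \gamma^{-1}\tan\hat\zeta^1 \ge -\delta_0^{-1}$. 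By the cutoff property \eqref{e:phi1}, $\phi\equiv 1$ on $[-\delta_0^{-1},+\infty)$, hence $\hat\phi\hat\mu = \hat\mu$, $\hat\phi\hat\eta = \hat\eta$, $\hat\eta/\hat\mu$ is unchanged, and every $\phi$-subscripted remainder term ($\widehat{\mathscr{Z}}_\phi$, $\widehat{\mathscr{L}}_\phi$, $\widehat{\mathfrak{F}_{u_0,\phi}}$, etc.) coincides pointwise on $\widehat{\mathit{\Lambda}}_{\delta_0}$ with its $\phi$-free counterpart. Comparing \eqref{e:A0b}--\eqref{e:Fphib} with the matrices of \eqref{e:mainsys8}, this gives $\widehat{\mathfrak{A}}^0_\phi = \widehat{\mathfrak{A}}^0$, $\widehat{\mathfrak{A}}^i_\phi = \widehat{\mathfrak{A}}^i$, $\widehat{\mathfrak{A}}_\phi = \widehat{\mathfrak{A}}$, and $\widehat{\mathfrak{F}}_\phi = \widehat{\mathfrak{F}}$ on $\widehat{\mathit{\Lambda}}_{\delta_0}$; therefore any $C^1$ solution $\widehat{\mathfrak{U}}$ of \eqref{e:mainsys6}, restricted to $\widehat{\mathit{\Lambda}}_{\delta_0}$, solves \eqref{e:mainsys8} there with the parameters \eqref{e:para}.

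For the uniqueness assertion I would verify that $\widehat{\mathit{\Lambda}}_{\delta_0}$ is a lens-shaped domain of dependence for the symmetric hyperbolic system, with $\widehat{\Sigma}_0$ as its (space-like) initial slice. Using the boundary decomposition $\partial\widehat{\mathit{\Lambda}}_{\delta_0} = \widehat{\Sigma}_0 \cup \widehat{\Gamma}_{\delta_0} \cup \widehat{\Sigma}_1 \cup (\bigcup_{k\ge 2}\widehat{\Sigma}_{\pm k})$, I would argue: the bottom slice $\widehat{\Sigma}_0 = \{\hat\tau=-1\}\cap\widehat{\mathit{\Lambda}}_{\delta_0}$ is space-like since the Fuchsian structure (Proposition~\ref{t:verfuc}\ref{t:verfuc.1}, Conditions~\ref{c:2}--\ref{c:7}) makes $\widehat{\mathfrak{A}}^0_\phi$ positive definite; the lateral pieces $\widehat{\Sigma}_{\pm k}$ for $k\ge 2$ lie on $\hat\zeta^k = \pm\pi/2$, where the factor $\gamma\cos^2\hat\zeta^k$ multiplying $\widehat{\mathfrak{A}}^k_\phi$ vanishes, so those hypersurfaces are characteristic and carry no flux, hence weakly space-like; the piece $\widehat{\Sigma}_1$ on $\hat\zeta^1 = \pi/2$ is characteristic for the same reason; and the top surface $\widehat{\Gamma}_{\delta_0}$ is weakly space-like for \eqref{e:mainsys6}, which is precisely what the choice of the profiles $\widehat{\mathfrak{T}}^{(1)}_{\delta_0}$, $\widehat{\mathfrak{T}}^{(2)}_{\delta_0}$ in \eqref{e:surf0} was designed to achieve (recorded, in the various coordinate systems, in Lemmas~\ref{t:Tsurf2}--\ref{t:Tsurf5}, with $p_m \in \overline{\mathit{\Lambda}_{\delta_0}\cap\mathcal{I}}$ by Lemma~\ref{t:inI}). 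Granting these, $\widehat{\mathit{\Lambda}}_{\delta_0}$ is bounded below by a space-like slice and on all remaining sides by weakly space-like or characteristic hypersurfaces, so Theorem~4.5 of \cite{Lax2006} applies and yields that the restriction of $\widehat{\mathfrak{U}}$ is the unique solution of \eqref{e:mainsys8} — equivalently of \eqref{e:mainsys6}, by the first paragraph — on $\widehat{\mathit{\Lambda}}_{\delta_0}$ with the prescribed data on $\widehat{\Sigma}_0$.

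The hard part is the weak-spacelikeness of $\widehat{\Gamma}_{\delta_0}$: one must show that along $\widehat{\Gamma}_{\delta_0}$ the conormal $d\widehat{\Psi} = d\hat\tau - d\widehat{\mathfrak{T}}_{\delta_0}$ satisfies the correct sign condition for the principal symbol of \eqref{e:mainsys6}, i.e. that $\widehat{\mathfrak{A}}^0_\phi\,\partial_{\hat\tau}\widehat{\Psi} + \tfrac{1}{A\hat\tau}\gamma\cos^2\hat\zeta^i\,\widehat{\mathfrak{A}}^i_\phi\,\partial_{\hat\zeta^i}\widehat{\Psi}$ has the required definiteness (so that the characteristic speed of $\widehat{\Gamma}_{\delta_0}$ does not exceed that of the system). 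Because only $\partial_{\hat\zeta^1}\widehat{\mathfrak{T}}_{\delta_0}$ is nonzero, this collapses to a scalar inequality involving $b$, $\Xi_0$, $A$, and the principal eigenvalues $\tfrac{200}{101}$, $\tfrac{400}{101}$ of $\widehat{\mathfrak{A}}^i_\phi$, and it is exactly here that the thresholds $\tfrac{202b-200-101\Xi_0}{202\Xi_0}\tfrac{\gamma}{\delta_0}$ and the constraint $\overline{\cc}\in\bigl(0,(1+\tfrac{101\Xi_0}{202b-200})^{-1}\bigr)$ in \eqref{e:surf0} are forced; one also has to check the $C^0$ (and weakly-spacelike) matching of $\widehat{\mathfrak{T}}^{(1)}_{\delta_0}$ and $\widehat{\mathfrak{T}}^{(2)}_{\delta_0}$ at the junction $\hat\zeta^1 = \arctan\bigl(\tfrac{202b-200-101\Xi_0}{202\Xi_0}\tfrac{\gamma}{\delta_0}\bigr)$. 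I would carry this computation out once and for all in the compactified coordinates (and transfer it to $(\ttau,\txi)$, $(\tau,\zeta)$, $(t,x)$) inside Appendix~\ref{s:ctbdry}, namely as Lemmas~\ref{t:Tsurf2}--\ref{t:Tsurf5} and Lemma~\ref{t:inI}, and then merely quote them here so that the proof of Lemma~\ref{t:extori2} itself remains the short two-step argument above.
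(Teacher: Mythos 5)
Your two-step skeleton is the right one and matches the paper: on $\widehat{\mathit{\Lambda}}_{\delta_0}$ every point has $\txi^1 \ge -\delta_0^{-1}$, so $\phi\equiv 1$ there and the $\phi$-modified coefficients in \eqref{e:A0b}--\eqref{e:Fphib} coincide with those of \eqref{e:mainsys8}; one then wants to quote Theorem~4.5 of \cite{Lax2006} on the lens-shaped domain. The observations about $\widehat{\Sigma}_0$ being spacelike and the lateral faces $\widehat{\Sigma}_{\pm k}$, $\widehat{\Sigma}_1$ being characteristic because the factor $\gamma\cos^2\hat\zeta^i$ degenerates at $\hat\zeta^i=\pm\pi/2$ are also correct and match the paper's implicit treatment.

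However, you have a genuine gap at the one point you yourself flag as ``the hard part.'' You defer the weak-spacelikeness of $\widehat{\Gamma}_{\delta_0}$ to Lemmas~\ref{t:Tsurf2}--\ref{t:Tsurf5} and Lemma~\ref{t:inI}, but those appendix lemmas do not contain any positivity argument: they only record the coordinate expressions of the hypersurface and its conormal in the various charts, and the geometric fact that $p_m\in\overline{\mathit{\Lambda}_{\delta_0}\cap\mathcal{I}}$. The actual proof that $(\nu_\Lambda)_0\mathfrak{A}^0_\phi + (\nu_\Lambda)_i\tfrac{1}{A\ttau}\mathfrak{A}^i_\phi\ge 0$ on $\widehat{\Gamma}_{\delta_0}$ is the entire substance of the paper's proof of Lemma~\ref{t:extori2}: one passes to $(\ttau,\txi)$ coordinates, writes down the matrix explicitly, and checks all leading principal minors $\mathrm{D}_\ell$, with the decisive estimate being $\mathrm{D}_2\ge 0$ in two regimes (below and above the threshold $\txi^1 = \tfrac{202b-200-101\Xi_0}{202\Xi_0}\delta_0^{-1}$). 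This requires the precise form of $\mathtt{q}_l,\mathtt{q}_r$ from \eqref{e:qlr}, the identities $S\tfrac{\chi_\uparrow}{B}=(2b+\Xi)^2\tfrac{B}{\chi_\uparrow}$ and $S=(2b+\Xi)^2(\tfrac{B}{\chi_\uparrow})^2$, the decay estimates \eqref{e:Sest1a}--\eqref{e:Sest1b} on $S$ and $S\tfrac{\chi_\uparrow}{B}$, the smallness estimates \eqref{e:KLest1} on $\mathscr{K}^j$ and $\widetilde{\mathscr{L}}$, the sign inequality \eqref{e:Xiest2} forced by the constraint on $\overline{\cc}$, and the monotone auxiliary functions $\mathrm{Q},\mathrm{P}$ to absorb the remainder for large $\txi^1$. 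None of this is ``a scalar inequality involving $b$, $\Xi_0$, $A$'' that one can read off; it is a multi-case quantitative computation that depends on the Fuchsian $C^3$ bound $\|\widehat{\mathfrak{U}}\|\le C\sigma$ from Proposition~\ref{t:verfuc}.\ref{t:verfuc.2} and on taking $\sigma$, $\sigma_0$, $\delta_0$ small. As written, your proof replaces this by a promissory note pointed at the wrong lemmas, so the key inequality is not actually established.

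A secondary, smaller imprecision: you say you would check ``a scalar inequality'' because only $\partial_{\hat\zeta^1}\widehat{\mathfrak{T}}_{\delta_0}$ is nonzero. That reduces the differential part to a rank-one perturbation in direction $\delta^1_i$, but the matrix $(\nu_\Lambda)_0\mathfrak{A}^0_\phi + (\nu_\Lambda)_i\tfrac{1}{A\ttau}\mathfrak{A}^i_\phi$ is still a full $(4+2n)\times(4+2n)$ symmetric matrix with off-diagonal couplings $\mathscr{K}^j$ and $\widetilde{\mathscr{Z}}^{ij}_\phi$; nonnegativity is a bona fide matrix inequality, verified minor by minor in the paper. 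You should at least acknowledge this and not present it as a single scalar check.
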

\begin{proof}
	The proof follows the argument of \cite[Lemma~4.4]{Liu2024}, with a more refined construction of the weakly spacelike hypersurface $\widehat{\Gamma}_{\delta_0}$. We omit the full details and focus only on verifying that $\widehat{\Gamma}_{\delta_0}$ is indeed weakly spacelike.
	
	From \eqref{e:surf0}, the unit normal vector to $\widehat{\Gamma}_{\delta_0}$ in coordinates $(\hat{\tau}, \hat{\zeta})$ and $(\ttau, \txi)$ is given by
$(\hat{\nu}_\Lambda)_\mu = \partial_\mu \widehat{\Psi}(\hat{\tau}, \hat{\zeta})$ and $(\nu_\Lambda)_\mu = \partial_\mu \Psi(\tilde{\tau}, \tilde{\zeta})$. 
	Using the coordinate transformation \eqref{e:transf3}, the relation between these normal vectors in the two coordinate systems can be expressed as
		\begin{equation*}
		(\hat{\nu}_{\Lambda})_0 = (\nu_{\Lambda})_0 = 1 \quad \text{and} \quad (\hat{\nu}_{\Lambda})_i = (\nu_{\Lambda})_i \frac{\sec^2(\hat{\zeta}^1)}{\gamma}.
	\end{equation*}

	According to \cite{Lax2006}, a hypersurface is weakly spacelike if on $\widehat{\Gamma}_{\delta_0}$ the following condition holds,
	\begin{equation*}
		(\hat{\nu}_\Lambda)_0 \widehat{\mathfrak{A}}_\phi^0 + (\hat{\nu}_\Lambda)_i \frac{1}{A\hat{\tau}} \gamma \cos^2 \hat{\zeta}^i \widehat{\mathfrak{A}}^{i}_\phi = \left( (\nu_\Lambda)_0 \mathfrak{A}^0_\phi + (\nu_\Lambda)_i \frac{1}{A\ttau} \mathfrak{A}^i_\phi \right)^{\widehat{}} \geq 0.
	\end{equation*}
	We now prove that $(\nu_\Lambda)_0 \mathfrak{A}^0_\phi + (\nu_\Lambda)_i \frac{1}{A\ttau} \mathfrak{A}^i_\phi \geq 0$ holds on $\widehat{\Gamma}_{\delta_0}$.
	
	By Lemma \ref{t:Tsurf2}, in the coordinates $(\ttau, \txi)$, the outward unit normal to the upper boundary $\Gamma_{\delta_0}$ of the region $\mathit{\Lambda}_{\delta_0}$ is
	\begin{align*}
		\nu_\Lambda = \left(1, A \ttau \mathtt{q} \delta^1_i \right) = 
		\begin{cases}
			\nu_\Lambda^l = \left(1, A \ttau \mathtt{q}_l \delta^1_i \right), & -\frac{1}{\delta_0} < \txi^1 \leq \frac{202b - 200 - 101 \Xi_0}{202 \Xi_0} \frac{1}{\delta_0}, \\
			\nu_\Lambda^r = \left(1, A \ttau \mathtt{q}_r \delta^1_i \right), & \txi^1 > \frac{202b - 200 - 101 \Xi_0}{202 \Xi_0} \frac{1}{\delta_0}
		\end{cases},
	\end{align*}
	where $\overline{\cc} = \left(1 + \frac{101\Xi_0}{202b - 200} \right)^{-1}$ and
	\begin{align}\label{e:qlr}
		\mathtt{q} =
		\begin{cases}
			\mathtt{q}_l = \frac{\overline{\cc} \cdot 101}{202b - 200} \left( 1 - \frac{1}{2(2 + \txi^1 \delta_0)^2} \right) \in \left[\frac{51}{4}\cc, \frac{101A}{202b - 200} \overline{\cc} \right), & -\frac{1}{\delta_0} < \txi^1 \leq \frac{202b - 200 - 101 \Xi_0}{202 \Xi_0} \frac{1}{\delta_0}, \\
			\mathtt{q}_r = \frac{101}{202b - 200} \left(1 - \frac{3}{4 (\delta_0 \txi^1 + 2)^2} \right) \in \left[17, \frac{101A}{202b - 200} \right), & \txi^1 > \frac{202b - 200 - 101 \Xi_0}{202 \Xi_0} \frac{1}{\delta_0}
		\end{cases}.
	\end{align}

On the hypersurface $\Gamma_\Lambda$, we compute
\begin{align*}
	& (\nu_\Lambda)_0 \mathfrak{A}^0 + (\nu_\Lambda)_i \frac{1}{A\ttau} \mathfrak{A}^i = \mathfrak{A}^0 + A \ttau \mathtt{q} \delta^1_i \frac{1}{A\ttau} \mathfrak{A}^i \notag \\
	= & \begin{pmatrix}
		1 + \mathtt{q} \frac{200}{101} & \mathtt{q} S \frac{\widetilde{\underline{\chi_\uparrow}}}{B} \delta^{1j} + \mathscr{K}^j & 0 & 0 & 0 & 0 \\
		\mathtt{q} S \frac{\widetilde{\underline{\chi_\uparrow}}}{B} \delta^{1j} \delta_{jk} + \mathscr{K}^j \delta_{jk} & (1 + \mathtt{q} \frac{200}{101})(S + \phi \widetilde{\mathscr{L}}) \delta^j_k & 0 & 0 & 0 & 0 \\
		0 & 0 & 2 + \mathtt{q} \frac{400}{101} & 0 & 0 & 0 \\
		0 & 0 & 0 & (1 + \mathtt{q} \frac{200}{101}) \delta^l_s & 0 & 0 \\
		0 & 0 & 0 & 0 & 1 + \mathtt{q} \frac{200}{101} & 0 \\
		0 & 0 & 0 & 0 & 0 & 2 + \mathtt{q} \frac{400}{101}
	\end{pmatrix},
\end{align*}
where $\mathscr{K}^j := \mathscr{K}^j(\tau; \mu \mfu_0, \mu \mfv, \eta \mathfrak{z}, \mu \mathfrak{B}_i) = \mathtt{q} \phi \widetilde{\mathscr{Z}}^{1j} + \phi \mu \tilde{R} \mathfrak{B}_i \delta^{ij}$.

From Proposition \ref{t:verfuc}, we have $\|\widehat{\mathfrak{U}}\|_{L^\infty} \leq C\sigma$. Using Lemma \ref{t:Gest2} and Proposition \ref{t:fginv0}, together with \eqref{e:S1} and the hypersurface expression \eqref{e:surf}, there exists a constant $C > 0$ independent of $\delta_0$ such that for $\txi^1 > \frac{202b - 200 - 101 \Xi_0}{202 \Xi_0} \frac{1}{\delta_0} (> -\frac{1}{2\delta_0})$, the following estimates hold,
\begin{align}
	&\left| S - 1 - \frac{(1 - \cm^2)(4 - 6\cb)}{6\cb + \frac{\underline{\mathfrak{G}}}{B}} \right| \notag \\
	\leq & C(-\ttau)^{\frac{1}{2}} = C \exp\left(-\frac{1}{2} \cdot \frac{101A}{202b - 200} \left( 1 - \frac{1}{2\delta_0 \txi^1 + 4} \right) \left( \txi^1 + \frac{1}{2\delta_0} \right) \right) \notag \\
	< & C \exp\left(- \frac{101A}{4(202b - 200)} \left( \txi^1 + \frac{1}{2\delta_0} \right) \right) \notag \\
	= & C \exp\left(-\frac{1}{\delta_0} \frac{A}{8 \Xi_0} \right) \exp\left(- \frac{101A}{4(202b - 200)} \left( \txi^1 - \frac{202b - 200 - 101 \Xi_0}{202 \delta_0 \Xi_0} \right) \right), \label{e:Sest1a}
\end{align}
and similarly,
\begin{align}
	&\left| S \frac{\widetilde{\underline{\chi_\uparrow}}}{B} - 4 - 2\cm^2 \right| \leq C(-\ttau)^{\frac{1}{2}} \notag \\
	< & C \exp\left(-\frac{1}{\delta_0} \frac{A}{8 \Xi_0} \right) \exp\left(- \frac{101A}{4(202b - 200)} \left( \txi^1 - \frac{202b - 200 - 101 \Xi_0}{202 \delta_0 \Xi_0} \right) \right). \label{e:Sest1b}
\end{align}

Moreover, for $\txi^1 > -\frac{1}{\delta_0}$, we have the estimates,
\begin{equation}\label{e:KLest1}
	|\mathscr{K}^j| \leq C \sigma_0 e^{-\frac{303}{\delta_0}} \sigma e^{-\frac{303}{2} \txi^1} \quad \text{and} \quad |\widetilde{\mathscr{L}}| \leq C \sigma_0 e^{-\frac{303}{\delta_0}} \sigma e^{-\frac{303}{2} \txi^1},
\end{equation}
and from Lemma \ref{t:Thpst},
\begin{equation}\label{e:Sest2}
	S \frac{\chi_\uparrow}{B} = (2b + \Xi)^2 \frac{B}{\chi_\uparrow} \quad \text{and} \quad S = (2b + \Xi)^2 \left( \frac{B}{\chi_\uparrow} \right)^2.
\end{equation}

We now prove that all principal minors of the matrix $(\nu_\Lambda)_0\mathfrak{A}^0 + (\nu_\Lambda)_i \frac{1}{A\ttau} \mathfrak{A}^i$ are non-negative. First, from \eqref{e:cc}, for $\txi^1 > -\frac{1}{\delta_0}$, the following inequality holds:
\begin{align}\label{e:Xiest2}
 1 - \overline{\cc} \left( 1 + \frac{101\Xi}{202b - 200} \right) \left( 1 - \frac{1}{2(2 + \txi^1 \delta_0)^2} \right)  
	> 1 - \overline{\cc} \left( 1 + \frac{101\Xi}{202b - 200} \right)
	\geq 1 - \overline{\cc} \left( 1 + \frac{101\Xi_0}{202b - 200} \right) > 0.
\end{align}

We first verify the leading principal minors $\mathrm{D}_\ell$ for $\ell = 1, 2, \dots, 2n+4$. The diagonal elements are $2 + \mathtt{q}\frac{400}{101}$ and $1 + \mathtt{q}\frac{200}{101}$, and the remaining principal minors can be computed directly. From Proposition \ref{t:verfuc}.\ref{t:verfuc.2}, we have $\|\mathfrak{U}(\ttau)\|_{H^k} \leq C\sigma$. Choosing sufficiently small $\sigma$ and using the estimates \eqref{e:KLest1}, \eqref{e:Sest2}, and \eqref{e:Xiest2}, for $-\frac{1}{\delta_0} < \txi^1 \leq \frac{202b - 200 - 101 \Xi_0}{202 \delta_0 \Xi_0}$, we obtain
\begin{align*}
	\mathrm{D}_2 = & \left(1 + \mathtt{q}_l \frac{200}{101} \right)^2 (S + \phi \widetilde{\mathscr{L}}) - \left( \mathtt{q}_l S \frac{\widetilde{\underline{\chi_\uparrow}}}{B} + \mathscr{K}^1 \right)^2 \notag \\
	= & \left(1 + \frac{100 \overline{\cc}}{101b - 100} \left( 1 - \frac{1}{2(2 + \txi^1 \delta_0)^2} \right) \right)^2 \phi \widetilde{\mathscr{L}} \notag \\
	& + (2b + \Xi)^2 \left( \frac{B}{\chi_\uparrow} \right)^2 \left( 1 - \overline{\cc} \left( 1 + \frac{101\Xi}{202b - 200} \right) \left( 1 - \frac{1}{2(2 + \txi^1 \delta_0)^2} \right) - \frac{\chi_\uparrow}{B} \frac{\mathscr{K}^1}{(2b + \Xi)} \right) \notag \\
	& \times \left( 1 + \overline{\cc} \left( \frac{101b + 100}{101b - 100} + \frac{101\Xi}{202b - 200} \right) \left( 1 - \frac{1}{2(2 + \txi^1 \delta_0)^2} \right) + \frac{\chi_\uparrow}{B} \frac{\mathscr{K}^1}{(2b + \Xi)} \right).
\end{align*}
Using \eqref{e:KLest1}, we have
\begin{align*}
	\mathrm{D}_2 \geq & -C \sigma_0 e^{-\frac{303}{\delta_0}} \sigma e^{-\frac{303}{2} \txi^1} \notag \\
	& + \left( 1 - \overline{\cc} \left( 1 + \frac{101\Xi}{202b - 200} \right) \left( 1 - \frac{1}{2(2 + \txi^1 \delta_0)^2} \right) - C \sigma_0 e^{-\frac{303}{\delta_0}} \sigma e^{-\frac{303}{2} \txi^1} \right) \notag \\
	& \times \left( 1 + \overline{\cc} \left( \frac{101b + 100}{101b - 100} + \frac{101\Xi}{202b - 200} \right) \left( 1 - \frac{1}{2(2 + \txi^1 \delta_0)^2} \right) - C \sigma_0 e^{-\frac{303}{\delta_0}} \sigma e^{-\frac{303}{2} \txi^1} \right) \notag \\
	& \times (2b + \Xi)^2 \left( \frac{B}{\chi_\uparrow} \right)^2 \overset{\eqref{e:Xiest2}}{>} 0.
\end{align*}

Next, we estimate $\mathrm{D}_2$ for $\txi^1 > \frac{202b - 200 - 101 \Xi_0}{202 \delta_0 \Xi_0}$. We introduce a decreasing function $\mathrm{Q}$ that plays a crucial role in the estimate of $\mathrm{D}_2$.
\begin{equation*}
	\mathrm{Q}(\txi^1) := (\delta_0 \txi^1 + 2)^2 \exp\left(- \frac{101A}{4(202b - 200)} \left( \txi^1 - \frac{202b - 200 - 101 \Xi_0}{202 \delta_0 \Xi_0} \right) \right).
\end{equation*}
Direct differentiation of $\mathrm{Q}(\txi^1)$ shows that $\mathrm{Q}$ is decreasing for $\txi^1 > \frac{16(101b - 100) \delta_0 - 202 A}{101 A \delta_0}$. If $\delta_0$ satisfies $0 < \delta_0 < \frac{303}{32(101b - 100)} A$, then $\frac{16(101b - 100) \delta_0 - 202 A}{101 A \delta_0} < -\frac{1}{2 \delta_0} < \frac{202b - 200 - 101 \Xi_0}{202 \delta_0 \Xi_0}$. Hence, for $\txi^1 > \frac{202b - 200 - 101 \Xi_0}{202 \delta_0 \Xi_0}$, $\mathrm{Q}$ is decreasing and
\begin{equation}\label{e:Qest1}
	\mathrm{Q}(\txi^1) \leq \mathrm{Q}\left( \frac{202b - 200 - 101 \Xi_0}{202 \Xi_0} \frac{1}{\delta_0} \right) = \frac{(303 \Xi_0 + 202b - 200)^2}{40804 \Xi_0^2}.
\end{equation}

Similarly, if $0 < \delta_0 \leq \frac{909}{8}$, then for $\txi^1 > \frac{202b - 200 - 101 \Xi_0}{202 \delta_0 \Xi_0} > -\frac{1}{2\delta_0}$, the function $\mathrm{P}(\txi^1) = e^{-\frac{303}{2} \txi^1} (\delta_0 \txi^1 + 2)^2$ is decreasing, so
\begin{equation}\label{e:Qest2}
	\mathrm{P}(\txi^1) \leq \mathrm{P}\left( \frac{202b - 200 - 101 \Xi_0}{202 \Xi_0} \frac{1}{\delta_0} \right) = \frac{(303 \Xi_0 + 202b - 200)^2}{40804 \Xi_0^2} e^{\frac{3(101 \Xi_0 + 200 - 202b)}{4 \delta_0 \Xi_0}}.
\end{equation}

Using \eqref{e:Sest1a}--\eqref{e:Sest1b} and choosing $\delta_0$ sufficiently small, for $\txi^1 > \frac{2 - 51 \Xi_0}{102 \Xi_0} \frac{1}{\delta_0}$, we have
\begin{align*}
	\mathrm{D}_2 = & \left(1 + \mathtt{q}_r \frac{200}{101} \right)^2 (S + \phi \widetilde{\mathscr{L}}) - \left( \mathtt{q}_r S \frac{\widetilde{\underline{\chi_\uparrow}}}{B} + \mathscr{K}^1 \right)^2 \notag \\
	\geq & \left(1 + \mathtt{q}_r \frac{200}{101} \right)^2 - 4 \mathtt{q}_r^2 - C \sigma_0 \sigma \exp\left(- \frac{303}{\delta_0} - \frac{303}{2} \txi^1 \right) \notag \\
	& - C \exp\left(-\frac{1}{\delta_0} \frac{A}{8 \Xi_0} - \frac{101A}{4(202b - 200)} \left( \txi^1 - \frac{202b - 200 - 101 \Xi_0}{202 \delta_0 \Xi_0} \right) \right).
\end{align*}
Substituting \eqref{e:qlr} into the above inequality yields
\begin{align*}
	\mathrm{D}_2 \geq & \left( 1 + \frac{402}{101} \mathtt{q}_r \right) \left( 1 - \frac{1}{101b - 100} \left( 1 - \frac{3}{4(\delta_0 \txi^1 + 2)^2} \right) \right) \notag \\
	& - C \exp\left(-\frac{1}{\delta_0} \frac{A}{8 \Xi_0} - \frac{101A}{4(202b - 200)} \left( \txi^1 - \frac{202b - 200 - 101 \Xi_0}{202 \delta_0 \Xi_0} \right) \right) \notag \\
	& - C \sigma_0 \sigma \exp\left(- \frac{303}{\delta_0} - \frac{303}{2} \txi^1 \right).
\end{align*}
Since $b \geq 1$ (see its definition in \eqref{e:bdef!} and Lemma~\ref{t:Thpst}), we further obtain
\begin{align*}
	\mathrm{D}_2 \geq & \frac{3}{4(101b - 100)} \frac{1}{(\delta_0 \txi^1 + 2)^2} \left( 1 + \frac{402}{101} \mathtt{q}_r \right) - C \sigma_0 \sigma \exp\left(-\frac{303}{\delta_0} - \frac{303}{2} \txi^1 \right) \notag \\
	& - C \exp\left(-\frac{1}{\delta_0} \frac{A}{8 \Xi_0} - \frac{101A}{4(202b - 200)} \left( \txi^1 - \frac{202b - 200 - 101 \Xi_0}{202 \delta_0 \Xi_0} \right) \right).
\end{align*}
Setting $\mathtt{q}_r \geq \frac{101}{3(101b - 100)}$, we derive
\begin{align*}
	\mathrm{D}_2 \geq & \frac{1}{(\delta_0 \txi^1 + 2)^2} \left( \frac{303b + 102}{4(101b - 100)^2} - C (\delta_0 \txi^1 + 2)^2 \sigma_0 \sigma \exp\left(- \frac{303}{\delta_0} - \frac{303}{2} \txi^1 \right) \right. \notag \\
	& \left. - C \exp\left(-\frac{1}{\delta_0} \frac{A}{8 \Xi_0} \right) (\delta_0 \txi^1 + 2)^2 \exp\left(- \frac{101A}{4(202b - 200)} \left( \txi^1 - \frac{202b - 200 - 101 \Xi_0}{202 \delta_0 \Xi_0} \right) \right) \right).
\end{align*}
Using inequalities \eqref{e:Qest1} and \eqref{e:Qest2}, we finally obtain
\begin{align*}
	\mathrm{D}_2 \geq & \frac{1}{(\delta_0 \txi^1 + 2)^2} \left( \frac{303b + 102}{4(101b - 100)^2} - C \exp\left(-\frac{1}{\delta_0} \frac{A}{8 \Xi_0} \right) \frac{(303 \Xi_0 + 202b - 200)^2}{40804 \Xi_0^2} \right. \notag \\
	& \left. - C \sigma \sigma_0 \frac{(303 \Xi_0 + 202b - 200)^2}{40804 \Xi_0^2} \exp\left( \frac{3(101 \Xi_0 + 200 - 202b)}{4 \delta_0 \Xi_0} \right) \right) \geq 0.
\end{align*}

For $\ell = 2, \dots, n$, with sufficiently small $\sigma$ and $-\frac{1}{\delta_0} < \txi^1 \leq \frac{202b - 200 - 101 \Xi_0}{202 \Xi_0} \frac{1}{\delta_0}$, we compute
\begin{align*}
	\mathrm{D}_{\ell+1} = & \left( \left(1 + \mathtt{q}_l \frac{200}{101} \right)^2 (S + \phi \widetilde{\mathscr{L}}) - \left( \mathtt{q}_l S \frac{\widetilde{\underline{\chi_\uparrow}}}{B} + \mathscr{K}^1 \right)^2 - \sum_{i=2}^{\ell} (\mathscr{K}^i)^2 \right) \notag \\
	& \times \left(1 + \mathtt{q}_l \frac{200}{101} \right)^{\ell-1} (S + \phi \widetilde{\mathscr{L}})^{\ell-1} \notag \\
	\geq & \left( \mathrm{D}_2 - C \sigma_0 e^{-\frac{303}{\delta_0}} \sigma e^{-\frac{303}{2} \txi^1} \right) \left(1 + \frac{100 \overline{\cc}}{101b - 100} \left( 1 - \frac{1}{2(2 + \txi^1 \delta_0)^2} \right) \right)^{\ell-1} \notag \\
	& \times \left( (2b + \Xi)^2 \left( \frac{B}{\chi_\uparrow} \right)^2 - C \sigma_0 e^{-\frac{303}{\delta_0}} \sigma e^{-\frac{303}{2} \txi^1} \right)^{\ell-1} > 0.
\end{align*}
Similarly, for $\txi^1 > \frac{202b - 200 - 101 \Xi_0}{202 \Xi_0} \frac{1}{\delta_0}$ and sufficiently small $\delta_0$, we have
\begin{equation*}
	\mathrm{D}_{\ell+1} = \left( \mathrm{D}_2 - \sum_{i=2}^{\ell} (\mathscr{K}^i)^2 \right) \left(1 + \mathtt{q}_r \frac{200}{101} \right)^{\ell-1} (S + \phi \widetilde{\mathscr{L}})^{\ell-1} \geq 0.
\end{equation*}

Therefore, the matrix $(\nu_\Lambda)_0 \mathfrak{A}^0 + (\nu_\Lambda)_i \frac{1}{A\ttau} \mathfrak{A}^i$ is non-negative definite, proving that the hypersurface is weakly spacelike. We complete the proof. 
\end{proof}

\subsection{Verification of the Fuchsian Form}\label{s:verfuc}
This section provides a detailed proof of Proposition \ref{t:verfuc}, establishing that equation \eqref{e:mainsys6} is indeed a Fuchsian equation. To achieve this, we systematically verify conditions \ref{c:2}--\ref{c:7} from Appendix \ref{s:fuch}.

\begin{proof}[Proof of Proposition~\ref{t:verfuc}]
To prove Proposition~\ref{t:verfuc}(1), we verify that equation~\eqref{e:mainsys6} satisfies the six conditions in Section~\ref{s:fuch}. Except for Condition~\ref{c:5}, the arguments closely follow \cite[§4.6]{Liu2024}; thus, we focus on Condition~\ref{c:5} and \ref{c:7}, while the remaining verifications are omitted and can be obtained by analogy with \cite[§4.6]{Liu2024}.

	\textbf{Verification of Condition \ref{c:5}:}
	
	In essence, for all $(\hat{\tau}, \hat{\zeta}, \widehat{\mathfrak{U}}) \in [-1,0] \times \mathbb{T}^n_{[-\frac{\pi}{2},\frac{\pi}{2}]} \times B_R(\mathbb{R}^{4+2n})$, we need to find constants $\acute{\kappa}$, $\gamma_2$, and $\gamma_1$ such that
	\begin{equation}\label{e:pstv}
		\frac{1}{\gamma_1} \mathds{1} \leq B^0 \leq \frac{1}{\acute{\kappa}} \mathbf{B} \leq \gamma_2 \mathds{1} \quad \text{i.e.,} \quad \frac{1}{\gamma_1} X^T \mathds{1} X \leq X^T \widehat{\mathfrak{A}}^0_\phi X \leq \frac{1}{\acute{\kappa}} X^T \frac{1}{A} \widehat{\mathfrak{A}}_\phi X \leq \gamma_2 X^T \mathds{1} X,
	\end{equation}
	where $X := (X_1, \cdots, X_{4+2n})^T \in \mathbb{R}^{4+2n}$.
	
	First, using \eqref{e:Aph}, we rewrite $X^T \widehat{\mathfrak{A}}_\phi X$ in symmetric form
	\begin{equation}\label{e:eBe1}
		X^T \widehat{\mathfrak{A}}_\phi X = X^T \acute{\mathfrak{A}}_\phi X,
	\end{equation}
	where $\acute{\mathfrak{A}}_\phi$ is the symmetric matrix
	\begin{equation}\label{e:Aphi2}
		\begin{aligned}
			\acute{\mathfrak{A}}_\phi = &
			\begin{pmatrix}
				300 - 7\cb & \acute{\mathfrak{D}}_{12}^j & \hat{\phi}\hat{\mu}\acute{\mathfrak{D}}_{13} & 0 & \sigma_0\acute{\mathfrak{D}}_{15} & 0 \\
				\acute{\mathfrak{D}}_{21,k} & (6\cb + 300)\delta^j_k & 0 & \acute{\mathfrak{D}}_{24}\delta^l_k & 0 & 0 \\
				\hat{\phi}\hat{\mu}\acute{\mathfrak{D}}_{13} & 0 & 20\cb & 0 & \hat{\phi}\hat{\eta}\acute{\mathfrak{D}}_{35} & 0 \\
				0 & \acute{\mathfrak{D}}_{24}\delta^j_{s} & 0 & (\cb + 300)\delta^l_s & 0 & 0 \\
				\sigma_0\acute{\mathfrak{D}}_{15} & 0 & \hat{\phi}\hat{\eta}\acute{\mathfrak{D}}_{35} & 0 & 300 & \sigma_0\acute{\mathfrak{D}}_{35} \\
				0 & 0 & 0 & 0 & \sigma_0\acute{\mathfrak{D}}_{35} & 20\cb + 600
			\end{pmatrix} \\
			& + \widehat{\mathscr{Z}}_{(4+2n)\times (4+2n)}(\ttau; \hat{\phi}{\hat{\mu}} \mfu_0, \mfu, \hat{\phi}{\hat{\mu}} \mfu_i, \hat{\phi}{\hat{\eta}} \mathfrak{z}, \hat{\phi}{\hat{\mu}} \mathfrak{B}_\zeta),
		\end{aligned}
	\end{equation}
	with
\begin{equation*}\label{acute:D}
	\begin{aligned}
		\acute{\mathfrak{D}}_{12}^j=&(-2+(2-3\cb)\cm^2)q^j + 606\delta^{1j}  
		,\quad  \acute{\mathfrak{D}}_{21,k}=
		(-2+(2-3\cb)\cm^2)q^j+ 606 \delta^{1}_k ,\\ 
		\acute{\mathfrak{D}}_{13}=&- 6 \cb,\quad
		\acute{\mathfrak{D}}_{15}=\frac{(9\ca-16)\ell_0\cb}{2},\quad
		\acute{\mathfrak{D}}_{24}=3\cb(\cb+6\ca\cb)   +\frac{1}{3},\quad 	 \acute{\mathfrak{D}}_{35}=-6\cb\ell_0(2-\ca),
	\end{aligned}
\end{equation*}
	and $\widehat{\mathscr{Z}}_{(4+2n)\times (4+2n)}(\ttau; \hat{\phi}{\hat{\mu}} \mfu_0, \mfu, \hat{\phi}{\hat{\mu}} \mfu_i, \hat{\phi}{\hat{\eta}} \mathfrak{z}, \hat{\phi}{\hat{\mu}} \mathfrak{B}_\zeta)$ is a symmetric $(4+2n) \times (4+2n)$ matrix.
	
Expanding \eqref{e:Aphi2} yields
	\begin{align}\label{e:eBe2}
		X^T \acute{\mathfrak{A}}_\phi X = & (300 - 7\cb)X_1^2 + (300 + 6\cb)\sum_{\ell=2}^{n+1} X_\ell^2 + 20\cb X_{n+2}^2 + (300 + \cb)\sum_{\ell=n+3}^{2n+2} X_\ell^2 + 300 X_{2n+3}^2 \notag \\
		& + (600 + 20\cb) X_{2n+4}^2  \notag  \\
		& + 2 \cdot \left[(-2 + (2 - 3\cb)\cm^2)|q| + 606\right] X_1 X_2 - 2 \cdot 6\cb \hat{\phi}\hat{\mu} X_{n+2} X_1 \notag \\
		& + 2 \cdot \left(3\cb(\cb + 6\ca\cb) + \frac{1}{3}\right) \sum_{\ell=0}^{n-1} X_{n+3+\ell} X_{2+\ell} + 2 \cdot \frac{(9\ca - 16)\ell_0\cb}{2} \sigma_0 X_{2n+3} X_1 \notag \\
		& - 2 \cdot 6\cb\ell_0(2 - \ca)\hat{\phi} \hat{\eta} X_{2n+3} X_{n+2} - 2 \cdot 6\cb\ell_0(2 - \ca) \sigma_0 X_{2n+4} X_{2n+3} \notag \\
		& + X^T \widehat{\mathscr{Z}}_{(4+2n)\times (4+2n)} X.
	\end{align}

Next, we take
\begin{equation}\label{e:l0def}
	\ell_0 = 2 \AND 1 \leq \ca \leq \frac{16}{9}
\end{equation}
It should be noted that the choice $1 \leq \ca \leq \frac{16}{9}$ is made to simplify the calculations and clearly demonstrate the derivation process. In fact, for $\frac{16}{9} < \ca \leq 30$, constants $\acute{\kappa}$, $\gamma_2$, and $\gamma_1$ can also be found to satisfy the positive definiteness condition \ref{c:5}. The derivation for $\frac{16}{9} < \ca \leq 30$ is very similar to that for $1 \leq \ca \leq \frac{16}{9}$, with only minor computational differences. Here, we present the calculation process for the case $1 \leq \ca \leq \frac{16}{9}$ as an example.

Since $\widehat{\mathscr{Z}}_{(4+2n)\times (4+2n)}(\ttau; 0) = 0$ and by continuity, there exists a constant $\hat{R} > 0$ such that $\mathfrak{U} \in B_{\hat{R}}(\mathbb{R}^{4+2n})$. Then, for sufficiently small $\sigma_0$, using \eqref{e:eBe2}, \eqref{e:mueta2}, Assumption \ref{A:3} (i.e., $q^i = |q|\delta^i_1$ with $|q| = \frac{606}{2 - (2 - 3\cb)\cm^2} \in [303, \frac{1212}{4 - 3m^2}]$), Young's inequality for products (i.e., $2ab \geq -\frac{a^2}{p} - p b^2$), and the Cauchy–Schwarz inequality, we can compute a lower bound for $X^T \acute{\mathfrak{A}}_\phi X$,
\begin{align*}
	X^T \acute{\mathfrak{A}}_\phi X \geq & \left(300 - 7\cb - 6\cb\hat{\phi}\hat{\mu} + \cb(9\ca - 16)\sigma_0 \right) X_1^2 + \left(300 + 6\cb - \left(3\cb(\cb + 6\ca\cb) + \frac{1}{3}\right) r \right) X_2^2 \notag \\
	& + \sum_{\ell=3}^{n+1} \left(300 + 6\cb - \left(3\cb(\cb + 6\ca\cb) + \frac{1}{3}\right) \right) X_\ell^2 \\
	& + \left(20\cb - 6\cb\hat{\phi}\hat{\mu} - 12\cb(2 - \ca)\hat{\phi}\hat{\eta} \right) X_{n+2}^2 \notag \\
	& + \left(300 + \cb - \frac{\left(3\cb(\cb + 6\ca\cb) + \frac{1}{3} \right)}{r} \right) X_{n+3}^2 \\
	& + \left(300 + \cb - \left(3\cb(\cb + 6\ca\cb) + \frac{1}{3}\right) \right) \sum_{\ell=n+4}^{2n+2} X_\ell^2 \notag \\
	& + \left(300 + \cb(9\ca - 16)\sigma_0 - 12\cb(2 - \ca)\hat{\phi}\hat{\eta} - 12\cb(2 - \ca) \sigma_0 \right) X_{2n+3}^2 \notag \\
	& + \left(600 + 20\cb - 12\cb(2 - \ca) \sigma_0 \right) X_{2n+4}^2 - C\hat{R} \sum_{\ell=1}^{2n+4} X_\ell^2.
\end{align*}

Further simplifying the above inequality using $1 \leq \ca \leq \frac{16}{9}$ and $\frac{1}{6} \leq \cb \leq \frac{2}{3}$, and setting $r = \frac{1}{13}$, we estimate $X^T \acute{\mathfrak{A}}_\phi X$ as
\begin{align}\label{e:XAX1}
	X^T \acute{\mathfrak{A}}_\phi X > & \left(\frac{886}{3} - \frac{26}{3}\sigma_0 \right) X_1^2 + \frac{2698}{9} X_2^2 + \frac{2566}{9} \sum_{\ell=3}^{n+1} X_\ell^2 + \left(\frac{10}{3} - 12\sigma_0 \right) X_{n+2}^2 + \frac{1685}{18} X_{n+3}^2 \notag \\
	& + \frac{5117}{18} \sum_{\ell=n+4}^{2n+2} X_\ell^2 + \left(300 - \frac{62}{3}\sigma_0 \right) X_{2n+3}^2 + \left(\frac{1810}{3} - 8\sigma_0 \right) X_{2n+4}^2 - C\hat{R} \sum_{\ell=1}^{2n+4} X_\ell^2  
	>   \frac{1}{3} X^T \mathds{1} X.
\end{align}

Similarly, we compute an upper bound for $X^T \acute{\mathfrak{A}}_\phi X$,
\begin{align*}
	X^T \acute{\mathfrak{A}}_\phi X \leq & \left(300 - 7\cb + 6\cb\hat{\phi}\hat{\mu} - \cb(9\ca - 16)\sigma_0 \right) X_1^2 + \left(300 + 6\cb + \left(3\cb(\cb + 6\ca\cb) + \frac{1}{3}\right) r \right) X_2^2 \notag \\
	& + \sum_{\ell=3}^{n+1} \left(300 + 6\cb + \left(3\cb(\cb + 6\ca\cb) + \frac{1}{3}\right) \right) X_\ell^2   + \left(20\cb + 6\cb\hat{\phi}\hat{\mu} + 12\cb(2 - \ca)\hat{\phi}\hat{\eta} \right) X_{n+2}^2 \notag \\
	& + \left(300 + \cb + \frac{\left(3\cb(\cb + 6\ca\cb) + \frac{1}{3} \right)}{r} \right) X_{n+3}^2  + \left(300 + \cb + \left(3\cb(\cb + 6\ca\cb) + \frac{1}{3}\right) \right) \sum_{\ell=n+4}^{2n+2} X_\ell^2 \notag \\
	& + \left(300 - \cb(9\ca - 16)\sigma_0 + 12\cb(2 - \ca)\hat{\phi}\hat{\eta} + 12\cb(2 - \ca) \sigma_0 \right) X_{2n+3}^2 \notag \\
	& + \left(600 + 20\cb + 12\cb(2 - \ca) \sigma_0 \right) X_{2n+4}^2 + C\hat{R} \sum_{\ell=1}^{2n+4} X_\ell^2.
\end{align*}

Using the ranges of $\ca$, $\cb$ and the value of $r$, this inequality simplifies to
\begin{align}\label{e:XAX2}
	X^T \acute{\mathfrak{A}}_\phi X < & \left(\frac{1793}{6} + \frac{26}{3} \sigma_0 \right) X_1^2 + \frac{2747}{9} X_2^2 + \frac{2879}{9} \sum_{\ell=3}^{n+1} X_\ell^2 + \left(\frac{40}{3} + 12\sigma_0 \right) X_{n+2}^2 \notag \\
	& + \frac{4565}{9} X_{n+3}^2 + \frac{2849}{9} \sum_{\ell=n+4}^{2n+2} X_\ell^2 + \left(300 + \frac{62}{3}\sigma_0 \right) X_{2n+3}^2 \notag \\
	& + \left( \frac{1840}{3} + 8\sigma_0 \right) X_{2n+4}^2 + C\hat{R} \sum_{\ell=1}^{2n+4} X_\ell^2  
	<  600 X^T \mathds{1} X.
\end{align}

Furthermore, given $\cc = \frac{4}{3}$, $\ck = 1$, and $\frac{1}{6} \leq \cb \leq \frac{2}{3}$, the result from Lemma \ref{t:coef1},
\begin{equation*}
	\cm^2 + \frac{(\ck - \cm^2)(12 - 2\cb - 8\cc)}{2\cb + (3 - 2\cc)\frac{\underline{\mathfrak{G}}}{B}} < S(\tau) \leq \ck\left(1 + \frac{1}{\beta}\right) + \frac{(\ck - \cm^2)(12 - 2\cb - 8\cc)}{2\cb + (3 - 2\cc)\frac{\underline{\mathfrak{G}}}{B}},
\end{equation*}
can be further simplified to $
	\cm^2 < S(\tau) < 4 + \frac{1}{\beta}$. 
Then, from \eqref{e:A0b}, for all $\widehat{\mathfrak{U}} \in B_{\hat{R}}(\mathbb{R}^{4+2n})$, we have
\begin{equation}\label{e:XA0X}
	\frac{\cm^2}{2} X^T \mathds{1} X \leq X^T \widehat{\mathfrak{A}}^0_\phi X < \left(4 + \frac{1}{\beta} \right) X^T \mathds{1} X.
\end{equation}

Combining \eqref{e:eBe1}, \eqref{e:XAX1}, \eqref{e:XAX2}, and \eqref{e:XA0X}, we obtain
\begin{equation*}
	\frac{\cm^2}{2} X^T \mathds{1} X \leq X^T \widehat{\mathfrak{A}}^0_\phi X < \frac{3(4\beta + 1)}{\beta} X^T \widehat{\mathfrak{A}}_\phi X \leq \frac{1800(4\beta + 1)}{\beta} X^T \mathds{1} X.
\end{equation*}

Therefore, the constants $\acute{\kappa}$, $\gamma_1$, and $\gamma_2$ in \eqref{e:pstv} are given by
\begin{equation*}
	\acute{\kappa} = \frac{\beta}{3A(4\beta + 1)}, \quad \gamma_1 = \frac{2}{\cm^2}, \quad \text{and} \quad \gamma_2 = \frac{1800(4\beta + 1)}{\beta}.
\end{equation*}

\textbf{Verification of Condition \ref{c:7}:} 
The verification of Condition~\ref{c:7} follows almost the same argument as in \cite[§4.6(4)]{Liu2024}; details are omitted, and only the key steps are highlighted below.

Since $\mathbf{P}^\perp = 0$, the terms $\mathbf{P} \mathrm{div} B \mathbf{P}^\perp$, $\mathbf{P}^\perp \mathrm{div} B \mathbf{P}$, and $\mathbf{P}^\perp \mathrm{div} B \mathbf{P}^\perp$ are identically zero, which trivially satisfies conditions \eqref{e:PhP2}--\eqref{e:PhP4}. Given $\mathbf{P} = \mathds{1}$ and $\mathbf{P}^\perp = 0$, Condition \ref{c:7} simplifies to: there exist constants $\theta$ and $\beta_\ell > 0$, $\ell = 0, 1$, such that for all $(\htau, \hat{\zeta}, \widehat{\mathfrak{U}}, \widehat{\mathfrak{W}}_i) \in [-1,0) \times \mathbb{T}^n_{[-\frac{\pi}{2},\frac{\pi}{2}]} \times B_{\tilde{R}}(\mathbb{R}^{4+2n}) \times B_{\tilde{R}}(\mathbb{M}_{(4+2n) \times n})$, the following holds, equivalent to \eqref{e:PhP1}:
\begin{equation}\label{e:divB1}
	\mathrm{div} \widehat{\mathfrak{A}}_\phi (\htau, \hat{\zeta}, \widehat{\mathfrak{U}}, \widehat{\mathfrak{W}}) = \mathcal{O}\left( \theta + |\htau|^{-\frac{1}{2}} \beta_0 + |\htau|^{-1} \beta_1 \right),
\end{equation}
where
\begin{align}\label{e:divB2}
	\mathrm{div} \widehat{\mathfrak{A}}_\phi (\htau, \hat{\zeta}, \widehat{\mathfrak{U}}, \widehat{\mathfrak{W}}) :=& \underbrace{\partial_{\htau} \widehat{\mathfrak{A}}^0_\phi(\htau, \hat{\zeta}, \widehat{\mathfrak{U}})}_{(d)\; (-\htau)^{-\frac{1}{2}} \text{ order term}} + \partial_{\widehat{\mathfrak{U}}} \widehat{\mathfrak{A}}^0_\phi(\htau, \hat{\zeta}, \widehat{\mathfrak{U}}) \cdot (\widehat{\mathfrak{A}}^0_\phi(\htau, \hat{\zeta}, \widehat{\mathfrak{U}}))^{-1} \notag \\
	&\times \Bigl[ \underbrace{-\frac{1}{A\htau} \gamma \cos^2(\hat{\zeta}^i) \widehat{\mathfrak{A}}^{i}_\phi (\htau, \hat{\zeta}, \widehat{\mathfrak{U}}) \cdot \widehat{\mathfrak{W}}_i}_{(a)\; \htau^{-1} \text{ order term}} + \underbrace{\frac{1}{A\htau} \widehat{\mathfrak{A}}_\phi (\htau, \hat{\zeta}, \widehat{\mathfrak{U}}) \widehat{\mathfrak{U}}}_{(b)\; \htau^{-1} \text{ order term}}  + \underbrace{\widehat{\mathfrak{F}}_\phi(\htau, \hat{\zeta}, \widehat{\mathfrak{U}})}_{(e)\; (-\htau)^{-\frac{1}{2}} \text{ order term}} \Bigr] \notag \\
	&+ \underbrace{\frac{1}{A\htau} \partial_{\hat{\zeta}^i} \left( \gamma \cos^2(\hat{\zeta}^i) \widehat{\mathfrak{A}}^{i}_\phi(\htau, \hat{\zeta}, \widehat{\mathfrak{U}}) \right) + \frac{\gamma \cos^2(\hat{\zeta}^i)}{A\htau} \partial_{\widehat{\mathfrak{U}}} \widehat{\mathfrak{A}}^{i}_\phi(\htau, \hat{\zeta}, \widehat{\mathfrak{U}}) \cdot \widehat{\mathfrak{W}}_i}_{(c)\; \htau^{-1} \text{ order term}}.
\end{align}

By arguments analogous to those in \cite[\S4.6(4)]{Liu2024}, we obtain that there exist constants $\tilde{\theta}$ and $\tilde{\beta}_{0}$ such that
\begin{equation}\label{e:H6.1}
	\partial_{\htau} \widehat{\mathfrak{A}}^0_\phi = \mathcal{O}\left(\tilde{\theta} + \tilde{\beta}_{0} |\htau|^{-\frac{1}{2}}\right).
\end{equation}

Through analysis, we establish the following three conclusions,

(i) $\partial_{\widehat{\mathfrak{U}}} \hat{R}$ depends on $\partial_{\widehat{\mathfrak{U}}} \hat{\mathscr{L}}$. From \eqref{e:L1} and \eqref{e:ZIj}, there exists a constant $\hat{\theta} > 0$ such that
\[
\partial_{\widehat{\mathfrak{U}}} \hat{\mathscr{L}} = \mathcal{O}(\hat{\theta}) \quad \text{and} \quad \partial_{\widehat{\mathfrak{U}}} \widehat{\mathscr{Z}^{ij}} = \mathcal{O}(\hat{\theta}).
\]

(ii) A direct computation yields
\begin{align}\label{e:dzcosA}
	\partial_{\hat{\zeta}^i} \left( \gamma \cos^2(\hat{\zeta}^i) \widehat{\mathfrak{A}}^{i}_\phi(\htau, \hat{\zeta}, \widehat{\mathfrak{U}})\right) = & -2 \gamma \cos(\hat{\zeta}^i) \sin(\hat{\zeta}^i) \widehat{\mathfrak{A}}^{i}_\phi(\htau, \hat{\zeta}, \widehat{\mathfrak{U}})  + \gamma \cos^2(\hat{\zeta}^i) \partial_{\hat{\zeta}^i} \widehat{\mathfrak{A}}^{i}_\phi(\htau, \hat{\zeta}, \widehat{\mathfrak{U}}).
\end{align}

(iii) The first term on the right-hand side of \eqref{e:dzcosA} is bounded; hence, it suffices to estimate the second term. To estimate \eqref{e:dzcosA}, we analyze $\gamma \cos^2(\hat{\zeta}^i) \partial_{\hat{\zeta}^i} \widehat{\mathscr{Z}}^{ij}_\phi$ and $\gamma \cos^2(\hat{\zeta}^i) \partial_{\hat{\zeta}^i} \widehat{\mathscr{L}}_\phi$. The key is to examine the factors $\gamma \cos^2(\hat{\zeta}^i) \partial_{\hat{\zeta}^i}(\hat{\phi}\hat{\mu})$ and $\gamma \cos^2(\hat{\zeta}^i) \partial_{\hat{\zeta}^i}(\hat{\phi}\hat{\eta})$.

Similar to \cite[\S4.6(4)]{Liu2024}, we obtain
\begin{align}\label{e:gamcos}
	\left| \gamma \cos^2(\hat{\zeta}^i) \partial_{\hat{\zeta}^i}(\hat{\phi}\hat{\mu}) \right| = \left| \hat{\mu} \widehat{\partial_{\tilde{\zeta}^i} \phi} + \hat{\phi} \widehat{\partial_{\tilde{\zeta}^i} \mu} \right| \leq \left| C\hat{\mu} - 101\delta^1_i \hat{\phi}\hat{\mu} \right| \leq C\hat{\mu} \leq C\sigma_0.
\end{align}
Thus, combining \eqref{e:L1} and \eqref{e:ZIj}, we conclude
\begin{equation*}
	|\partial_{\hat{\zeta}^i} \widehat{\mathscr{L}}| \lesssim \sigma_0 \quad \text{and} \quad |\partial_{\hat{\zeta}^i} \widehat{\mathscr{Z}^{ij}}| \lesssim \sigma_0.
\end{equation*}

Based on the above three conclusions, we directly compute $\partial_{\widehat{\mathfrak{U}}} \widehat{\mathfrak{A}}^0_\phi$, $\partial_{\widehat{\mathfrak{U}}} \widehat{\mathfrak{A}}^{i}_\phi$, and $\partial_{\hat{\zeta}^i} \widehat{\mathfrak{A}}^{i}_\phi$, and apply Lemmas C.1 and C.2 from \cite{Liu2018} to compute $(\widehat{\mathfrak{A}}^0_\phi)^{-1}$. We find that for all $\widehat{\mathfrak{U}} \in B_{\tilde{R}}(\mathbb{R}^{4+2n})$, there exist constants $\hat{\theta}$ and $\tilde{R}$ such that
\begin{gather}\label{e:H6.2}
	\partial_{\widehat{\mathfrak{U}}} \widehat{\mathfrak{A}}^0_\phi = \mathcal{O}(\hat{\theta}), \quad
	\partial_{\hat{\zeta}^i} \left( \gamma \cos^2(\hat{\zeta}^i) \widehat{\mathfrak{A}}^{i}_\phi \right) = \mathcal{O}(\hat{\theta} \sigma_0), \quad 
	\partial_{\widehat{\mathfrak{U}}} \widehat{\mathfrak{A}}^{i}_\phi = \mathcal{O}(\hat{\theta}), \quad \text{and} \quad
	(\widehat{\mathfrak{A}}^0)^{-1} = \mathcal{O}(\hat{\theta}).
\end{gather}

Furthermore, from arguments analogous to those in \cite[\S4.6(4)]{Liu2024}, there exist constants $\bar{\theta}$, $\hat{\beta}_0$, and $\tilde{R}$ such that for all $\widehat{\mathfrak{U}} \in B_{\tilde{R}}(\mathbb{R}^{4+2n})$
\begin{equation}\label{e:H6.5}
	\widehat{\mathfrak{F}}_\phi = \mathcal{O}\left( \bar{\theta} + \hat{\beta}_0 |\htau|^{-\frac{1}{2}} \right).
\end{equation}

Using \eqref{e:H6.1}--\eqref{e:H6.5}, we examine each term in \eqref{e:divB2} and conclude that there exist constants $\theta > 0$, $\beta_0 > 0$, and $\beta_1 > 0$ such that for all $(\htau, \hat{\zeta}, \widehat{\mathfrak{U}}, \widehat{\mathfrak{W}}_i) \in [-1,0) \times \mathbb{T}^n_{[-\frac{\pi}{2},\frac{\pi}{2}]} \times B_{\tilde{R}}(\mathbb{R}^{4+2n}) \times B_{\tilde{R}}(\mathbb{M}_{(4+2n) \times n})$, inequality \eqref{e:divB1} holds.

From \eqref{e:dzcosA} and \eqref{e:gamcos}, the $\htau^{-1}$-order terms $(a)$, $(b)$, and $(c)$ in \eqref{e:divB2} are controlled by $\widehat{\mathfrak{U}}$, $\widehat{\mathfrak{W}}_i$, or bounded by $\sigma_0$ and $\gamma$. By appropriately reducing the radius of the ball $B_{\tilde{R}}(\mathbb{R}^N) \times B_{\tilde{R}}(\mathbb{M}_{(4+2n) \times n}) \ni (\mathfrak{U}, \mathfrak{W}_i)$ and choosing sufficiently small $\sigma_0$, we can select a suitable $\gamma$ and set $\beta_1 = C_1 \gamma > 0$ in \eqref{e:divB1} such that
\begin{equation}\label{e:bt1}
	\beta_1 + 2k(k+1)C_0 \gamma = (C_1 + 2k(k+1)C_0)\gamma < \frac{\cm^2 \beta}{3A(4\beta + 1)},
\end{equation}
where $k \in \mathbb{Z}_{> \frac{n}{2} + 3}$ is a constant and $C_0 > 0$ is given by $\mathtt{b} \leq C_0 \gamma$ (see \cite[\S4.6(1)]{Liu2024} for details of this result). This completes the verification of Condition \ref{c:7}.

In summary, having verified Conditions \ref{c:2}--\ref{c:7}, we conclude that equation \eqref{e:mainsys3} constitutes a Fuchsian equation as defined in \S \ref{s:fuch}, thereby proving Proposition \ref{t:verfuc}.\ref{t:verfuc.1}.

Proposition \ref{t:verfuc}.\ref{t:verfuc.2} follows directly from Theorem \ref{t:fuch}. Since \eqref{e:mainsys5} is indeed in Fuchsian form, and with $\beta_1$ chosen according to \eqref{e:bt1}, the constants $\kappa$, $\gamma_1$, and $\beta_1$ satisfy \eqref{e:kpbt1} (see \cite[\S4.6]{Liu2024} for details). This completes the proof of Proposition \ref{t:verfuc}.
\end{proof}

\section{Proof of Main Theorem \ref{t:mainthm2}}\label{s:pfmthm}
This section presents the proof of Theorem~\ref{t:mainthm2}, carried out in four main steps. For simplicity, we omit parts similar to \cite[§5]{Liu2024} as well as routine calculations and minor modifications, emphasizing only the essential changes. Once Theorem~\ref{t:mainthm2} is proved, Theorem~\ref{t:mainthm1} follows immediately through exponential and logarithmic transformations and will not be discussed further. Let $\sigma_0 \in (0, \sigma_\star)$ and $\delta_0 \in (0, \delta_\star)$, where $\sigma_\star$ and $\delta_\star$ are sufficiently small positive constants.

\textbf{Step 1:} We first introduce a set of variable substitutions that will be used frequently in the subsequent analysis. Using the transformations defined in \eqref{e:v1}--\eqref{e:v5} and \eqref{e:fv1}--\eqref{e:fv6}, the coordinate transformation, 
\begin{equation}\label{e:coord6}
	\ttau = \tau = g(t, x^i) \quad \text{and} \quad \txi^i = \frac{200\delta^i_1}{101A} \ln(-\tau) + \zeta^i = \frac{200\delta^i_1}{101A} \ln(-g(t, x^i)) + x^i,
\end{equation}
along with \eqref{e:coord5}--\eqref{e:coordi5}, \eqref{e:coord2}, and \eqref{e:coordi2}, and the definitions of $\mu$ and $\eta$ in \eqref{e:eta2}, together with the relations $\mft = \mathsf{h}_\uparrow(\tau) = \mathsf{h}_\uparrow \circ g(t, x^i)$ and $f(\mft) = \uf(\tau) = \uf \circ \mfg(\mft) = f \circ \mathsf{h}_\uparrow \circ g(t, x^i)$, we obtain the following composite transformations
\begin{align}
	\hat{\mfu}_0(\htau, \hat{\zeta}) = & \mfu_0(\ttau, \txi) = \frac{e^{\frac{303}{\delta_0}} (-\tau)^{\frac{300}{A}} e^{\frac{303}{2} \zeta^1}}{\sigma_0} \frac{\underline{\varrho_0}(\tau, \zeta) - \underline{f_0}(\tau)}{\underline{f_0}(\tau)} \nonumber \\
	= & \frac{e^{\frac{303}{\delta_0}} (-g(t, x))^{\frac{300}{A}} e^{\frac{303}{2} x^1}}{\sigma_0} \frac{\varrho_0(t, x) - f_0 \circ \mathsf{h}_\uparrow \circ g(t, x)}{f_0 \circ \mathsf{h}_\uparrow \circ g(t, x)}, \label{e:fv1.b} \\
	\hat{\mfu}_i(\htau, \hat{\zeta}) = & \mfu_i(\ttau, \txi) = \frac{e^{\frac{303}{\delta_0}} (-\tau)^{\frac{300}{A}} e^{\frac{303}{2} \zeta^1}}{\sigma_0} \frac{\underline{\varrho_i}(\tau, \zeta)}{1 + \underline{f}(\tau)} \notag \\
	= & \frac{e^{\frac{153}{\delta_0}} (-g(t, x))^{\frac{100}{A}} e^{51 x^1}}{\sigma_0} \frac{\varrho_i(t, x)}{1 + f \circ \mathsf{h}_\uparrow \circ g(t, x)}, \label{e:fv2.b} \\
	\hat{\mfu}(\htau, \hat{\zeta}) = & \mfu(\ttau, \txi) = \frac{\underline{\varrho}(\tau, \zeta) - \uf(\tau)}{\uf(\tau)} = \frac{\varrho(t, x) - f \circ \mathsf{h}_\uparrow \circ g(t, x)}{f \circ \mathsf{h}_\uparrow \circ g(t, x)}, \label{e:fv3.b} \\
	\hat{\mfv}(\htau, \hat{\zeta}) = & \mfv(\ttau, \txi) = \frac{e^{\frac{303}{\delta_0}} (-\tau)^{\frac{300}{A}} e^{\frac{303}{2} \zeta^1}}{\sigma_0} \frac{\underline{\varrho}(\tau, \zeta) - \uf(\tau)}{\uf(\tau)} \notag \\
	= & \frac{e^{\frac{303}{\delta_0}} (-g(t, x))^{\frac{300}{A}} e^{\frac{303}{2} x^1}}{\sigma_0} \frac{\varrho(t, x) - f \circ \mathsf{h}_\uparrow \circ g(t, x)}{f \circ \mathsf{h}_\uparrow \circ g(t, x)}, \label{e:fv6.b} \\
	\hat{\mathfrak{B}}_j(\htau, \hat{\zeta}) = & \mathfrak{B}_j(\ttau, \txi) = \frac{e^{\frac{303}{\delta_0}} (-\tau)^{\frac{300}{A}} e^{\frac{303}{2} \zeta^1}}{\sigma_0} \frac{B \ufo(\tau)}{\underline{\chi_\uparrow}(\tau) \uf(\tau)} \mathsf{h}_j(\tau, \zeta), \label{e:fv4.b} \\
	\hat{\mathfrak{z}}(\htau, \hat{\zeta}) = & \mathfrak{z}(\ttau, \txi) = \frac{e^{\frac{303}{\delta_0}} (-\tau)^{\frac{300}{A}} e^{\frac{303}{2} \zeta^1}}{\sigma_0^2} \left[ \left( \frac{\mathsf{h}(\tau, \zeta)}{\mathsf{h}_\uparrow(\tau)} \right)^{\frac{1}{2}} - 1 \right]. \label{e:fv5.b}
\end{align}

Following arguments and calculations similar to those in \cite[\S5]{Liu2024}, we conclude that the initial data of $\widehat{\mathfrak{U}}$ satisfy
\begin{equation}\label{e:dt3}
	\|\widehat{\mathfrak{U}}_0\|_{C^3} \leq \|\widehat{\mathfrak{U}}_0\|_{H^k(B_r(0))} \leq \sigma.
\end{equation}

\textbf{Step 2:} In \S \ref{s:4}, through Lemmas \ref{t:mainsys1}, \ref{t:mainsys2}, and \ref{t:sigsys}, we transformed equation \ref{Eq2} into the singular equation \eqref{e:mainsys3}. By introducing the cutoff function $\phi$, we replaced the terms involving $\mu$ and $\eta$ in \eqref{e:mainsys3} with $\phi\mu$ and $\phi\eta$, respectively, resulting in the modified equation \eqref{e:mainsys5}. Then, using the coordinate transformations \eqref{e:coord6b} and \eqref{e:coordi6}, we compactified the spatial domain into a torus, ultimately converting the modified equation \eqref{e:mainsys5} into the standard Fuchsian form described in \S \ref{s:fuch} (see Proposition \ref{t:verfuc}).

This implies that on the spacetime $[-1,0) \times \mathbb{T}^n_{[-\frac{\pi}{2},\frac{\pi}{2}]}$, there exists a constant $\sigma = \sigma(\delta_0) > 0$ such that if the initial data satisfy $\|\widehat{\mathfrak{U}}_0\|_{H^k} \leq \sigma$, then there exists a unique solution $\widehat{\mathfrak{U}}$ satisfying the regularity conditions in Proposition \ref{t:verfuc}, with the energy estimate 
$
\|\widehat{\mathfrak{U}}(\htau)\|_{H^k(\mathbb{T}^n_{[-\frac{\pi}{2},\frac{\pi}{2}]})} \leq C \|\widehat{\mathfrak{U}}_0\|_{H^k(\mathbb{T}^n_{[-\frac{\pi}{2},\frac{\pi}{2}]})}$. 
From \eqref{e:dt3}, by choosing $\sigma_\star$ sufficiently small, there exists $\sigma > 0$ such that $\|\widehat{\mathfrak{U}}_0\|_{H^k} \leq \sigma$. Therefore, there exists a unique solution $\widehat{\mathfrak{U}}$ on $[-1,0) \times \mathbb{T}^n_{[-\frac{\pi}{2},\frac{\pi}{2}]}$ satisfying the regularity condition \eqref{e:solreg}, and for all $-1 \leq \hat{\tau} < 0$:
\begin{equation}\label{e:est1}
	\|\widehat{\mathfrak{U}}(\htau)\|_{C^3(\mathbb{T}^n_{[-\frac{\pi}{2},\frac{\pi}{2}]})} \leq C \|\widehat{\mathfrak{U}}(\htau)\|_{H^k(\mathbb{T}^n_{[-\frac{\pi}{2},\frac{\pi}{2}]})} \leq C \|\widehat{\mathfrak{U}}_0\|_{H^k(\mathbb{T}^n_{[-\frac{\pi}{2},\frac{\pi}{2}]})} \leq C\sigma.
\end{equation}

By Lemma \ref{t:extori2}, we obtain the unique solution $\widehat{\mathfrak{U}} := (\hat{\mfu}_0, \hat{\mfu}_j, \hat{\mfu}, \hat{\mathfrak{B}}_l, \hat{\mathfrak{z}}, \hat{\mfv})^T$ of equation \eqref{e:mainsys3} with parameters \eqref{e:para} in the region $\widehat{\mathit{\Lambda}}_{\delta_0}$. Applying the Sobolev embedding theorem, the estimate \eqref{e:est1} becomes
\begin{equation}\label{e:est2}
	\|(\hat{\mfu}_0, \hat{\mfu}_j, \hat{\mfu}, \hat{\mathfrak{B}}_l, \hat{\mathfrak{z}}, \hat{\mfv})\|_{C^3(\mathit{\Lambda_{\delta_0}})} = \|\widehat{\mathfrak{U}}(\ttau)\|_{C^3(\mathit{\Lambda_{\delta_0}})} \leq C\sigma.
\end{equation}

The following lemma controls the decay factor via simple exponential decay in $\zeta$:
\begin{lemma}\label{t:dcy}
	If $(\tau, \zeta) \in \underline{\mathit{\Lambda}_{\delta_0}}$, then the decay factor satisfies
	\begin{equation*}
		e^{-\frac{303}{\delta_0}} (-g(t,x))^{-\frac{300}{A}} e^{-\frac{303}{2} x^1} = e^{-\frac{303}{\delta_0}} (-\tau)^{-\frac{300}{A}} e^{-\frac{303}{2} \zeta^1} < e^{-\frac{100}{\delta_0}} e^{-\frac{3}{4} \zeta^1}.
	\end{equation*}
\end{lemma}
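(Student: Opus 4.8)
\textbf{Proof proposal for Lemma~\ref{t:dcy}.} The plan is to reduce the claimed inequality to a bound on the exponent, namely that for $(\tau,\zeta)\in\underline{\mathit{\Lambda}_{\delta_0}}$ one has
\begin{equation*}
	\frac{303}{\delta_0}+\frac{300}{A}\ln(-\tau)^{-1}+\frac{303}{2}\zeta^1 > \frac{100}{\delta_0}+\frac{3}{4}\zeta^1,
\end{equation*}
equivalently $\frac{203}{\delta_0}+\frac{300}{A}\bigl(-\ln(-\tau)\bigr)+\frac{1197}{4}\zeta^1>0$. First I would pass to the zoom-in coordinates $(\ttau,\txi)$ via \eqref{e:coord6}, so that $\zeta^1=\txi^1-\frac{200}{101A}\ln(-\ttau)$ and $\tau=\ttau$; substituting turns the left side into an affine expression in $\txi^1$ and $\ln(-\ttau)$ with \emph{positive} coefficient on $-\ln(-\ttau)$ after combining the two logarithmic terms (one checks $\frac{300}{A}+\frac{1197}{4}\cdot\frac{200}{101A}=\frac{300}{A}+\frac{59850}{101A}>0$). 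Hence it suffices to bound $\txi^1$ from below on $\underline{\mathit{\Lambda}_{\delta_0}}$ and to note $-\ln(-\ttau)\ge 0$ since $\ttau\in[-1,0)$.

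The key geometric input is the description of the lens-shaped region: $\widehat{\mathit{\Lambda}}_{\delta_0}$ lies \emph{below} the hypersurface $\widehat{\Gamma}_{\delta_0}$, and pulling back through \eqref{e:coordi6} and \eqref{e:coordi5}, the support restriction coming from the cutoff $\phi$ (see \eqref{e:phi1}) forces $\txi^1>-2\delta_0^{-1}$ on the relevant part of the region, while the lower boundary $\widehat{\Sigma}_0=\{\ttau=-1\}$ together with the shape of $\widehat{\mathfrak{T}}_{\delta_0}$ pins down how negative $\txi^1$ can be when $\ttau$ is close to $0$. Concretely, I would use the expressions for $\widehat{\mathfrak{T}}^{(1)}_{\delta_0},\widehat{\mathfrak{T}}^{(2)}_{\delta_0}$ in \S\ref{s:reorg} to show that on $\underline{\mathit{\Lambda}_{\delta_0}}$ the pair $(\ttau,\txi^1)$ satisfies $-\ln(-\ttau)\ge c(\txi^1+\tfrac{1}{2\delta_0})$ for a constant $c$ bounded below in terms of $A,b$ (this is exactly the content of the boundary formula, where $-\ttau$ is an exponential of a linear function of $\txi^1$). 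Feeding this back, the affine expression in $\txi^1$ and $-\ln(-\ttau)$ is bounded below by something of the form $\frac{203}{\delta_0}+(\text{positive})\cdot(\txi^1+\tfrac{1}{2\delta_0})+\frac{1197}{4}\txi^1$, which is manifestly positive once $\txi^1>-2\delta_0^{-1}$ and $\delta_0$ is small; the slack $\tfrac{203}{\delta_0}$ versus the generous numerical coefficients ($303$ vs.\ $3/4$) is what makes the inequality strict with room to spare.

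The step I expect to be the main obstacle is making the lower bound on $\txi^1$ (equivalently, controlling how the region $\underline{\mathit{\Lambda}_{\delta_0}}$ leans as $\ttau\to 0^-$) fully rigorous from the two-branch definition of $\widehat{\mathfrak{T}}_{\delta_0}$: one must verify that the worst case over both branches and over the transition point $\txi^1=\frac{202b-200-101\Xi_0}{202\Xi_0}\delta_0^{-1}$ still obeys the needed linear relation between $-\ln(-\ttau)$ and $\txi^1$, using $1\le b\le 2$ and the smallness of $\delta_0$. Everything else—the coordinate substitutions \eqref{e:coord6}, \eqref{e:coordi5}, \eqref{e:coordi6}, the monotonicity $-\ln(-\ttau)\ge 0$, and the final numerical comparison—is routine. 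I would also remark that, since the statement only asserts a strict inequality with large constant gaps, one does not need the sharp constant in the $\txi^1$-bound, only its qualitative form, which is already furnished by the construction in \S\ref{s:reorg} and the cutoff condition \eqref{e:phi1}.
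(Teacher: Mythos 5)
Your high-level plan --- restrict to the upper boundary of the lens region and substitute the explicit hypersurface formula --- is the same as the paper's (which works in $(\tau,\zeta)$ coordinates via Lemma~\ref{t:Tsurf3} rather than in $(\ttau,\txi)$). But your executed reduction contains a sign error that inverts the logic of the whole argument. Taking logarithms, the claimed inequality is equivalent to
\begin{equation*}
	\frac{300}{A}\bigl(-\ln(-\tau)\bigr) \;<\; \frac{203}{\delta_0} + \frac{603}{4}\,\zeta^1
\end{equation*}
(note also that the $\zeta^1$-coefficient is $\tfrac{303}{2}-\tfrac{3}{4}=\tfrac{603}{4}$, not $\tfrac{1197}{4}$). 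The term $\frac{300}{A}(-\ln(-\tau))$ sits on the \emph{dangerous} side: it is nonnegative and blows up as $\tau\to 0^-$, so the statement is emphatically not ``manifestly positive,'' and one cannot ``note $-\ln(-\ttau)\ge 0$'' and discard it. In your displayed version this term appears with the opposite, helpful sign, which is why your argument collapses to a mere lower bound on $\txi^1$; that reduction is wrong, and the ``main obstacle'' you identify is not the actual crux.

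The entire content of the lemma is the \emph{upper} bound on $(-\tau)^{-300/A}$ supplied by the region: on $\underline{\mathit{\Lambda}_{\delta_0}}$ one has $\tau\le\tau_\Gamma(\zeta;\delta_0)$, hence $-\ln(-\tau)\le-\ln(-\tau_\Gamma(\zeta;\delta_0))$ --- the reverse of your claimed ``$-\ln(-\ttau)\ge c(\txi^1+\tfrac{1}{2\delta_0})$,'' which fails on the region (take $\ttau=-1$ and $\txi^1>-\tfrac{1}{2\delta_0}$). Substituting the formula of Lemma~\ref{t:Tsurf3} and using $\sqrt{\Diamond_l},\sqrt{\Diamond_r}>0$ to drop those terms gives, e.g.\ on the right branch,
\begin{equation*}
	-\ln(-\tau) \;<\; \frac{101A}{200\delta_0}+\frac{(100+101b)A}{400b}\,\zeta^1 ,
\end{equation*}
and the conclusion then follows from the coefficient comparisons $\frac{300}{A}\cdot\frac{(100+101b)A}{400b}=\frac{75}{b}+\frac{303}{4}\le\frac{603}{4}<\frac{303}{2}$ for $b\ge1$ and $\frac{300}{A}\cdot\frac{101A}{200\delta_0}=\frac{303}{2\delta_0}<\frac{203}{\delta_0}$, checked on both branches and for both signs of $\zeta^1$. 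This is a tight comparison, not one with ``room to spare.'' With the sign corrected and this boundary substitution carried out, your outline becomes exactly the paper's proof.
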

\begin{proof}
	If $(\tau, \zeta) \in \underline{\mathit{\Lambda}_{\delta_0}}$, then for all $\zeta$, $(-\tau)^{-\frac{300}{A}}$ attains its supremum on the hypersurface $\hat{\Gamma}_{\delta_0}$. Substituting the expression for $\tau$ from Lemma \ref{t:Tsurf3}, and since $\sqrt{\Diamond_r(\zeta^1)} > 0$ and $\sqrt{\Diamond_l(\zeta^1)} > 0$, the result follows by direct computation.
\end{proof}

Now we consider the original variables in the region $\mathit{\Lambda}_{\delta_0}$. For $(t, x) \in \mathit{\Lambda}_{\delta_0}$, using \eqref{e:fv1.b}--\eqref{e:fv5.b} and Lemma \ref{t:dcy}, estimate \eqref{e:est2} implies
\begin{align}
	(1 - C \sigma_0^2 e^{-\frac{100}{\delta_0}} e^{-\frac{3x^1}{4}}) f_0 \circ \mathsf{h}_\uparrow \circ g \leq \varrho_0 &\leq (1 + C \sigma_0^2 e^{-\frac{100}{\delta_0}} e^{-\frac{3x^1}{4}}) f_0 \circ \mathsf{h}_\uparrow \circ g, \\
	-C \sigma_0^2 e^{-\frac{100}{\delta_0}} e^{-\frac{3x^1}{4}} (1 + f \circ \mathsf{h}_\uparrow \circ g) \leq \varrho_i &\leq C \sigma_0^2 e^{-\frac{100}{\delta_0}} e^{-\frac{3x^1}{4}} (1 + f \circ \mathsf{h}_\uparrow \circ g), \\
	(1 - C \sigma_0) f \circ \mathsf{h}_\uparrow \circ g \leq \varrho &\leq (1 + C \sigma_0) f \circ \mathsf{h}_\uparrow \circ g, \label{e:rhoreg0} \\
	(1 - C \sigma_0^2 e^{-\frac{100}{\delta_0}} e^{-\frac{3x^1}{4}}) f \circ \mathsf{h}_\uparrow \circ g \leq \varrho &\leq (1 + C \sigma_0^2 e^{-\frac{100}{\delta_0}} e^{-\frac{3x^1}{4}}) f \circ \mathsf{h}_\uparrow \circ g, \\
	-\frac{\chi_\uparrow}{B} \frac{f}{f_0} C \sigma_0^2 e^{-\frac{100}{\delta_0}} e^{-\frac{3x^1}{4}} \leq \mathsf{h}_j &\leq \frac{\chi_\uparrow}{B} \frac{f}{f_0} C \sigma_0^2 e^{-\frac{100}{\delta_0}} e^{-\frac{3x^1}{4}}, \\
	(1 - C \sigma_0^3 e^{-\frac{100}{\delta_0}} e^{-\frac{3x^1}{4}})^2 \mathsf{h}_\uparrow \leq \mathsf{h} &\leq (1 + C \sigma_0^3 e^{-\frac{100}{\delta_0}} e^{-\frac{3x^1}{4}})^2 \mathsf{h}_\uparrow. \label{e:breg0}
\end{align}

Moreover, from Lemma \ref{t:Tsurf5} and \eqref{e:breg0}, we can further estimate the hypersurface (where $\tau_\Gamma$ is defined in Lemma \ref{t:Tsurf3}),
\begin{align}\label{e:lidest}
	(1 - C \sigma_0^3 e^{-\frac{100}{\delta_0}} e^{-\frac{3x^1}{4}})^2 \mathsf{h}_\uparrow(\tau_\Gamma(x; \delta_0)) \leq \mathsf{h}(\tau_\Gamma(x; \delta_0), x) \leq (1 + C \sigma_0^3 e^{-\frac{100}{\delta_0}} e^{-\frac{3x^1}{4}})^2 \mathsf{h}_\uparrow(\tau_\Gamma(x; \delta_0)).
\end{align}
Note that
\begin{gather*}
	\lim_{x^1 \to +\infty} \tau_\Gamma(x; \delta_0) = 0 \quad \text{and} \quad \lim_{x^1 \to +\infty} e^{-\frac{3x^1}{4}} = 0; \\
	\lim_{\delta_0 \to 0^+} \tau_\Gamma(x; \delta_0) = 0 \quad \text{and} \quad \lim_{\delta_0 \to 0^+} e^{-\frac{100}{\delta_0}} = 0.
\end{gather*}
Taking the limit $x^1 \to +\infty$ in \eqref{e:lidest}, and using \eqref{e:ctm2} and $\mathfrak{g}(t_m) = 0$ from Lemma \ref{t:gb2}.\ref{l:2.3}, we obtain
\begin{equation*}
	\lim_{a \to +\infty} \mathfrak{T}(a \delta^i_1, \delta_0) = \mathsf{h}_\uparrow(0) = t_m \quad \text{and} \quad \lim_{\delta_0 \to 0^+} \mathfrak{T}(x, \delta_0) = \mathsf{h}_\uparrow(0) = t_m.
\end{equation*}

\textbf{Step 3:} We now prove parts \ref{b1}--\ref{b4} of Theorem \ref{t:mainthm2}. To prove Theorem \ref{t:mainthm2}.\ref{b1}, we first establish estimates for the composite function $\mathsf{h}_\uparrow \circ g$.

\begin{lemma}\label{t:bgest}
	For all $(t, x) \in \mathit{\Lambda}_{\delta_0}$, the composite function $\mathsf{h}_\uparrow \circ g(t, x)$ satisfies
	\begin{equation*}
		t_0 + \left(1 - C \sigma_0^2 e^{-\frac{100}{\delta_0}} e^{-\frac{3x^1}{4}}\right)(t - t_0) \leq \mathsf{h}_\uparrow \circ g(t, x) \leq t_0 + \left(1 + C \sigma_0^2 e^{-\frac{100}{\delta_0}} e^{-\frac{3x^1}{4}}\right)(t - t_0).
	\end{equation*}
\end{lemma}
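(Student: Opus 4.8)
```latex
The plan is to extract pointwise bounds on $\mathsf{h}_\uparrow\circ g(t,x)$ from the already-established estimate \eqref{e:breg0} on $\mathsf{h}$ together with the relation between $\mathsf{h}$ and $\mathsf{h}_\uparrow$ coming from the $z$-variable \eqref{e:v4}. Recall from \eqref{e:v4} that $\mathsf{h}(\tau,\zeta^k)=(1+z(\tau,\zeta^k))^{\ell_0}\mathsf{h}_\uparrow(\tau)$ with $\ell_0=2$ by \eqref{e:l0def}, so that $\mathsf{h}_\uparrow\circ g(t,x)=\mathsf{h}_\uparrow(\tau) = (1+z)^{-2}\,\mathsf{h}(\tau,\zeta^k)$ evaluated at $\tau=g(t,x)$, $\zeta^k=x^k$. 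Since the solution $\widehat{\mathfrak U}$ (hence $\hat{\mathfrak z}$, hence $z$) is controlled in $C^3(\mathit\Lambda_{\delta_0})$ by $C\sigma$ via \eqref{e:est2}, and the relabelling \eqref{e:fv5.b} plus Lemma~\ref{t:dcy} converts this into $|z(\tau,\zeta)|\le C\sigma_0^3 e^{-100/\delta_0}e^{-3x^1/4}$, the factor $(1+z)^{-2}$ lies within $1\pm C\sigma_0^3 e^{-100/\delta_0}e^{-3x^1/4}$. Combining this with \eqref{e:breg0}, which already gives $(1-C\sigma_0^3 e^{-100/\delta_0}e^{-3x^1/4})^2\mathsf{h}_\uparrow \le \mathsf{h} \le (1+C\sigma_0^3 e^{-100/\delta_0}e^{-3x^1/4})^2\mathsf{h}_\uparrow$ — which is in fact just the statement $\mathsf h = (1+z)^2 \mathsf h_\uparrow$ with the bound on $z$ inserted — one sees this step is essentially tautological; the real content is to pass from a bound on $\mathsf{h}_\uparrow\circ g$ relative to $\mathsf{h}_\uparrow$ to an \emph{absolute} bound in terms of $t,t_0$.

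The substantive step is therefore to understand $\mathsf{h}_\uparrow\circ g(t,x)$ directly. The idea is to compare the two time-compactification maps $\mathfrak g$ (for $f$) from \eqref{e:ctm2} and $g$ (for $\varrho$) from \eqref{e:ctm2b}: both send initial time to $\tau=-1$ and the blow-up time to $\tau=0$, but at intermediate times they differ because $\varrho\neq f$. Writing $\mathfrak s := \mathsf{h}_\uparrow\circ g(t,x)$, the quantity $\mathfrak s$ is exactly the $f$-time that carries the same $\tau$-value as the $\varrho$-time $t$; i.e. $\mathfrak g(\mathfrak s)=g(t,x)$. Differentiating this identity in $t$ and using Lemma~\ref{t:gb2}.\ref{l:2.2} for $\partial_{\mft}\mathfrak g$ and \eqref{e:tmeq1} for $\partial_t g$, one obtains an ODE for $\mathfrak s$ as a function of $t$ (at fixed $x$):
\[
	\frac{d\mathfrak s}{dt} = \frac{\varrho(t,x)\,(1+f(\mathfrak s))^{\cc-1} f_0(\mathfrak s)\,t_0^{\ca}\,\mft^{\,?}\cdots}{f(\mathfrak s)\,(1+\varrho(t,x))^{\cc-1}\,t^{2-\ca}\cdots},
\]
which after simplification (using $B=(1+\mf)^{\cc}/(t_0^{\ca}\mf_0)$ and the reference-ODE identities from Appendix~\ref{t:refsol}) reduces to $d\mathfrak s/dt = 1 + \mathcal E(t,x)$ where the error $\mathcal E$ is controlled by $|\varrho/f - 1|$ plus lower-order pieces. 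By \eqref{e:rhoreg0} and, more sharply, by the $e^{-3x^1/4}$-weighted bound $(1-C\sigma_0^2 e^{-100/\delta_0}e^{-3x^1/4})f\circ\mathsf h_\uparrow\circ g \le \varrho \le (1+\cdots)f\circ\mathsf h_\uparrow\circ g$ already proved, together with the monotonicity properties of $f$ from Theorem~\ref{t:mainthm0}, we get $|\mathcal E(t,x)|\le C\sigma_0^2 e^{-100/\delta_0}e^{-3x^1/4}$ uniformly on $\mathit\Lambda_{\delta_0}$. Integrating from $t_0$ to $t$ with $\mathfrak s(t_0)=t_0$ (since $g(t_0,x)=-1=\mathfrak g(t_0)$) then yields exactly
\[
	t_0 + \bigl(1-C\sigma_0^2 e^{-100/\delta_0}e^{-3x^1/4}\bigr)(t-t_0) \le \mathsf h_\uparrow\circ g(t,x) \le t_0 + \bigl(1+C\sigma_0^2 e^{-100/\delta_0}e^{-3x^1/4}\bigr)(t-t_0),
\]
which is the claim.

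Concretely I would carry out the steps in this order: (1) write out the defining identity $\mathfrak g(\mathsf h_\uparrow\circ g(t,x)) = g(t,x)$ and differentiate in $t$; (2) substitute the expressions for $\partial_{\mft}\mathfrak g$ (Lemma~\ref{t:gb2}.\ref{l:2.2}) and $\partial_t g$ (\eqref{e:tmeq1}), and simplify using the definition of $B$ to obtain $d\mathfrak s/dt = \bigl(\varrho/f\circ\mathsf h_\uparrow\circ g\bigr)\cdot\bigl((1+f)/(1+\varrho)\bigr)^{\cc-1}\cdot(\text{a factor that is }1\text{ when }\varrho=f)$, hence $d\mathfrak s/dt = 1+\mathcal E$; (3) bound $\mathcal E$ using the relative estimate \eqref{e:rhoreg0} and its weighted refinement, plus the bounds on $f,f_0$ from the reference-solution analysis; (4) integrate using the initial condition $\mathfrak s(t_0)=t_0$ and conclude. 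The main obstacle is step (3): one must check that \emph{all} the error contributions to $\mathcal E$ — not just the leading $\varrho/f-1$ term but also the $(1+f)/(1+\varrho)$ mismatch and any residual $z$-dependence hidden in $g$ versus $\mathfrak g$ — are genuinely of size $O(\sigma_0^2 e^{-100/\delta_0}e^{-3x^1/4})$ uniformly over the lens region, which requires the sharp weighted bounds rather than the crude $O(\sigma_0)$ bound, and requires knowing that $f$ stays bounded below (away from $0$) and that $f_0/f$ and $(1+f)^{\cc-1}$ behave controllably — all of which follow from the properties of the reference ODE solution collected in \S\ref{s:2} and Appendix~\ref{t:refsol}.
```
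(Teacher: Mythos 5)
Your proposal is correct and is essentially the argument the paper defers to (it cites \cite[Lemma~5.2]{Liu2024} and omits details): differentiate the identity $\mathfrak g(\mathsf h_\uparrow\circ g(t,x))=g(t,x)$ in $t$, use Lemma~\ref{t:gb2}\ref{l:2.2} and \eqref{e:tmeq1} so that the $(-g)^{1+\cb/A}$ factors cancel, leaving $d\mathfrak s/dt=\frac{\varrho}{f(\mathfrak s)}\bigl(\frac{1+f(\mathfrak s)}{1+\varrho}\bigr)^{\cc-1}\bigl(\frac{\mathfrak s}{t}\bigr)^{2-\ca}$, bound each factor by $1\pm C\sigma_0^2e^{-100/\delta_0}e^{-3x^1/4}$ via the weighted estimates on $\mfv$ and $\mathfrak z$ from Step~2 (note $\mathfrak s/t=(1+z)^{-2}$), and integrate from $\mathfrak s(t_0)=t_0$. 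Your step (3) checklist correctly identifies all the error sources, including the $(\mathfrak s/t)^{2-\ca}$ factor that your schematic display left with a ``$?$''.
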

\begin{proof}
The proof is analogous to that of \cite[Lemma 5.2]{Liu2024}, and the details are omitted.
\end{proof}

Now, using \eqref{e:rhoreg0}, Lemma \ref{t:bgest}, and the fact that $f$ is increasing (by Lemma \ref{t:f0fg}), we obtain for all $(t, x) \in \mathit{\Lambda}_{\delta_0}$,
\begin{gather*}
	\mathbf{1}_{-}(x^1) f_0\left(t_0 + \mathbf{1}_{-}(x^1)(t - t_0)\right) \leq \varrho_0(t, x) \leq \mathbf{1}_{+}(x^1) f_0\left(t_0 + \mathbf{1}_{+}(x^1)(t - t_0)\right), \\
	-C \sigma_0^2 e^{-\frac{100}{\delta_0}} e^{-\frac{3x^1}{4}} \left(1 + f\left(t_0 + \mathbf{1}_{-}(x^1)(t - t_0)\right)\right) \leq \varrho_i   \leq C \sigma_0^2 e^{-\frac{100}{\delta_0}} e^{-\frac{3x^1}{4}} \left(1 + f\left(t_0 + \mathbf{1}_{+}(x^1)(t - t_0)\right)\right), \\
	\mathbf{1}_{-}(x^1) f\left(t_0 + \mathbf{1}_{-}(x^1)(t - t_0)\right) \leq \varrho \leq \mathbf{1}_{+}(x^1) f\left(t_0 + \mathbf{1}_{+}(x^1)(t - t_0)\right), \\
	-\frac{\chi_\uparrow}{B} \frac{f}{f_0} C\sigma_0^2 e^{-\frac{100}{\delta_0}} e^{-\frac{3x^1}{4}} \leq \mathsf{h}_j \leq \frac{\chi_\uparrow}{B} \frac{f}{f_0} C\sigma_0^2 e^{-\frac{100}{\delta_0}} e^{-\frac{3x^1}{4}}, \\
	(1 - C\sigma_0^3 e^{-\frac{100}{\delta_0}} e^{-\frac{3x^1}{4}})^2 \mathsf{h}_\uparrow \leq \mathsf{h} \leq (1 + C\sigma_0^3 e^{-\frac{100}{\delta_0}} e^{-\frac{3x^1}{4}})^2 \mathsf{h}_\uparrow, 
\end{gather*}
where
\begin{equation*}
	\mathbf{1}_{-}(x^1) := 1 - C \sigma_0^2 e^{-\frac{100}{\delta_0}} e^{-\frac{3x^1}{4}}, \quad \mathbf{1}_{+}(x^1) := 1 + C \sigma_0^2 e^{-\frac{100}{\delta_0}} e^{-\frac{3x^1}{4}}.
\end{equation*}
This completes the proof of Theorem \ref{t:mainthm2}.\ref{b1}. Part \ref{t:mainthm2}.\ref{b2} follows from Corollary \ref{t:homdom}. \ref{t:mainthm2}.\ref{b3} and \ref{t:mainthm2}.\ref{b4} are obtained via Theorem \ref{t:mainthm0}.		
		
\textbf{Step 4:} We prove that $\varrho \in C^2(\mathcal{K} \cup \mathcal{H})$. Since the proof is similar to \cite[\S5.1.4]{Liu2024} with minor calculation revision, we omit the details. 
 
This completes the proof of Theorem \ref{t:mainthm2}.

\appendix

\section{Key Identities}\label{s:iden1}
This appendix lists several identities frequently used in reformulating the original equation into Fuchsian form and in subsequent derivations.

As shown in \eqref{e:Gdef0} of Appendix \ref{t:refsol}, $\chi(\mft)$ is defined by
\begin{equation*}\label{e:Gdef1}
	\chi_\uparrow(\mft) := \frac{\mft^{2 - \ca} f_0(\mft)}{(1 + f(\mft))^{2 - \cc} f(\mft) (-\mfg(\mft))^{\frac{\cb}{A}}} \overset{\eqref{e:f0aa}}{=} \frac{(-\mfg(\mft))^{-\frac{2\cb}{A}} \mft^{2 - 2\ca}}{B f(\mft) (1 + f(\mft))^{2 - 2\cc}}.
\end{equation*}
In terms of the compactified time variable $\tau$, it becomes
\begin{equation}\label{e:Gdef2}
	\underline{\chi_\uparrow}(\tau) := \frac{\mathsf{h}_\uparrow^{2 - \ca} \underline{f_0}(\tau)}{(1 + \uf(\tau))^{2 - \cc} \uf(\tau) (-\tau)^{\frac{\cb}{A}}} = \frac{(-\tau)^{-\frac{2\cb}{A}} \mathsf{h}_\uparrow^{2 - 2\ca}}{B \uf(\tau) (1 + \uf(\tau))^{2 - 2\cc}}.
\end{equation}

\begin{lemma}\label{t:iden1}
	Let $f \in C^2([t_0, t_1))$ ($t_1 > t_0$) be a solution to the ODE \eqref{e:feq0}--\eqref{e:feq1}, let $\mfg(t)$ be defined by \eqref{e:ctm2a}, and let $f_0(\mft) := \partial_{\mft} f(\mft)$. Then the following identities hold
	\begin{gather}
		\frac{\mathsf{h}_\uparrow^{1 - \ca} (1 + \uf)^{\cc - 1}}{A B \uf (-\tau)^{\frac{\cb}{A} + 1}} = \frac{\underline{\chi_\uparrow}^{\frac{1}{2}}}{A (-\tau) B^{\frac{1}{2}} \uf^{\frac{1}{2}}}, \label{e:iden3} \\
		\frac{1}{\underline{f_0}} \frac{\mathsf{h}_\uparrow^{-\ca} (1 + \uf)^{\cc}}{A B (-\tau)^{\frac{\cb}{A} + 1}} = \frac{1}{A (-\tau)}, \label{e:iden1} \\
		\frac{\mathsf{h}_\uparrow^{2 - \ca} (1 + \uf)^{\cc - 2}}{A B \uf (-\tau)^{\frac{\cb}{A} + 1}} \underline{f_0} = \frac{\underline{\chi_\uparrow}(\tau)}{A B (-\tau)}, \label{e:iden2} \\
		\frac{\mathsf{h}_\uparrow \underline{f_0}}{(1 + \uf)} = \frac{\underline{\chi_\uparrow}^{\frac{1}{2}}}{B^{\frac{1}{2}}} \uf^{\frac{1}{2}}. \label{e:keyid3}
	\end{gather}
\end{lemma}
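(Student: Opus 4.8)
All four identities are purely algebraic consequences of two inputs: the closed form $\underline{f_0}(\tau)=B^{-1}(\mathsf{h}_\uparrow(\tau))^{-\ca}(-\tau)^{-\frac{\cb}{A}}(1+\uf(\tau))^{\cc}$ supplied by Lemma~\ref{t:gb2}.\ref{l:2.6}, and the two equivalent expressions for $\underline{\chi_\uparrow}(\tau)$ recorded in \eqref{e:Gdef2}. Together with the positivity facts $\uf>0$, $\mathsf{h}_\uparrow>0$, $1+\uf>0$, $\underline{f_0}>0$, $\underline{\chi_\uparrow}>0$, $B>0$ (so that every square root below is taken with its positive branch, matching the one convention used throughout), the verification reduces to careful bookkeeping of the exponents of $\mathsf{h}_\uparrow$, $(1+\uf)$ and $(-\tau)$. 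The plan is to treat the four identities in the order \eqref{e:iden1}, \eqref{e:iden2}, \eqref{e:keyid3}, \eqref{e:iden3}.

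First, for \eqref{e:iden1} I would substitute $\underline{f_0}^{-1}=B\,(-\tau)^{\frac{\cb}{A}}(\mathsf{h}_\uparrow)^{\ca}(1+\uf)^{-\cc}$ from Lemma~\ref{t:gb2}.\ref{l:2.6} into the left-hand side; the factors of $\mathsf{h}_\uparrow$, $(1+\uf)$ and $B$ cancel, leaving $(-\tau)^{\frac{\cb}{A}}/(A(-\tau)^{\frac{\cb}{A}+1})=1/(A(-\tau))$. For \eqref{e:iden2}, I would regroup the left-hand side as $\frac{1}{A(-\tau)}\cdot\frac{(\mathsf{h}_\uparrow)^{2-\ca}\underline{f_0}}{B\,\uf\,(1+\uf)^{2-\cc}(-\tau)^{\frac{\cb}{A}}}$ and recognize the second factor as $\underline{\chi_\uparrow}/B$ by the first form in \eqref{e:Gdef2}, which gives $\underline{\chi_\uparrow}/(AB(-\tau))$.

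For the two square-root identities I would square first. Squaring the claim \eqref{e:keyid3} reduces it to $\frac{(\mathsf{h}_\uparrow)^2\underline{f_0}^2}{(1+\uf)^2}=\frac{\uf\,\underline{\chi_\uparrow}}{B}$; inserting the $\underline{f_0}$ formula on the left and the second form of $\underline{\chi_\uparrow}$ on the right, both sides collapse to $\frac{(\mathsf{h}_\uparrow)^{2-2\ca}(-\tau)^{-\frac{2\cb}{A}}}{B\,\uf\,(1+\uf)^{2-2\cc}}$, and taking positive square roots gives the identity. Analogously, for \eqref{e:iden3} I would multiply through by $A(-\tau)$, reducing to $\bigl(\frac{(\mathsf{h}_\uparrow)^{1-\ca}(1+\uf)^{\cc-1}}{B\,\uf\,(-\tau)^{\frac{\cb}{A}}}\bigr)^2=\frac{\underline{\chi_\uparrow}}{B\,\uf}$; the left-hand side equals $\frac{(\mathsf{h}_\uparrow)^{2-2\ca}(1+\uf)^{2\cc-2}}{B^2\uf^2(-\tau)^{\frac{2\cb}{A}}}$, so multiplying by $B\uf$ returns precisely the second form of $\underline{\chi_\uparrow}$ in \eqref{e:Gdef2}, and extracting positive square roots finishes the proof.

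There is no genuinely hard step here. The only points one must not skip are: (i) checking that the two displayed forms of $\underline{\chi_\uparrow}$ in \eqref{e:Gdef2} are themselves consistent, which is immediate from the $\underline{f_0}$ formula of Lemma~\ref{t:gb2}.\ref{l:2.6}; and (ii) confirming the positivity of all the factors before passing to square roots, so that $\underline{\chi_\uparrow}^{1/2}$, $\uf^{1/2}$ and $B^{1/2}$ are unambiguous. Everything else is the exponent arithmetic indicated above.
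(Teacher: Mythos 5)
Your proof is correct and takes exactly the route the paper indicates (the paper omits the proof, remarking that it is ``analogous to [Lemma 2.3 of Liu 2024]''): substitute the explicit formula for $\underline{f_0}$ from Lemma~\ref{t:gb2}\ref{l:2.6} and the two equivalent closed forms for $\underline{\chi_\uparrow}$ in \eqref{e:Gdef2}, then check exponents. All four verifications go through as you describe, and the positivity remarks justifying the square roots are exactly what is needed.

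One small slip in your commentary on \eqref{e:keyid3}: after squaring, both sides reduce to
\begin{equation*}
\frac{\mathsf{h}_\uparrow^{2-2\ca}(-\tau)^{-\frac{2\cb}{A}}(1+\uf)^{2\cc-2}}{B^{2}},
\end{equation*}
not to $\mathsf{h}_\uparrow^{2-2\ca}(-\tau)^{-\frac{2\cb}{A}}/\bigl(B\,\uf\,(1+\uf)^{2-2\cc}\bigr)$ as written (that latter expression is $\underline{\chi_\uparrow}$ itself, not $\uf\,\underline{\chi_\uparrow}/B$). This is only a transcription error in stating the common value; the claimed reduction $\mathsf{h}_\uparrow^{2}\underline{f_0}^{2}/(1+\uf)^{2}=\uf\,\underline{\chi_\uparrow}/B$ is correct and the argument stands.
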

\begin{proof}
The proof is analogous to that of \cite[Lemma 2.3]{Liu2024}, and the details are omitted.
\end{proof}

\section{Supplementary Lemmas}\label{s:pfmthm1}

\subsection{Some Necessary Estimates}
\begin{lemma}\label{t:Thpst}
	If the initial data satisfy condition \ref{A:1} in \S \ref{s:mainthm}, and $S(\tau)$, $\underline{\chi}(\tau)$, and $\underline{\mathfrak{G}}(\tau)$ are given by \eqref{e:S1}, \eqref{e:Gdef2}, and \eqref{e:chig} respectively, then for all $\tau \in [-1,0]$,
	\begin{enumerate}[label=(\arabic*)]
		\item\label{r:1} $\underline{\mathfrak{G}}(\tau) \geq 0$;
		\item\label{r:2} If $\frac{1}{3} \leq \cb \leq \frac{2}{3}$ and $\cc = \frac{4}{3}$, then $\cm^2 < S \leq \ck\left(3 + \frac{1}{\beta}\right)$;
		\item\label{r:3} If $\ck = 1$, $\frac{1}{3} \leq \cb \leq \frac{2}{3}$, $\cc = \frac{4}{3}$, and define
		\begin{align*}
			b := \sqrt{9\cb^2 \cm^2 - 6\cb \cm^2 + 6\cb},
		\end{align*}
		then
		\begin{equation*}
		b\in[1,2], \quad	\sqrt{S} \frac{\underline{\chi_\uparrow}}{B} = 2\sqrt{\frac{\underline{\chi_\uparrow}}{B}\left(\frac{\cm^2}{4}\frac{\underline{\chi_\uparrow}}{B} + (1 - \cm^2)\frac{1 + \uf}{\uf}\right)} = 2b + \Xi, \quad \text{and} \quad \sqrt{S} \frac{\underline{\chi_\uparrow}}{B} \geq 2,
		\end{equation*}
		where $\Xi(\tau)$ is a decreasing function.
	\end{enumerate}
\end{lemma}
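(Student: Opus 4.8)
\textbf{Proof plan for Lemma~\ref{t:Thpst}.}
The plan is to treat the three parts in order, since each builds on the previous and relies on the explicit formulas \eqref{e:S1}, \eqref{e:Gdef2}, \eqref{e:chig} together with the properties of the reference solution established in Lemmas~\ref{t:gb2}, \ref{t:id0}, and Proposition~\ref{t:limG}. For part~\ref{r:1}, the quantity $\underline{\mathfrak{G}}$ is (by the cited definitions) a remainder built from the reference ODE solution; since $\uf>0$ and $\underline{f_0}>0$ (Lemma~\ref{t:gb2}.\ref{l:2.6}), one expands $\underline{\mathfrak{G}}$ using \eqref{e:chig} as $\underline{\mathfrak{G}}/B = \underline{\chi_\uparrow}/B - \tfrac{2\cb}{3-2\cc}$, and checks via the formula \eqref{e:Gdef2} for $\underline{\chi_\uparrow}$ together with the monotonicity/positivity facts for $\uf$, $\underline{f_0}$, $\mathsf{h}_\uparrow$ that the combination is nonnegative on $[-1,0]$; this should be a direct sign computation using $\cc=\tfrac43$ so that $3-2\cc=\tfrac13>0$ and the initial normalization $B=(1+\mf)^{\cc}/(t_0^{\ca}\mf_0)$ which forces $\underline{\chi_\uparrow}(-1)/B$ to be exactly $\tfrac{2\cb}{3-2\cc}$ at $\tau=-1$, i.e.\ $\underline{\mathfrak{G}}(-1)=0$, and then monotonicity in $\tau$.

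For part~\ref{r:2}, I would substitute $\cc=\tfrac43$ into the definition \eqref{e:S1} of $S$, giving $12-2\cb-8\cc = \tfrac{4-6\cb}{?}$ — more precisely $12-2\cb-\tfrac{32}{3}$, and $3-2\cc=\tfrac13$, so that
\begin{equation*}
S = \ck + \frac{(\ck-\cm^2)(12-2\cb-\tfrac{32}{3})}{2\cb+\tfrac13\frac{\underline{\mathfrak{G}}}{B}} + \tau\mathscr{S}.
\end{equation*}
The lower bound $\cm^2<S$ and the upper bound $S\le\ck(3+\tfrac1\beta)$ then follow from Lemma~\ref{t:coef1} (which already gives $\cm^2+\tfrac{(\ck-\cm^2)(12-2\cb-8\cc)}{2\cb+(3-2\cc)\underline{\mathfrak{G}}/B}<S\le \ck(1+\tfrac1\beta)+\tfrac{(\ck-\cm^2)(12-2\cb-8\cc)}{2\cb+(3-2\cc)\underline{\mathfrak{G}}/B}$) by bounding the middle term: using $\underline{\mathfrak{G}}\ge0$ from part~\ref{r:1} and $\cm^2\le\ck$, the fraction $\tfrac{(\ck-\cm^2)(12-2\cb-\tfrac{32}{3})}{2\cb+\tfrac13\underline{\mathfrak{G}}/B}$ is maximized when $\underline{\mathfrak{G}}=0$ and its numerator is controlled by $\ck\cdot(12-2\cb-\tfrac{32}{3})\le \ck\cdot 2$ for $\cb\ge\tfrac13$, which together with $\tfrac1{2\cb}\le\tfrac32$ gives the extra $\le 2\ck$ contribution; combining with $\ck(1+\tfrac1\beta)$ yields $S\le\ck(3+\tfrac1\beta)$. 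The lower bound is immediate since the middle fraction is nonnegative (as $12-2\cb-\tfrac{32}{3}\ge0$ for $\cb\le\tfrac23$) and the $\tau\mathscr{S}$ term is handled via the sign of $\mathscr{S}$ established in Lemma~\ref{t:coef1}.

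For part~\ref{r:3}, set $\ck=1$ and use \eqref{e:chig} to write $\underline{\chi_\uparrow}/B = 2\cb\cdot 3 + \underline{\mathfrak{G}}/B$ (since $3-2\cc=\tfrac13$ gives $\tfrac{2\cb}{3-2\cc}=6\cb$), and then compute $S\cdot\underline{\chi_\uparrow}/B$ directly; the algebraic identity
\begin{equation*}
\sqrt{S}\,\frac{\underline{\chi_\uparrow}}{B} = 2\sqrt{\frac{\underline{\chi_\uparrow}}{B}\left(\frac{\cm^2}{4}\frac{\underline{\chi_\uparrow}}{B}+(1-\cm^2)\frac{1+\uf}{\uf}\right)}
\end{equation*}
should drop out after substituting the definitions of $S$, $\mathscr{S}$ and simplifying; the value $b=\sqrt{9\cb^2\cm^2-6\cb\cm^2+6\cb}$ is precisely $2\sqrt{\tfrac{\cm^2}{4}(6\cb)^2\cdot\tfrac14 + \ldots}$ evaluated at the limiting value $\underline{\mathfrak{G}}\to 0$, $\uf\to+\infty$ — i.e.\ $b$ is the $\tau\to0$ limit of $\tfrac12\sqrt{S}\,\underline{\chi_\uparrow}/B$. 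The range $b\in[1,2]$ follows by checking the quadratic $9\cb^2\cm^2-6\cb\cm^2+6\cb$ in $\cb\in[\tfrac13,\tfrac23]$ with $\cm^2\in[0,1]$ (monotone analysis in both variables). The bound $\sqrt{S}\,\underline{\chi_\uparrow}/B\ge 2$ and $\Xi\ge0$, $\lim_{\tau\to0}\Xi=0$ come from the fact that $\uf$ is increasing to $+\infty$ and $\underline{\mathfrak{G}}/B$ is monotone, so that the expression under the square root is always at least its limiting value; monotonicity of $\Xi$ requires differentiating $\Xi$ in $\tau$ (equivalently, showing $\sqrt{S}\,\underline{\chi_\uparrow}/B$ is monotone along the flow), using $\partial_\tau\uf>0$ from Lemma~\ref{t:gb2}.\ref{l:2.5} and the sign of $\partial_\tau(\underline{\mathfrak{G}}/B)$.

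\textbf{Main obstacle.} The genuinely delicate point is the monotonicity of $\Xi$ (equivalently of $\sqrt{S}\,\underline{\chi_\uparrow}/B$) in $\tau$, because $\Xi$ is built from two competing monotone pieces — the $\underline{\mathfrak{G}}/B$ term (which may increase) and the $(1-\cm^2)(1+\uf)/\uf$ term (which decreases since $\uf\nearrow$) — so one must show the net effect is decreasing. I expect this to require an explicit computation of $\partial_\tau\bigl(\underline{\chi_\uparrow}(\underline{\chi_\uparrow}\cm^2/4 + (1-\cm^2)(1+\uf)/\uf)\bigr)$ using the evolution equations from Lemma~\ref{t:gb2}, reducing to a sign condition that holds precisely in the parameter range \eqref{A:2} with $\cc=\tfrac43$, $\ck=1$ and under Assumption~\ref{A:1} ($\beta_0^2\ge 6\cb\beta(1+\beta)^2 t_0^{-2}$); the role of \ref{A:1} is to guarantee the correct sign of the relevant derivative at $\tau=-1$, which then propagates. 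This is the step where the extended parameter range (versus the fixed $\ca=\tfrac43$, $\cb=\tfrac23$ of \cite{Liu2024}) forces the more careful bookkeeping alluded to in the introduction.
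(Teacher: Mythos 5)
Your overall skeleton (expand $\underline{\mathfrak{G}}$ from \eqref{e:chig} and \eqref{e:keyid3}, substitute $\cc=\tfrac43$ into the bounds from Lemma~\ref{t:coef1}, then simplify $S\,\underline{\chi_\uparrow}/B$ algebraically) matches the paper's route, but there are two substantive errors in your account.

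First, in part~\ref{r:1} you claim Assumption~\ref{A:1} ``forces $\underline{\chi_\uparrow}(-1)/B$ to be \emph{exactly} $\tfrac{2\cb}{3-2\cc}$, i.e.\ $\underline{\mathfrak{G}}(-1)=0$.'' That is wrong: \ref{A:1} reads $\beta_0^2 \geq 6\cb\beta(1+\beta)^2 t_0^{-2}$, which via $\underline{\chi_\uparrow}(-1)/B = t_0^2\beta_0^2/((1+\beta)^2\beta)$ (from \eqref{e:keyid3}) gives $\underline{\mathfrak{G}}(-1)\ge 0$, not equality. The actual argument then requires two further facts you omit and which carry the load: $\lim_{\tau\to0}\underline{\mathfrak{G}}(\tau)=0$ (Proposition~\ref{t:limG}) and $|\underline{\mathfrak{G}}|$ decreasing in $\tau$ (Corollary~\ref{t:dtchi2}). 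Combined with $\underline{\mathfrak{G}}(-1)\ge 0$, these force $\underline{\mathfrak{G}}(\tau)\ge 0$ on all of $[-1,0]$: if $\underline{\mathfrak{G}}$ dipped below zero it would have to cross zero first, and then $|\underline{\mathfrak{G}}|$ would increase afterward, a contradiction. Your plan's ``monotonicity in $\tau$'' is too vague to substitute for this.

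Second, and more seriously, the ``main obstacle'' you flag in part~\ref{r:3} does not exist, and your framing rests on a sign error. You write that the $\underline{\mathfrak{G}}/B$ term ``may increase'' while $(1-\cm^2)(1+\uf)/\uf$ decreases, and so you propose an explicit derivative computation to resolve the competition. But once part~\ref{r:1} is in hand, $\underline{\mathfrak{G}}\ge 0$ and $|\underline{\mathfrak{G}}|$ decreasing together imply $\underline{\mathfrak{G}}$ itself is \emph{decreasing}, hence so is $\underline{\chi_\uparrow}/B = 6\cb + \underline{\mathfrak{G}}/B$. Since $\uf$ is increasing, $1/\uf$ is also decreasing. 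The two factors under the square root in $\Xi + 2b = 2\sqrt{\tfrac{\underline{\chi_\uparrow}}{B}\bigl(\tfrac{\cm^2}{4}\tfrac{\underline{\chi_\uparrow}}{B}+(1-\cm^2)(\tfrac{1}{\uf}+1)\bigr)}$ therefore decrease simultaneously — there is no competition, and monotonicity of $\Xi$ is immediate. The algebraic reduction from the raw expression (with the $\tfrac32\cb$ and $(1-\tfrac{3\cb}{2})$ terms) to the clean form using $\underline{\chi_\uparrow}/B = 6\cb + \underline{\mathfrak{G}}/B$ is the only computation needed, and you correctly anticipated it. Your part~\ref{r:2} arithmetic has slips (e.g.\ $12-2\cb-\tfrac{32}{3} = \tfrac43 - 2\cb$, yielding $\tfrac{4-6\cb}{6\cb+\underline{\mathfrak{G}}/B}$ after clearing $\tfrac13$), but the conclusion goes through once the fraction is bounded by $2(\ck-\cm^2)$ using $\underline{\mathfrak{G}}\ge 0$ and $\cb\ge\tfrac13$.
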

\begin{proof}
	From \eqref{e:keyid3}, we have $\underline{\chi_\uparrow} = \frac{\mathsf{h}^2_\uparrow \underline{f_0}^2}{(1 + \uf)^2 \uf} B$. Substituting into \eqref{e:chig} yields
	\begin{align*}
		\underline{\mathfrak{G}} = \frac{\mathsf{h}^2_\uparrow \underline{f_0}^2}{(1 + \uf)^2 \uf} B - \frac{2\cb}{3 - 2\cc} B.
	\end{align*}
	By condition \ref{A:1} in \S \ref{s:mainthm}, the initial data satisfy $\beta_0^2 \geq \frac{2\cb}{3 - 2\cc} \beta (1 + \beta)^2 t_0^{-2}$, hence
	\begin{equation*}
		\underline{\mathfrak{G}}(-1) = \underline{\chi_\uparrow}(-1) - \frac{2\cb}{3 - 2\cc} B = \frac{t_0^2 \beta_0^2}{(1 + \beta)^2 \beta} B - \frac{2\cb}{3 - 2\cc} B \geq 0.
	\end{equation*}
	Combining with the result from \cite{Liu2024} (Appendix B) that $\lim_{\tau \to 0} \underline{\mathfrak{G}}(\tau) = 0$ and $|\underline{\mathfrak{G}}|$ is decreasing, we conclude $\underline{\mathfrak{G}}(\tau) \geq 0$ for all $\tau \in [-1,0]$, proving \ref{r:1}.
	
	If $\frac{1}{3} \leq \cb \leq \frac{2}{3}$ and $\cc = \frac{4}{3}$, since $\ck \geq \cm^2$ and $\underline{\mathfrak{G}}(\tau) \geq 0$, we compute
	\begin{equation*}
		0 \leq \frac{(\ck - \cm^2)(12 - 2\cb - 8\cc)}{2\cb + (3 - 2\cc)\frac{\underline{\mathfrak{G}}}{B}} = \frac{(\ck - \cm^2)(4 - 6\cb)}{6\cb + \frac{\underline{\mathfrak{G}}}{B}} \leq 2(\ck - \cm^2).
	\end{equation*}
	From the estimate for $S$ in Lemma \ref{t:coef1}, we have $\cm^2 < S(\tau) \leq \ck + (\ck - \cm^2)\left(\frac{1}{f} + 2\right)$, which implies $
		\cm^2 < S \leq \ck\left(3 + \frac{1}{\beta}\right)$, 
	proving \ref{r:2}.
	
	If $\cc = \frac{4}{3}$ and $\ck = 1$, using \eqref{e:S1}, \eqref{e:chig}, Lemma \ref{t:char1}, and the non-negativity of $\underline{\mathfrak{G}}(\tau)$
	\begin{align*}
		\sqrt{S} \frac{\underline{\chi_\uparrow}}{B} &= 2\sqrt{\frac{\underline{\chi_\uparrow}}{B}\left(\frac{\cm^2}{4}\frac{\underline{\chi_\uparrow}}{B} + (1 - \cm^2)\left(\frac{1}{\uf} + \frac{3}{2}\cb\right)\right) + \left(\frac{\underline{\chi_\uparrow}}{B}\right)^2 \frac{(1 - \cm^2)(1 - \frac{3\cb}{2})}{6\cb + \frac{\underline{\mathfrak{G}}}{B}}} \\
		&= 2\sqrt{\frac{\underline{\chi_\uparrow}}{B}\left(\frac{\cm^2}{4}\frac{\underline{\chi_\uparrow}}{B} + (1 - \cm^2)\left(\frac{1}{\uf} + 1\right)\right)}  
	 = 2b + \Xi.
	\end{align*}
	Here, $\frac{\underline{\chi_\uparrow}}{B} = 6\cb + \frac{\underline{\mathfrak{G}}}{B}$ and $1 \leq b = \sqrt{9\cb^2 \cm^2 - 6\cb \cm^2 + 6\cb} \leq 2$. Since $\underline{\chi_\uparrow}$ and $\underline{\mathfrak{G}}$ share the same monotonicity, $\Xi(\tau)$ is a decreasing function, completing the proof.
\end{proof}

\subsection{Boundary Between Known and Unknown Regions}\label{s:ctbdry}
The following lemmas provide the expressions for the hypersurface $\widehat{\Gamma}_{\delta_0}$ defined in \eqref{e:surf0} of \S \ref{s:reorg} in the coordinate systems $(\ttau,\txi)$, $(\tau,\zeta)$, and $(t,x)$, respectively. The proofs are based on lenghy but direct computation; only key steps and results are presented here.

\begin{lemma}\label{t:Tsurf2}
	The hypersurface $\widehat{\mathfrak{T}}_{\delta_0}(\hat{\zeta})$ in the coordinates $(\ttau,\txi)$ is given by
	\begin{align}\label{e:surf}
		\ttau =& \widetilde{\mathfrak{T}}_{\delta_0}(\tilde{\zeta}) \notag\\
		=&
		\begin{cases}
			-\exp\left(-\dfrac{\overline{\cc} \cdot 101A}{202b - 200}\left( 1 - \dfrac{1}{2\delta_0\txi^1 + 4} \right) \left( \txi^1 + \dfrac{1}{\delta_0} \right)\right), & -\dfrac{1}{\delta_0} \leq \txi^1 \leq \dfrac{202b - 200 - 101 \Xi_0}{202 \Xi_0} \dfrac{1}{\delta_0} \\
			-\exp\left(-\dfrac{101A}{202b - 200}\left( 1 - \dfrac{1}{2\delta_0\txi^1 + 4} \right) \left( \txi^1 + \dfrac{1}{2\delta_0} \right)\right), & \txi^1 > \dfrac{202b - 200 - 101 \Xi_0}{202 \Xi_0} \dfrac{1}{\delta_0}
		\end{cases}.
	\end{align}
	Here,
	\begin{equation}\label{e:cc}
		\overline{\cc} = \left(1 + \dfrac{101\Xi_0}{202b - 200} \right)^{-1} \quad \text{and} \quad \Xi_0 := \Xi(t_0) = \sup_{t \in [t_0, t_m)} \Xi(t).
	\end{equation}
	The inverse transformation yields $\tilde{\zeta}$ as
	\begin{align*}
		\txi^1 =
		\begin{cases}
			\dfrac{1}{404} \sqrt{\triangle_l(\ttau)} - \dfrac{(101 \Xi_0 + 202b - 200) \ln(-\ttau)}{202 A} - \dfrac{5}{4\delta_0}, & -1 \leq \ttau \leq -\exp\left(-\dfrac{A (101\Xi_0 + 101b - 100)}{\delta_0 \Xi_0 (303 \Xi_0 + 202b - 200)}\right) \\
			\dfrac{1}{202} \sqrt{\triangle_r(\ttau)} - \dfrac{(101b - 100) \ln(-\ttau)}{101 A} - \dfrac{1}{\delta_0}, & \ttau > -\exp\left(-\dfrac{A (101\Xi_0 + 101b - 100)}{\delta_0 \Xi_0 (303 \Xi_0 + 202b - 200)}\right)
		\end{cases}.
	\end{align*}
	where
	\begin{align*}
		\triangle_r(\ttau) :=& \dfrac{4(101b - 100)^2 (\ln(-\ttau))^2}{A^2} - \dfrac{808(101b - 100) \ln(-\ttau)}{A \delta_0} + \dfrac{10201}{\delta_0^2}, \\
		\triangle_l(\ttau) :=& \dfrac{4(101 \Xi_0 + 202b - 200)^2 (\ln(-\ttau))^2}{A^2} - \dfrac{1212(101 \Xi_0 + 202b - 200) \ln(-\ttau)}{A \delta_0} + \dfrac{10201}{\delta_0^2}.
	\end{align*}
\end{lemma}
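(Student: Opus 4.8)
The plan is to obtain \eqref{e:surf} and its inversion from the definition \eqref{e:surf0} of $\widehat{\Gamma}_{\delta_0}$ by a straightforward change of coordinates followed by an explicit algebraic inversion of two elementary functions.

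First I would pass from the $(\htau,\hat\zeta)$-expression to the $(\ttau,\txi)$-expression. Using $\htau=\ttau$ together with $\hat\zeta^1=\arctan(\gamma\txi^1)$ from \eqref{e:coord6b} --- equivalently $\tfrac1\gamma\tan\hat\zeta^1=\txi^1$ and $\delta_0\gamma^{-1}\tan\hat\zeta^1=\delta_0\txi^1$ --- the two branches $\widehat{\mathfrak{T}}^{(1)}_{\delta_0}$, $\widehat{\mathfrak{T}}^{(2)}_{\delta_0}$ of $\widehat{\mathfrak{T}}_{\delta_0}$ turn into the two branches of \eqref{e:surf} by literal substitution, and the range constraint on $\hat\zeta^1$ becomes the stated range on $\txi^1$ upon applying $\tan$ and dividing by $\gamma$. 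No computation is needed here.

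Next I would invert each branch. Taking $-\ln(-\ttau)$ of the first line of \eqref{e:surf} gives $\tfrac{\ln(-\ttau)}{K}=-\big(1-\tfrac{1}{2\delta_0\txi^1+4}\big)\big(\txi^1+\tfrac1{\delta_0}\big)$ with $K=\overline{\cc}\cdot\tfrac{101A}{202b-200}=\tfrac{101A}{101\Xi_0+202b-200}$ by \eqref{e:cc}. Introducing the shifted variable $w:=2\delta_0\txi^1+4$ rewrites the right-hand side as $-\tfrac{(2w-1)(w-1)}{2\delta_0 w}$, so clearing denominators produces the quadratic $2w^2+\big(\tfrac{2\delta_0\ln(-\ttau)}{K}-3\big)w+1=0$; solving it, substituting $\txi^1=(w-2)/\delta_0$, and simplifying the discriminant (using $101^2=10201$) reproduces $\triangle_l$ and the claimed first line of the inverse formula. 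The second branch is identical with the shift $v:=2\delta_0\txi^1+4$ applied to $\widehat{\mathfrak{T}}^{(2)}_{\delta_0}$: one gets $v^2+\big(\tfrac{2\delta_0\ln(-\ttau)}{K'}-4\big)v+3=0$ with $K'=\tfrac{101A}{202b-200}$, and the discriminant collapses to $\tfrac{1}{10201}\triangle_r$. The root with the $+$ sign in front of $\sqrt{\triangle_l}$, resp.\ $\sqrt{\triangle_r}$, is the admissible one: this follows because $\widetilde{\mathfrak{T}}_{\delta_0}$ is strictly increasing in $\txi^1$, which is exactly the positivity $\mathtt{q}>0$ recorded in the proof of Lemma~\ref{t:extori2}, so $\txi^1$ must likewise be increasing in $\ttau$. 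Finally, evaluating the first line of \eqref{e:surf} at $\txi^1=\tfrac{202b-200-101\Xi_0}{202\Xi_0}\tfrac1{\delta_0}$ and simplifying the product of the two factors identifies the matching value $\ttau=-\exp\!\big(-\tfrac{A(101\Xi_0+101b-100)}{\delta_0\Xi_0(303\Xi_0+202b-200)}\big)$ at which the two cases of the inverse formula are glued.

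The main obstacle is purely bookkeeping. The discriminant simplifications and the product simplification at the matching point each require carefully tracking the integer constants $101,202,303,10201$ and the $\Xi_0$-dependence, and the branch selection must be justified by invoking the monotonicity (equivalently the sign condition $\mathtt{q}_l,\mathtt{q}_r>0$) already established rather than re-deriving it; there are no conceptual difficulties beyond these routine computations.
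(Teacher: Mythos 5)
Your proposal is correct and coincides with the paper's (omitted) argument: the paper merely declares these lemmas to follow from "lengthy but direct computation," and your outline --- substituting $\txi^1=\gamma^{-1}\tan\hat\zeta^1$ into $\widehat{\mathfrak{T}}^{(1,2)}_{\delta_0}$, reducing each branch to a quadratic, matching the discriminants $\triangle_l,\triangle_r$, selecting the larger root, and checking the gluing value of $\ttau$ --- is exactly that computation, and all the constants check out. One small slip: for the algebra you write on the first branch to be consistent (the form $-\frac{(2w-1)(w-1)}{2\delta_0 w}$, the quadratic $2w^2+(\cdot)w+1=0$, and $\txi^1=(w-2)/\delta_0$), the substitution must be $w:=\delta_0\txi^1+2$ rather than $w:=2\delta_0\txi^1+4$, and on the second branch the discriminant is $\frac{4\delta_0^2}{10201}\triangle_r$ rather than $\frac{1}{10201}\triangle_r$; neither affects the conclusion.
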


\begin{lemma}\label{t:Tsurf3}
	The hypersurface $\widehat{\mathfrak{T}}_{\delta_0}(\hat{\zeta})$ in the coordinates $(\tau,\zeta)$ is given by
	\begin{align*}
		\tau :=& \tau_\Gamma(\zeta; \delta_0) \\
		=&
		\begin{cases}
			-\exp\left(\dfrac{A}{400 \delta_0 (\Xi_0 + 2b)} \sqrt{\Diamond_l(\zeta^1)} - \dfrac{A (101 \Xi_0 + 202b + 50)}{200 \delta_0 (\Xi_0 + 2b)} - \dfrac{A \zeta^1 (101 \Xi_0 + 202b + 200)}{400 (\Xi_0 + 2b)}\right), \\
			\hspace{6cm} -\dfrac{1}{\delta_0} \leq \zeta^1 \leq \dfrac{-303 \Xi_0^2 + 404b \Xi_0 + (404b^2 - 400b)}{\delta_0 \Xi_0 (606\Xi_0 + 404b - 400)} \\
			-\exp\left(\dfrac{A}{400b \delta_0} \sqrt{\Diamond_r(\zeta^1)} - \dfrac{101 A}{200 \delta_0} - \dfrac{(100 + 101b) A \zeta^1}{400b} \right), \\
			\hspace{6cm} \zeta^1 > \dfrac{-303 \Xi_0^2 + 404b \Xi_0 + (404b^2 - 400b)}{\delta_0 \Xi_0 (606\Xi_0 + 404b - 400)}
		\end{cases}.
	\end{align*}
	Here,
	\begin{align*}
		\Diamond_r(\zeta^1) :=& \delta_0 \zeta^1 (101b - 100) \left(101b(4 + \delta_0 \zeta^1) - 100\delta_0 \zeta^1 \right) + (40804b - 30300)b, \\
		\Diamond_l(\zeta^1) :=& \delta_0 \zeta^1 (101 \Xi_0 + 202b - 200) \left(\delta_0 \zeta^1 (101 \Xi_0 + 202b - 200) + 4 (101 \Xi_0 + 202b - 50)\right) \\
		&+ 4 \left(101 \Xi_0 (101 \Xi_0 + 404b - 200) + 4(10201b^2 - 10100b + 625)\right).
	\end{align*}
	The inverse transformation yields $\zeta^1$ as
	\begin{align*}
		\zeta^1 =
		\begin{cases}
			\dfrac{1}{404} \sqrt{\triangle_l(\tau)} - \dfrac{(101 \Xi_0 + 202b + 200) \ln(-\tau)}{202 A} - \dfrac{5}{4\delta_0}, & -1 \leq \tau \leq -\exp\left(-\dfrac{A (101\Xi_0 + 101b - 100)}{\delta_0 \Xi_0 (303 \Xi_0 + 202b - 200)}\right) \\
			\dfrac{1}{202} \sqrt{\triangle_r(\tau)} - \dfrac{(101b + 100) \ln(-\tau)}{101 A} - \dfrac{1}{\delta_0}, & \tau > -\exp\left(-\dfrac{A (101\Xi_0 + 101b - 100)}{\delta_0 \Xi_0 (303 \Xi_0 + 202b - 200)}\right)
		\end{cases}.
	\end{align*}
\end{lemma}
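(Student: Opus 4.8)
The plan is to deduce Lemma~\ref{t:Tsurf3} from Lemma~\ref{t:Tsurf2} by transporting the formulas \eqref{e:surf} (and their inverses) through the change of coordinates $(\ttau,\txi)\to(\tau,\zeta)$ of \eqref{e:coordi5}. Under the parameter choice \eqref{e:para} one has $\frac{\mathtt{p}\tc^i}{A}=\frac{200}{101A}\delta^i_1$, so this change of variables is simply
\begin{equation*}
	\tau=\ttau,\qquad \zeta^1=\txi^1-\frac{200}{101A}\ln(-\tau),\qquad \zeta^i=\txi^i\ \ (i\ge 2),
\end{equation*}
and in particular $\ln(-\tau)=\ln(-\ttau)$. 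Since $\widehat{\Gamma}_{\delta_0}$ is a graph over the single variable $\zeta^1$ (equivalently $\txi^1$), the whole computation is controlled by this one scalar identity together with the image of the branch junction.

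First I would handle the inverse relation $\zeta^1=\zeta^1(\tau)$: inserting the inverse formulas of Lemma~\ref{t:Tsurf2} for $\txi^1$ in terms of $\ttau=\tau$ and merging the two logarithmic terms via $\frac{200}{101A}=\frac{400}{202A}$, the coefficient of $\ln(-\tau)$ changes from $\frac{101\Xi_0+202b-200}{202A}$ to $\frac{101\Xi_0+202b+200}{202A}$ on the $l$-branch and from $\frac{101b-100}{101A}$ to $\frac{101b+100}{101A}$ on the $r$-branch, while $\triangle_l$, $\triangle_r$ and the additive constants $-\frac{5}{4\delta_0}$, $-\frac{1}{\delta_0}$ stay untouched (they depend on $\ttau=\tau$ only); this already reproduces the stated inverse. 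The junction $\ttau=-\exp\!\bigl(-\tfrac{A(101\Xi_0+101b-100)}{\delta_0\Xi_0(303\Xi_0+202b-200)}\bigr)$ of Lemma~\ref{t:Tsurf2} is unchanged since $\tau=\ttau$, and I would then compute its preimage $\zeta^1=\txi^1-\frac{200}{101A}\ln(-\tau)$ using $\txi^1=\frac{202b-200-101\Xi_0}{202\Xi_0\delta_0}$, which after simplification gives the stated junction abscissa $\zeta^1=\frac{-303\Xi_0^2+404b\Xi_0+404b^2-400b}{\delta_0\Xi_0(606\Xi_0+404b-400)}$.

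To obtain the forward formula $\tau=\tau_\Gamma(\zeta^1;\delta_0)$ I would invert the relation just derived: writing $L:=\ln(-\tau)\le 0$, on each branch $\zeta^1+c_1=\frac{1}{c_2}\sqrt{\triangle_\bullet(\tau)}-c_3 L$ with explicit constants, so isolating the square root and squaring produces a quadratic $\alpha L^2+\beta(\zeta^1)L+\gamma(\zeta^1)=0$ with $\alpha$ a nonzero constant and $\beta,\gamma$ polynomials of degree $1$, $2$ in $\zeta^1$. Solving, the discriminant $\beta^2-4\alpha\gamma$ turns out to be a positive constant multiple of $\Diamond_l(\zeta^1)$ (resp.\ $\Diamond_r(\zeta^1)$), and exponentiating $L=\frac{-\beta\pm\sqrt{\beta^2-4\alpha\gamma}}{2\alpha}$ yields the displayed expressions for $\tau_\Gamma$. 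The nonnegativity of all the radicands on the indicated ranges follows from $1\le b\le 2$ and the monotonicity of $\Xi$ (Lemma~\ref{t:Thpst}).

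The step I expect to require the most care is this square-and-solve for the forward formula: one must verify that the $L^2$ contributions coming from $c_3^2 L^2$ and from $\triangle_\bullet$ do not cancel (they differ exactly by the shift that enters the logarithmic coefficient), simplify $\beta^2-4\alpha\gamma$ into the compact form $\Diamond_\bullet$, and — crucially — pick the correct branch of the square root so that $\tau_\Gamma$ increases from $-1$ at $\zeta^1=-\tfrac{1}{\delta_0}$ to $0^-$ as $\zeta^1\to+\infty$, in agreement with the endpoint behaviour of $\widehat{\mathfrak{T}}_{\delta_0}$ recorded in Lemma~\ref{t:Tsurf2}. Everything else is a direct substitution.
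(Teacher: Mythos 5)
Your proposal is correct. Note that the paper itself states only that these lemmas are ``based on lengthy but direct computation; only key steps and results are presented here,'' so there is no written proof to compare against — but the route you outline (transport Lemma~\ref{t:Tsurf2} through the change of variables \eqref{e:coordi5}, which with the choice \eqref{e:para} reduces to the single scalar shift $\zeta^1=\txi^1-\tfrac{200}{101A}\ln(-\tau)$, and then invert for $L=\ln(-\tau)$) is the natural realization of exactly that computation.

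I spot-checked the two places you flag as delicate and they are fine. For the inverse, absorbing $\tfrac{200}{101A}=\tfrac{400}{202A}$ into the log coefficient turns $101\Xi_0+202b-200$ into $101\Xi_0+202b+200$ on the $l$-branch and $101b-100$ into $101b+100$ on the $r$-branch, exactly as claimed, and the junction abscissa works out: inserting $\txi^1=\tfrac{202b-200-101\Xi_0}{202\Xi_0\delta_0}$ and the junction value of $\ln(-\tau)$ gives, after clearing denominators, $\zeta^1=\tfrac{-303\Xi_0^2+404b\Xi_0+404b^2-400b}{\delta_0\Xi_0(606\Xi_0+404b-400)}$ as stated. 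For the forward formula on the $r$-branch, writing $p=202(\zeta^1+\delta_0^{-1})$, $q=\tfrac{2(101b+100)}{A}$ and squaring gives a quadratic in $L$ with leading coefficient $\tfrac{161600b}{A^2}$ (from $(101b+100)^2-(101b-100)^2=40400b$), linear term $-\tfrac{\beta}{2\alpha}=-\tfrac{A(101b+100)\zeta^1}{400b}-\tfrac{101A}{200\delta_0}$, and discriminant $\beta^2-4\alpha\gamma=\tfrac{652864}{\delta_0^2A^2}\Diamond_r(\zeta^1)$, which exponentiates to the displayed $\tau_\Gamma$; the $+$~branch of the square root is the one consistent with $\tau\to0^-$ as $\zeta^1\to+\infty$ (both sides of the pre-squaring identity are asymptotic to $\tfrac{(101b-100)\zeta^1}{b}$, so no spurious root is introduced). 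The $l$-branch is analogous. Your derivation is a sound account of the lemma.
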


\begin{lemma}\label{t:Tsurf5}
	Assume $g(t,x)$ and $\mathsf{h}(\tau,\zeta)$ exist and are given by \eqref{e:coord2} and \eqref{e:coordi2}, respectively. Then, in the coordinates $(t,x)$, the surface $\widehat{\mathfrak{T}}_{\delta_0}(\hat{\zeta})$ can be expressed as
	\begin{align*}
		t :=& \mathsf{h}(\tau_\Gamma(x; \delta_0), x) \notag \\
		=&\begin{cases}
			\mathsf{h}\left(-\exp\left(\dfrac{A}{400 \delta_0 (\Xi_0 + 2b)} \sqrt{\Diamond_l(\zeta^1)} - \dfrac{A (101 \Xi_0 + 202b + 50)}{200 \delta_0 (\Xi_0 + 2b)} - \dfrac{A \zeta^1 (101 \Xi_0 + 202b + 200)}{400 (\Xi_0 + 2b)}\right), x\right), \\
			\hspace{7cm} -\dfrac{1}{\delta_0} \leq x^1 \leq \dfrac{-303 \Xi_0^2 + 404b \Xi_0 + (404b^2 - 400b)}{\delta_0 \Xi_0 (606\Xi_0 + 404b - 400)} \\
			\mathsf{h}\left(-\exp\left(\dfrac{A}{400b \delta_0} \sqrt{\Diamond_r(\zeta^1)} - \dfrac{101 A}{200 \delta_0} - \dfrac{(100 + 101b) A \zeta^1}{400b}\right), x\right), \\
			\hspace{7cm} x^1 > \dfrac{-303 \Xi_0^2 + 404b \Xi_0 + (404b^2 - 400b)}{\delta_0 \Xi_0 (606\Xi_0 + 404b - 400)}
		\end{cases}.
	\end{align*}
	Furthermore, $x^1$ can be expressed in terms of $g(t,x)$ as
	\begin{align*}
		x^1 :=& \mathrm{X}(g(t,x)) \notag \\
		=&
		\begin{cases}
			\dfrac{1}{404} \sqrt{\triangle_l(g(t,x))} - \dfrac{\Xi_0 \ln(-g(t,x))}{2 A} - \dfrac{(101b + 100) \ln(-g(t,x))}{101 A} - \dfrac{5}{4 \delta_0}, \\
			\hspace{6cm} -1 \leq g(t,x) \leq -\exp\left(-\dfrac{A (101\Xi_0 + 101b - 100)}{\delta_0 \Xi_0 (303 \Xi_0 + 202b - 200)}\right) \\
			\dfrac{1}{202} \sqrt{\triangle_r(g(t,x))} - \dfrac{(101b + 100) \ln(-g(t,x))}{101 A} - \dfrac{1}{\delta_0}, \\
			\hspace{6cm} g(t,x) > -\exp\left(-\dfrac{A (101\Xi_0 + 101b - 100)}{\delta_0 \Xi_0 (303 \Xi_0 + 202b - 200)}\right)
		\end{cases}.
	\end{align*}
\end{lemma}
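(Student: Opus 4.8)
The plan is to obtain the claimed $(t,x)$-representations of $\widehat{\Gamma}_{\delta_0}$ purely by composing the coordinate changes with the forms already established in Lemmas~\ref{t:Tsurf2} and~\ref{t:Tsurf3}; no new analytic input is needed, only the existence of $g$ and its inverse $\mathsf{h}$ from \eqref{e:coord2}--\eqref{e:coordi2}. First I would start from Lemma~\ref{t:Tsurf3}, which already expresses $\widehat{\Gamma}_{\delta_0}$ in the $(\tau,\zeta)$ coordinates as the graph $\tau = \tau_\Gamma(\zeta;\delta_0)$, together with the inverse description of $\zeta^1$ as a function of $\tau$ through the discriminant-type quantities $\triangle_l,\triangle_r$ (equivalently $\Diamond_l,\Diamond_r$). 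Since the transformation \eqref{e:coordi2} is simply $t = \mathsf{h}(\tau,\zeta^i)$, $x^i = \zeta^i$, one substitutes $\zeta^i = x^i$ into $\tau_\Gamma$ and then applies $\mathsf{h}$ to the resulting time value; this produces the two-branch formula for $t = \mathsf{h}(\tau_\Gamma(x;\delta_0),x)$, with the threshold in $x^1$ inherited verbatim from the threshold in $\zeta^1$ of Lemma~\ref{t:Tsurf3} because $\zeta^1 = x^1$.

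Second, for the identity $x^1 = \mathrm{X}(g(t,x))$, I would take the inverse-graph part of Lemma~\ref{t:Tsurf3}, which already solves the relevant quadratic-in-$\ln(-\tau)$ relation for $\zeta^1$, and substitute $\zeta^1 \mapsto x^1$, $\tau \mapsto g(t,x)$ via \eqref{e:coord2}; collecting the $\ln(-g)$ coefficients (using $\tfrac{101\Xi_0 + 202b + 200}{202A} = \tfrac{\Xi_0}{2A} + \tfrac{101b+100}{101A}$) gives exactly the stated expression. Thus the whole lemma is a direct substitution once Lemmas~\ref{t:Tsurf2} and~\ref{t:Tsurf3} are in hand.

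The only place where care is needed — and hence the main obstacle — is the bookkeeping of the branch threshold and the choice of sign in each square root as one passes along the chain $(\hat{\tau},\hat{\zeta})\to(\ttau,\txi)\to(\tau,\zeta)\to(t,x)$, using $\hat{\zeta}^i = \arctan(\gamma\txi^i)$ from \eqref{e:coordi6}, $\txi^1 = \tfrac{200}{101A}\ln(-\tau) + \zeta^1$ from \eqref{e:coordi5} with the parameters \eqref{e:para}, and finally $\zeta^1 = x^1$. Completing the square to invert for $\zeta^1$ (equivalently $x^1$) generates $\triangle_l,\triangle_r,\Diamond_l,\Diamond_r$, and the sign ambiguity in each radical is resolved by demanding $x^1 \to +\infty$ as $g(t,x)\to 0^-$, which is the blow-up regime consistent with $p_m = (t_m,+\infty,0,\dots,0)$. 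Since each such step is elementary algebra with no analytic subtlety, I would simply verify the endpoint identifications of the two branches (matching at the value of $\hat{\zeta}^1$ prescribed in \eqref{e:surf0}) and record the final expressions.
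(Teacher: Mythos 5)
Your proposal is correct and matches the paper's (unwritten) argument: the paper explicitly states that Lemmas~\ref{t:Tsurf2}--\ref{t:Tsurf5} follow by "lengthy but direct computation," and your route — substituting $\zeta^i=x^i$, $\tau=g(t,x)$ into the $(\tau,\zeta)$-representation of Lemma~\ref{t:Tsurf3} and applying $\mathsf{h}$, with the coefficient identity $\tfrac{101\Xi_0+202b+200}{202A}=\tfrac{\Xi_0}{2A}+\tfrac{101b+100}{101A}$ to reconcile the $\ln(-g)$ terms — is exactly that computation. Your attention to the branch thresholds and the sign of the radicals is the right place to focus the verification.
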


The following lemma states an important conclusion: $p_m \in \overline{\mathit{\Lambda}_{\delta_0} \cap \mathcal{I}}$.
	\begin{lemma}\label{t:inI}
		For all points $(t, a\delta_1^i) \in \Gamma_{\delta_0}$ with $a \geq 0$, we have $(t, a\delta_1^i) \in \mathcal{I}$, where $\mathcal{I}$ is defined in \eqref{e:cai}.
	\end{lemma}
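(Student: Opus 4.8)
\textbf{Proof proposal for Lemma~\ref{t:inI}.}
The plan is to compare, for each fixed direction $a\delta_1^i$ along the $x^1$-axis, the value $t=\mathfrak{T}_{\delta_0}(a\delta_1^i)$ of the time coordinate on the hypersurface $\Gamma_{\delta_0}$ against the value $t_{\mathcal{C}}(a)$ of the time coordinate on the characteristic cone $\mathcal{C}$ along the same ray, and to show $\mathfrak{T}_{\delta_0}(a\delta_1^i)<t_{\mathcal{C}}(a)$, i.e. the surface lies strictly below (in time) the cone, hence inside the open region $\mathcal{I}$ defined in \eqref{e:cai}. It is cleanest to carry out this comparison in the $(\tau,\zeta)$ coordinates, where $\mathcal{I}$ is bounded by $\underline{\mathcal{C}}$ from Lemma~\ref{t:char1}, namely $|\zeta|=1-\tfrac{2b}{A}\ln(-\tau)+\tfrac{1}{A}\int_{-1}^{\tau}\tfrac{\Xi(s)}{-s}\,ds$, and the surface $\Gamma_{\delta_0}$ is given in Lemma~\ref{t:Tsurf3} by $\tau=\tau_\Gamma(\zeta;\delta_0)$ (equivalently $\zeta^1$ as a function of $\tau$ on the axis). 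So the statement $(\,t,a\delta_1^i)\in\mathcal{I}$ translates to: at the point of $\Gamma_{\delta_0}$ with $\zeta^1=a\ge 0$, one has $a<1-\tfrac{2b}{A}\ln(-\tau_\Gamma(a\delta_1^i;\delta_0))+\tfrac1A\int_{-1}^{\tau_\Gamma}\tfrac{\Xi(s)}{-s}\,ds$.

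The key steps, in order, are: (1) record the explicit formula for $\zeta^1$ on $\Gamma_{\delta_0}$ as a function of $\tau=\tau_\Gamma$ from Lemma~\ref{t:Tsurf3} — in the relevant range this is $\zeta^1=\tfrac{1}{202}\sqrt{\triangle_r(\tau)}-\tfrac{(101b+100)\ln(-\tau)}{101A}-\tfrac1{\delta_0}$ (and the analogous $\triangle_l$ branch near $\tau=-1$); (2) record the defining relation of the cone $\underline{\mathcal C}$, $|\zeta|=1-\tfrac{2b}{A}\ln(-\tau)+\tfrac1A\int_{-1}^\tau\tfrac{\Xi(s)}{-s}\,ds$; (3) subtract: show that the cone value of $\zeta^1$ at a given $\tau$ exceeds the $\Gamma_{\delta_0}$ value of $\zeta^1$ at the same $\tau$. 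Since $\tfrac1{202}\sqrt{\triangle_r(\tau)}\le -\tfrac{(101b-100)\ln(-\tau)}{101A}+\tfrac{101}{202\delta_0}$ (complete the square / use $\sqrt{x+y}\le\sqrt x+\sqrt y$ on $\triangle_r$), the $\Gamma_{\delta_0}$ value of $\zeta^1$ is bounded above by $-\tfrac{(101b-100)\ln(-\tau)}{101A}-\tfrac{(101b+100)\ln(-\tau)}{101A}+\tfrac{101}{202\delta_0}-\tfrac1{\delta_0} = -\tfrac{2b}{A}\ln(-\tau)-\tfrac{1}{202\delta_0}$, which is strictly less than $1-\tfrac{2b}{A}\ln(-\tau)$, hence strictly less than the cone value, because $\Xi\ge 0$ makes the integral term nonnegative. (4) For the $\triangle_l$ branch near $\tau=-1$ repeat the same estimate; here one must also use $\int_{-1}^\tau\tfrac{\Xi(s)}{-s}\,ds\le\theta$ only as a lower bound of $0$, which suffices. (5) Finally translate back to $(t,x)$: the inequality $\zeta^1_{\Gamma}(\tau)<\zeta^1_{\mathcal C}(\tau)$ with $\zeta^i=x^i$ and $t$ strictly increasing in both $\tau$ (Lemma~\ref{t:gb1}) gives $(t,a\delta_1^i)$ strictly inside $\mathcal I$; monotonicity and the limits $\lim_{a\to\infty}\tau_\Gamma=0$, $\lim_{a\to\infty}e^{-3a/4}=0$ (stated in \S\ref{s:reorg} and used in Step~2 of \S\ref{s:pfmthm}) then also yield $p_m=(t_m,+\infty,0,\dots,0)\in\overline{\mathit\Lambda_{\delta_0}\cap\mathcal I}$ as claimed in the remark preceding the lemma.

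The main obstacle I expect is step (3)–(4): the formulas for $\triangle_r,\triangle_l,\Diamond_r,\Diamond_l$ are unwieldy quadratics in $\ln(-\tau)$ and $1/\delta_0$, and the proof hinges on the clean cancellation of the $\ln(-\tau)$ coefficients — $\tfrac{101b-100}{101A}+\tfrac{101b+100}{101A}=\tfrac{2b}{A}$ — so that what remains is a purely numerical gap in the $1/\delta_0$ terms and the sign of $\Xi$. One must check that the $\sqrt{\triangle}$ upper bound is uniform in $\tau\in[-1,0)$ and that the two branches match at the transition value $\tau=-\exp\!\big(-\tfrac{A(101\Xi_0+101b-100)}{\delta_0\Xi_0(303\Xi_0+202b-200)}\big)$; this is where the constant $\overline{\cc}\in\big(0,(1+\tfrac{101\Xi_0}{202b-200})^{-1}\big)$ and the bound $b\in[1,2]$ from Lemma~\ref{t:Thpst} are needed to keep all the radicands positive and the estimates on the correct side. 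No new ideas are required beyond the bookkeeping already set up in Lemmas~\ref{t:char1}, \ref{t:Thpst}, and \ref{t:Tsurf3}; the work is entirely in controlling these explicit expressions, which is why I would only sketch the completion-of-square step rather than expand every term.
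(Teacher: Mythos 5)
Your overall strategy — compare, at each fixed $\tau$, the value of $\zeta^1$ on $\Gamma_{\delta_0}$ (Lemma~\ref{t:Tsurf3}) against the value of $|\zeta|$ on the cone $\underline{\mathcal{C}}$ (Lemma~\ref{t:char1}) — is a legitimate reorganization of the paper's argument. The paper instead argues by contradiction in $(t,x)$ coordinates: if the point were not in $\mathcal{I}$ it would lie in $\mathcal{H}$, where $\varrho=f$ forces $g(t_1,\cdot)=\mfg(t_1)$, so $\ln(-g)$ can be rewritten via Lemma~\ref{t:gb2}\ref{l:2.1} as $-A\int_{t_0}^{t_1}\frac{f(f+1)}{y^2f_0}\,dy$ and compared with the cone integral $\int_{t_0}^{t_1}\frac{f_0}{1+f}\frac{B}{\chi_\uparrow}(2b+\Xi)\,dy$, yielding $1<0$. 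Your direct comparison avoids this device because both curves are already expressed as functions of $\tau$; the computational core (completing the square in $\triangle_{r/l}$ and the cancellation $\frac{101b-100}{101A}+\frac{101b+100}{101A}=\frac{2b}{A}$) is the same in both versions. However, there are two concrete problems with your write-up.

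First, your stated bound $\frac{1}{202}\sqrt{\triangle_r(\tau)}\le -\frac{(101b-100)\ln(-\tau)}{101A}+\frac{101}{202\delta_0}$ is false — indeed reversed. Writing $L=-\ln(-\tau)\ge0$, $P=101b-100>0$, the cross term of $\triangle_r$ is $+\frac{808PL}{A\delta_0}$, which forces $c=202$ (not $101$) in $\bigl(\frac{2PL}{A}+\frac{c}{\delta_0}\bigr)^2$; since $202^2=40804>10201$, one gets $\triangle_r<\bigl(\frac{2PL}{A}+\frac{202}{\delta_0}\bigr)^2$ and hence $\frac{1}{202}\sqrt{\triangle_r}<-\frac{(101b-100)\ln(-\tau)}{101A}+\frac{1}{\delta_0}$, exactly the bound the paper uses. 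With $c=101$ the square has a \emph{smaller} cross term, so $\triangle_r$ exceeds it and your inequality goes the wrong way. The corrected constant makes the $\frac{1}{\delta_0}$ terms cancel exactly, giving $\zeta^1_\Gamma<-\frac{2b}{A}\ln(-\tau)$ on the right branch (the paper's \eqref{e:x1upbd}); the comparison with the cone then rests on the summand $1$ and $\Xi\ge0$, so your conclusion survives, but the intermediate step as written does not.

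Second, step (4) is a genuine gap. "Repeating the same estimate" on the left branch gives $\zeta^1_\Gamma<-\frac{(\Xi_0+2b)}{A}\ln(-\tau)-\frac{1}{2\delta_0}$, which carries the \emph{extra positive} term $-\frac{\Xi_0}{A}\ln(-\tau)$ (with the supremum $\Xi_0$, not $\Xi(s)$). This is unbounded as $\tau\to0$ and dominates the cone's $\frac{1}{A}\int_{-1}^{\tau}\frac{\Xi(s)}{-s}\,ds$ because $\Xi(s)\le\Xi_0$, so bounding that integral below by $0$ does \emph{not} suffice pointwise. You must either (a) use that the left branch is confined to $-\ln(-\tau)\le\frac{A(101\Xi_0+101b-100)}{\delta_0\Xi_0(303\Xi_0+202b-200)}$ and check $\frac{101\Xi_0+101b-100}{303\Xi_0+202b-200}\le\frac12$ so the excess is absorbed by $\frac{1}{2\delta_0}$, or (b) as the paper does, estimate $\int\frac{f_0}{1+f}\frac{B}{\chi_\uparrow}(\Xi_0-\Xi)\,dy\le\frac{\Xi_0}{6\cb}\ln\frac{1+f(t_1)}{1+\beta}$ and impose the smallness condition $\delta_0<\bigl(\frac{\Xi_0}{3\cb}\ln\frac{1+f(t_1)}{1+\beta}\bigr)^{-1}$. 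Either way, this is where the actual content of the lemma lies, and your proposal currently waves it away.
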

	\begin{proof}
		First, since $g(t,x) \in [-1,0)$, we observe that 
		\begin{equation*}
			\frac{303}{\delta_0} - \frac{2 (101 \Xi_0 + 202b - 200) (\ln(-g(t,x)))}{A} > 0 \AND \frac{202}{\delta_0} - \frac{(202b - 200) (\ln(-g(t,x)))}{A} > 0
		\end{equation*}
		Hence,
		\begin{align*}
			\sqrt{\triangle_r(g(t,x))} &< \frac{202}{\delta_0} - \frac{(202b - 200) (\ln(-g(t,x)))}{A}, \\
			\sqrt{\triangle_l(g(t,x))} &< \frac{303}{\delta_0} - \frac{2 (101 \Xi_0 + 202b - 200) (\ln(-g(t,x)))}{A}.
		\end{align*}
		Using Lemma \ref{t:Tsurf5}, we obtain the inequality
		\begin{equation}
			x^1 = \mathrm{X}(g(t,x))
			< \begin{cases}
				-\dfrac{(\Xi_0 + 2b)}{A} \ln(-g(t,x)) - \dfrac{1}{2\delta_0}, & -1 \leq g(t,x) \leq -\exp\left(-\dfrac{A (101\Xi_0 + 101b - 100)}{\delta_0 \Xi_0 (303 \Xi_0 + 202b - 200)}\right) \\
				-\dfrac{2b}{A} \ln(-g(t,x)), & g(t,x) > -\exp\left(-\dfrac{A (101\Xi_0 + 101b - 100)}{\delta_0 \Xi_0 (303 \Xi_0 + 202b - 200)}\right)
			\end{cases}. \label{e:x1upbd}
		\end{equation}
		
		We now prove $(t, a\delta_1^i) \in \mathcal{I}$ by contradiction. Suppose there exists a point $(t_1, a_1\delta_1^i) \in \Gamma_{\delta_0}$ with $a_1 \geq 0$ such that $(t_1, a_1\delta_1^i) \notin \mathcal{I}$. Since $(t_1, a_1\delta_1^i)$ lies in the homogeneous region $\mathcal{H}$, we have $g(t_1, a_1\delta^i_1) = \mfg(t_1)$. From \eqref{e:cah}, \eqref{e:keyid3}, Lemmas \ref{t:Tsurf5} and \ref{t:Thpst}.\ref{r:2}, it follows that
		\begin{equation*}
			\mathrm{X}(\mfg(t_1)) \geq 1 + \int_{t_0}^{t_1} \sqrt{\frac{\cm^2 f_0^2(y)}{(1 + f(y))^2} + (4 - 4\cm^2)\frac{1 + f(y)}{y^2}} \, dy = 1 + \int_{t_0}^{t_1} \frac{f_0(y)}{1 + f(y)} \frac{B}{\chi_\uparrow} (2b + \Xi) \, dy.
		\end{equation*}
		Combining \eqref{e:x1upbd} and Lemma \ref{t:gb2}.\ref{l:2.1}, we derive
		\begin{align*}
			& 1 + \int_{t_0}^{t_1} \frac{f_0(y)}{1 + f(y)} \frac{B}{\chi_\uparrow} (2b + \Xi) \, dy \notag \\
			<&
			\begin{cases}
				-\dfrac{(\Xi_0 + 2b)}{A} \ln(-\mfg(t_1)) - \dfrac{1}{2\delta_0} = (\Xi_0 + 2b) \int^{t_1}_{t_0} \frac{f(y)(f(y) + 1)}{y^2 f_0(y)} \, dy - \dfrac{1}{2\delta_0}, \\
				\hspace{7cm} -1 \leq \mfg(t_1) \leq -\exp\left(-\dfrac{A (101\Xi_0 + 101b - 100)}{\delta_0 \Xi_0 (303 \Xi_0 + 202b - 200)}\right) \\
				-\dfrac{2b}{A} \ln(-\mfg(t_1)) = 2b \int^{t_1}_{t_0} \frac{f(y)(f(y) + 1)}{y^2 f_0(y)} \, dy, \quad \mfg(t_1) > -\exp\left(-\dfrac{A (101\Xi_0 + 101b - 100)}{\delta_0 \Xi_0 (303 \Xi_0 + 202b - 200)}\right)
			\end{cases} \notag \\
			\overset{\eqref{e:keyid3}}{=}&
			\begin{cases}
				(\Xi_0 + 2b) \int^{t_1}_{t_0} \frac{f_0(y)}{1 + f(y)} \frac{B}{\chi_\uparrow} \, dy - \dfrac{1}{2\delta_0}, & -1 \leq \mfg(t_1) \leq -\exp\left(-\dfrac{A (101\Xi_0 + 101b - 100)}{\delta_0 \Xi_0 (303 \Xi_0 + 202b - 200)}\right) \\
				2b \int^{t_1}_{t_0} \frac{f_0(y)}{1 + f(y)} \frac{B}{\chi_\uparrow} \, dy, & \mfg(t_1) > -\exp\left(-\dfrac{A (101\Xi_0 + 101b - 100)}{\delta_0 \Xi_0 (303 \Xi_0 + 202b - 200)}\right)
			\end{cases}.
		\end{align*}
		Since $\Xi > 0$ (by Lemma \ref{t:char1}) and $B/\chi_\uparrow < \frac{1}{6\cb}$ (from \eqref{e:chig} and Lemma \ref{t:Thpst}.\ref{r:1}), we deduce
		
		When $\mfg(t_1) > -\exp\left(-\dfrac{A (101\Xi_0 + 101b - 100)}{\delta_0 \Xi_0 (303 \Xi_0 + 202b - 200)}\right)$,
		\[
		1 < -\int^{t_1}_{t_0} \frac{f_0(y)}{1 + f(y)} \frac{B}{\chi_\uparrow} \Xi \, dy < 0;
		\]
		
		When $-1 \leq \mfg(t_1) \leq -\exp\left(-\dfrac{A (101\Xi_0 + 101b - 100)}{\delta_0 \Xi_0 (303 \Xi_0 + 202b - 200)}\right)$,
		\[
		1 < \int^{t_1}_{t_0} \frac{f_0(y)}{1 + f(y)} \frac{B}{\chi_\uparrow} (\Xi_0 - \Xi) \, dy - \frac{1}{2\delta_0} < \frac{\Xi_0}{6\cb} \ln \frac{1 + f(t_1)}{1 + \beta} - \frac{1}{2\delta_0}.
		\]
		
		For sufficiently small $\delta_0$, specifically $\delta_0 < \left(\frac{\Xi_0}{3\cb} \ln \frac{1 + f(t_1)}{1 + \beta}\right)^{-1}$, we have
		\[
		1 < -\int^{t_1}_{t_0} \frac{f_0(y)}{1 + f(y)} \frac{B}{\chi_\uparrow} \Xi \, dy < 0,
		\]
		which leads to the contradiction $1 < 0$. This completes the proof.
	\end{proof}

\section{Fuchsian Global Initial Value Problem on Compact Domains}\label{s:fuch}

The Fuchsian global initial value problem (GIVP), introduced by Oliynyk in \cite{Oliynyk2016a}, provides a framework for analyzing singular symmetric hyperbolic systems. This appendix presents the formulation from \cite{Beyer2020}, adapted to our setting and serving as a key tool in this work. For further developments and applications of the Fuchsian GIVP, see \cite{Liu2018,Liu2018c,Liu2018a,Beyer2020b,Fajman2021,LeFloch2021,Ames2022,Ames2022a,Marshall2023,Liu2022,Liu2022a,Liu2023,Beyer2023a,Beyer2023b,Fajman2023,Oliynyk2024}.  
Consider the singular symmetric hyperbolic system

\begin{align}
	B^{\mu}(t,x,u)\partial_{\mu}u =& \frac{1}{t}\mathbf{B}(t,x,u)\mathbf{P}u + H(t,x,u) \quad &&\text{on } [T_{0},T_{1})\times\mathbb{R}^{n}, \label{e:model1} \\
	u =& u_{0}(x) &&\text{on } \{T_{0}\}\times\mathbb{R}^{n}, \label{e:model2}
\end{align}
where $T_{0} < T_{1} \leq 0$, and the system satisfies the following conditions. 
\begin{enumerate}[label=(H\arabic*)]
	\item \label{c:2} $\mathbf{P}$ is a constant symmetric projection operator, i.e., $\mathbf{P}^{2} = \mathbf{P}$, $\mathbf{P}^{T} = \mathbf{P}$, and $\partial_\mu \mathbf{P} = 0$. Denote the complementary projection by $\mathbf{P}^\perp := \mathds{1} - \mathbf{P}$.
	
	\item \label{c:3} $u = u(t,x)$ and $H(t,x,u)$ are $\mathbb{R}^{N}$-valued maps, with $H \in C^{0}([T_{0},0), C^{\infty}(\mathbb{T}^n \times B_R(\mathbb{R}^{N}), \mathbb{R}^N))$, and $H$ admits the expansion
$
		H(t,x,u) = H_0(t,x,u) + |t|^{-\frac{1}{2}} H_1(t,x,u)$, 
	where $H_0, H_1 \in C^0([T_0,0], C^{\infty}(\mathbb{T}^n \times B_R(\mathbb{R}^{N}), \mathbb{R}^N))$, and there exist constants $\lambda_a \geq 0$, $a = 1,2$, such that for all $(t,x) \in [T_0,0) \times \mathbb{T}^n$,
	\begin{equation*}
		H_0(t,x,u) = \mathcal{O}(u), \quad \mathbf{P}H_1(t,x,u) = \mathcal{O}(\lambda_1 u), \quad \mathbf{P}^\perp H_1(t,x,u) = \mathcal{O}(\lambda_2 \mathbf{P} u).
	\end{equation*}
	
	\item \label{c:4} $B^{\mu} = B^{\mu}(t,x,u)$ and $\mathbf{B} = \mathbf{B}(t,x,u)$ are $\mathbb{M}_{N \times N}$-valued maps. There exists a constant $R > 0$ such that $B^{i} \in C^{0}([T_{0},0), C^{\infty}(\mathbb{T}^n \times B_R(\mathbb{R}^{N}), \mathbb{M}_{N \times N}))$, $\mathbf{B} \in C^{0}([T_{0},0], C^{\infty}(\mathbb{T}^n \times B_R(\mathbb{R}^{N}), \mathbb{M}_{N \times N}))$, $B^{0} \in C^{1}([T_{0},0), C^{\infty}(\mathbb{T}^n \times B_R(\mathbb{R}^{N}), \mathbb{M}_{N \times N}))$, and they satisfy $
		(B^{\mu})^{T} = B^{\mu}, \quad [\mathbf{P}, \mathbf{B}] = \mathbf{PB} - \mathbf{BP} = 0$, 
	and $B^i$ can be expanded as
$
		B^i(t,x,u) = B_0^i(t,x,u) + |t|^{-1} B_2^i(t,x,u)$, 
	where $B_0^i, B_2^i \in C^{0}([T_{0},0], C^{\infty}(\mathbb{T}^n \times B_R(\mathbb{R}^{N}), \mathbb{M}_{N \times N}))$. 
	Moreover, there exist $\tilde{B}^0, \tilde{\mathbf{B}} \in C^0([T_0,0], C^\infty(\mathbb{T}^n, \mathbb{M}_{N \times N}))$ such that for all $(t,x,u) \in [T_{0},0] \times \mathbb{T}^n \times B_R(\mathbb{R}^N)$,
	\begin{equation*}
		[\mathbf{P}, \tilde{\mathbf{B}}] = 0, \quad
		B^0(t,x,u) - \tilde{B}^0(t,x) = \mathcal{O}(u), \quad
		\mathbf{B}(t,x,u) - \tilde{\mathbf{B}}(t,x) = \mathcal{O}(u).
	\end{equation*}
	
	Additionally, there exists $\tilde{B}_2^i \in C^0([T_0,0], C^\infty(\mathbb{T}^n, \mathbb{M}_{N \times N}))$ such that for all $(t,x,u) \in [T_{0},0] \times \mathbb{T}^n \times B_R(\mathbb{R}^N)$,
	\begin{gather*}
		\mathbf{P} B_2^i(t,x,u) \mathbf{P}^\perp = \mathcal{O}(\mathbf{P} u), \quad
		\mathbf{P}^\perp B_2^i(t,x,u) \mathbf{P} = \mathcal{O}(\mathbf{P} u), \\
		\mathbf{P}^\perp B_2^i(t,x,u) \mathbf{P}^\perp = \mathcal{O}(\mathbf{P} u \otimes \mathbf{P} u), \quad
		\mathbf{P} (B_2^i(t,x,u) - \tilde{B}_2^i(t,x)) \mathbf{P} = \mathcal{O}(u).
	\end{gather*}
	
	\item \label{c:5} There exist constants $\kappa, \gamma_1, \gamma_2$ such that for all $(t,x,u) \in [T_{0},0] \times \mathbb{T}^n \times B_R(\mathbb{R}^N)$,
	\begin{equation*}
		\frac{1}{\gamma_1} \mathds{1} \leq B^{0} \leq \frac{1}{\kappa} \mathbf{B} \leq \gamma_2 \mathds{1}. \label{e:Bineq}
	\end{equation*}
	
	\item \label{c:6} For all $(t,x,u) \in [T_{0},0] \times \mathbb{T}^n \times B_R(\mathbb{R}^{N})$, $
		\mathbf{P}^{\perp} B^{0}(t, \mathbf{P}^\perp u) \mathbf{P} = \mathbf{P} B^{0}(t, \mathbf{P}^\perp u) \mathbf{P}^{\perp} = 0$. 
	
	\item \label{c:7} There exist constants $\theta$ and $\beta_{\ell} \geq 0$, $\ell = 0, \dots, 7$, such that
	\begin{align*}
		\mathrm{div} B(t,x, u,w) :=& \partial_{t} B^0(t,x,u) + D_u B^0(t,x,u) \cdot (B^0(t,x,u))^{-1} \Bigl[ -B^i(t,x,u) \cdot w_i \\
		&+ \frac{1}{t} \mathbf{B}(t,x,u) \mathbf{P} u + H_0(t,x,u) + |t|^{-\frac{1}{2}} H_2(t,x,u) \Bigr]  + \partial_{i} B^i(t,x,u) + D_u B^i(t,x,u) \cdot w_i,
	\end{align*}
	where $w = (w_i)$, $(t,x,u,w) \in [T_0,0) \times \mathbb{T}^n \times B_R(\mathbb{R}^N) \times B_R(\mathbb{M}_{N \times n})$, and
	\begin{align}
		\mathbf{P} \mathrm{div} B \mathbf{P} =& \mathcal{O}\left(\theta + |t|^{-\frac{1}{2}} \beta_0 + |t|^{-1} \beta_1 \right), \label{e:PhP1} \\
		\mathbf{P} \mathrm{div} B \mathbf{P}^\perp =& \mathcal{O}\left(\theta + |t|^{-\frac{1}{2}} \beta_2 + \frac{|t|^{-1} \beta_3}{R} \mathbf{P} u \right), \label{e:PhP2} \\
		\mathbf{P}^\perp \mathrm{div} B \mathbf{P} =& \mathcal{O}\left(\theta + |t|^{-\frac{1}{2}} \beta_4 + \frac{|t|^{-1} \beta_5}{R} \mathbf{P} u \right), \label{e:PhP3} \\
		\intertext{and}
		\mathbf{P}^\perp \mathrm{div} B \mathbf{P}^\perp =& \mathcal{O}\left(\theta + \frac{|t|^{-\frac{1}{2}} \beta_6}{R} \mathbf{P} u + \frac{|t|^{-1} \beta_7}{R^2} \mathbf{P} u \otimes \mathbf{P} u \right). \label{e:PhP4}
	\end{align}
\end{enumerate}

\begin{theorem}\label{t:fuch}
	Assume $k \in \mathbb{Z}_{> \frac{n}{2} + 3}$, $u_{0} \in H^{k}(\mathbb{T}^{n})$, the system \eqref{e:model1} satisfies conditions \ref{c:2}--\ref{c:7}, and the constants $\kappa, \gamma_1, \beta_1, \beta_3, \beta_5, \beta_7$ from conditions \ref{c:2}--\ref{c:7} satisfy
	\begin{equation}\label{e:kpbt1}
		\kappa > \frac{1}{2} \gamma_1 \max\left\{ \sum^3_{\ell=0} \beta_{2\ell+1}, \beta_1 + 2k(k+1) \mathtt{b} \right\},
	\end{equation}
	where
	\begin{equation*}
		\mathtt{b} := \sup_{T_0 \leq t < 0} \left( \left\| |\mathbf{P} \tilde{\mathbf{B}} D (\tilde{\mathbf{B}}^{-1} \tilde{B}^0) (\tilde{B}^0)^{-1} \mathbf{P} \tilde{B}_2^i \mathbf{P}|_{\mathrm{op}} \right\|_{L^\infty} + \left\| |\mathbf{P} \tilde{\mathbf{B}} D (\tilde{\mathbf{B}}^{-1} \tilde{B}_2^i) \mathbf{P}|_{\mathrm{op}} \right\|_{L^\infty} \right),
	\end{equation*}
	then there exist constants $\delta_0, \delta > 0$ with $\delta < \delta_0$ such that if the initial data satisfy $
		\|u_0\|_{H^k} \leq \delta$, 
	the initial value problem \eqref{e:model1}--\eqref{e:model2} has a unique solution
	\begin{equation*}
		u \in C^0([T_0,0), H^k(\mathbb{T}^n)) \cap C^1([T_0,0), H^{k-1}(\mathbb{T}^n)) \cap L^\infty([T_0,0), H^k(\mathbb{T}^n)),
	\end{equation*}
	and the limit $\mathbf{P} u(0) := \lim_{t \nearrow 0} \mathbf{P} u(t)$ exists in $H^{k-1}(\mathbb{T}^n)$. Moreover, for all $T_0 \leq t < 0$, the solution $u$ satisfies the energy estimate:
	\begin{equation*}\label{e:ineq1}
		\|u(t)\|_{H^k(\mathbb{T}^n)}^2 - \int^t_{T_0} \frac{1}{\tau} \|\mathbf{P} u(\tau)\|^2_{H^k(\mathbb{T}^n)} d\tau \leq C(\delta_0, \delta_0^{-1}) \|u_0\|^2_{H^k(\mathbb{T}^n)}.
	\end{equation*}
\end{theorem}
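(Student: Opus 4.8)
The plan is to prove Theorem~\ref{t:fuch} by the standard Fuchsian energy method of Oliynyk and Beyer: produce a local‑in‑time smooth solution by classical symmetric hyperbolic theory, derive an $H^k$ energy estimate that stays uniform as $t\nearrow 0$, and then use a continuation argument to extend the solution all the way to the singular time. Since Condition~\ref{c:5} forces $B^0\ge\gamma_1^{-1}\mathds 1>0$ uniformly on $[T_0,0]\times\mathbb{T}^n\times B_R(\mathbb{R}^N)$, the system \eqref{e:model1} is a genuine symmetric hyperbolic system on any interval bounded away from $t=0$; hence by the usual Kato/Friedrichs existence theory on $\mathbb{T}^n$ there is a maximal $T_*\in(T_0,0]$ and a unique solution $u\in C^0([T_0,T_*),H^k(\mathbb{T}^n))\cap C^1([T_0,T_*),H^{k-1}(\mathbb{T}^n))$ with $u(T_0)=u_0$, valid as long as $u(t)$ stays inside $B_R(\mathbb{R}^N)$. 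The entire difficulty is to prove that in fact $T_*=0$.

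The heart of the matter is the weighted energy
\[
\mathcal E(t)^2:=\sum_{|\alpha|\le k}\langle B^0(t,\cdot,u)\,D^\alpha u,\,D^\alpha u\rangle ,
\]
which is comparable to $\|u(t)\|_{H^k(\mathbb{T}^n)}^2$ by Condition~\ref{c:5}. Differentiating in $t$, commuting $D^\alpha$ through \eqref{e:model1}, integrating by parts using $(B^\mu)^T=B^\mu$, and bounding the lower‑order nonlinear commutators by Moser‑type product estimates yields a differential identity of the schematic form
\[
\frac{d}{dt}\mathcal E^2=\frac{2}{t}\sum_{|\alpha|\le k}\langle \mathbf B\,\mathbf P D^\alpha u,\,\mathbf P D^\alpha u\rangle+\sum_{|\alpha|\le k}\langle(\mathrm{div}\,B)\,D^\alpha u,\,D^\alpha u\rangle+\mathcal R ,
\]
where I have used $[\mathbf P,\mathbf B]=0$ and $\mathbf P^2=\mathbf P^T=\mathbf P$ from Conditions~\ref{c:2} and \ref{c:4}, and $\mathcal R$ gathers the genuinely nonsingular terms together with the $|t|^{-1/2}$ contributions generated by the $H_1$, $B_2^i$ parts of Conditions~\ref{c:3}--\ref{c:4}. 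The first term is the \emph{good} damping term: since $t<0$ and $\mathbf B\ge\kappa B^0\ge(\kappa/\gamma_1)\mathds 1$, it is bounded above by $-\tfrac{2\kappa}{\gamma_1|t|}\|\mathbf P u\|_{H^k}^2$ up to lower order. The $\mathrm{div}\,B$ term and the top‑order commutators must now be expanded block‑by‑block according to the decomposition into $\mathbf P\cdot\mathbf P$, $\mathbf P\cdot\mathbf P^\perp$, $\mathbf P^\perp\cdot\mathbf P$ and $\mathbf P^\perp\cdot\mathbf P^\perp$ pieces; Condition~\ref{c:7} bounds these by $\theta\,\mathcal E^2+|t|^{-1/2}\big(\textstyle\sum_\ell\beta_{2\ell}\big)\mathcal E^2+|t|^{-1}\big(\beta_1+2k(k+1)\mathtt{b}\big)\|\mathbf P u\|_{H^k}^2$, plus terms that already carry a factor $\mathbf P u$ (hence are not truly singular on the $\mathbf P^\perp$ sector) with singular weights $\beta_3,\beta_5,\beta_7$; Conditions~\ref{c:4} and \ref{c:6} are precisely what guarantee that the $\mathbf P^\perp$ blocks acquire these compensating $\mathbf P u$ factors. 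Now the hypothesis \eqref{e:kpbt1}, which gives both $2\kappa>\gamma_1(\beta_1+2k(k+1)\mathtt{b})$ and $2\kappa>\gamma_1\sum_{\ell=0}^3\beta_{2\ell+1}$, lets the damping term absorb every genuinely $t^{-1}$‑singular bad contribution, leaving an inequality of the form
\[
\frac{d}{dt}\mathcal E^2+\frac{c}{|t|}\,\|\mathbf P u\|_{H^k}^2\le\Big(C+\frac{C'}{|t|^{1/2}}\Big)\mathcal E^2
\]
with $c>0$ (provided $\mathcal E$ is small enough that the nonlinear corrections do not overwhelm $c$).

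Integrating from $T_0$ to $t$ and using $\int_{T_0}^{0}|t|^{-1/2}\,dt<\infty$, Grönwall's lemma gives
\[
\mathcal E(t)^2+\int_{T_0}^{t}\frac{c}{|\tau|}\,\|\mathbf P u(\tau)\|_{H^k}^2\,d\tau\le C(\delta_0,\delta_0^{-1})\,\|u_0\|_{H^k}^2\qquad\text{for all }t\in[T_0,T_*) .
\]
A standard continuity/bootstrap argument then shows that, for $\delta$ small enough, the right‑hand side stays below any prescribed $\delta_0<R$, so $u(t)$ never reaches $\partial B_R(\mathbb{R}^N)$; by the continuation criterion this forces $T_*=0$. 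This yields the claimed global existence, uniqueness and regularity, and—after re‑expressing $\mathcal E$ in terms of $\|u\|_{H^k}$—the energy estimate \eqref{e:ineq1}. For the last statement, apply $\mathbf P$ to \eqref{e:model1} and solve for $\partial_t(\mathbf P u)$, using the commutation identities from Conditions~\ref{c:4} and \ref{c:6} to peel off the leading coupling with $\mathbf P^\perp u$; one then bounds $\|\partial_t(\mathbf P u)(t)\|_{H^{k-1}}$ by a quantity that is integrable in $t$ (controlled by $\mathcal E$ and by the integral term $\int\frac{1}{|\tau|}\|\mathbf P u\|_{H^k}^2$ appearing in the energy estimate), so $\mathbf P u(t)$ is Cauchy in $H^{k-1}(\mathbb{T}^n)$ as $t\nearrow 0$ and its limit $\mathbf P u(0)$ exists there. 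The main obstacle is exactly the top‑order energy step: carefully bookkeeping the $\mathrm{div}\,B$ and commutator contributions in the four $\mathbf P,\mathbf P^\perp$ blocks and verifying, with the full strength of Conditions~\ref{c:4}, \ref{c:6} and \ref{c:7}, that every genuinely $t^{-1}$‑singular piece is either absorbed by the damping under \eqref{e:kpbt1} or carries a compensating factor of $\mathbf P u$. This is the core of the Fuchsian GIVP machinery, and a complete proof along these lines is given in \cite{Beyer2020} (see also \cite{Oliynyk2016a}).
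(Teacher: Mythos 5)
The paper does not prove Theorem~\ref{t:fuch} at all: it is stated in Appendix~\ref{s:fuch} as the formulation of the Fuchsian GIVP result taken from \cite{Beyer2020} (building on \cite{Oliynyk2016a}), and is used as a black box. Your sketch correctly reproduces the standard proof strategy of those references — uniform positivity of $B^0$ giving local existence, the weighted $H^k$ energy identity in which the $\tfrac{1}{t}\mathbf{B}\mathbf{P}u$ term supplies damping that absorbs the $|t|^{-1}$-singular $\mathrm{div}\,B$ and commutator contributions under \eqref{e:kpbt1}, Grönwall for the integrable $|t|^{-1/2}$ pieces, and continuation to $t=0$ — and you appropriately defer to \cite{Beyer2020} for the full bookkeeping, so this matches the paper's treatment.
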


\section{Reference ODE}\label{s:ODE0}
This appendix presents an important class of ODEs studied in \cite{Liu2022b} along with related results. Detailed derivations can be found in \cite{Liu2022b} and \cite{Liu2023}. Note that in this paper, $\mfg$ corresponds to $g$ in \cite{Liu2022b}.

\subsection{Analysis of the Reference ODE}\label{s:ODE}
Consider the following ODE:
\begin{gather}
	f^{\prime\prime}(\mft) + \frac{\ca}{\mft} f^\prime(\mft) - \frac{\cb}{\mft^2} f(\mft)(1 + f(\mft)) - \frac{\cc (f^\prime(\mft))^2}{1 + f(\mft)} = 0, \label{e:feq0} \\
	f(t_0) = \mf > 0 \quad \text{and} \quad f^\prime(t_0) = \mf_0 > 0, \label{e:feq1}
\end{gather}
where $\mf, \mf_0 > 0$ are constants, and the parameters $\ca$, $\cb$, and $\cc$ satisfy
\begin{equation}\label{e:abcdk}
	\ca > 1, \quad \cb > 0, \quad \text{and} \quad 1 < \cc < 3/2.
\end{equation}

For notational simplicity, define
\begin{equation*}
	\triangle := \sqrt{(1 - \ca)^2 + 4\cb} > -\ba, \quad \ba := 1 - \ca < 0, \quad \bc := 1 - \cc < 0.
\end{equation*}
Introduce constants $\mathtt{A}$, $\mathtt{B}$, $\mathtt{C}$, $\mathtt{D}$, and $\mathtt{E}$, which depend on the parameters $\ca$, $\cb$, $\cc$ in \eqref{e:feq0}--\eqref{e:feq1} and the initial values $\mf$, $\mf_0$.
\begin{align*}
	\mathtt{A} :=& \frac{t_0^{-\frac{\ba - \triangle}{2}}}{\triangle} \left( \frac{t_0 \mf_0}{(1 + \mf)^2} - \frac{\ba + \triangle}{2} \frac{\mf}{1 + \mf} \right), \\
	\mathtt{B} :=& \frac{t_0^{-\frac{\ba + \triangle}{2}}}{\triangle} \left( \frac{\ba - \triangle}{2} \frac{\mf}{1 + \mf} - \frac{t_0 \mf_0}{(1 + \mf)^2} \right) < 0, \\
	\mathtt{C} :=& \frac{2}{2 + \ba + \triangle} \left( \max\left\{ \frac{\ba + \triangle}{2 \cb}, 1 \right\} \right)^{-1} \left( \ln(1 + \mf) + \frac{\ba + \triangle}{2\cb} \frac{t_0 \mf_0}{1 + \mf} \right) t_0^{-\frac{\ba + \triangle}{2}} > 0, \\
	\mathtt{D} :=& \frac{\ba + \triangle}{2 + \ba + \triangle} \left( \ln(1 + \mf) - \frac{1}{\cb} \frac{t_0 \mf_0}{1 + \mf} \right) t_0 \AND
	\mathtt{E} :=  \frac{\bc \mf_0 t_0^{1 - \ba}}{\ba (1 + \mf)} > 0.
\end{align*}

Next, define two critical time points $t_\star$ and $t^\star$:
\begin{definition}\label{t:tdef}
	Given $\mathtt{A}$, $\mathtt{B}$, $\mathtt{E}$, $\ba$, and $\triangle$ as above:
	\begin{enumerate}[label=(\arabic*)]
		\item Let $\mathcal{R} := \{t_r > t_0 \mid \mathtt{A} t_r^{\frac{\ba - \triangle}{2}} + \mathtt{B} t_r^{\frac{\ba + \triangle}{2}} + 1 = 0\}$, and define $t_\star := \min \mathcal{R}$.
		\item If $t_0^{\ba} > \mathtt{E}^{-1}$, define $t^\star := (t_0^{\ba} - \mathtt{E}^{-1})^{1/\ba} \in (0, \infty)$, i.e., $\mft = t^\star$ solves $1 - \mathtt{E} t_0^{\ba} + \mathtt{E} \mft^{\ba} = 0$.
	\end{enumerate}
\end{definition}

The main theorem concerning the ODE \eqref{e:feq0}--\eqref{e:feq1} is stated below; its proof can be found in \cite{Liu2022b}:
\begin{theorem}\label{t:mainthm0}
	Assume the constants $\ca$, $\cb$, $\cc$ satisfy \eqref{e:abcdk}, $t_\star$ and $t^\star$ are defined as above, and initial values $\mf, \mf_0 > 0$. Then
	\begin{enumerate}[label=(\arabic*)]
		\item There exists $t_\star \in [0, \infty)$ with $t_\star > t_0$.
		\item There exists a constant $t_m \in [t_\star, \infty]$ such that the initial value problem \eqref{e:feq0}--\eqref{e:feq1} has a unique solution $f \in C^2([t_0, t_m))$ satisfying $
			\lim_{\mft \rightarrow t_m} f(\mft) = +\infty$ and $ \lim_{\mft \rightarrow t_m} f_0(\mft) = +\infty$. 
		\item The solution $f$ satisfies the bounds,
		\begin{align*}
			1 + f(\mft) >& \exp\left( \mathtt{C} \mft^{\frac{\ba + \triangle}{2}} + \mathtt{D} \mft^{-1} \right), && \mft \in (t_0, t_m); \\
			1 + f(\mft) <& \left( \mathtt{A} \mft^{\frac{\ba - \triangle}{2}} + \mathtt{B} \mft^{\frac{\ba + \triangle}{2}} + 1 \right)^{-1}, && \mft \in (t_0, t_\star).
		\end{align*}
	\end{enumerate}
	Furthermore, if the initial data satisfy $\mf_0 > \ba(1 + \mf) / (\bc t_0)$, then
	\begin{enumerate}[label=(\arabic*)]
		\setcounter{enumi}{3}
		\item $t_\star$ and $t^\star$ exist and are finite, with $t_0 < t_\star < t^\star < \infty$.
		\item There exists a finite $t_m \in [t_\star, t^\star)$ such that the solution $f \in C^2([t_0, t_m))$ to \eqref{e:feq0}--\eqref{e:feq1} satisfies $
			\lim_{\mft \rightarrow t_m} f(\mft) = +\infty$ and $\lim_{\mft \rightarrow t_m} f_0(\mft) = +\infty$. 
		\item For all $\mft \in (t_0, t_m)$, the lower bound of $f$ can be improved to
		\begin{equation*}\label{e:ipvest}
			(1 + \mf) \left(1 - \mathtt{E} t_0^{\ba} + \mathtt{E} \mft^{\ba} \right)^{1/\bc} < 1 + f(\mft).
		\end{equation*}
	\end{enumerate}
\end{theorem}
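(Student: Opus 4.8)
The plan is to base everything on a single Riccati reduction. Set $g:=1+f$ and $y:=\mft\,g'/g$; inserting this into \eqref{e:feq0} turns the problem into the planar system
\[
	\mft\,y'=\ba y-\bc y^{2}+\cb\,(g-1),\qquad \mft\,(\ln g)'=y,
\]
with $y(t_0)=t_0\mf_0/(1+\mf)>0$ and $g(t_0)=1+\mf>1$, and the Cauchy--Lipschitz theorem gives a unique maximal $C^{2}$ solution on some $[t_0,t_m)$. The first structural fact is \emph{monotonicity}: $g$ increases wherever $y>0$, and at a candidate first zero of $y$ one has $\mft y'=\cb(g-1)>0$, so $y>0$ on all of $[t_0,t_m)$; hence $f'>0$, $f\ge\mf$ and $g\ge1+\mf>1$ throughout. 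Assertion~(1) is separate: with $X(\mft):=\mathtt A\,\mft^{(\ba-\triangle)/2}+\mathtt B\,\mft^{(\ba+\triangle)/2}+1$ one checks $X(t_0)=1/(1+\mf)>0$ while, since $\mathtt B<0$ and $(\ba+\triangle)/2>0$, $X(\mft)\to-\infty$ as $\mft\to\infty$; thus $\mathcal R\ne\varnothing$, $t_\star=\min\mathcal R\in(t_0,\infty)$, and $X>0$ on $[t_0,t_\star)$.

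For the \emph{upper bound} in~(3) I would pass to $v:=1/(1+f)$. A short computation converts \eqref{e:feq0} into
\[
	v''+\tfrac{\ca}{\mft}v'-\tfrac{\cb}{\mft^{2}}(v-1)=(2-\cc)\,\frac{(v')^{2}}{v}\ \ge\ 0 ,
\]
so $v$ is a supersolution of the Euler operator $Lh:=h''+\tfrac{\ca}{\mft}h'-\tfrac{\cb}{\mft^{2}}h$, whose homogeneous solutions are $\mft^{(\ba\pm\triangle)/2}$; the constants $\mathtt A,\mathtt B$ are precisely those making $X$ solve $X''+\tfrac{\ca}{\mft}X'-\tfrac{\cb}{\mft^{2}}(X-1)=0$ with $X(t_0)=v(t_0)$, $X'(t_0)=v'(t_0)$. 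Writing $\psi:=v-X$, so $L\psi=(2-\cc)(v')^{2}/v>0$ strictly (as $v'=-f'/(1+f)^{2}<0$) and $\psi(t_0)=\psi'(t_0)=0$, I would factor $\psi=\mft^{(\ba-\triangle)/2}\chi$; since this factor kills the zeroth-order term, $\chi$ solves a \emph{first-order} linear ODE in $\chi'$ with nonnegative forcing, whence $\chi'\ge0$, $\chi(t_0)=0$ and $\psi>0$ for $\mft>t_0$. This gives $1+f<1/X$ on $(t_0,t_\star)$, and, together with the monotonicity above (which also bounds $y$ and hence $f'$), a priori bounds of $f$ and $f'$ on compact subintervals of $[t_0,t_\star)$; the solution therefore extends past $t_\star$, so $t_m\ge t_\star$, and the continuation principle --- ruling out $f\to-1$ by monotonicity and $f$ bounded with $f'$ unbounded, since a pole of $y$ forces $\int y/s\,ds=+\infty$ and hence $f\to+\infty$ --- shows that both $f$ and $f'$ blow up to $+\infty$ at $t_m$.

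The two \emph{lower bounds} use the same comparison idea in $w:=\ln(1+f)$, which satisfies
\[
	w''+\tfrac{\ca}{\mft}w'-\tfrac{\cb}{\mft^{2}}(e^{w}-1)=(\cc-1)(w')^{2}.
\]
For the general bound in~(3) one checks that $Y(\mft)=\mathtt C\,\mft^{(\ba+\triangle)/2}+\mathtt D\,\mft^{-1}$ is a subsolution of this nonlinear equation: the pure power already makes its left side nonpositive, because $e^{Y}-1\ge Y$ and the $(w')^{2}$ term has a favourable sign, and the $\mathtt D/\mft$ correction is added --- with its sign constrained so as not to destroy this --- in order to match as much of the pair $(w(t_0),w'(t_0))$ as possible, which is where the factor $\max\{(\ba+\triangle)/(2\cb),1\}$ in $\mathtt C$ and the explicit $\mathtt D$ arise; a Nagumo-type comparison with $Y(t_0)\le w(t_0)$ and $Y'(t_0)\le w'(t_0)$ then yields $w\ge Y$. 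For assertions~(4)--(6) the hypothesis $\mf_0>\ba(1+\mf)/(\bc t_0)$ is equivalent to $t_0^{\ba}>\mathtt E^{-1}$, which makes $t^\star$ finite; dropping the nonnegative term $\cb(g-1)$ gives $\mft y'\ge\ba y-\bc y^{2}$, and the explicit solution of $\mft\tilde y'=\ba\tilde y-\bc\tilde y^{2}$, $\tilde y(t_0)=y(t_0)$, is exactly $\tilde y(\mft)=\mathtt E\,\ba\,\mft^{\ba}/(\bc\,P(\mft))$ with $P(\mft):=1-\mathtt E t_0^{\ba}+\mathtt E\mft^{\ba}$ --- the identity that pins down $\mathtt E$. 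A first-touching-point argument gives $y\ge\tilde y$, and integrating $w'=y/\mft$ with $\int\tilde y(s)/s\,ds=\bc^{-1}\ln P$ produces $1+f\ge(1+\mf)\,P^{1/\bc}$; since $\bc<0$ and $P(\mft)\downarrow0$ as $\mft\uparrow t^\star$, this forces $f\to+\infty$ before $t^\star$, and feeding the new lower bound for $g-1$ back into the $y$-equation and comparing with a Riccati ODE whose forcing diverges at $t^\star$ upgrades $t_m\le t^\star$ to $t_m<t^\star$, so that $t_0<t_\star\le t_m<t^\star<\infty$; in particular $t_\star<t^\star$.

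\textbf{Main obstacle.} The genuinely delicate part is the lower bounds: choosing the barriers so that they reproduce the \emph{exact} constants $\mathtt C,\mathtt D,\mathtt E$ (including the case split hidden in the $\max\{\cdot\}$ defining $\mathtt C$) and carrying a comparison principle through the nonlinear terms $e^{w}-1$ and $(w')^{2}$. The continuation bookkeeping behind the blow-up statement in~(2)/(5) and the strict inequality $t_m<t^\star$ are secondary but still require a careful bootstrap rather than a one-line appeal to ODE theory.
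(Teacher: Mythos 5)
First, a point of reference: this paper does not actually prove Theorem~\ref{t:mainthm0} --- it states it and defers entirely to \cite{Liu2022b} (``its proof can be found in [Liu2022b]''), so there is no in-paper argument to compare against line by line. Judged on its own merits, much of your proposal is correct and, I believe, close in spirit to the cited proof: the reduction to $\mft y'=\ba y-\bc y^2+\cb(g-1)$, $\mft(\ln g)'=y$ is right; the positivity of $y$ and hence monotonicity of $f$ is right; part~(1) is right ($X(t_0)=1/(1+\mf)$ and $X\to-\infty$); the upper bound via $v=1/(1+f)$ is complete and correct, since $\mathtt A,\mathtt B$ do match $v(t_0)$ and $v'(t_0)$ exactly and the factorization $\psi=\mft^{(\ba-\triangle)/2}\chi$ closes the comparison; the identification $\tilde y=\mathtt E\,\ba\,\mft^{\ba}/(\bc P)$ with $\int\tilde y/s\,ds=\bc^{-1}\ln P$ is exactly right and yields part~(6) and $t_m\le t^\star$; and the observation that a pole of $y$ forces $\int y/s\,ds=+\infty$ correctly ties the blow-up of $f'$ to that of $f$.

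The genuine gap is the lower bound in part~(3). Your barrier $Y=\mathtt C\,\mft^{(\ba+\triangle)/2}+\mathtt D\,\mft^{-1}$ does not satisfy the hypotheses of the Nagumo comparison you invoke. Writing $\lambda=(\ba+\triangle)/2$, $L=\ln(1+\mf)$, $P=t_0\mf_0/(1+\mf)$: one does get $Y(t_0)\le w(t_0)$ (with equality when $\lambda\le\cb$), but a direct computation gives, in the case $\lambda>\cb$,
\begin{equation*}
	Y'(t_0)-w'(t_0)=\frac{\lambda-\cb}{(1+\lambda)\,t_0}\Bigl(\frac{P}{\cb}-L\Bigr),
\end{equation*}
which is \emph{positive} whenever $\mathtt D<0$ (i.e.\ $L<P/\cb$), so the initial slope ordering fails for perfectly admissible data (large $\mf_0$). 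Moreover $\mft^{-1}$ is not a root of the indicial equation, so $Y''+\tfrac{\ca}{\mft}Y'-\tfrac{\cb}{\mft^2}Y=\mathtt D(2-\ca-\cb)\mft^{-3}$, whose sign is not controlled by the hypotheses; hence $Y$ need not be a subsolution either. So the argument as written does not produce the stated constants. The fix is to avoid a second-order barrier altogether and work with integrated first-order quantities: from $e^{w}-1\ge w$ and $-\bc y^2\ge0$ one gets $\mft y'\ge\ba y+\cb w$, and the combination $\Phi:=y+\tfrac{\triangle-\ba}{2}w$ satisfies $\mft\Phi'\ge\lambda\Phi$ because $\lambda^2-\ba\lambda-\cb=0$; integrating the resulting first-order inequality for $w$ (and then estimating the exponent $(\ba-\triangle)/2$ against $-1$, which is where the factors $\tfrac{2}{2+\ba+\triangle}$, $\tfrac{\ba+\triangle}{2+\ba+\triangle}$ and the $\max\{\tfrac{\ba+\triangle}{2\cb},1\}$ enter) is what produces $\mathtt C$ and $\mathtt D$. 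Separately, your claim $t_m<t^\star$ strictly in part~(5) is only sketched; as written you only obtain $t_\star\le t_m\le t^\star$, and the bootstrap you allude to needs to be carried out.
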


\subsection{Analysis of the Reference Solution} \label{t:refsol}
\subsubsection{Time Transformation Function $\mfg(\mft)$ and Estimates of $f$ and $\partial_{\mft} f(\mft)$}\label{t:ttf}
Define
\begin{align}\label{e:gdef0a}
	\mfg(\mft) := -\exp\left( -A \int^{\mft}_{t_0} \frac{f(s)(f(s) + 1)}{s^2 f_0(s)}  ds \right) < 0,
\end{align}
where the constant $A \in (0, 2\cb/(3 - 2\cc))$.

\begin{lemma}\label{t:f0fg}
	Let $t_1 > t_0$, and let $f \in C^2([t_0, t_1))$ be a solution to the ODE \eqref{e:feq0}--\eqref{e:feq1}. Define $\mfg(\mft)$ by \eqref{e:gdef0a}, and let $f_0(\mft) := \partial_{\mft} f(\mft)$. Then
	\begin{enumerate}[label=(\arabic*)]
		\item For all $\mft \in [t_0, t_1)$, $f_0$ can be expressed as
		\begin{equation}\label{e:f0aa}
			f_0(\mft) = B^{-1} \mft^{-\ca} (-\mfg(\mft))^{-\frac{\cb}{A}} (1 + f(\mft))^{\cc} > 0,
		\end{equation}
		where $B := (1 + \mf)^\cc / (t_0^{\ca} \mf_0) > 0$ is a constant depending on the initial data.
		
		\item If the initial value $\mf > 0$, then $f(\mft) > 0$ for all $\mft \in [t_0, t_1)$.
		
		\item For all $\mft \in [t_0, t_1)$, $\mfg(\mft)$ can be written as
		\begin{equation}\label{e:ctm2a}
			\mfg(\mft) = -\left(1 + \cb B \int^\mft_{t_0} s^{\ca - 2} f(s)(1 + f(s))^{1 - \cc}  ds \right)^{-\frac{A}{\cb}} \in [-1, 0),
		\end{equation}
		and $\mfg(t_0) = -1$.
		
		\item The function $\mfg(t)$ is strictly increasing and invertible on $[t_0, t_1)$.
	\end{enumerate}
\end{lemma}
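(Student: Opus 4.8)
The plan is to prove the four assertions in the order (2), (1), (3), (4), since each uses positivity information secured by the previous one. \textbf{Step 1 (positivity of $f_0$, hence assertion (2)).} First I would show that $f_0(\mft):=\partial_{\mft}f(\mft)>0$ on all of $[t_0,t_1)$ by a continuity argument. Set $\mft_\star:=\sup\{\tau\in[t_0,t_1):f_0>0\text{ on }[t_0,\tau]\}$; this is $>t_0$ because $f_0$ is continuous with $f_0(t_0)=\mf_0>0$. If $\mft_\star<t_1$, then $f_0(\mft_\star)=0$ while $f_0>0$ on $[t_0,\mft_\star)$, so $f$ is strictly increasing there and $f(\mft_\star)\ge\mf>0$. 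Evaluating the ODE \eqref{e:feq0} at $\mft_\star$ and using $f_0(\mft_\star)=0$ gives $f''(\mft_\star)=\tfrac{\cb}{\mft_\star^2}f(\mft_\star)(1+f(\mft_\star))>0$, so $f_0'(\mft_\star)>0$ while $f_0(\mft_\star)=0$, forcing $f_0<0$ immediately to the left of $\mft_\star$ — contradicting $f_0>0$ on $[t_0,\mft_\star)$. Hence $\mft_\star=t_1$, so $f_0>0$ throughout, $f$ is strictly increasing, $f(\mft)\ge\mf>0$ on $[t_0,t_1)$ (which is assertion (2)), $1+f>1>0$ keeps the ODE regular, and $\mfg$ from \eqref{e:gdef0a} is a well-defined negative $C^1$ function with $\mfg(t_0)=-1$.

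\textbf{Step 2 (assertion (1)).} Define $\Phi(\mft):=B^{-1}\mft^{-\ca}(-\mfg(\mft))^{-\cb/A}(1+f(\mft))^{\cc}$, a positive $C^1$ function by Step 1, and show $f_0\equiv\Phi$ by comparing logarithmic derivatives. From \eqref{e:gdef0a} one has $\tfrac{(-\mfg)'}{-\mfg}=-A\,\tfrac{f(f+1)}{\mft^2 f_0}$, whence $(\ln\Phi)'=-\tfrac{\ca}{\mft}+\tfrac{\cb}{\mft^2}\tfrac{f(f+1)}{f_0}+\tfrac{\cc f_0}{1+f}$; dividing \eqref{e:feq0} by $f_0$ gives exactly $\tfrac{f''}{f_0}=(\ln\Phi)'$, so $f_0/\Phi$ is constant. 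At $t_0$ it equals $1$ since $\mfg(t_0)=-1$ and $B=(1+\mf)^{\cc}/(t_0^{\ca}\mf_0)$, so $f_0=\Phi$, which is \eqref{e:f0aa}; positivity is immediate from the formula.

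\textbf{Step 3 (assertions (3) and (4)).} Substituting \eqref{e:f0aa} into $\mfg'=-\mfg\,A\,\tfrac{f(f+1)}{\mft^2 f_0}$ and simplifying (using $\mfg<0$) converts it into the separable identity $\frac{d}{d\mft}\left((-\mfg(\mft))^{-\cb/A}\right)=\cb B\,\mft^{\ca-2}f(\mft)(1+f(\mft))^{1-\cc}$. Integrating from $t_0$ with initial value $(-\mfg(t_0))^{-\cb/A}=1$ yields $(-\mfg(\mft))^{-\cb/A}=1+\cb B\int_{t_0}^{\mft}s^{\ca-2}f(s)(1+f(s))^{1-\cc}\,ds$, i.e.\ \eqref{e:ctm2a}; since the integrand is positive the right side is $\ge1$, so $\mfg\in[-1,0)$ with $\mfg(t_0)=-1$. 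Finally $\mfg'=-\mfg\,A\,\tfrac{f(f+1)}{\mft^2 f_0}>0$ on $[t_0,t_1)$ by Step 1, so $\mfg$ is strictly increasing, hence injective and invertible onto its image, giving assertion (4).

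\textbf{Expected main difficulty.} None of the computations is long; the point requiring genuine care is the \emph{logical ordering}. The representation \eqref{e:f0aa} expresses $f_0$ through $\mfg$, whereas $\mfg$ is itself \emph{defined} through $f_0$ in \eqref{e:gdef0a}, so the strict positivity $f_0>0$ — which is what makes \eqref{e:gdef0a} meaningful and legitimizes the fractional powers $(-\mfg)^{-\cb/A}$ and $(1+f)^{\cc}$ — must be extracted directly from the differential equation, via the sign argument of Step 1, before any of the explicit identities can be invoked.
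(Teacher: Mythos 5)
Your proof is correct, and your identification of the key logical subtlety — that $\mfg$ is defined via $1/f_0$, so strict positivity of $f_0$ must be extracted directly from the ODE before \eqref{e:gdef0a} can even be invoked — is exactly the point that matters. The continuity argument in Step 1 is sound: at a hypothetical first zero $\mft_\star$ of $f_0$, the damping and Riccati terms drop out and the ODE forces $f''(\mft_\star) = \frac{\cb}{\mft_\star^2}f(\mft_\star)(1+f(\mft_\star))>0$, so $f_0$ would have been negative immediately to the left, a contradiction. Step 2's comparison of $(\ln f_0)' = f''/f_0$ with $(\ln\Phi)'$ is a clean way to see \eqref{e:f0aa}; the normalization at $t_0$ is where the specific choice of $B$ is used. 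Step 3's separable integration and the strict monotonicity of $\mfg$ follow immediately from Steps 1--2. For the record, the paper itself does not supply a proof of this lemma — it is stated as quoted from \cite{Liu2022b,Liu2023} in Appendix~\ref{s:ODE0} — so there is no internal argument to compare against, but your derivation is the natural one and has no gaps.
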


\begin{remark}\label{t:dtg1}
	Two important identities from \cite{Liu2022b} are
	\begin{align*}
		\partial_{\mft} \mfg(\mft) =& A B (-\mfg(\mft))^{\frac{\cb}{A} + 1} \mft^{\ca - 2} f(\mft) (1 + f(\mft))^{1 - \cc}, \\
		\partial_{\mft} (-\mfg(\mft))^{-\frac{\cb}{A}} =& \frac{\cb}{A} (-\mfg(\mft))^{-\frac{\cb}{A} - 1} \partial_{\mft} \mfg(\mft) = \cb B \mft^{\ca - 2} f(\mft) (1 + f(\mft))^{1 - \cc}.
	\end{align*}
\end{remark}

\subsubsection{Estimates of $\chi(\mft)$ and $\xi(\mft)$}
The quantities $\chi(\mft)$ and $\xi(\mft)$ are frequently used in the analysis of Chapter \ref{cha:thirdsection}. This subsection provides relevant estimates for these important functions; see \cite{Liu2022b} for details. Define
\begin{equation}\label{e:Gdef0}
	\chi(\mft) := \frac{\mft^{2 - \ca} f_0(\mft)}{(1 + f(\mft))^{2 - \cc} f(\mft) (-\mfg)^{\frac{\cb}{A}}(\mft)} \overset{\eqref{e:f0aa}}{=} \frac{(-\mfg)^{-\frac{2\cb}{A}}(\mft) \mft^{2(1 - \ca)}}{B f(\mft) (1 + f(\mft))^{2(1 - \cc)}} > 0.
\end{equation}

\begin{lemma}\label{t:gmap}
	Let $\mfg(\mft)$ be defined by \eqref{e:gdef0a}, $\cc \in (1, 3/2)$, and let $f \in C^2([t_0, t_m))$ be a solution to the ODE \eqref{e:feq0}--\eqref{e:feq1}, where $[t_0, t_m)$ is the maximal interval of existence from Theorem \ref{t:mainthm0}. Then 
$
	\lim_{\mft \rightarrow t_m} \mfg(\mft) = 0
$. 
\end{lemma}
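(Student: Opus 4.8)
\textbf{Proof proposal for Lemma~\ref{t:gmap}.}

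The statement to be proved is that $\lim_{\mft\to t_m}\mfg(\mft)=0$, where $\mfg$ is the time-transformation function defined in \eqref{e:gdef0a}, and $[t_0,t_m)$ is the maximal interval of existence for the reference ODE solution $f$ given by Theorem~\ref{t:mainthm0}. The plan is to combine the explicit representation \eqref{e:ctm2a} of $\mfg$ with the blow-up behavior $\lim_{\mft\to t_m}f(\mft)=+\infty$ from Theorem~\ref{t:mainthm0}.\ref{l:2.2}, and to split into the two cases $t_m<\infty$ and $t_m=\infty$.

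First I would invoke \eqref{e:ctm2a}, which gives
\begin{equation*}
	\mfg(\mft) = -\Bigl(1+\cb B\int_{t_0}^{\mft} s^{\ca-2}f(s)\bigl(1+f(s)\bigr)^{1-\cc}\,ds\Bigr)^{-\frac{A}{\cb}} \in [-1,0),
\end{equation*}
and note that since $A,\cb,B>0$ and the integrand is positive (by $f>0$ from Lemma~\ref{t:f0fg}), the function $I(\mft):=\int_{t_0}^{\mft}s^{\ca-2}f(s)(1+f(s))^{1-\cc}\,ds$ is strictly increasing, so it suffices to show $I(\mft)\to+\infty$ as $\mft\to t_m$; then $\mfg(\mft)=-(1+\cb B\,I(\mft))^{-A/\cb}\to 0$. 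The key point is that although $\cc\in(1,\tfrac32)$ makes the exponent $1-\cc<0$, so $(1+f)^{1-\cc}\to 0$ as $f\to\infty$, the decay is only polynomial of order $\cc-1<\tfrac12$, which is too weak to make the integral converge near the blow-up. To make this precise I would use the growth bounds on $f$ from Theorem~\ref{t:mainthm0}.\ref{l:2.3}, or more directly the ODE identity: rewriting \eqref{e:feq0} and using \eqref{e:f0aa}, one has $f_0(\mft)=B^{-1}\mft^{-\ca}(-\mfg(\mft))^{-\cb/A}(1+f(\mft))^{\cc}$, so the derivative of $(-\mfg)^{-\cb/A}$ is $\cb B\,\mft^{\ca-2}f(1+f)^{1-\cc}$ (Remark~\ref{t:dtg1}). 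Hence $(-\mfg(\mft))^{-\cb/A}=1+\cb B\,I(\mft)$, and establishing $\mfg\to 0$ is equivalent to $(-\mfg)^{-\cb/A}\to+\infty$, i.e. to $I(\mft)\to+\infty$. In the case $t_m=\infty$, since $f$ is increasing and positive, $f(\mft)\ge\mf>0$, so $I(\mft)\ge \mf(1+\mf)^{1-\cc}\int_{t_0}^{\mft}s^{\ca-2}\,ds$; because $\ca>1$, the exponent $\ca-2>-1$, so this integral diverges as $\mft\to\infty$, giving the result. In the case $t_m<\infty$, I would use the lower bound $1+f(\mft)>\exp(\mathtt{C}\mft^{(\ba+\triangle)/2}+\mathtt{D}\mft^{-1})$ together with the fact that $f_0(\mft)\to+\infty$; a clean way is to bound $I(\mft)$ below by $\int_{t_0}^{\mft}s^{\ca-2}f(s)(1+f(s))^{1-\cc}\,ds$ and observe that on a neighbourhood of $t_m$ the factor $s^{\ca-2}$ is bounded below by a positive constant, while $f(1+f)^{1-\cc}\sim f^{2-\cc}\to+\infty$ since $2-\cc>\tfrac12>0$; integrating a quantity that tends to $+\infty$ over an interval of positive length, combined with monotonicity of $I$, forces $I(\mft)\to\sup_{[t_0,t_m)}I$, and I would rule out this supremum being finite by a contradiction argument: if $I$ stayed bounded, then $(-\mfg)^{-\cb/A}$ would stay bounded, hence $\mfg$ would stay bounded away from $0$, and then from \eqref{e:f0aa} and the finiteness of $\mft\in[t_0,t_m)$ one would deduce $f_0(\mft)\le C(1+f(\mft))^{\cc}$ with $\cc<\tfrac32$; a standard comparison/ODE-inequality argument then shows $f$ cannot blow up in finite time, contradicting $\lim_{\mft\to t_m}f(\mft)=+\infty$.

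The main obstacle I anticipate is the finite blow-up case $t_m<\infty$: one must rule out the scenario in which $f$ blows up so fast that, despite the divergence of $f^{2-\cc}$, the integral $I$ could conceivably still converge — but this cannot happen because the integrand tends to $+\infty$, and an integrand bounded below by a divergent constant near $t_m$ cannot have a convergent integral over $[t_0,t_m)$ unless the interval has zero length. So the careful point is just to phrase the elementary observation ``$\int_{t_0}^{t_m}$ of a function tending to $+\infty$ over an interval of positive length is $+\infty$'' rigorously; everything else is bookkeeping with the representation \eqref{e:ctm2a} and the blow-up statement of Theorem~\ref{t:mainthm0}. Since Lemma~\ref{t:gmap} is stated in the excerpt without its proof being required in detail (the paper defers to \cite{Liu2022b}), I would keep the write-up short, presenting the equivalence $\mfg(\mft)\to 0\iff I(\mft)\to+\infty$ and then dispatching the two cases as above.
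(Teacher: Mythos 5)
The paper itself defers the proof of Lemma~\ref{t:gmap} to \cite{Liu2022b}, so there is no in-text proof to compare against; judged on its own terms, your reduction of the claim to the divergence of $I(\mft)=\int_{t_0}^{\mft}s^{\ca-2}f(s)(1+f(s))^{1-\cc}\,ds$ and your treatment of the case $t_m=\infty$ are correct. The case $t_m<\infty$, however, contains a genuine gap — in fact two. First, the principle you ultimately lean on ("an integrand tending to $+\infty$ near $t_m$ cannot have a convergent integral over an interval of positive length") is false: $\int_0^1(1-s)^{-1/2}\,ds$ converges. Whether $I$ diverges is a quantitative question about the blow-up rate of $f$; if $f(s)\sim(t_m-s)^{-1/(\cc-1)}$ (the rate dictated by the Riccati term), the integrand behaves like $(t_m-s)^{-(2-\cc)/(\cc-1)}$, and $(2-\cc)/(\cc-1)>1$ precisely because $\cc<3/2$. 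This is exactly where the hypothesis $\cc\in(1,3/2)$ must enter, and your argument for this case never uses it. Second, your fallback contradiction is also invalid: from $I$ bounded you correctly get $-\mfg\ge\epsilon_0>0$ and hence $f_0\le C(1+f)^{\cc}$ via \eqref{e:f0aa}, but since $\cc>1$ the comparison ODE $y'=C(1+y)^{\cc}$ itself blows up in finite time, so this differential inequality does not show that $f$ ``cannot blow up in finite time.''

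The contradiction can be closed correctly as follows. Assume $-\mfg\ge\epsilon_0$ on $[t_0,t_m)$ with $t_m<\infty$, so that $f_0\le C(1+f)^{\cc}$ by \eqref{e:f0aa}. Since $f_0>0$, $f$ is a strictly increasing bijection from $[t_0,t_m)$ onto $[\mf,\infty)$, and substituting $ds=df/f_0$,
\[
I(\mft)=\int_{\mf}^{f(\mft)}\frac{s(f)^{\ca-2}\,f\,(1+f)^{1-\cc}}{f_0(s(f))}\,df
\;\ge\;\frac{\min\{t_0^{\ca-2},\,t_m^{\ca-2}\}}{C}\int_{\mf}^{f(\mft)}f\,(1+f)^{1-2\cc}\,df\;\longrightarrow\;+\infty,
\]
because the last integrand is $\sim f^{2-2\cc}$ and $2-2\cc>-1$ precisely when $\cc<3/2$. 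Hence $(-\mfg)^{-\cb/A}=1+\cb B\,I\to\infty$, i.e.\ $\mfg\to0$, contradicting $-\mfg\ge\epsilon_0$. With this replacement the finite-$t_m$ case closes and the rest of your bookkeeping is fine.
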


\begin{remark}
	By the above lemma, $\mfg(\mft)$ can be continuously extended to $[t_0, t_m]$ by setting $\mfg(t_m) := \lim_{\mft \rightarrow t_m} \mfg(\mft) = 0$, so that $\mfg^{-1}(0) = t_m$.
\end{remark}

\begin{proposition}\label{t:limG}
	Assume $\cc \in (1, 3/2)$, $\cb > 0$, $\ca > 1$, and let $\chi$ be defined by \eqref{e:Gdef0}. If $f \in C^2([t_0, t_m))$ is a solution to the ODE \eqref{e:feq0}--\eqref{e:feq1}, then there exists a function $\mathfrak{G} \in C^1([t_0, t_m))$ such that for all $\mft \in [t_0, t_m)$,
	\begin{equation}\label{e:limG}
		\chi(\mft) = \frac{2\cb B}{3 - 2\cc} + \mathfrak{G}(\mft),
	\end{equation}
	and $\lim_{\mft \rightarrow t_m} \mathfrak{G}(\mft) = 0$. Moreover, by setting $\chi(t_m) := 2\cb B/(3 - 2\cc)$ and $\mathfrak{G}(t_m) := 0$, there exists a constant $C_\chi > 0$ such that $0 < \chi(\mft) \leq C_\chi$ for all $\mft \in [t_0, t_m)$, and both $\chi$ and $\mathfrak{G}$ can be continuously extended to $[t_0, t_m]$, i.e., $\chi \in C^0([t_0, t_m])$ and $\mathfrak{G} \in C^0([t_0, t_m])$.
\end{proposition}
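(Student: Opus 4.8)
The plan is to reduce the proposition to the single analytic fact that $\chi(\mft)\to\chi_\infty:=\tfrac{2\cb B}{3-2\cc}$ as $\mft\to t_m$, and to extract that from a first-order ODE for $\chi$ whose leading term is a strong restoring force pulling $\chi$ toward the constant $\chi_\infty$. First I would note that $\mathfrak{G}:=\chi-\chi_\infty\in C^1([t_0,t_m))$ and $\chi>0$ there are both immediate: by Lemma~\ref{t:f0fg}, $f$ and $1+f$ are positive $C^1$ functions, $f\in C^2$, and $-\mfg\in(0,1]$ is $C^1$, so the defining formula \eqref{e:Gdef0} is a $C^1$ positive expression. Hence the only real content is (i) the limit $\mathfrak{G}(\mft)\to 0$, which then yields the continuous extensions $\chi(t_m):=\chi_\infty$, $\mathfrak G(t_m):=0$, and (ii) an upper bound $\chi\le C_\chi$; a positive lower bound is then automatic, since $\chi$ is continuous and positive on the compact interval $[t_0,t_m]$.

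\emph{Step 1: the governing ODE.} I would logarithmically differentiate \eqref{e:Gdef0}, substitute $f_0=B^{-1}\mft^{-\ca}(-\mfg)^{-\cb/A}(1+f)^{\cc}$ from \eqref{e:f0aa} and the expressions for $\partial_\mft\mfg$ and $\partial_\mft(-\mfg)^{-\cb/A}$ from Remark~\ref{t:dtg1}, and then re-use \eqref{e:Gdef0}--\eqref{e:f0aa} to eliminate $\mft$, $(-\mfg)$ and $f_0$ in favour of $\chi$ and $f$. After this elementary algebra one arrives at
\begin{equation*}
	\mft\,\chi'(\mft)\;=\;\frac{3-2\cc}{\sqrt{B}}\,\sqrt{f(\mft)\chi(\mft)}\,\bigl(\chi_\infty-\chi(\mft)\bigr)\;-\;\chi(\mft)\Bigl(2\ca-2+\sqrt{\tfrac{\chi(\mft)}{B f(\mft)}}\Bigr).
\end{equation*}
The structure is what matters: since $\cc<\tfrac32$, the coefficient $\kappa(\mft):=\tfrac{3-2\cc}{\sqrt B}\sqrt{f\chi}>0$ multiplies $(\chi_\infty-\chi)$; since $\ca>1$ and $f\ge\beta$, the remaining term is a negative, bounded perturbation once $\chi$ is bounded; and by Theorem~\ref{t:mainthm0} together with $f'=f_0>0$, the function $f$ is increasing with $f(\mft)\to+\infty$, so $\kappa(\mft)\to+\infty$ once $\chi$ is bounded away from $0$.

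\emph{Step 2: a priori bounds, then convergence.} For the upper bound I would run a barrier argument: if $\chi$ were unbounded, there would be a sequence $\mft_k\to t_m$ of running maxima with $\chi'(\mft_k)\ge 0$, yet evaluating the ODE there gives $\mft_k\chi'(\mft_k)=\kappa(\mft_k)(\chi_\infty-\chi(\mft_k))-\chi(\mft_k)(\cdots)<0$ for large $k$ — a contradiction; hence $\chi\le C_\chi$ and the perturbation term is bounded. A symmetric argument at running minima shows $\chi$ cannot approach $0$ (when $\chi$ is small and $f$ large the restoring term $\kappa(\mft)(\chi_\infty-\chi)$ dominates the bounded term, forcing $\chi'>0$), so $\chi$ is bounded below by a positive constant. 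For the limit, suppose $\limsup_{\mft\to t_m}\chi>\chi_\infty$: if $\chi$ converges to a limit $L>\chi_\infty$, then near $t_m$ one has $\chi_\infty-\chi\le-\tfrac12(L-\chi_\infty)<0$ and $\kappa(\mft)\to\infty$, so $\mft\chi'\to-\infty$ and $\chi$ decreases without bound, contradicting $\chi>0$; if instead $\chi$ oscillates, there are local maxima $\mft_k\to t_m$ with $\chi(\mft_k)\ge\chi_\infty+\delta$, and the ODE at these points (where $\chi'=0$) again has right-hand side $\to-\infty$. Thus $\limsup_{\mft\to t_m}\chi\le\chi_\infty$; the mirror argument at minima gives $\liminf_{\mft\to t_m}\chi\ge\chi_\infty$, so $\chi(\mft)\to\chi_\infty$, i.e. $\mathfrak G(\mft)\to 0$. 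Setting $\chi(t_m):=\chi_\infty$, $\mathfrak G(t_m):=0$ gives $\chi,\mathfrak G\in C^0([t_0,t_m])$, and compactness yields $0<\min_{[t_0,t_m]}\chi\le\chi\le C_\chi:=\max_{[t_0,t_m]}\chi$.

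The hard part is the convergence step: ruling out sustained oscillation of $\chi$ above or below $\chi_\infty$, which is exactly where it is essential that $f\to\infty$ so that the restoring coefficient $\kappa(\mft)$ blows up and overwhelms the bounded perturbation; the bookkeeping must also carefully distinguish whether an extremum of $\chi$ is attained in the interior, only as a running extremum, or only in the limit $\mft\to t_m$. An equivalent and perhaps cleaner route is to change variables to the compactified time $\tau=\mfg(\mft)\in[-1,0)$, in which the ODE becomes the Fuchsian form $-A\tau\,\partial_\tau\underline{\chi_\uparrow}=\tfrac{3-2\cc}{B}\,\underline{\chi_\uparrow}(\chi_\infty-\underline{\chi_\uparrow})+O(\uf^{-1/2})$ with a singularity at $\tau=0$; there the linearization of the leading term at $\underline{\chi_\uparrow}=\chi_\infty$ has coefficient $-2\cb<0$ against $A>0$, so $\chi_\infty$ is an attracting fixed point as $\tau\to 0^-$ and all bounded trajectories converge to it.
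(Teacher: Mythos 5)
Your Step 1 is exactly right: logarithmic differentiation of \eqref{e:Gdef0} and substitution of \eqref{e:f0aa} and Remark~\ref{t:dtg1} recovers precisely the ODE stated by the paper as Lemma~\ref{t:dtchi}, namely $\mft\chi' = \tfrac{3-2\cc}{\sqrt B}\sqrt{f\chi}\,(\chi_\infty-\chi) - \chi\bigl(2\ca-2+\sqrt{\chi/(Bf)}\bigr)$. The paper itself defers the proof of Proposition~\ref{t:limG} to \cite{Liu2022b}, but the auxiliary facts it records (Lemma~\ref{t:dtchi} and Corollary~\ref{t:dtchi2}, asserting that $|\mathfrak{G}|$ is monotone decreasing) suggest the reference argues via a global monotone Lyapunov quantity, whereas you run barrier arguments at running extrema and then a convergence-versus-oscillation dichotomy using $\kappa=\tfrac{3-2\cc}{\sqrt B}\sqrt{f\chi}\to\infty$. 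Both routes ultimately rest on the same ODE; yours is a bit more robust in that it never needs to show $|\mathfrak G|$ is monotone (which is not obvious on the side $\mathfrak G<0$, where two of the three terms in $\partial_\mft\mathfrak G$ still point downward), and it gives the upper bound $\chi\le\max\{\chi(t_0),\chi_\infty\}$ for free.

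Two points deserve tightening. First, in the ``$\chi\to L>\chi_\infty$'' branch you conclude ``$\mft\chi'\to-\infty$ and $\chi$ decreases without bound,'' but $\mft\chi'\to-\infty$ only gives $\chi'\to-\infty$ when $t_m<\infty$, and Theorem~\ref{t:mainthm0} allows $t_m=\infty$. The fix is to integrate: $\chi(\mft)-\chi(\mft_1)\le -\tfrac{L-\chi_\infty}{4}\int_{\mft_1}^{\mft}\kappa(s)\,s^{-1}\,ds$, and the identity $\tfrac{\mft f_0}{1+f}=\tfrac{\chi^{1/2}f^{1/2}}{B^{1/2}}$ (the ungraded form of \eqref{e:keyid3}) gives $\int\kappa(s)s^{-1}ds \gtrsim \int \tfrac{f_0}{1+f}\,ds = \ln(1+f)\Big|\to\infty$, producing the contradiction in both the finite- and infinite-$t_m$ cases. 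Second, the lower barrier needs the observation that $\chi$ is bounded below on every compact $[t_0,T]$ by continuity, so a sequence with $\chi\to0$ must escape to $t_m$, where $f\to\infty$ makes $\chi/f\to0$ and the restoring term dominates; and the assertion ``$\liminf\chi\ge\chi_\infty$'' should be run via a level-set crossing argument (for $\mft>T$, the level $\{\chi=\chi_\infty-\delta\}$ has $\chi'>0$, so it cannot be crossed downward, while remaining trapped below it forces $\chi\uparrow L'<\chi_\infty$ and then the same divergent integral gives a contradiction). With those refinements the proof is complete, and your concluding remark about the Fuchsian reformulation $-A\tau\,\partial_\tau\underline{\chi_\uparrow}=\tfrac{3-2\cc}{B}\underline{\chi_\uparrow}(\chi_\infty-\underline{\chi_\uparrow})+\mathcal O(\uf^{-1/2})$ with linearized coefficient $-2\cb$ is also correct and gives a clean alternative phrasing of why $\chi_\infty$ attracts.
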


Define $\xi(\mft)$ as follows:
\begin{equation}\label{e:xidef}
	\xi(\mft) := \frac{1}{-\mfg(\mft) (1 + f(\mft))}.
\end{equation}
The following proposition establishes the boundedness of $\xi$ and shows that $\xi$ tends to zero as $\mft$ approaches $t_m$.

\begin{proposition}\label{t:fginv0}
	Let $f \in C^2([t_0, t_m))$ be a solution to the ODE \eqref{e:feq0}--\eqref{e:feq1}, where $[t_0, t_m)$ is the maximal interval of existence from Theorem \ref{t:mainthm0}. Let $\mfg(t)$ and $\xi(\mft)$ be defined by \eqref{e:gdef0a} and \eqref{e:xidef}, respectively. Then $\xi \in C^1([t_0, t_m))$ and
$
	\lim_{\mft \rightarrow t_m} \xi(\mft) = 0$. 
	Furthermore, there exists a constant $C_\star > 0$ such that $0 < \xi(\mft) \leq C_\star$ for all $\mft \in [t_0, t_m)$. By setting $\chi(t_m) := 2\cb B/(3 - 2\cc)$ and $\xi(t_m) := 0$, the function $\xi$ can be continuously extended to $[t_0, t_m]$, i.e., $\xi \in C^0([t_0, t_m])$.
\end{proposition}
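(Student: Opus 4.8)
The plan is to reduce the statement to a single explicit identity for $\xi$ in terms of $\chi$, $f$ and $\mft$, and then read off the behaviour near $t_m$ from the growth estimates for $f$ already recorded in Theorem~\ref{t:mainthm0}. First I would dispose of the regularity: by Lemma~\ref{t:f0fg} we have $f\in C^2([t_0,t_m))$ with $f>0$ and $\mfg\in C^1([t_0,t_m))$ with $-\mfg(\mft)>0$, so $\xi=1/\bigl(-\mfg(1+f)\bigr)$ is a quotient of $C^1$ functions whose denominator never vanishes; hence $\xi\in C^1([t_0,t_m))$, and it is strictly positive on $[t_0,t_m)$.

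Next I would derive the key identity. Rearranging the definition~\eqref{e:Gdef0} of $\chi$ gives $(-\mfg)^{2\cb/A}=\mft^{2(1-\ca)}/\bigl(B\chi\,f\,(1+f)^{2(1-\cc)}\bigr)$, and substituting into $\xi^{-1}=-\mfg\,(1+f)$ produces
\[
\xi(\mft)^{-1}=\frac{\mft^{(1-\ca)A/\cb}}{\bigl(B\chi(\mft)\bigr)^{A/(2\cb)}}\cdot\frac{(1+f(\mft))^{\,1-(1-\cc)A/\cb}}{f(\mft)^{A/(2\cb)}}.
\]
The decisive point is that the total power of $(1+f)$ in this expression, for $f$ large, is $1-(1-\cc)\tfrac{A}{\cb}-\tfrac{A}{2\cb}=1-\tfrac{A}{2\cb}(3-2\cc)$, which is strictly positive because $A\in\bigl(0,\tfrac{2\cb}{3-2\cc}\bigr)$ is the standing assumption on $A$ in \eqref{e:gdef0a}. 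Using $0<f\le 1+f$ and Proposition~\ref{t:limG} (which gives $0<\chi\le C_\chi$) we may therefore bound $\xi^{-1}\ge c\,\mft^{(1-\ca)A/\cb}\,(1+f)^{\,1-\frac{A}{2\cb}(3-2\cc)}$ for some constant $c>0$.

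I would then prove $\lim_{\mft\to t_m}\xi(\mft)=0$ by showing $\xi^{-1}\to+\infty$, splitting into two cases. If $t_m<\infty$, then $\mft$ stays in a compact subinterval of $(0,\infty)$, so $\mft^{(1-\ca)A/\cb}$ is bounded below by a positive constant, while $1+f(\mft)\to+\infty$ by Theorem~\ref{t:mainthm0}(2); the lower bound for $\xi^{-1}$ then forces $\xi^{-1}\to+\infty$. If $t_m=\infty$, then $\mft^{(1-\ca)A/\cb}\to0$ since $\ca>1$, and the argument requires the quantitative lower bound $1+f(\mft)>\exp\bigl(\mathtt{C}\mft^{(\ba+\triangle)/2}+\mathtt{D}/\mft\bigr)$ of Theorem~\ref{t:mainthm0}(3), together with $\ba+\triangle=(1-\ca)+\sqrt{(1-\ca)^2+4\cb}>0$ (valid since $\cb>0$) and $\mathtt{C}>0$: this shows $1+f$ grows faster than every power of $\mft$, which more than overcomes the polynomial decay of $\mft^{(1-\ca)A/\cb}$, so again $\xi^{-1}\to+\infty$. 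Finally, boundedness is a compactness argument: $\xi\in C^1([t_0,t_m))$ with $\xi(t_0)=1/(1+\beta)$ (as $\mfg(t_0)=-1$) and $\lim_{\mft\to t_m}\xi=0$, so the extension $\xi(t_m):=0$ lies in $C^0([t_0,t_m])$ on a compact interval; hence $C_\star:=\sup_{[t_0,t_m]}\xi<\infty$ and $0<\xi(\mft)\le C_\star$ throughout $[t_0,t_m)$, which also yields the claimed continuous extension.

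The step I expect to be the main obstacle is the case $t_m=\infty$: there the prefactor $\mft^{(1-\ca)A/\cb}$ decays, so the conclusion genuinely relies on the sharp growth estimate of Theorem~\ref{t:mainthm0}(3) and on the sign information $\ba+\triangle>0$; once the identity for $\xi^{-1}$ and the positivity of the exponent $1-\tfrac{A}{2\cb}(3-2\cc)$ are established, everything else is routine bookkeeping.
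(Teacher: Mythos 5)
Your proof is correct, and it is self-contained: the identity $\xi^{-1}=(B\chi)^{-A/(2\cb)}\,\mft^{(1-\ca)A/\cb}\,(1+f)^{1-(1-\cc)A/\cb}f^{-A/(2\cb)}$ follows from \eqref{e:Gdef0}, the exponent bookkeeping $1-\tfrac{A}{2\cb}(3-2\cc)>0$ is exactly where the hypothesis $A<\tfrac{2\cb}{3-2\cc}$ enters, and the two cases $t_m<\infty$ (where $1+f\to\infty$ suffices) and $t_m=\infty$ (where the super-polynomial lower bound of Theorem~\ref{t:mainthm0}(3) with $\ba+\triangle>0$ beats the decay of $\mft^{(1-\ca)A/\cb}$) cover everything. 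The paper itself gives no proof of this proposition, deferring to \cite{Liu2022b}, so a line-by-line comparison is not possible; but your route uses only ingredients already recorded in the appendices (Lemma~\ref{t:f0fg}, Theorem~\ref{t:mainthm0}, Proposition~\ref{t:limG}) and is the natural one given that $\chi$ packages precisely the combination of $-\mfg$, $f$ and $\mft$ that appears in $\xi$. One cosmetic point: in the final compactness step, when $t_m=\infty$ the interval $[t_0,t_m]$ is not compact in the literal sense; you should phrase the boundedness as ``a continuous positive function on $[t_0,t_m)$ with limit $0$ at $t_m$ is bounded,'' which is what your argument actually delivers in both cases.
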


The next lemma gives an expression for $\partial_{\mft} \chi$; see \cite{Liu2023} for details.

\begin{lemma}\label{t:dtchi}
	If $\chi(\mft)$ and $\mathfrak{G}$ are defined by \eqref{e:Gdef0} and \eqref{e:limG}, respectively, then
	\begin{equation*}
		\partial_{\mft} \chi = \partial_{\mft} \mathfrak{G} = -\frac{(3 - 2\cc) \mathfrak{G} f^{\frac{1}{2}} \chi^{\frac{1}{2}}}{B^{\frac{1}{2}} \mft} - \frac{\chi^{\frac{3}{2}}}{B^{\frac{1}{2}} \mft f^{\frac{1}{2}}} + 2(1 - \ca) \frac{\chi}{\mft}.
	\end{equation*}
\end{lemma}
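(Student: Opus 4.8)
The plan is to obtain the identity by straightforward logarithmic differentiation of the second form of $\chi$ in \eqref{e:Gdef0},
\[
\chi = \frac{(-\mfg)^{-\frac{2\cb}{A}}\,\mft^{2(1-\ca)}}{B\,f\,(1+f)^{2(1-\cc)}},
\]
followed by a substitution that re-expresses every term through $\chi$, $f$, $\mft$ and $\mathfrak{G}$ only. Differentiating $\log\chi$ produces four contributions, coming from $(-\mfg)^{-2\cb/A}$, from $\mft^{2(1-\ca)}$, from $f$, and from $(1+f)^{2(1-\cc)}$. The middle one is immediate and already gives the term $2(1-\ca)\chi/\mft$ of the claim, so the real work concentrates on the remaining three.

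First I would evaluate the $\mfg$-contribution using the identity $\partial_{\mft}(-\mfg)^{-\cb/A}=\cb B\,\mft^{\ca-2}f(1+f)^{1-\cc}$ from Remark~\ref{t:dtg1}, which yields $\partial_{\mft}(-\mfg)^{-2\cb/A}\big/(-\mfg)^{-2\cb/A}=2\cb B(-\mfg)^{\cb/A}\mft^{\ca-2}f(1+f)^{1-\cc}$, and I would replace $\partial_{\mft}f=f_0$ in the $f$- and $(1+f)$-contributions by the explicit formula \eqref{e:f0aa}. At this stage everything is written in terms of $(-\mfg)$, $f$ and $\mft$, so the key normalization step is to eliminate $(-\mfg)^{\cb/A}$. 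Comparing the two expressions for $\chi$ in \eqref{e:Gdef0} (equivalently, invoking \eqref{e:keyid3}) gives $(-\mfg)^{\cb/A}=B^{-1/2}\chi^{-1/2}\mft^{1-\ca}f^{-1/2}(1+f)^{\cc-1}$, hence also $f_0=B^{-1/2}\chi^{1/2}f^{1/2}(1+f)\,\mft^{-1}$. Substituting these and multiplying through by $\chi$ turns the three contributions into
\[
\frac{(3-2\cc)\chi^{3/2}f^{1/2}}{B^{1/2}\mft},\qquad -\frac{\chi^{3/2}(1+f)}{B^{1/2}\mft f^{1/2}},\qquad \frac{(2\cc-2)\chi^{3/2}f^{1/2}}{B^{1/2}\mft},
\]
the first of which I have not yet split into its $\chi$- and $\mathfrak{G}$-parts.

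The decisive step is then purely algebraic: writing $(1+f)/f^{1/2}=f^{1/2}+f^{-1/2}$ in the second term and using \eqref{e:limG} in the form $2\cb B=(3-2\cc)(\chi-\mathfrak{G})$ to peel $-\tfrac{(3-2\cc)\mathfrak{G}\chi^{1/2}f^{1/2}}{B^{1/2}\mft}$ off the first term, the coefficients of the common monomial $\chi^{3/2}f^{1/2}/(B^{1/2}\mft)$ add up to $(3-2\cc)-1+(2\cc-2)=0$, so all such terms cancel. What survives is exactly $-\tfrac{(3-2\cc)\mathfrak{G}\chi^{1/2}f^{1/2}}{B^{1/2}\mft}-\tfrac{\chi^{3/2}}{B^{1/2}\mft f^{1/2}}+\tfrac{2(1-\ca)\chi}{\mft}$, i.e.\ the asserted formula for $\partial_{\mft}\chi$; and since $\tfrac{2\cb B}{3-2\cc}$ is constant, \eqref{e:limG} also gives $\partial_{\mft}\chi=\partial_{\mft}\mathfrak{G}$. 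The only place demanding care is the bookkeeping of the fractional exponents of $(-\mfg)$ and $(1+f)$ and the choice of the right combination—effectively \eqref{e:keyid3}—to land on the $\sqrt{\chi}/\sqrt{f}$ normalization; once that substitution is in place, the cancellation is a one-line check.
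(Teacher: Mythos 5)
Your argument is correct and complete: logarithmic differentiation of the second expression for $\chi$ in \eqref{e:Gdef0}, elimination of $(-\mfg)^{\cb/A}$ and $f_0$ via \eqref{e:f0aa} and the relation $(-\mfg)^{\cb/A}=B^{-1/2}\chi^{-1/2}\mft^{1-\ca}f^{-1/2}(1+f)^{\cc-1}$ (i.e.\ \eqref{e:keyid3}), and the split $2\cb B=(3-2\cc)(\chi-\mathfrak{G})$ do produce the cancellation $(3-2\cc)-1+(2\cc-2)=0$ and hence the stated identity; the paper itself omits the proof (deferring to the cited reference), and this direct computation is exactly the intended route. One presentational slip: in your display of the "three contributions" the first term should read $\frac{2\cb B\,\chi^{1/2}f^{1/2}}{B^{1/2}\mft}=\frac{(3-2\cc)(\chi-\mathfrak{G})\chi^{1/2}f^{1/2}}{B^{1/2}\mft}$ rather than $\frac{(3-2\cc)\chi^{3/2}f^{1/2}}{B^{1/2}\mft}$, since the $\mathfrak{G}$-part has not yet been peeled off at that stage; your final paragraph performs the split correctly, so this is only a typo, not a gap.
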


\begin{corollary}\label{t:dtchi2}
	Under the conditions of Lemma \ref{t:dtchi}, the function $|\mathfrak{G}(\mft)|$ is decreasing and satisfies $-|\mathfrak{G}(t_0)| < \mathfrak{G}(\mft) < |\mathfrak{G}(t_0)|$.
\end{corollary}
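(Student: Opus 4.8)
The plan is to work directly from the scalar evolution law for $\mathfrak{G}$ supplied by Lemma~\ref{t:dtchi}, combined with the structural facts of Proposition~\ref{t:limG}: that $\mathfrak{G}=\chi-\tfrac{2\cb B}{3-2\cc}$, that $0<\chi\le C_\chi$ on $[t_0,t_m)$, and that $\mathfrak{G}(t_m)=0$; I will also use that $f$ is increasing with $f(\mft)\ge\beta$ and $f(\mft)\to+\infty$ as $\mft\to t_m^-$ (Lemma~\ref{t:f0fg} and Theorem~\ref{t:mainthm0}). The key observation is that the right-hand side of Lemma~\ref{t:dtchi} is affine in $\mathfrak{G}$: the first term is exactly $-a_0(\mft)\mathfrak{G}$ with $a_0(\mft):=\frac{(3-2\cc)\chi^{1/2}f^{1/2}}{B^{1/2}\mft}>0$, so that
\[
\partial_\mft\mathfrak{G}= -a_0(\mft)\,\mathfrak{G}-b_0(\mft),\qquad b_0(\mft):=\frac{\chi^{3/2}}{B^{1/2}\mft f^{1/2}}+2(\ca-1)\frac{\chi}{\mft}>0 ,
\]
using $3-2\cc>0$, $\ca>1$, $\mft\ge t_0>0$ and $\chi,f>0$. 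The goal is to show $\frac{d}{d\mft}|\mathfrak{G}(\mft)|<0$ at every $\mft\in(t_0,t_m)$ with $\mathfrak{G}(\mft)\neq0$, equivalently $\mathfrak{G}\,\partial_\mft\mathfrak{G}<0$ there; the monotonicity of $|\mathfrak{G}|$ and the two-sided bound then follow by integration from $t_0$.

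In the regime $\mathfrak{G}(\mft)\ge0$ the claim is immediate: $\partial_\mft\mathfrak{G}=-a_0\mathfrak{G}-b_0<0$, so $|\mathfrak{G}|=\mathfrak{G}$ is strictly decreasing. The substantive case is $\mathfrak{G}(\mft)<0$, i.e. $0<\chi<\tfrac{2\cb B}{3-2\cc}$, where $\partial_\mft\mathfrak{G}=a_0|\mathfrak{G}|-b_0$, and one must check $a_0(\mft)|\mathfrak{G}(\mft)|>b_0(\mft)$, i.e.
\[
|\mathfrak{G}(\mft)|>\frac{b_0(\mft)}{a_0(\mft)}=\frac{\chi}{(3-2\cc)f}+\frac{2(\ca-1)B^{1/2}\chi^{1/2}}{(3-2\cc)f^{1/2}} .
\]
Since $\chi$ is bounded and $f$ is increasing, the right-hand side is of order $f(\mft)^{-1/2}$ as $\mft\to t_m$, so the inequality amounts to a lower bound $|\mathfrak{G}(\mft)|\gtrsim f(\mft)^{-1/2}$. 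I would derive this from a sign-rigidity/continuity argument along the flow of the scalar ODE, in the spirit of the computation in Appendix~B of \cite{Liu2024}: at any interior zero $\mft^\star$ of $\mathfrak{G}$ one has $\partial_\mft\mathfrak{G}(\mft^\star)=-b_0(\mft^\star)<0$, so $\mathfrak{G}$ can change sign only by passing from positive to negative, never the reverse; together with the sign of $\mathfrak{G}(t_0)=\chi(t_0)-\tfrac{2\cb B}{3-2\cc}$ (nonnegative under Assumption~\ref{A:1}) and the endpoint condition $\mathfrak{G}(t_m)=0$, this fixes the sign of $\mathfrak{G}$ on all of $[t_0,t_m)$ and, in the negative case, supplies the required pointwise lower bound on $|\mathfrak{G}|$ that makes $a_0|\mathfrak{G}|$ dominate the bounded remainder $b_0$.

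Once $\frac{d}{d\mft}|\mathfrak{G}|<0$ holds on $\{\mathfrak{G}\neq0\}$ and $\mathfrak{G}$ retains a constant sign, integrating from $t_0$ gives $|\mathfrak{G}(\mft)|<|\mathfrak{G}(t_0)|$ for $\mft\in(t_0,t_m)$, which is precisely $-|\mathfrak{G}(t_0)|<\mathfrak{G}(\mft)<|\mathfrak{G}(t_0)|$, completing the proof. I expect the genuine obstacle to be exactly the estimate $a_0(\mft)|\mathfrak{G}(\mft)|>b_0(\mft)$ in the regime $\mathfrak{G}<0$: unlike the nonnegative regime, where the sign of $\partial_\mft\mathfrak{G}$ is forced term by term, here one must combine the monotonicity of $f$, the constraint $0<\chi<\tfrac{2\cb B}{3-2\cc}$, and the behaviour at the endpoint $t_m$ to control $|\mathfrak{G}|$ from below; making that step rigorous — rather than the otherwise routine sign bookkeeping — is where the argument needs care.
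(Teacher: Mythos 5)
Your decomposition $\partial_\mft\mathfrak{G}=-a_0\mathfrak{G}-b_0$ with $a_0=\tfrac{(3-2\cc)f^{1/2}\chi^{1/2}}{B^{1/2}\mft}>0$ and $b_0=\tfrac{\chi^{3/2}}{B^{1/2}\mft f^{1/2}}+\tfrac{2(\ca-1)\chi}{\mft}>0$ is correct, and the observation that $|\mathfrak{G}|$ is strictly decreasing on $\{\mathfrak{G}\geq0\}$ is immediate and sound. You have also correctly pinpointed the genuine obstacle: the claim reduces to the inequality $a_0|\mathfrak{G}|>b_0$ on $\{\mathfrak{G}<0\}$. The problem is that the resolution you sketch does not deliver this, and in fact it cannot work as you describe. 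Sign rigidity at zeros, i.e.\ $\partial_\mft\mathfrak{G}(\mft^\star)=-b_0(\mft^\star)<0$, says only that $\mathfrak{G}$ can cross from positive to negative and never back; combined with $\mathfrak{G}(t_m)=0$, this does not rule out the scenario where $\mathfrak{G}$ crosses zero at some $\mft_1<t_m$ and thereafter tends to $0^-$ without ever touching zero again. And if $\mathfrak{G}$ does cross at $\mft_1$, then immediately after $\mft_1$ one has $|\mathfrak{G}|$ small and $\partial_\mft|\mathfrak{G}|=b_0-a_0|\mathfrak{G}|>0$, so $|\mathfrak{G}|$ is strictly \emph{increasing} there. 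Hence $|\mathfrak{G}|$ cannot be monotone if $\mathfrak{G}$ ever changes sign; the only viable route to the corollary is to prove that $\mathfrak{G}(\mft)\geq0$ on all of $[t_0,t_m)$, which makes the "lower bound on $|\mathfrak{G}|$ in the negative regime" that your outline targets moot, since that regime is empty. But establishing $\mathfrak{G}\geq0$ everywhere is precisely the missing step: it is not a consequence of sign rigidity plus the endpoint limit alone, and your sketch provides no quantitative input to exclude the sign change. (Note also that under the bare hypotheses of Lemma~\ref{t:dtchi}, i.e.\ without Assumption~\ref{A:1} forcing $\mathfrak{G}(t_0)\geq0$, the claim is in fact false: if $\mathfrak{G}(t_0)<0$ is small, then $\partial_\mft\mathfrak{G}(t_0)=a_0(t_0)|\mathfrak{G}(t_0)|-b_0(t_0)<0$ and $|\mathfrak{G}|$ grows near $t_0$, so $\mathfrak{G}(t_0)\geq0$ is essential.)

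The paper's own treatment defers entirely to \cite[Corollary~B.1]{Liu2024}; your outline gets the bookkeeping right and correctly identifies the crux, but to close it you would need to reconstruct whatever quantitative mechanism that proof uses to exclude a sign change — for instance a two-sided comparison between $|\mathfrak{G}|$ and $b_0/a_0\sim f^{-1/2}$, or the decay rate controlled in Lemma~\ref{t:Gest2} — rather than appealing to soft continuity considerations. As written, the argument has a genuine gap exactly where you suspected it might.
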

\begin{proof}
The proof is similar to \cite[Corollary B.1]{Liu2024}. We omit the details. 
\end{proof}

\begin{lemma}\label{t:Gest2}
	If $\mathfrak{G}$ is defined as in \eqref{e:chig}, then for all $\ttau \in [-1, 0)$, $\tilde{\underline{\mathfrak{G}}}(\ttau)$ satisfies the estimate
	$
	|\tilde{\underline{\mathfrak{G}}}(\ttau)| \lesssim (-\ttau)^{\frac{1}{2}}$, 
	and the function $(-\ttau)^{-\frac{1}{2}} \tilde{\underline{\mathfrak{G}}}(\ttau)$ can be continuously extended to $\ttau \in [-1, 0]$, i.e., $(-\ttau)^{-\frac{1}{2}} \tilde{\underline{\mathfrak{G}}}(\ttau) \in C^0([-1, 0])$.
\end{lemma}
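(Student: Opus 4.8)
\textbf{Proof proposal for Lemma~\ref{t:Gest2}.}
The plan is to trace the estimate back to the defining ODE for $\mathfrak{G}$ from Lemma~\ref{t:dtchi} and to exploit the explicit form of the time compactification $\mfg$. Recall that in the original time variable, $\partial_{\mft}\chi = \partial_{\mft}\mathfrak{G}$ is given by Lemma~\ref{t:dtchi}, while Proposition~\ref{t:limG} guarantees $\lim_{\mft\to t_m}\mathfrak{G}(\mft)=0$ and $\mathfrak{G}\in C^0([t_0,t_m])$. Under the compactification $\tau=\mfg(\mft)$, we have $\partial_\tau \underline{\mathfrak{G}} = (\partial_{\mft}\mathfrak{G})\,(\partial_{\mft}\mfg)^{-1}$, and from Remark~\ref{t:dtg1} together with the identities \eqref{e:iden1}--\eqref{e:keyid3} in Appendix~\ref{s:iden1}, the factor $(\partial_{\mft}\mfg)^{-1}$ converts the $\mft^{-1}$-weights into $(-\tau)^{-1}$-weights. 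The key is then to show that $\partial_\tau \underline{\mathfrak{G}}$ behaves like $(-\tau)^{-\frac12}$ near $\tau=0$, so that integrating from $\tau$ up to $0$ (where $\underline{\mathfrak{G}}$ vanishes) yields $|\underline{\mathfrak{G}}(\tau)|\lesssim(-\tau)^{\frac12}$.

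First I would rewrite Lemma~\ref{t:dtchi} in $\tau$-coordinates: using $\partial_\tau\uf = \tfrac{\mathsf{h}_\uparrow^{2-\ca}(1+\uf)^{\cc-1}}{AB\uf(-\tau)^{\frac{\cb}{A}+1}}\ufo$ from Lemma~\ref{t:gb2}.\ref{l:2.5} and the chain rule, one obtains an ODE of the schematic form
\begin{equation*}
	\partial_\tau \underline{\mathfrak{G}} = \frac{1}{A(-\tau)}\Bigl(-(3-2\cc)\underline{\mathfrak{G}}\,\bigl(\tfrac{\underline{\chi_\uparrow}}{B}\bigr)^{\frac12} - \tfrac{1}{\uf}\bigl(\tfrac{\underline{\chi_\uparrow}}{B}\bigr)^{\frac32} + 2(1-\ca)\underline{\mathfrak{G}}\Bigr)\cdot\frac{1}{(\,\text{positive bounded}\,)} ,
\end{equation*}
where the precise coefficients come from substituting \eqref{e:Gdef0} and \eqref{e:limG}; I would then combine this with the relation $\tfrac{\underline{\chi_\uparrow}}{B}=\tfrac{2\cb}{3-2\cc}+\tfrac{\underline{\mathfrak{G}}}{B}$ from \eqref{e:chig} so that the right-hand side is a function of $\underline{\mathfrak{G}}$, $\uf$, and $\tau$ alone. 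Second, I would establish that $1/\uf$ decays: since $\uf\to+\infty$ as $\tau\to0$ by Theorem~\ref{t:mainthm0}, and in fact the lower bounds there ($1+\uf\gtrsim\exp(\mathtt{C}\mathsf{h}_\uparrow^{(\ba+\triangle)/2})$ in $\mft$, transcribed via $\mathsf{h}_\uparrow$) give at minimum $1/\uf \lesssim (-\tau)^{\frac12}$ near $\tau=0$; this is the crucial decay rate feeding the $(-\tau)^{-\frac12}$ claim. Third, treating the ODE as a linear scalar equation in $\underline{\mathfrak{G}}$ with the inhomogeneity $\tfrac{1}{A(-\tau)\uf}(\tfrac{\underline{\chi_\uparrow}}{B})^{\frac32}\lesssim(-\tau)^{-\frac12}$, I would apply an integrating-factor / Grönwall argument on $[\tau,0)$ using $\underline{\mathfrak{G}}(0)=0$; the homogeneous part has a coefficient of size $O((-\tau)^{-1})$ but with a \emph{favorable sign} (the $-(3-2\cc)(\tfrac{\underline{\chi_\uparrow}}{B})^{1/2}$ term is negative and dominates for small $\tau$ since $\tfrac{\underline{\chi_\uparrow}}{B}\geq\tfrac{2\cb}{3-2\cc}>0$), so the integrating factor is bounded and the estimate $|\underline{\mathfrak{G}}(\tau)|\lesssim(-\tau)^{\frac12}$ follows. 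Finally, continuity of $(-\tau)^{-\frac12}\underline{\mathfrak{G}}$ up to $\tau=0$ comes from the same representation: the integral expression for $\underline{\mathfrak{G}}$ divided by $(-\tau)^{\frac12}$ converges as $\tau\to0$ by dominated convergence, and the limit is finite. One then reads off the statement in the zoom-in time $\ttau$, which equals $\tau$ by \eqref{e:coord5}.

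The main obstacle I anticipate is controlling the homogeneous coefficient near $\tau=0$: the naive bound on $\partial_\tau\underline{\mathfrak{G}}$ carries a $(-\tau)^{-1}$ singularity, which is too strong to integrate directly, so the argument genuinely relies on the sign structure (the damping term has the right sign and a coefficient bounded below away from zero, by Lemma~\ref{t:Thpst}.\ref{r:1} and \eqref{e:chig}) to make the integrating factor comparable to a bounded power of $(-\tau)$ rather than blowing up. A secondary technical point is verifying that the decay $1/\uf=O((-\tau)^{1/2})$ is actually available with the correct exponent; this requires carefully converting the $\mft$-variable blow-up rate of $f$ from Theorem~\ref{t:mainthm0} through $\mathsf{h}_\uparrow$ and the definition \eqref{e:ctm2} of $\mfg$, and may need the sharper lower bound in Theorem~\ref{t:mainthm0}(6) under Assumption~\ref{A:1} rather than the generic one. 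Apart from these, the computation is routine bookkeeping with the identities already collected in Appendix~\ref{s:iden1} and Lemma~\ref{t:id0}.
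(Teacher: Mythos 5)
The paper itself states Lemma~\ref{t:Gest2} without proof (the corresponding argument lives in the appendix of \cite{Liu2024}), so your proposal can only be judged on its own terms; its skeleton — transcribe Lemma~\ref{t:dtchi} into $\tau$-time, exploit the sign of the damping term together with $\underline{\mathfrak{G}}\ge 0$, $\underline{\mathfrak{G}}(0)=0$, and integrate back from $\tau=0$ — is the natural route and surely the intended one. However, as written it does not produce the exponent $\tfrac12$. The first problem is the rate you assign to $1/\uf$. Using $\partial_\tau\mathsf{h}_\uparrow=\mathsf{h}_\uparrow\,\underline{\chi_\uparrow}^{1/2}/\bigl(A(-\tau)B^{1/2}\uf^{1/2}\bigr)$ (Lemma~\ref{t:gb2} with \eqref{e:iden3}), Lemma~\ref{t:dtchi} becomes exactly
\begin{equation*}
\partial_\tau\underline{\mathfrak{G}}
=-\frac{(3-2\cc)\,\underline{\chi_\uparrow}}{AB(-\tau)}\,\underline{\mathfrak{G}}
-\frac{\underline{\chi_\uparrow}^{2}}{AB(-\tau)\,\uf}
+\frac{2(1-\ca)\,\underline{\chi_\uparrow}^{3/2}}{AB^{1/2}(-\tau)\,\uf^{1/2}},
\end{equation*}
so the dominant forcing is of size $(-\tau)^{-1}\uf^{-1/2}$. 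For this to be $O\bigl((-\tau)^{-1/2}\bigr)$ — which is what integration from $0$ must see in order to return $(-\tau)^{1/2}$ — you need $1/\uf\lesssim(-\tau)$, not the $1/\uf\lesssim(-\tau)^{1/2}$ you assert; with your exponent the forcing is only $O\bigl((-\tau)^{-3/4}\bigr)$ and the argument yields $(-\tau)^{1/4}$, short of the lemma. The correct rate is already available without revisiting Theorem~\ref{t:mainthm0}: Proposition~\ref{t:fginv0} gives $\xi=\bigl((-\mfg)(1+f)\bigr)^{-1}\le C_\star$, i.e.\ $1/(1+\uf)\le C_\star(-\tau)$.

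The second problem is the assertion that "the integrating factor is bounded." It is not: the integrating factor is $(-\tau)^{-\lambda}$ with $\lambda=(3-2\cc)\underline{\chi_\uparrow}/(AB)$, which blows up as $\tau\to 0$, and the Duhamel formula then gives $\underline{\mathfrak{G}}(\tau)=O\bigl((-\tau)^{\min(\lambda,1/2)}\bigr)$ (with a logarithmic loss at $\lambda=\tfrac12$). Since $\underline{\chi_\uparrow}/B\to 2\cb/(3-2\cc)$ as $\tau\to0$, asymptotically $\lambda\to 2\cb/A$, and the standing restriction $0<A<2\cb/(3-2\cc)$ only guarantees $\lambda>3-2\cc=\tfrac13$. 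So the sign structure alone is not enough: you must additionally verify $\lambda\ge\tfrac12$ on $[-1,0]$ — e.g.\ check $\underline{\chi_\uparrow}/B\ge\tfrac{3A}{2}$ using $\underline{\mathfrak{G}}\ge 0$ from Lemma~\ref{t:Thpst} and an appropriate (smaller) choice of $A$ — since otherwise the homogeneous part decays strictly slower than $(-\tau)^{1/2}$ and the claimed rate fails. Both repairs are mechanical, but without them the proposal does not establish the stated estimate.
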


\section*{Acknowledgement}
C.L. is partially supported by  NSFC (Grant No. $12571234$). 

\bigskip

\textbf{Data Availability} Data sharing is not applicable to this article as no datasets were
generated or analysed during the current study.

\bigskip

\textbf{Declarations}

\bigskip

\textbf{Conflict of interest} The authors declare that they have no conflict of interest.

\bibliographystyle{amsplain}
\bibliography{Reference_Chao}

\end{document}